\theoremstyle{plain}\newtheorem{Theorem}{Theorem}[section]
\theoremstyle{plain}\newtheorem{Corollary}[Theorem]{Corollary}
\theoremstyle{plain}\newtheorem{Lemma}[Theorem]{Lemma}
\theoremstyle{plain}\newtheorem{Definition}[Theorem]{Definition}
\theoremstyle{plain}
\theoremstyle{plain}\newtheorem{Proposition}[Theorem]{Proposition}
\theoremstyle{plain}
\theoremstyle{plain}
\theoremstyle{plain}\newtheorem{Example}[Theorem]{Example}
\theoremstyle{plain}
\theoremstyle{plain}\newtheorem*{Theorem*}{Theorem}
\theoremstyle{plain}\newtheorem*{Problem*}{Problem}
\theoremstyle{plain}\newtheorem{theoremalph}{Theorem}[section]
\newtheorem*{rep@theorem}{\rep@title}
\newcommand{\newreptheorem}[2]{%
\newenvironment{rep#1}[1]{%
 \def\rep@title{#2 \ref{##1}}%
 \begin{rep@theorem}}%
 {\end{rep@theorem}}}
\theoremstyle{plain}\newreptheorem{theorem}{Theorem}
\theoremstyle{remark}\newtheorem{remark}[Theorem]{Remark}
\theoremstyle{remark}\newtheorem{notation}[Theorem]{Notation}
\theoremstyle{remark}
\definecolor{antiquewhite}{rgb}{0.98, 0.92, 0.84}
\definecolor{buff}{rgb}{0.94, 0.86, 0.51}
\definecolor{palecopper}{rgb}{0.85, 0.54, 0.4}
\definecolor{fluorescentyellow}{rgb}{0.8, 1.0, 0.0}
\definecolor{bole}{rgb}{0.47, 0.27, 0.23}
\numberwithin{equation}{section}
\newcommand{\bA}{\mathbb{A}}
\newcommand{\bC}{\mathbb{C}}
\newcommand{\bD}{\mathbb{D}}
\newcommand{\bG}{\mathbb{G}}
\newcommand{\bP}{\mathbb{P}}
\newcommand{\bQ}{\mathbb{Q}}
\newcommand{\bZ}{\mathbb{Z}}
\newcommand{\clF}{\mathcal{F}}
\newcommand{\clG}{\mathcal{G}}
\newcommand{\clI}{\mathcal{I}}
\newcommand{\clL}{\mathcal{L}}
\newcommand{\clM}{\mathcal{M}}
\newcommand{\clN}{\mathcal{N}}
\newcommand{\clO}{\mathcal{O}}
\newcommand{\clQ}{\mathcal{Q}}
\newcommand{\clX}{\mathcal{X}}
\newcommand{\clY}{\mathcal{Y}}
\newcommand{\fm}{\mathfrak{m}}
\newcommand{\fp}{\mathfrak{p}}
\newcommand{\fA}{\mathfrak{A}}
\newcommand{\fS}{\mathfrak{S}}
\newcommand{\sC}{\mathscr{C}}
\newcommand{\sE}{\mathscr{E}}
\newcommand{\sS}{\mathscr{S}}
\newcommand{\sV}{\mathscr{V}}
\newcommand{\sX}{\mathscr{X}}
\newcommand{\sY}{\mathscr{Y}}
\newcommand{\clHom}{\mathcal{H}om}
\newcommand{\Et}{\normalfont{\text{\'EtSch}}}
\newcommand{\Mbar}{\overline{\mathcal{M}}}
\DeclareMathOperator\Spec{Spec}
\DeclareMathOperator\Proj{Proj}
\DeclareMathOperator\Hom{Hom}
\DeclareMathOperator\Mod{Mod}
\DeclareMathOperator\Art{Art}
\DeclareMathOperator\Aut{Aut}
\DeclareMathOperator\Sets{Sets}
\DeclareMathOperator\Def{Def}
\DeclareMathOperator\Hilb{Hilb}
\DeclareMathOperator\Sym{Sym}
\begin{document}

\author{Fatemeh Rezaee}
\address{University of Cambridge}
\curraddr{ETH Z\"urich}

\email{fr414@cam.ac.uk}
\author{Mohan Swaminathan}
\address{Stanford University}
\email{mohans@stanford.edu}
\title[Constructing smoothings of stable maps]{Constructing smoothings of stable maps}
\begin{abstract}
 Let $X$ be a smooth projective variety. Define a stable map $f:\sC\to X$ to be \emph{eventually smoothable} if there is an embedding $X\hookrightarrow\bP^N$ such that $(\sC,f)$ occurs as the limit of a $1$-parameter family of stable maps to $\bP^N$ with smooth domain curves. Via an explicit deformation-theoretic construction, we produce a large class of stable maps (called \emph{stable maps with model ghosts}), and show that they are eventually smoothable.
\end{abstract}

\maketitle

\setcounter{tocdepth}{1}

\section{Introduction}\label{sec:intro}

We work over a fixed algebraically closed field $k$ of characteristic $0$. Given a projective variety $X$, we will consider stable maps to it in the sense of Gromov and Kontsevich (see Definition \ref{def:prestable}). The motivating problem for this paper is the following.

\begin{Problem*}
    Given a stable map $f:\sC\to X$, determine necessary and sufficient conditions under which we can find an embedding $X\hookrightarrow\bP^N$, so that $(\sC,f)$ arises as a limit of stable maps from nonsingular curves to $\bP^N$.
\end{Problem*}

In the present paper, we derive an explicit sufficient condition and give several examples, in arbitrary genus, which satisfy this condition. In a follow up paper \cite{sequel}, we will investigate necessary conditions.\\\\
\indent This problem can be naturally interpreted in terms of the moduli stacks $\Mbar_{g,n}(X,\beta)$ of $n$-pointed stable maps to $X$ of genus $g$ and class $\beta$. These moduli stacks are important in enumerative geometry and physics, because when $X$ is non-singular, we can use virtual intersection theory on $\Mbar_{g,n}(X,\beta)$ to define the Gromov--Witten invariants of $X$.

The stack $\Mbar_{g,n}(X,\beta)$ is proper over $k$ and contains an open substack $\clM_{g,n}(X,\beta)$ parametrizing maps from non-singular curves to $X$. It is known (see \textsection\ref{subsec:lit-survey}) that when $g$ is positive, $\clM_{g,n}(X,\beta)$ is usually \emph{not} dense in $\Mbar_{g,n}(X,\beta)$, even when $X = \bP^N$ and the degree $c_1(\clO_{\bP^N}(1))\cap\beta$ is much bigger than the genus $g$. This stems from the fact that a stable map may contract an entire irreducible component of the domain curve to a point in the target variety. Following \cite{Zinger-sharp-compactness}, it is therefore natural to ask if we can explicitly describe a \emph{sharper} compactification $\Mbar^\circ_{g,n}(X,\beta)$ with the following properties. 
\begin{enumerate}[(i)]
    \item\label{cond:compact} $\Mbar^\circ_{g,n}(X,\beta)$ is (strictly) contained in $\Mbar_{g,n}(X,\beta)$ as a closed substack and contains $\clM_{g,n}(X,\beta)$ as an open substack.
    \item\label{cond:functorial} For any embedding $X\hookrightarrow X'$ of projective varieties, with the class $\beta'$ being the image of $\beta$, the stack $\Mbar^\circ_{g,n}(X,\beta)$ is the inverse image of $\Mbar^\circ_{g,n}(X',\beta')$ under the morphism
    \begin{align*}
        \Mbar_{g,n}(X,\beta)\hookrightarrow\Mbar_{g,n}(X',\beta').    
    \end{align*} 
    \item\label{cond:optimal} Given $(\sC,p_1,\ldots,p_n)\to X$ lying in $\Mbar^\circ_{g,n}(X,\beta)$, we can find $X\hookrightarrow X'$ as above such that $(\sC,p_1,\ldots,p_n)\to X\hookrightarrow X'$ lies in the closure of $\clM_{g,n}(X',\beta')$ in $\Mbar_{g,n}(X',\beta')$.
\end{enumerate}
Property \eqref{cond:compact} says that $\Mbar^\circ_{g,n}(X,\beta)$ is a compactification of $\clM_{g,n}(X,\beta)$ within $\Mbar_{g,n}(X,\beta)$. Property \eqref{cond:functorial} says that this compactification is \emph{functorial under embeddings}, while property \eqref{cond:optimal} says that it is \emph{optimal} (or \emph{sharp}) in a precise sense.

Our ultimate goal is to obtain an explicit description of a compactification $\Mbar^\circ_{g,n}(X,\beta)$ satisfying \eqref{cond:compact}--\eqref{cond:optimal} above by characterizing the stable maps which belong to it. An important application of the sharp compactification was the resolution of the genus one Bershadsky--Cecotti--Ooguri--Vafa (BCOV) conjecture in \cite{Zinger-BCOV-g1} (see \textsection\ref{subsec:lit-survey} for more information).

%%%%%%%%%%%%%%%%%%%%%%%%%%%%%%%%%%%%%%%%%%%%%%%%%%%
\subsection{Main results}\label{subsec:summary-of-results}

We start by briefly recalling some notions that are required for the discussion. As above, $X$ denotes a projective variety.

\begin{Definition}[Prestable curve, stable map]\label{def:prestable}
    A \emph{prestable curve} $\sC$ is a projective connected reduced curve with at worst ordinary nodes\footnote{By this, we mean a singularity which is \'etale locally given by $\Spec k[x,y]/(xy)$.} as singularities. A \emph{stable map} to $X$ is a morphism $f:\sC\to X$ from a prestable curve such that the group of automorphisms of $\sC$ which preserve $f$ is finite. It is also possible to consider a variant of this definition where one allows $\sC$ to have a finite ordered collection $p_1,\ldots,p_n\in\sC$ of non-singular marked points.
\end{Definition}

Denote by $\Mbar(X) = \bigsqcup_{g,\beta}\Mbar_{g,0}(X,\beta)$, the moduli stack of stable maps to $X$ (without marked points) and with no restriction on their genus or degree. Denote by $\clM(X)$ the open substack parametrizing maps defined on non-singular domain curves. We are interested in the question of when a stable map is {\bf smoothable} (Definition \ref{def:smoothability-of-map}), i.e., understanding when a stable map lies in the closure of $\clM(X)$ within $\Mbar(X)$. For a general target variety $X$, smoothability of a stable map is rather subtle and depends delicately on the geometry of $X$. We instead study the following more flexible notion, as in our motivating problem.

\begin{Definition}[Eventual smoothability]\label{def:eventual-smoothability}
    A stable map $f:\sC\to X$ is said to be \emph{eventually smoothable} if there exists an embedding $X\hookrightarrow\bP^N$ such that the point of $\Mbar(\bP^N)$ defined by the stable map $\sC\to X\hookrightarrow \bP^N$ lies in the closure of the open substack $\clM(\bP^N)$.
\end{Definition}

For a stable map $f:\sC\to X$, a {\bf ghost component} (Definition \ref{def:ghost-components}) is a maximal connected union of irreducible components of $\sC$ on which $f$ is constant. Eventual smoothability of a stable map is expected to depend only on its local behavior near each of its ghost components. Our first main result is a local-to-global principle in this vein.

\begin{theoremalph}[see Theorem \ref{thm:local-criterion-for-smoothability}, Local criterion for eventual smoothability]\label{thm:intro-local-criterion}
    If a non-constant stable map is locally formally smoothable, then it is eventually smoothable.
\end{theoremalph}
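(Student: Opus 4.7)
The plan is to produce, for a suitable projective embedding $X\hookrightarrow\bP^N$, an honest $1$-parameter family of stable maps to $\bP^N$ with smooth general fibre degenerating to $(\sC, f)$. The argument splits into three steps: first, use the local formal smoothability hypothesis to produce formal smoothings of $(\sC, f)$ near each ghost component; second, glue these local formal smoothings to a trivial deformation on the non-ghost part after passing to a sufficiently positive embedding; third, algebraise the resulting formal family via Artin approximation.

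For the first step, let $\sE_1,\ldots,\sE_r$ be the ghost components of $f$ and let $\sD\subset\sC$ be the closure of the non-ghost locus. The local formal smoothability hypothesis provides, for each $i$, a formal $1$-parameter smoothing of the germ of $(\sC, f)$ along $\sE_i$ as a stable map to $X$. On the non-ghost part $\sD$, the restriction of $f$ is non-constant on every irreducible component, so standard deformation theory (the obstruction sheaf for deformations of a stable map is concentrated at its ghosts and singular points of the target) yields a trivial formal smoothing of every node of $\sD$ over $\Spec k[[t]]$, with the map to $X$ extended by a small perturbation.

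For the second step, fix a very ample line bundle $\clL$ on $X$ and consider the embedding induced by $|\clL^{\otimes m}|$ for $m \gg 0$, composing to view everything as deformations of stable maps to $\bP^N$. At each node joining a ghost $\sE_i$ to $\sD$, the two local formal smoothings must be matched along a common formal punctured disc. The obstruction to this global gluing lies in an $H^1$-group of a coherent sheaf on $\sC$ twisted by $f^*\clL^{\otimes m}$, which vanishes by Serre vanishing for large $m$; the same positivity ensures that first-order deformations of the map $\sC\to\bP^N$ surject onto the space of permitted boundary data at each node. This is precisely where the flexibility of an \emph{eventual} embedding, rather than a fixed one, is essential.

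Finally, the glued object is a formal $1$-parameter smoothing $\widehat{\sC}\to\bP^N$ over $\Spec k[[t]]$ whose generic fibre has smooth domain in a formal neighbourhood of each ghost and of $\sD$, hence globally smooth. Applying Artin approximation to the functor of stable maps to $\bP^N$ at the point $[f]$, one obtains an honest family $\pi:\sC_B\to B$ with a morphism $\sC_B\to\bP^N$ over a pointed smooth curve $(B,0)$, specialising to $(\sC,f)$ at $0$; openness of smoothness in $\pi$ then exhibits $(\sC,f)$ as a limit point of $\clM(\bP^N)$ in $\Mbar(\bP^N)$. The hardest step is the gluing in the second paragraph: making the locally supplied formal ghost smoothings compatible with the trivial smoothing of $\sD$ along the annular overlaps at the connecting nodes, which requires simultaneously killing a global $H^1$-obstruction and invoking Serre-type positivity coming from the high embedding $X\hookrightarrow\bP^N$ to absorb the mismatch between the abstract formal smoothings and their images in projective space.
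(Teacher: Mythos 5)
Your overall scaffolding (local formal data at the ghosts, a positivity/gluing step enabled by re-embedding, then algebraization) matches the spirit of the paper, but the step you yourself flag as the hardest one is exactly where the proposal breaks down, and the fix is a specific idea you are missing. You propose to kill the gluing obstruction in ``an $H^1$-group of a coherent sheaf on $\sC$ twisted by $f^*\clL^{\otimes m}$'' by Serre vanishing for $m\gg 0$. This cannot work as stated: $f^*\clL$ is \emph{trivial} on every ghost component, so no twist by $f^*\clL^{\otimes m}$ affects the cohomology coming from the ghosts (for instance $H^1(C_i,\clO_{C_i})\neq 0$ for a positive-genus ghost survives every twist). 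The paper's essential maneuver is to transfer the positivity question from $\sC$ to the \emph{pinched} curve $\sS$ (the ghosts collapsed to singular points), through which $f$ factors as $f=g\circ\nu$: on $\sS$ the induced map $g$ is finite, so $L=g^*\clO_{\bP^N}(1)$ is ample, and after a $d$-uple re-embedding one arranges $H^1(\sS,L)=0$. One then embeds $\sS$ into $\bP^N\times\bP^N$ by $(g,\iota)$ (the second factor is needed because $g$ alone is only finite, not an embedding), applies the local-to-global principle for \emph{embedded} deformations of $\sS$ (strong surjectivity of $\Hilb_{(\sS,\bP^N\times\bP^N)}\to\Def_{U_i}$) to realize the prescribed local smoothings of the pinched singularities inside a global formal family $\sS_n\subset\bP^N\times\bP^N\times\bD_n$, and only afterwards glues the ghost-containing local smoothings $\widetilde\clX_{i,n}$ back in over the pinch points to recover a formal smoothing of $\sC$, the map to $\bP^N$ being the first projection. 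Your proposal never passes to $\sS$, so the global $H^1$-obstruction you need to kill is genuinely nonzero in your setting.

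Two further inaccuracies: the local formal smoothability hypothesis is a statement about smoothing the \emph{pinching} (compatible formal deformations of a neighborhood of the ghost and of the pinched curve germ), not a ``formal smoothing of the germ of $(\sC,f)$ as a stable map to $X$''; and your claim that on the non-ghost part standard deformation theory gives an unobstructed (trivial-plus-perturbation) smoothing of every node with the map to the fixed target $X$ extended is unjustified -- over a fixed, possibly singular $X$ such deformations can be obstructed, which is precisely why the paper only ever deforms the embedded pinched curve in $\bP^N\times\bP^N$ and never the map to $X$ directly. Your final algebraization step is fine in spirit, though Artin approximation is not needed: the paper argues more elementarily via a finite-type \'etale chart $U\to\Mbar(\bP^N)$, the induced map $R\to k[[t]]$, and the observation that the smoothing parameters of the nodes do not lie in the kernel prime, so the corresponding subvariety of $U$ meets the locus of maps with smooth domain and contains the given point in its closure.
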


Our second main result concerns the eventual smoothability of a large class of stable maps, which we call {\bf stable maps with model ghosts}.

\begin{theoremalph}[see {Theorem \ref{thm:model-ghost-implies-eventual-smoothability}}]\label{thm:intro-model-ghost}
    Stable maps with model ghosts are eventually smoothable.
\end{theoremalph}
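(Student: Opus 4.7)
The plan is to deduce Theorem \ref{thm:intro-model-ghost} from Theorem \ref{thm:intro-local-criterion} by verifying that any stable map $f\colon \sC\to X$ with model ghosts is locally formally smoothable in the required sense. The non-constancy hypothesis of Theorem \ref{thm:intro-local-criterion} should either be built into the definition of a model ghost or handled by observing that a fully constant stable map of positive arithmetic genus is never smoothable, so we can restrict to stable maps whose non-ghost part is non-empty.

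The first step is a local reduction. Since the defect obstructing smoothability is concentrated at the ghost components, local formal smoothability can be checked component by component: for each maximal ghost subcurve $Z\subseteq \sC$, one needs to produce a formal one-parameter deformation of $f$ on a formal neighborhood of $Z$ in $\sC$ whose generic fiber maps a smooth curve to $X$. By the standard formalism of stable map deformation theory, these local smoothings can be patched into a global formal deformation of $f$, and this is precisely the notion of local formal smoothability invoked in Theorem \ref{thm:intro-local-criterion}.

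The second step, which is the heart of the argument, is the explicit construction of the formal smoothing of $f$ near each model ghost $Z$. This is where the \emph{model} structure must be used: by definition, a model ghost should come equipped with enough auxiliary data, for instance a preferred direction in $X$ at the image point, together with compatibility conditions at the nodes where $Z$ meets the non-ghost part of $\sC$, to allow one to write down a formal family order by order. At each order $n$, the obstruction to lifting the $n$-th order deformation to order $n+1$ lives in a suitable $\mathrm{Ext}^1$ (or $H^1$) group of the cotangent complex of the stable map, and the model data should either exhibit a preferred lift directly or force the obstruction to vanish. Assembling the lifts yields a formal family over $k[\![t]\!]$, which can be promoted to an algebraic family by Artin approximation.

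The main obstacle is step two: finding a sufficiently canonical presentation of model ghosts that makes the order-by-order obstruction analysis tractable, and in particular ensuring that the local smoothings at distinct nodes of $Z$ are compatible with a single one-parameter smoothing rather than requiring independent parameters. The compatibility at nodes connecting $Z$ to the non-ghost part of $\sC$ is especially delicate, because the non-ghost branch fixes a direction in $X$ while the ghost branch is free; matching these with a smooth generic fiber is exactly what the model hypothesis is expected to encode. Once the local formal smoothing is in hand, Theorem \ref{thm:intro-local-criterion} provides the global embedding $X\hookrightarrow \bP^N$ and the desired one-parameter smoothing in $\Mbar(\bP^N)$, completing the proof.
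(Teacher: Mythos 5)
Your overall strategy (reduce Theorem \ref{thm:intro-model-ghost} to Theorem \ref{thm:intro-local-criterion} by verifying local formal smoothability at each ghost) matches the paper, but the step you yourself identify as ``the heart of the argument'' is missing, and the way you propose to fill it would not work. You suggest building the smoothing near a model ghost order by order, with obstructions in an $\mathrm{Ext}^1$/$H^1$ of the cotangent complex, and you assert that ``the model data should either exhibit a preferred lift directly or force the obstruction to vanish.'' No mechanism is given for this vanishing, and none can come from general deformation theory alone: most ghosts are \emph{not} eventually smoothable (Examples \ref{exa:zinger-simple-case-non-smoothable} and \ref{exa:single-ghost-non-smoothable}), so any argument that does not use the specific structure of a model ghost in an essential, constructive way is doomed. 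Moreover, your description of that structure (``a preferred direction in $X$ at the image point'') is not what Definition \ref{def:stable-map-with-model-ghosts} requires; the actual condition is that, near each positive-genus ghost, the pinching $\nu_f:\sC\to\sS_f$ is an \'etale pullback of the explicit model pinching $\nu:\overline{\sX}_0\to\overline{\sY}_0$ attached to $(C,\{p_i\},\{d_i\})$, which translates into Taylor-expansion constraints on $f|_\sE$ at the attaching nodes.

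What the paper does instead of obstruction theory is to exhibit an \emph{explicit} one-parameter smoothing: the graded algebra $A=\bigoplus_{m\ge 0}H^0(C,\clO_C(\lfloor m\Delta\rfloor))$ gives the family $\pi_\sY:\sY=\Spec A\to\bA^1$ with central fibre the model singularity $\sY_0=\Spec\fA$ (Lemma \ref{lem:model-sing}), and the blow-up family $\sX$ over $\bA^1$ together with the contraction morphism $\Phi:\sX\to\sY$ (Appendix \ref{appendix:analysis-model-smoothing}, culminating in Proposition \ref{prop:local-smoothability-of-model-pinching-appx}) shows that the model pinching is locally smoothable in the sense of Definition \ref{def:locally-smoothable-pinching}; Lemma \ref{lem:actual-implies-formal-for-pinching} then converts this into the local \emph{formal} smoothability needed for Theorem \ref{thm:local-criterion-for-smoothability}, and genus-$0$ ghosts are handled separately by Proposition \ref{prop:local-smoothability-genus-0-pinching} and Appendix \ref{appendix:genus-0-pinchings}. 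Note also that the relevant local notion is not merely ``a formal deformation of $f$ near the ghost with smooth generic fibre'': Definitions \ref{def:locally-formally-smoothable-pinching} and \ref{def:local-smoothability-stable-map} require a compatible pair of deformations, one of the pinched curve singularity $V_i$ and one of the nodal curve $\sC_{U_i,\varphi_i}$, matched away from the ghost, and your proposal never produces either member of this pair. Finally, the compatibility-at-the-nodes issue you flag (a single smoothing parameter versus independent ones) is resolved in the paper precisely by the weights $d_1,\ldots,d_n$ entering the $\bQ$-divisor $\Delta=\sum\frac1{d_i}p_i$, which prescribe the relative rates $z_iw_i=t^{d_i}$ of the node smoothings in one family; this is another ingredient your sketch does not supply.
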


Let us now explain the meaning of the terms appearing in Theorems \ref{thm:intro-local-criterion} and \ref{thm:intro-model-ghost}.

In the statement of Theorem \ref{thm:intro-local-criterion}, \emph{local formal smoothability} (Definition \ref{def:local-smoothability-stable-map}) is a condition on the behavior of a stable map near its ghost components. This condition is related to the smoothability of the singularities of a certain non-prestable curve, of the same arithmetic genus as the domain of the stable map, obtained by collapsing the ghost components. 

In the statement of Theorem \ref{thm:intro-model-ghost}, \emph{stable maps with model ghosts} (Definition \ref{def:stable-map-with-model-ghosts}) are a class of stable maps whose behavior near any ghost component is governed by an explicit model which we now describe. Given any non-singular projective curve $C$ of positive genus with $n\ge 1$ marked points $p_1,\ldots,p_n\in C$ (and relatively prime integers $d_1,\ldots,d_n\ge 1$), we construct a corresponding \emph{model} $1$-parameter family $\{f_t\}_{t\in\bA^1}$ of stable maps, to a projective space, with the following properties (see \textsection\ref{subsec:model-sing-and-smoothing} for details). When $t\ne 0$, $f_t$ is a non-constant stable map with domain $C$, while $f_0$ is a stable map with exactly one ghost component $C$ and $n$ non-constant $\bP^1$ components attached to $C$ at the points $p_1,\ldots,p_n$. The number $1/d_i$ encodes the \emph{rate of smoothing}\footnote{This means that if the local equation of the central fibre (near the node $p_i$) is $z_iw_i = 0$, then the local equation of the smoothing has the form $z_iw_i=t^{d_i}$. The value $d_i$ is the \emph{rate of node formation, i.e., degeneration}, while $1/d_i$ is the \emph{rate of smoothing}.} at the node $p_i$ in the central fibre. Additionally, the restriction of $f_0$ to the $n$ non-constant $\bP^1$ components satisfies a non-trivial set of conditions on its Taylor expansions at the points $p_1,\ldots,p_n$. The simplest of these conditions is that the first derivatives, at $p_1,\ldots,p_n$, of the $n$ non-constant $\bP^1$ components have linearly dependent images. The full list of Taylor expansion conditions is governed by the (non)existence of rational functions on $C$, having poles of prescribed orders at $p_1,\ldots,p_n$ and regular elsewhere (see \textsection\ref{subsec:example-model-sing} for examples). In particular, the exact answer is sensitive to whether $(C,p_1,\ldots,p_n)\in\clM_{g,n}$ is a general point or whether it lies on some locus of special curves (e.g. hyperelliptic curves). With this in mind, we note that a stable map has \emph{model ghosts} if, near each ghost component, it satisfies the same conditions on its Taylor expansions as in the central fibre of a corresponding model family (Remark \ref{rem:model-ghost-in-coord}). We also allow ghost components of genus $0$ in the definition of \emph{stable maps with model ghosts}, without requiring any special behavior near these (Remark \ref{rem:genus-0-vs-higher}).

Once we verify that the models described above satisfy the local formal smoothability hypothesis, Theorem \ref{thm:intro-model-ghost} follows as a consequence of Theorem \ref{thm:intro-local-criterion}. The majority of this verification is carried out in Appendices \ref{appendix:analysis-model-smoothing} and \ref{appendix:genus-0-pinchings}.

To illustrate the scope of Theorem \ref{thm:intro-model-ghost}, we discuss several examples (in arbitrary genus) of stable maps with model ghosts in \textsection\ref{subsec:example-model-sing}, provide pictures where possible and explicitly work out the Taylor expansion conditions in each case. For instance, we have the following example which is of genus $>2$, and to the best of the authors' knowledge, does not directly follow from previous works on (eventual) smoothability of stable maps.

\begin{Example}[see \textsection\ref{subsubsec:all-d-bigger-than-1}, specifically Example \ref{exa:genus-3-with-2-points-d=2,3}, for more details]
    Let $(C,p_1,p_2)$ be a general non-singular $2$-pointed curve of genus $3$. Form a prestable curve $\sC$ by attaching two copies of $\bP^1$ to $C$, denoted $E_1$ and $E_2$, at the points $p_1$ and $p_2$ respectively. Let $f:\sC\to X$ be any morphism such that $C$ is a ghost component, $f|_{E_1}$ is non-constant with its first two derivatives vanishing at $p_1$ and $f|_{E_2}$ is non-constant with its first derivative vanishing at $p_2$. Then, $f:\sC\to X$ is a stable map with model ghosts and therefore, eventually smoothable.
\end{Example}

The notion of eventual smoothability has been studied -- although not using this terminology -- in the algebraic setting as well as in the setting of closed symplectic manifolds equipped with compatible almost complex structures. We emphasize that these prior works give {\bf non-trivial obstructions to eventual smoothability}, as in Examples \ref{exa:zinger-simple-case-non-smoothable} and \ref{exa:single-ghost-non-smoothable} below. In the symplectic setting, the projective embedding in Definition \ref{def:eventual-smoothability} can be replaced by an unobstructedness assumption on each non-constant component of the stable map.\footnote{More precisely, we make a \emph{jet transversality} assumption on each non-constant component. Due to multiply covered curves, it is unclear if allowing the almost complex structure on the target manifold to vary (in a finite dimensional family) would be a suitable replacement for the jet transversality assumption.} In \textsection\ref{subsec:lit-survey} below, we discuss the relation of our results to some of these earlier works.

\begin{Example}[Follows from {\cite[Definition 1.1 and Theorem 1.2]{Zinger-sharp-compactness}}]\label{exa:zinger-simple-case-non-smoothable}
    Let $f:\sC\to X$ be a stable map such that $\sC$ is of arithmetic genus $1$ and it has $n+1$ irreducible components, denoted by $C,E_1,\ldots,E_n$. Assume that $C$ is a non-singular curve of genus $1$, and that $f|_C$ is constant. Let $p_i\in\sC$ denote the point at which $E_i\simeq\bP^1$ meets $C$, for $1\le i\le n$. If $f|_{E_i}$ are immersions into $X$ near $p_i$, for $1\le i\le n$, and their tangent lines at $p_1,\ldots,p_n$ are linearly independent, then $f:\sC\to X$ is \emph{not} eventually smoothable.
\end{Example}

\begin{Example}[Follows from {\cite[Theorem 1.1]{DW-counting}} or {\cite[Theorem 1.1]{ekholm-shende-ghost}}]\label{exa:single-ghost-non-smoothable}
    Let $f:\sC\to X$ be a stable map such that $\sC$ has exactly two irreducible components, denoted by $C$ and $E$, which are non-singular and meet at a unique point $p\in\sC$. If $C$ is a positive genus ghost component and $f|_E$ is an immersion into $X$ near $p$, then $f:\sC\to X$ is \emph{not} eventually smoothable.
\end{Example}

%%%%%%%%%%%%%%%%%%%%%%%%%%%%%%%%%%%%%%%%%%%%%%%%%%%%
\subsection{Strategy of the proof}\label{subsec:proof-strategy}

The proof of Theorem \ref{thm:intro-local-criterion} is based on formal deformation theory. Given a stable map $f:\sC\to X$, we use the local formal smoothability hypothesis on it to construct a formal deformation of a neighborhood of each ghost component in $\sC$. These local deformations are then patched together into a global deformation of the stable map $f:\sC\to X$, yielding the desired smoothing. For this local-to-global step to work, we require a positivity assumption and it is here that we make essential use of the freedom to replace the original target variety $X$ by a projective space $\bP^N$.

As explained in \textsection\ref{subsec:summary-of-results}, the proof of Theorem \ref{thm:intro-model-ghost} involves the construction of explicit model $1$-parameter families of stable maps to which Theorem \ref{thm:intro-local-criterion} can be applied. To explain the construction of the model families, in a special case, let us consider a non-singular projective curve $C$ of genus $\ge 1$ and a marked point $p\in C$. Let $h_1,\ldots,h_r$ be a set of non-constant elements of the coordinate ring of the affine curve $C\setminus\{p\}$. In particular, for $1\le i\le r$, the rational function $h_i$ has a pole of order $m_i\ge 1$ at $p$ and is regular elsewhere on $C$. By re-ordering, we may assume that $m_r=\max_{1\le i\le r}m_i$. Now, consider the rational map
\begin{align*}
    &f:C\times\bA^1\dashrightarrow\bP^r\\
    &f(z,t) = [1:t^{m_1}h_1(z):\cdots:t^{m_r}h_r(z)].
\end{align*}
Then, $f$ determines a well-defined morphism on $C\times\bA^1\setminus\{(p,0)\}$. The indeterminacy of $f$ at $(p,0)$ is resolved by blowing up this point and this defines a family of stable maps over $\bA^1$ (see Figure \ref{fig:strategyOfProof}). The stable map $f_0$, which is the central fibre of this family, has domain curve given by the union of $C$ with a copy of $\bP^1$ attached at $p\in C$. From the expression for $f(z,0)$, it is clear that $f_0$ maps the whole of $C$ to the point $q = [1:0:\cdots:0]\in\bP^r$. An explicit computation, using the local equation $zv=tu$ for the blow-up of $C\times\bA^1$ at $(p,0)$, shows that the restriction of $f_0$ to the exceptional curve $\bP^1$ is given by
\begin{align*}
    f_0([u:v]) = [u^{m_r}:c_1v^{m_1}u^{m_r-m_1}:\cdots:c_r v^{m_r}],
\end{align*}
where $c_1,\ldots,c_r$ are some nonzero elements of the field $k$. The model family is obtained when we specialize the discussion above to the case when $m_1,\ldots,m_r$ generate the \emph{Weierstrass semigroup} of $C$ at $p$ (see \textsection\ref{subsubsec:monomial-curves} for more information).

\begin{figure}[ht]
    \centering
\includegraphics[width=7.3cm]{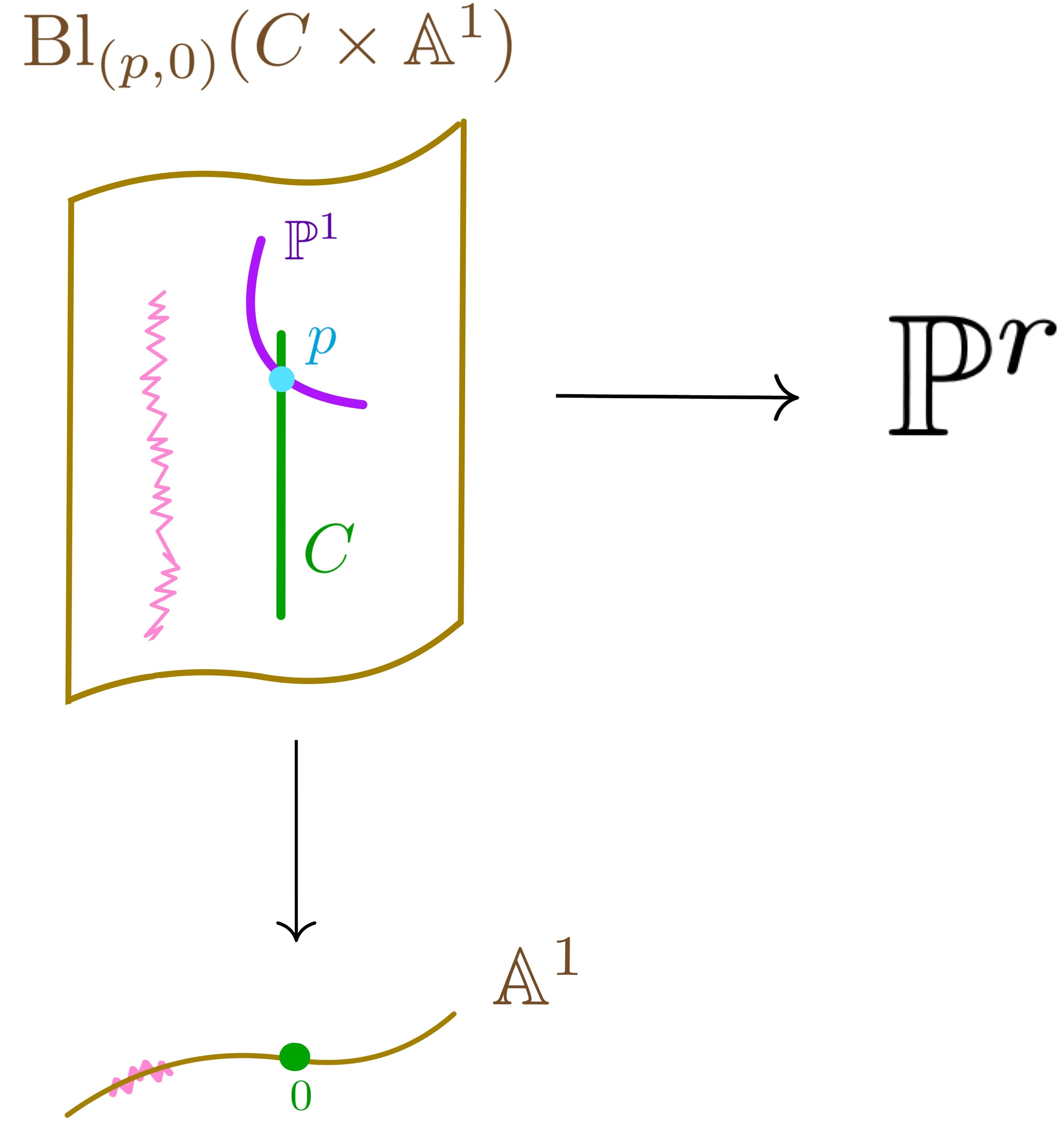}
    \caption{The model family of stable maps, from \textsection\ref{subsec:proof-strategy}, associated to $(C,p)$.}
    \label{fig:strategyOfProof}
\end{figure}

%%%%%%%%%%%%%%%%%%%%%%%%%%%%%%%%%%%%%%%%%%%%%%%%%%
\subsection{Relations to previous work}\label{subsec:lit-survey}

In the algebraic setting, some works which are closely related to the present paper are \cites{Pinkham-thesis, Pinkham-surf-sing} and \cites{Vakil-genus01, Vakil-stable-red}. The result of \cite[Lemma 5.9]{Vakil-genus01} describes a necessary condition for a genus $1$ stable map to be smoothable. When $X$ is a surface, the idea of relating the \emph{ghost components} of a stable map $f:\sC\to X$ to the \emph{singularities of the image} of $f$ is made explicit  in \cite[Lemma 2.4.1]{Vakil-stable-red}. Although the papers \cites{Pinkham-thesis, Pinkham-surf-sing} do not mention stable maps (for the simple reason that these were written much before stable maps were defined), they nevertheless serve as the main source of inspiration for the definition of the model families of \textsection\ref{subsec:model-sing-and-smoothing}. We also mention that there is an alternate compactification of $\clM(X)$, with a natural \emph{non-surjective} forgetful map to $\Mbar(X)$, parametrizing \emph{stable unramified maps}. This is constructed in \cite{KKO}; for an expository account, see \cite[Section $5\frac12$]{PT-13-over-2}.

In the symplectic setting as well, there are works with close relations to the present paper. For genus $1$ stable maps, eventual smoothability was studied in \cite{ionel-j-inv}, and subsequently, a complete characterization was given in \cite{Zinger-sharp-compactness}. \cite{VZ-desing} used this to construct an explicit desingularization of the closure of $\clM_{1,n}(\bP^N,d)$ in $\Mbar_{1,n}(\bP^N,d)$, which led to the resolution of the Bershadsky--Cecotti--Ooguri--Vafa (BCOV) conjecture for genus one Gromov--Witten invariants of the quintic $3$-fold in \cite{Zinger-BCOV-g1}. Subsequently, \cite{Ranganathan--Santos-Parker--Wise} used logarithmic geometry to interpret the desingularization in \cite{VZ-desing} as the solution to a natural moduli problem. A complete characterization of eventual smoothability for genus $2$ stable maps in the symplectic setting was given in \cite{Niu-thesis}; see also the related work \cite{hu-li-niu-genus-2} in the algebraic setting. We also mention that a modular desingularization of the closure of $\clM_{2,n}(\bP^N,d)$ in $\Mbar_{2,n}(\bP^N,d)$ was constructed in \cite{Battistella-Carocci}, by generalizing the method of \cite{Ranganathan--Santos-Parker--Wise}.

In genus $1$ and $2$, the characterizations given in \cite{Zinger-sharp-compactness} and \cite{Niu-thesis} lead to the description of a compactification of $\clM(X)$ which is \emph{strictly smaller} than $\Mbar(X)$. Following \cite{Zinger-sharp-compactness}, we refer to this as the \emph{sharp compactification}. By the methods of the present paper, we are able to produce eventual smoothings for \emph{most} of the stable maps occurring in the genus $1$ and $2$ sharp compactifications (see Examples \ref{exa:non-W-pt}, \ref{exa:hyperelliptic}, \ref{exa:genus-1-with-many-eff}, \ref{exa:genus-2-with-many-eff}, \ref{exa:genus-2-with-hyperelliptic-conjugate-points} and Remark \ref{rem:model-ghost-for-suspension}). Using the generalization of the notion of stable maps with model ghosts explained in Remark \ref{rem:etale-subtleties}, it is possible to cover \emph{all} the stable maps occurring in the genus $1$ and $2$ sharp compactifications. 

The hope is that the compactifications in \cite{Niu-thesis} and \cite{Battistella-Carocci} (and their analogues in higher genus, once defined) could shed light on the BCOV conjecture in genus bigger than one. However, let us point out that the BCOV conjecture in genus bigger than one has been resolved by two approaches not involving a sharp compactification. The first one is contained in \cites{NMSP2, NMSP3}, and its foundation is the theory of $N$-mixed-spin-$P$ fields developed in \cite{NMSP1} (generalizing mixed-spin-$P$ fields introduced in \cites{CLLL,CLLL2}). The second one is contained in \cites{Guo-Janda-Ruan-BCOV-g2, Guo-Janda-Ruan-BCOV-higher-g}, and its foundation is the theory of logarithmic gauged linear sigma model developed in \cites{log-GLSM1, log-GLSM2, log-GLSM3}.

An obstruction to eventual smoothability in the symplectic setting, for stable maps of any positive genus, was produced in \cite{DW-counting}, by generalizing some arguments of \cite{ionel-j-inv}, \cite{Zinger-sharp-compactness} and \cite{Niu-thesis}. A similar obstruction was produced in \cite{ekholm-shende-ghost} using a different argument.

%%%%%%%%%%%%%%%%%%%%%%%%%%%%%%%%%%%%%%%%%%%%%%%%%%%
\subsection{Future directions} 

In a sequel \cite{sequel} to the present paper, we will continue the study of eventual smoothability, focusing on obstructions. In particular, we will strengthen (in the algebraic setting) the obstructions found in \cite{DW-counting} and \cite{ekholm-shende-ghost} (in the symplectic setting), and use this to give many examples of stable maps which are not eventually smoothable. For some known examples of stable maps which are not eventually smoothable (based on the literature mentioned in \textsection\ref{subsec:lit-survey}), see Examples \ref{exa:zinger-simple-case-non-smoothable} and \ref{exa:single-ghost-non-smoothable} above.

%%%%%%%%%%%%%%%%%%%%%%%%%%%%%%%%%%%%%%%%%%%%%%%%%%%
\subsection*{Structure of the paper} 

In section \ref{sec:recall-defthy}, we review (without proof but with precise references) the main results from deformation theory that we need in this paper. In section \ref{sec:eventual-smoothability}, we prove Theorem \ref{thm:intro-local-criterion} (= Theorem \ref{thm:local-criterion-for-smoothability}), which gives a local criterion for eventual smoothability. In section \ref{sec:local-models}, we introduce the class of \emph{stable maps with model ghosts} and prove Theorem \ref{thm:intro-model-ghost} (= Theorem \ref{thm:model-ghost-implies-eventual-smoothability}), which states that these are eventually smoothable. Some technical details of this proof are deferred to Appendices \ref{appendix:analysis-model-smoothing} and \ref{appendix:genus-0-pinchings}. For the reader's convenience, at the beginning of each section, we provide a list of notations used in that section along with a reference to where each notation is introduced.

%%%%%%%%%%%%%%%%%%%%%%%%%%%%%%%%%%%%%%%%%%%%%%%%%%%%
\subsection*{Acknowledgements} 
We thank Ben Church, Amanda Hirschi, Patrick Kennedy-Hunt,  Adeel Khan, Andrew Kresch, T.R. Ramadas, Kyler Siegel, Sridhar Venkatesh and Sushmita Venugopalan for helpful discussions and correspondence. 
We would also like to thank Mark Gross, Eleny Ionel, Dominic Joyce, Melissa Liu, Rahul Pandharipande, John Pardon, Dhruv Ranganathan and Ravi Vakil for useful conversations and suggestions on a draft version of this paper.
We also thank the anonymous referee for their careful reading of the paper and for their useful suggestions.
This work came out of initial discussions the authors had while they were in residence at SLMath (formerly known as MSRI) in Berkeley, California, during the Fall 2022 semester (supported by the NSF under Grant No. DMS-1928930). F.R. was supported by UKRI grant No. EP/X032779/1 (in lieu of an MSCA grant), and was in residence at ETH Z\"urich during the final stages of writing. We acknowledge the use of GeoGebra \cite{geogebra5} and Goodnotes for figures, and Macaulay2 \cite{M2} for computations.

%%%%%%%%%%%%%%%%%%%%%%%%%%%%%%%%%%%%%%%%%%%%%%
\subsection*{Conventions} Unless explicitly stated otherwise, the schemes we consider will be of finite type (usually, quasi-projective) over a fixed algebraically closed field $k$ of characteristic $0$. Similarly, $\bA^n$ (resp. $\bP^n$) without subscripts is understood to be the $n$-dimensional affine space (resp. projective space) over $k$ and ring maps between $k$-algebras are understood to be $k$-algebra maps. For a finite dimensional $k$-vector space $V$, the algebra of polynomial functions on $V$ is denoted $\Sym V^\vee$. When tensor products and vector space dimensions are written without subscripts, they are understood to be over $k$.

The word \emph{variety} is reserved for a scheme which is irreducible and reduced. \emph{Morphisms} (resp. \emph{rational maps}) of schemes are defined everywhere (resp. on a dense open subset) and indicated by $\to$ (resp. $\dashrightarrow$). In particular, a \emph{birational morphism} is defined everywhere (though its inverse is typically a birational map which is defined only on a dense open subset). Following established terminology, we make an exception when the domain is a prestable curve and we say \emph{stable map} instead of stable morphism. A morphism of varieties is \emph{dominant} if its image is dense. For affine varieties, a dominant morphism corresponds to an injective map of coordinate rings. When $X$ is a scheme over a ring $A$, i.e., we have a morphism $X\to\Spec A$, its base change along a map $A\to B$ of rings will be denoted interchangeably by $X\otimes_AB$ and $X\times_{\Spec A}\Spec B$. We sometimes refer to a morphism of schemes $X\to S$ as the $S$-scheme $X$ (especially when the morphism is clear from context). A mophism $X\to Y$ between two $S$-schemes is called an $S$-morphism if it commutes with the projections to $S$.

For sheaves of $\clO_S$-modules $\clF,\clG$ on a scheme $S$, we let $\clHom_S(\clF,\clG)$ denote the sheaf of $\clO_S$-module homomorphisms between them. The space of global sections of $\clHom_S(\clF,\clG)$ is denoted $\Hom_S(\clF,\clG)$. For a point $p\in S$, we denote by $\clO_{S,p}$ (resp. $\fm_{S,p}$) the local ring (resp. maximal ideal) of $S$ at $p$. The $\fm_{S,p}$-adic completion of $\clO_{S,p}$ is denoted by $\clO_{S,p}^\wedge$ (with unique maximal ideal $\fm_{S,p}^\wedge$). The Zariski tangent space of $S$ at a closed point $p$ is denoted by $T_{S,p} = (\fm_{S,p}/\fm_{S,p}^2)^\vee$. When $S$ is non-singular of dimension $n$ near a closed point $p$, elements $z_1,\ldots,z_n\in\clO_{S,p}$ will be called \emph{local coordinates} for $S$ at $p$ if they generate the maximal ideal $\fm_{S,p}$. 

The notation $\bG_m = \Spec k[t^\pm]$ denotes the multiplicative group. Our convention is that the $\bG_m$-action on $\bA^1 = \Spec k[t]$ by scaling is of weight $1$ (and it corresponds to the grading on $k[t]$ under which $t$ has degree $1$).

More specific notations are introduced at the beginning of each section.

\section{Review of deformation theory}\label{sec:recall-defthy}

In this section, we review some background from deformation theory which will be used to study eventual smoothability of stable maps in \textsection\ref{sec:eventual-smoothability}. Throughout this section, we only recall the key statements and refer the reader to \cite{Har-DT} and \cite{stacks-project} for the proofs. In \textsection\ref{subsec:basic-dt-notions}, we recall some notions and constructions from deformation theory. These are applied in \textsection\ref{subsec:abstract-def} (resp. \textsection\ref{subsec:embedded-def}) to describe abstract (resp. embedded) deformations of schemes. Finally, in \textsection\ref{subsec:local-to-global} and \textsection\ref{subsec:top-inv-et-top} we record some useful facts which are needed to prove a local criterion for eventual smoothability (Theorem \ref{thm:local-criterion-for-smoothability}).

%%%%%%%%%%%%%%%%%%%%%%%%%%%%%%%%%%%%%%%%%%%%%%%%%%%%%%%%
\subsection*{Notation for this section}
   \begin{longtable}{ r l }
   $\Mod_B$ & Category of $B$-modules for a commutative ring $B$ (Notation \ref{not:ring-map-mod}).\\
   $T^i(B/A,M)$ & $T^i$ module for a ring map $A\to B$ and $M\in\Mod_B$ (Definition \ref{def:modules-T^i(B/A,M)}).\\
   $\clN_{Y/X}$& Normal sheaf $\clHom_Y(\clI/\clI^2,\clO_Y)$ of $Y$ in $X$ (Definition \ref{def:normal-sheaf}).\\ 
   $\fm_R$& Maximal ideal of a local $k$-algebra $R$ (Notation \ref{not:category-art}).\\
   $\Art_k$ & Category of local Artinian $k$-algebras with residue field $k$ (Notation \ref{not:category-art}).\\
   $\Art_k^\wedge$ & Category of complete local $k$-algebras $R$ such that $R/\fm_R^n\in\Art_k$ 
   \\& for all $n\ge 1$ (Notation \ref{not:category-art}).\\
   $D$ & Ring $k[t]/(t^2)$ of dual numbers (Notation \ref{not:category-art}).\\
   $h_S$ & Pro-representable functor associated to $S\in\Art_k^\wedge$ (Definition \ref{def:pro-rep}).\\
   $\Def_X$& Deformation functor of a scheme $X$ (Definition \ref{def:deformation-functor-of-scheme}).\\
   & If $X = \Spec B$, then $\Def_{X}(D) = T^1(B/k,B)$ (Proposition \ref{prop:deformation-of-B-over-D}).\\
   $\Hilb_{(Y,X)}$& Deformation functor of a closed subscheme $Y\subset X$ (Definition \ref{def:deformation-functor-of-subscheme}).\\
   & $\Hilb_{(Y,X)}(D) = H^0(Y,\clN_{Y/X})$ (Proposition \ref{prop:deformation-of-subscheme-over-D}).\\
   $\text{\'EtSch}/S$& Category of \'etale $S$-schemes for a scheme $S$ (Notation \ref{not:et-sch-cat}).
   \end{longtable}

%%%%%%%%%%%%%%%%%%%%%%%%%%%%%%%%%%%%%%%%%%%%%%%%%%%%%%%   
\subsection{Basic notions}\label{subsec:basic-dt-notions} To state the key results we need about abstract (resp. embedded) deformations of schemes, we review the definitions of the Lichtenbaum--Schlessinger $T^i$ functors (resp. normal sheaves). Additionally, we also recall some standard notions of deformation theory (such as functors of Artin rings).

\begin{notation}\label{not:ring-map-mod}
    Let $A\to B$ be a map of commutative rings with identity (not necessarily of finite presentation or over a base field). Let $\Mod_B$ denote the category of $B$-modules.
\end{notation}

We briefly describe the construction of the Lichtenbaum--Schlessinger functors $T^i(B/A,-)$, for $0\leq i \leq 2$. The reader is referred to \cite[Construction 3.1]{Har-DT} for more details.

To begin, choose a presentation $B = R/I$ of the $A$-algebra $B$ with $R = A[x]$ being a free polynomial algebra (with $x$ denoting the possibly infinite collection of variables) and $I\subset R$ being an ideal. Next, choose a free $R$-module $F$ fitting into a short exact sequence
\begin{align*}
    0\to Q\to F\xrightarrow{\alpha} I\to 0
\end{align*}
of $R$-modules. Let $F_0\subset Q$ be the $R$-sub-module generated by the elements $\alpha(f)f'-\alpha(f')f$ (called Koszul relations) ranging over $f,f'\in F$. Having made these choices, we can define the (truncated) cotangent complex of $A\to B$.

\begin{Definition} [Cotangent complex] \label{def:cotangent-complex} With notation as above, define the $B$-modules $L_i$ for $0\leq i \leq 2$ as follows.
\begin{align*}
     L_0&:=\Omega_{R/A}\otimes_R B\\
     L_1&:=F\otimes_RB=F/IF\\
     L_2&:=Q/F_0
\end{align*}
Here, $\Omega_{R/A}$ is the module of relative K\"ahler differentials of the $A$-algebra $R$. Next, define the $B$-module maps $d_i:L_i\to L_{i-1}$ for $i=1,2$ as follows. The map $d_1$ is defined to be the composition of the maps $L_1 = F/IF\to I/I^2$ induced by $\alpha$ and $I/I^2\to \Omega_{R/A}\otimes_R R/I = L_0$ induced by the derivation $R\to \Omega_{R/A}$. The map $d_2$ is induced by the inclusion $Q\subset F$.
These yield a complex
\begin{align*}
    L_2 \xrightarrow{d_2} L_1 \xrightarrow{d_1} L_0
\end{align*}
of $B$-modules, called the \emph{cotangent complex} of $A\to B$, denoted by $L_\bullet$.
\end{Definition}

Using the cotangent complex $L_\bullet$, we can define $T^i(B/A,M)$ for $0\le i\le 2$.

\begin{Definition}[$T^i$ modules]\label{def:modules-T^i(B/A,M)} With notation as above, for any $B$-module $M$, define
\begin{align*}
    T^i(B/A,M):=h^i(\Hom_B(L_\bullet,M))
\end{align*}
for $0\le i\le 2$.
\end{Definition}

\begin{remark}\label{rem:T^i-well-defined}
    Using \cite[Lemma 3.2 and Lemma 3.3]{Har-DT}, we see that $T^i(B/A,M)$ are well-defined $B$-modules, i.e., they are independent of the choices of the presentations $B = R/I$ and $I = F/Q$. Moreover, by \cite[Remark 3.3.1]{Har-DT}, the complex $L_\bullet$ gives a well-defined element of the derived category of $\Mod_B$.
\end{remark}

\begin{remark}\label{rem:T^i-functors}
    Using Definition \ref{def:modules-T^i(B/A,M)} and Remark \ref{rem:T^i-well-defined}, we obtain functors
    \begin{align*}
        T^i(B/A,-):\Mod_B\to\Mod_B
    \end{align*}
    for $0\le i\le 2$. One can show that $T^i$ functors are compatible with localization and this can be used to extend the definition to the non-affine case.
\end{remark}

When $B$ is a $k$-algebra of finite type and $M$ is a $B$-module, the following statement allows us to explicitly compute $T^i(B/k,M)$. The case $M = B$ is relevant to the study of abstract deformations of the affine scheme $\Spec B$.

\begin{Proposition}[{\cite[Proposition 3.10]{Har-DT}}]
\label{prop: s.e.s for T^i(B/k,M)}
    Let $A=k[x_1,\ldots,x_n]$, and let $B = A/I$ be a $k$-algebra obtained as a quotient of $A$. Then, for any $B$-module $M$, we have a natural exact sequence
    \begin{align*}
        0\to T^0(B/k,M)\to \mathrm{Hom}(\Omega_{A/k},M) \to \mathrm{Hom}(I/I^2,M) \to T^1(B/k,M) \to 0
    \end{align*}
    of $B$-modules. Furthermore, we have a  natural isomorphism 
    \begin{align*}
        T^2(B/A,M) = T^2(B/k,M)
    \end{align*} 
    of $B$-modules.
\end{Proposition}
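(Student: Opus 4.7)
The plan is to compute the cotangent complex of Definition \ref{def:cotangent-complex} explicitly for both $k\to B$ and $A\to B$, using a convenient presentation, and then read off all the $T^i$ from the resulting Hom-complex. Since $A=k[x_1,\ldots,x_n]$ is already a free polynomial $k$-algebra, I would take $R=A$ in Definition \ref{def:cotangent-complex} (no auxiliary variables are needed). Choosing a free $A$-module $F$ with surjection $\alpha\colon F\twoheadrightarrow I$ and kernel $Q$, and forming $F_0\subset Q$ as in that definition, yields
$$L_0=\Omega_{A/k}\otimes_A B,\qquad L_1=F/IF,\qquad L_2=Q/F_0,$$
with $d_1\colon L_1\to L_0$ factoring as $F/IF\twoheadrightarrow I/I^2\to\Omega_{A/k}\otimes_A B$, the second arrow being the conormal map. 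A basic but crucial observation is that $F_0\subset IF$, since its generators $\alpha(f)f'-\alpha(f')f$ visibly lie in $IF$; consequently the image of $d_2$ inside $L_1$ is exactly the submodule $(Q+IF)/IF\subset F/IF$.

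Next, I would apply $\Hom_B(-,M)$ and invoke the adjunction $\Hom_B(\Omega_{A/k}\otimes_A B,M)=\Hom_A(\Omega_{A/k},M)$, then trace through the definitions. The kernel of $d_1^*$ is the set of $B$-linear maps $\phi\colon\Omega_{A/k}\otimes_A B\to M$ that vanish on the image of the conormal map, giving $T^0(B/k,M)$ as the kernel of the natural map $\Hom_A(\Omega_{A/k},M)\to\Hom_B(I/I^2,M)$. The kernel of $d_2^*$ consists of those $\phi\colon F/IF\to M$ whose lift to $F$ vanishes on $Q$; since $M$ is a $B$-module such a $\phi$ automatically vanishes on $IF$, and hence $\ker d_2^*=\Hom_B(F/(Q+IF),M)=\Hom_B(I/I^2,M)$. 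Finally, the image of $d_1^*$ inside this kernel consists exactly of those maps $I/I^2\to M$ which factor through the conormal map, so $T^1(B/k,M)$ is the cokernel of $\Hom_A(\Omega_{A/k},M)\to\Hom_B(I/I^2,M)$. Stringing these four identifications together produces the claimed four-term exact sequence.

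For the comparison $T^2(B/A,M)=T^2(B/k,M)$, I would repeat the same computation for the algebra map $A\to B$. Since $B=A/I$ presents $B$ as a quotient of the polynomial $A$-algebra $R=A$ (again with no extra variables), one has $\Omega_{R/A}=\Omega_{A/A}=0$, so $L_0(A\to B)=0$; choosing the same free presentation $0\to Q\to F\to I\to 0$ gives $L_1(A\to B)=L_1(k\to B)$ and $L_2(A\to B)=L_2(k\to B)$ with identical differential $d_2$. Since $T^2$ is the cokernel of $d_2^*\colon\Hom_B(L_1,M)\to\Hom_B(L_2,M)$, which depends only on the top two stages of the complex, the two $T^2$ coincide. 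By Remark \ref{rem:T^i-well-defined} all of this is independent of the chosen presentations. The step demanding the most care is the identification $\ker d_2^*=\Hom_B(I/I^2,M)$: it relies simultaneously on $F_0\subset IF$ (so that the image of $d_2$ is the full $(Q+IF)/IF$ and not something smaller) and on the $B$-module structure of $M$ (so that $\phi(IF)=0$ is automatic). Once these are verified, the remaining steps are essentially an unwinding of definitions.
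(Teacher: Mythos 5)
Your argument is correct: the paper gives no proof of its own (it cites \cite[Proposition 3.10]{Har-DT}), and your computation—taking $R=A$ with no auxiliary variables, identifying $\ker d_2^*$ with $\Hom_B(I/I^2,M)$ via $F/(Q+IF)\cong I/I^2$ and $F_0\subset IF$, and noting that $T^2$ depends only on $d_2\colon L_2\to L_1$, which is literally the same for $k\to B$ and $A\to B$ since $\Omega_{A/A}=0$—is exactly the standard proof in that reference. No gaps; the only point worth spelling out is the triviality $\alpha(IF)=I^2$, which you use implicitly in the isomorphism $F/(Q+IF)\cong I/I^2$.
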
 

For the study of embedded deformations, the following notion is useful.

\begin{Definition}[Normal sheaf]\label{def:normal-sheaf} Let $X$ be a scheme, and let $Y\subset X$ be a closed subscheme with ideal sheaf $\clI:=\clI_{Y/X}$. The \emph{conormal sheaf} of $Y$ in $X$ is defined to be $\clI/\clI^2$. The \emph{normal sheaf} of $Y$ in $X$ is defined to be
 \begin{align*}
        \clN_{Y/X} := \clHom_X(\clI,\clO_Y)=\clHom_Y(\clI/\clI^2,\clO_Y).
\end{align*}
If $X$ is non-singular and $Y$ is a local complete intersection in $X$, then the $\clO_Y$-module $\clI/\clI^2$ and its dual $\clN_{Y/X}$ are locally free. In this case, $\clN_{Y/X}$ is called the \emph{normal bundle} of $Y$ in $X$.
\end{Definition}

Finally, we recall some standard notions which are required to state the key results on abstract and embedded deformations.
\begin{Definition}
    [Torsor and pseudotorsor] \label{def: torsor and pseudotorsor} Let $\fS$ be a set equipped with the action of a group $G$. We call $\fS$ a \emph{pseudotorsor} under the action of $G$, if $G$ acts freely and transitively on $\fS$. If $\fS$ is also non-empty, then it is said to be a \emph{torsor} under the action of $G$.
\end{Definition}

\begin{notation}\label{not:category-art}
    For a local $k$-algebra $R$, we denote its maximal ideal by $m_R$. Let $\Art_k$ denote the category of local Artinian $k$-algebras with residue field $k$. Similarly, let $\Art_k^\wedge$ denote the category of complete local $k$-algebras $R$ such that for all $n\ge 1$,  $R/\fm_R^n\in\Art_k$. Morphisms in $\Art_k$ and $\Art_k^\wedge$ are local $k$-algebra maps. Also, we introduce the notation $D := k[t]/(t^2)$, for the ring of dual numbers.
\end{notation}

\begin{Definition}[Extension of Artin rings]\label{def:ext-of-art}
    An \emph{extension of Artin rings} is a surjection $R'\twoheadrightarrow R$ in $\Art_k$. Defining $J := \ker(R'\twoheadrightarrow R)$, we get an associated short exact sequence
    \begin{align*}
        0\to J\to R'\to R\to 0.
    \end{align*}
    We say the extension $R'\twoheadrightarrow R$ is \emph{small} if we have $\fm_{R'}J = 0$.
\end{Definition}

\begin{remark}
    Our definition of small extensions is a slight generalization of the one given in \cite{Har-DT} where it is also required that $J$ be $1$-dimensional over $k$.
\end{remark}

\begin{Definition}[Functors of Artin rings]\label{def:functor-of-art}
    A \emph{functor of Artin rings} is a functor
    $F:\Art_k\to\Sets$. A natural transformation $F\to G$ between functors of Artin rings is said to be \emph{strongly surjective} if, for any extension $R'\twoheadrightarrow R$ in $\Art_k$, the induced map $F(R')\to F(R)\times_{G(R)}G(R')$ is surjective.
\end{Definition}

\begin{Definition}[Pro-representable functor]\label{def:pro-rep}
    For $S\in\Art_k^\wedge$, we define $h_S$ to be the functor of Artin rings which maps $R\in\Art_k$ to the set of local $k$-algebra maps $S\to R$. A functor $F$ of Artin rings is said to be \emph{pro-representable} if it is isomorphic to $h_S$ for some $S\in\Art_k^\wedge$. In this, case we also say $S$ \emph{pro-represents} $F$.
\end{Definition}

\begin{Definition}[Versality]\label{def:versal}
    Let $F$ be a functor of Artin rings, $S\in\Art_k^\wedge$ and consider a natural transformation $\eta: h_S\to F$. We say that $\eta$ is a
    \begin{enumerate}[\normalfont(i)]
        \item \emph{(formally) versal family} if $\eta$ is strongly surjective and $\eta(D)$ is a bijection,
        \item \emph{(formally) universal family} if $\eta$ is an isomorphism.
    \end{enumerate}
\end{Definition}

\begin{remark}[Formal deformations]
    When $F$ is the functor corresponding to deformations of a geometric object (as in Definitions \ref{def:deformation-functor-of-scheme} and \ref{def:deformation-functor-of-subscheme} below) and $S\in\Art_k^\wedge$, we will refer to a natural transformation $\eta:h_S\to F$ as a \emph{formal deformation} parametrized by $S$. This is because, by the Yoneda lemma, $\eta$ is equivalent to the data of a compatible collection of elements $\eta_n\in F(S/\fm_S^n)$ for $n\ge 1$.
\end{remark}

%%%%%%%%%%%%%%%%%%%%%%%%%%%%%%%%%%%%%%%%%%%%%%%%%%%%%%%%
\subsection{Deformations of schemes}\label{subsec:abstract-def}

We review (abstract) deformations of schemes and then specialize to the affine case. As usual, the schemes under consideration are assumed to be of finite type over $k$.

\begin{Definition}
    [Deformations of a scheme]\label{def:deformation-of-scheme} Let $X_0$ be a scheme and $R\in\Art_k$. A \emph{deformation of $X$ over $R$} is a pair $(X_R,i_R)$, where $X_R$ is a scheme which is flat over $R$ and $i_R:X_0\hookrightarrow X_R$ is a closed immersion inducing an isomorphism $X_0\simeq X_R\otimes_R k$ of schemes over $k$. 
    
    For fixed $X$ and $R$, two deformations $(X_R,i_R)$ and $(X'_R,i'_R)$ of $X_0$ over $R$ are considered \emph{equivalent}, if there is an isomorphism $f:X_R\xrightarrow{\simeq} X'_R$ of schemes over $R$ such that $f\circ i_R = i'_R$.
\end{Definition}

\begin{Definition}[Deformation functor of a scheme]\label{def:deformation-functor-of-scheme}
    Let $X_0$ be a scheme. The functor of Artin rings
    \begin{align*}
        \Def_{X_0}:\Art_k\to\Sets
    \end{align*}
    which maps any $R\in\Art_k$ to the set of equivalence classes of deformations of $X_0$ over $R$ is called the \emph{deformation functor of $X_0$}.
\end{Definition}

\begin{Definition}
    [Extensions of deformations of schemes] \label{def:extensions-of-deformations-of-schemes} Let $X_0$ be a scheme and let $R'\twoheadrightarrow R$ be a surjection in $\Art_k$ with kernel $J$ as in Definition \ref{def:ext-of-art}. Let $(X_R,i_R)$ be a deformation of $X_0$ over $R$. An extension of $(X_R,i_R)$ over $R'$ is a scheme $X_{R'}$ which is flat over $R'$ along with a closed immersion $X_R\hookrightarrow X_{R'}$ inducing an isomorphism $X_R\simeq X_{R'}\otimes_{R'} R$ of schemes over $R$. 
    
    Two such extensions are equivalent if there is an $R'$-isomorphism between them which is compatible with the closed immersion of $X_R$.
\end{Definition}

\begin{remark}
    In the situation of Definitions \ref{def:deformation-of-scheme} and \ref{def:extensions-of-deformations-of-schemes}, we usually omit the closed immersion $i_R$ from the notation when it is clear from the context.
\end{remark}

\begin{remark}[Restriction of deformation functors]\label{rem:restriction-of-def-functor}
    Since the maximal ideal $\fm_R$ of a ring $R\in\Art_k$ is nilpotent, any deformation of a scheme $X_0$ over $R$ has the same underlying topological space as $X_0$. In particular, if $U_0\subset X_0$ is an open subscheme, then it makes sense to restrict a deformation of $X_0$ to $U_0$. This gives a restriction map $\Def_{X_0}\to\Def_{U_0}$ between the deformation functors of $X_0$ and $U_0$.
\end{remark}

\begin{remark}[Deformations of affine schemes are affine]\label{rem:def-of-affine-is-affine}
    If $X_0 = \Spec B_0$ is an affine scheme, then any deformation of $X_0$ over $R\in\Art_k$ is necessarily affine \cite[Remark 1.1]{Artin-tifr-lectures}. In this case, Definitions \ref{def:deformation-of-scheme}, \ref{def:deformation-functor-of-scheme} and \ref{def:extensions-of-deformations-of-schemes} have obvious reformulations with rings in place of affine schemes.
\end{remark}

The deformations of an affine scheme over the ring $D$ of dual numbers can be characterized as follows.

\begin{Proposition}[{\cite[Corollary 5.2]{Har-DT}}]
\label{prop:deformation-of-B-over-D} 

Let $X_0 = \Spec B_0$ be an affine scheme. Then, there is a natural bijection
\begin{align*}
    \Def_{X_0}(D) = T^1(B_0/k,B_0).
\end{align*}
\end{Proposition}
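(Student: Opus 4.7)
The plan is to reduce both sides of the claimed bijection to an explicit computation involving a fixed presentation of $B_0$. Choose a surjection $A = k[x_1,\ldots,x_n] \twoheadrightarrow B_0$ with kernel $I$, and a set of generators $f_1,\ldots,f_m$ of $I$. Set $A_D := A \otimes_k D = A[t]/(t^2)$. The first step is to observe that every deformation of $X_0 = \Spec B_0$ over $D$ is affine (by Remark \ref{rem:def-of-affine-is-affine}) and admits a presentation of the form $B = A_D/\tilde I$, where $\tilde I \subset A_D$ is an ideal. Indeed, smoothness of $A_D$ over $D$ combined with a Nakayama argument (applied to the maximal ideal $(t) \subset D$) lifts the map $A \to B_0$ to a surjective $D$-algebra map $A_D \twoheadrightarrow B$, and we let $\tilde I$ be its kernel.

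Next, I would unpack the flatness condition on $B$. Since $D$ is local Artinian with $t^2 = 0$, flatness of $B$ over $D$ is equivalent to the multiplication-by-$t$ map $B_0 \to B$ being injective with image $tB$. In terms of presentations, this translates to the statement that $\tilde I$ is generated by elements of the form $\tilde f_i := f_i + t g_i$ for $1 \le i \le m$, for some choice of $g_i \in A$. A relation-lifting argument (handling Koszul-type relations among the $f_i$'s) then shows that the assignment $f_i \mapsto \bar g_i \in B_0$ extends to a well-defined $B_0$-module homomorphism $\phi \colon I/I^2 \to B_0$, and conversely that any such $\phi$ arises from some flat deformation $B$ in this way.

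The third step is to analyze when two such choices yield equivalent deformations. An equivalence $B \xrightarrow{\sim} B'$ over $D$ restricting to the identity on $B_0$ lifts to a $D$-algebra automorphism $\sigma \colon A_D \to A_D$ of the form $\sigma(x_i) = x_i + t h_i$ with $h_i \in A$; such an automorphism is the same data as a $k$-derivation $A \to B_0$, i.e., an element of $\Hom(\Omega_{A/k}, B_0)$. Computing how $\sigma$ alters the $g_i$'s shows that the corresponding $\phi$ is modified precisely by the image of this derivation under the natural map $\Hom(\Omega_{A/k}, B_0) \to \Hom(I/I^2, B_0)$ appearing in Proposition \ref{prop: s.e.s for T^i(B/k,M)}. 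Hence $\Def_{X_0}(D)$ is naturally identified with the cokernel of that map, which is exactly $T^1(B_0/k, B_0)$. Independence from the chosen presentation then follows from Remark \ref{rem:T^i-well-defined}, giving the required naturality.

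The main obstacle I anticipate is the careful translation of flatness into the generation statement for $\tilde I$, together with the verification that Koszul-type relations among the $f_i$'s always lift to relations among the $\tilde f_i$'s modulo $(t^2)$. This relation-lifting step is precisely what forces the $g_i$'s to descend to a well-defined element of $\Hom(I/I^2, B_0)$ rather than only of $\Hom(A^m, B_0)$, and it is the conceptual reason why $T^1$ (and not a cruder invariant) classifies first-order deformations.
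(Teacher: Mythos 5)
Your proof is correct, and since the paper offers no argument for this proposition beyond the citation to \cite[Corollary 5.2]{Har-DT}, your write-up essentially reproduces the standard argument from that reference: lift to a presentation $B = A_D/\tilde I$, translate flatness over $D$ into injectivity of multiplication by $t$ (equivalently, lifting of relations), extract the class in $\Hom(I/I^2,B_0)$, and quotient by the action of equivalences through $\Hom(\Omega_{A/k},B_0)$, which matches the cokernel description of $T^1(B_0/k,B_0)$ in Proposition \ref{prop: s.e.s for T^i(B/k,M)}. The one imprecision is attributing the well-definedness of $\phi$ on $I/I^2$ to ``Koszul-type'' relations: what must be checked is that \emph{every} relation $\sum a_i f_i = 0$ yields $\sum \bar a_i \bar g_i = 0$ in $B_0$ (this is exactly what flatness gives), the Koszul relations being precisely those that impose no condition because $I$ annihilates $B_0$.
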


To study higher order deformations of affine schemes, we have the following statement characterizing when extensions of deformations exist.

\begin{Proposition}[{\cite[Theorem 10.1]{Har-DT}}]
     \label{prop:def-obs-for-affine-schemes} Let $B_0$ be a $k$-algebra, $B_R\twoheadrightarrow B_0$ be a deformation of $B_0$ over $R\in\Art_k$ and let $R'\twoheadrightarrow R$ be a small extension in $\Art_k$ with kernel $J$ as in Definition \ref{def:ext-of-art}. Then, there is an obstruction in $T^2(B_0/k,B_0\otimes J)$ whose vanishing is equivalent to the existence of an extension $B_{R'}\twoheadrightarrow B_{R}$ of $B_R\twoheadrightarrow B_0$ to $R'$. Moreover, if extensions do exist, then the set of their equivalence classes is a torsor under the action of $T^1(B_0/k,B_0\otimes J)$.
\end{Proposition}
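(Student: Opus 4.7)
The plan is to follow the classical cotangent-complex strategy. First, I would fix a presentation $P := k[x_\lambda]_\lambda \twoheadrightarrow B_0$ with kernel $\fI \subset P$, together with a surjection $F \xrightarrow{\alpha} \fI$ from a free $P$-module, with kernel $Q$ and Koszul submodule $K \subset Q$, as in Definition \ref{def:cotangent-complex}; in particular $L_0 = \Omega_{P/k}\otimes_P B_0$, $L_1 = F/\fI F$, and $L_2 = Q/K$. Because $B_R$ is flat over $R$ and $P,F$ are free, this presentation lifts to $B_R = P_R/\fI_R$, with $P_R := P\otimes_k R$ and an $R$-flat surjection $\alpha_R: F_R := F\otimes_k R \twoheadrightarrow \fI_R$ whose kernel $Q_R$ reduces modulo $\fm_R$ to $Q$. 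Setting $P_{R'} := P\otimes_k R'$ and $F_{R'} := F\otimes_k R'$, I would pick any $P_{R'}$-linear lift $\tilde\alpha: F_{R'}\to P_{R'}$ of $\alpha_R$ and any $R'$-flat lift $Q_{R'}\subset F_{R'}$ of $Q_R$.

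The restriction $\tilde\alpha|_{Q_{R'}}$ vanishes modulo $J$, so its image lies in $J\cdot P_{R'}$; the hypothesis $\fm_{R'} J = 0$ forces $J\cdot P_{R'} = J\otimes_k P$, and reducing further modulo $\fI$ gives a $B_0$-linear map $\mathrm{ob}: Q\to J\otimes_k B_0$. Since Koszul relations of the form $\alpha(f)f' - \alpha(f')f$ lift tautologically via $\tilde\alpha(\tilde f)\tilde f' - \tilde\alpha(\tilde f')\tilde f$, the map $\mathrm{ob}$ kills $K$ and descends to $L_2\to J\otimes B_0$; its class modulo the image of $\Hom_{B_0}(L_1,J\otimes B_0)$ defines the obstruction $[\mathrm{ob}]\in T^2(B_0/k,B_0\otimes J)$. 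If $[\mathrm{ob}]=0$, then $\tilde\alpha$ can be corrected by a map $F_{R'}\to J\cdot P_{R'}$ so that the new $\tilde\alpha$ annihilates $Q_{R'}$, whereupon $B_{R'} := P_{R'}/\tilde\alpha(F_{R'})$ fits into a short exact sequence $0\to J\otimes B_0\to B_{R'}\to B_R\to 0$ and is $R'$-flat by the local criterion of flatness; this is the desired extension. Conversely, any extension recovers a vanishing of $[\mathrm{ob}]$ by reversing the construction.

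For the torsor statement, two extensions can be presented as $P_{R'}/\tilde\alpha(F_{R'})$ and $P_{R'}/\tilde\alpha'(F_{R'})$ with $\tilde\alpha,\tilde\alpha'$ both annihilating $Q_{R'}$; their difference is an $R'$-linear map $F_{R'}\to J\cdot P_{R'} = J\otimes P$ which descends to an element of $\Hom_{B_0}(L_1, J\otimes B_0)$, and one checks that its image modulo $\Hom_{B_0}(L_0, J\otimes B_0)$ (accounting for $R'$-automorphisms of $P_{R'}$ that are the identity modulo $\fm_R$) yields a well-defined free and transitive action of $T^1(B_0/k, B_0\otimes J)$ on the set of equivalence classes of extensions. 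The main obstacle is verifying that $[\mathrm{ob}]$ and the $T^1$-action are independent of all auxiliary choices --- the presentation of $B_0$, the resolution of $\fI$, and the lifts $\tilde\alpha, Q_{R'}$. This independence follows formally from the fact that $L_\bullet$ is well-defined up to quasi-isomorphism in the derived category of $B_0$-modules (Remark \ref{rem:T^i-well-defined}), so that $T^i(B_0/k, B_0\otimes J)$ depends only on $B_0$ and $J$; the smallness hypothesis $\fm_{R'} J = 0$ is essential here because it is exactly what makes the identification $J \cdot P_{R'} = J \otimes_k P$ canonical, allowing the obstruction and difference classes to be read off $L_\bullet$ rather than living in a larger, less canonical space.
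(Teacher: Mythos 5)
This proposition is not proved in the paper at all: Section~\ref{sec:recall-defthy} is explicitly a review, and the statement is quoted from \cite[Theorem 10.1]{Har-DT} with the reader referred there for the proof. So the only meaningful comparison is with the cited Lichtenbaum--Schlessinger/Hartshorne argument, and your sketch is essentially that argument: lift the presentation $B_0=P/\fI$, $F\to\fI$ along the flat deformation $B_R$, measure the failure of relations to lift to $R'$ by a $B_0$-linear map on $Q$ killing the Koszul submodule, read off a class in $T^2(B_0/k,B_0\otimes J)$ well defined modulo the image of $\Hom_{B_0}(L_1,B_0\otimes J)$, and compare any two extensions by an element of $\Hom_{B_0}(L_1,B_0\otimes J)$ up to the coordinate changes coming from $\Hom_{B_0}(L_0,B_0\otimes J)$; smallness enters exactly through $J\cdot P_{R'}=J\otimes_k P$. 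In outline this is correct and is the same proof as in the reference.

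Two wrinkles deserve attention. First, you take as given ``any $R'$-flat lift $Q_{R'}\subset F_{R'}$ of $Q_R$.'' Such a lift does exist, but this needs an argument (over the Artinian local ring $R$ the flat module $Q_R$ is free, because $\fm_R$ is nilpotent; lift a basis and use $\fm_{R'}J=0$ together with the injectivity of $Q=Q_R\otimes_Rk\to F$ to see the lifted span is $R'$-free and reduces to $Q_R$) --- and it is in any case unnecessary: the standard construction lifts individual elements of $Q_R$ arbitrarily to $F_{R'}$ and checks that the induced map $Q\to B_0\otimes J$ is independent of these lifts, because two lifts differ by an element of $J\otimes F$ and $\tilde\alpha(J\otimes F)\subset J\otimes\fI$. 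Second, when $[\mathrm{ob}]=0$, correcting $\tilde\alpha$ by a lift of a cobounding map in $\Hom_{B_0}(L_1,B_0\otimes J)$ only achieves $\tilde\alpha(Q_{R'})\subset J\otimes\fI$, not $\tilde\alpha(Q_{R'})=0$; to get honest lifted relations you must also adjust the lifts (replace $\tilde q$ by $\tilde q-g$ with $g\in J\otimes F$ and $\tilde\alpha(g)=\tilde\alpha(\tilde q)$), after which the identification $\ker\bigl(B_{R'}\to B_R\bigr)=J\otimes B_0$ and the local flatness criterion go through as you say. Both are routine repairs, and your appeal to the well-definedness of $L_\bullet$ in the derived category is the right mechanism for independence of the presentation, so the proposal matches the cited proof in substance.
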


\begin{Example}[Versal deformation of a node {\cite[Theorem 14.1]{Har-DT}}]\label{exa:versal-def-of-node}
    Consider the algebra corresponding to a node, i.e., $B_0 = k[x,y]/(xy)$. Then, a formally versal family for $\Def_{B_0}$ is given by the restrictions of $k[x,y,t]/(xy-t)$ to $k[t]/(t^n)\in\Art_k$, for $n\ge 1$. We call $t$ the \emph{smoothing parameter} for the node.
\end{Example}

%%%%%%%%%%%%%%%%%%%%%%%%%%%%%%%%%%%%%%%%%%%%%%%%%%%%%%%%
\subsection{Deformations of closed subschemes}\label{subsec:embedded-def}

We review (embedded) deformations of closed subschemes. 

\begin{Definition}[Deformations of closed subschemes]\label{def:deformation-of-subscheme}
    Fix a closed subscheme $Y_0\subset X_0$ and $R\in\Art_k$. An \emph{embedded deformation of $Y_0$ in $X_0$ over $R$} is a closed subscheme $Y_R\subset X_0\otimes_k R$ such that $Y_R$ is flat over $R$, and $Y_R\otimes_R k = Y_0\subset X_0$.
\end{Definition}

\begin{Definition}[Deformation functor of a closed subscheme]\label{def:deformation-functor-of-subscheme}
    Let $Y_0\subset X_0$ be a closed subscheme. The functor of Artin rings
    \begin{align*}
        \Hilb_{(Y_0,X_0)}:\Art_k\to\Sets
    \end{align*}
    which maps any $R\in\Art_k$ to the set of embedded deformations of $Y_0$ in $X_0$ over $R$ is called the \emph{local Hilbert functor of $Y_0\subset X_0$}.
\end{Definition}

\begin{remark}[Restriction of local Hilbert functors]\label{rem:restriction-of-hilb-functor}
    If $U_0\subset X_0$ is an open subscheme and $W_0:=Y_0\cap U_0\subset U_0$, then any embedded deformation of $Y_0$ in $X_0$ defines an embedded deformation of $W_0$ in $U_0$, yielding a map $\Hilb_{(Y_0,X_0)}\to\Hilb_{(W_0,U_0)}$ between local Hilbert functors.
\end{remark}

\begin{remark}[Forgetting the embedding]\label{rem:forget-embedding}
    An embedded deformation of $Y_0$ in $X_0$ also yields a deformation of just $Y_0$ by forgetting the data of the embedding. This gives a forgetful map $\Hilb_{(Y_0,X_0)}\to\Def_{Y_0}$.
\end{remark}

The embedded deformations of $Y_0$ in $X_0$ over the ring $D$ of dual numbers can be characterized as follows.

\begin{Proposition}[{\cite[Theorem 2.4]{Har-DT}}] \label{prop:deformation-of-subscheme-over-D}
   For a closed subscheme $Y_0\subset X_0$, we have a natural bijection
   \begin{align*}
       \Hilb_{(Y_0,X_0)}(D) = H^0(Y_0,\clN_{Y_0/X_0}).
   \end{align*}
\end{Proposition}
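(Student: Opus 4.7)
The plan is to construct an explicit bijection affine-locally between embedded deformations over $D$ and homomorphisms from the conormal sheaf to the structure sheaf, and then to glue. An embedded deformation $Y_D\subset X_0\otimes_k D$ is, by definition, an ideal sheaf $\clI_D\subset\clO_{X_0}\otimes_k D$ reducing to $\clI=\clI_{Y_0/X_0}$ modulo $t$ and with $(\clO_{X_0}\otimes D)/\clI_D$ flat over $D$. Since the datum is Zariski-local on $X_0$, it suffices to work on an affine open $U=\Spec A\subset X_0$ with $I\subset A$ cutting out $Y_0\cap U$, and classify ideals $J\subset A[t]/(t^2)$ with $J\bmod(t)=I$ whose quotient is flat over $D$.

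The core calculation is the affine case. Applying the local flatness criterion to the short exact sequence $0\to k\xrightarrow{t}D\to k\to 0$, the flatness of $A[t]/(t^2)/J$ is equivalent to $\{a\in A : ta\in J\}=I$. Given $f\in I$, choose $b\in A$ with $f+tb\in J$; flatness forces $b$ to be well-defined modulo $I$, so one obtains a map $\varphi:I\to A/I$, $f\mapsto b\bmod I$. Additivity is clear, $A$-linearity follows from $J$ being an ideal, and the identity $g(h+tb_h)=gh+tgb_h\in J$ with $gb_h\in I$ for $g,h\in I$ shows $\varphi$ factors through $I/I^2$. Conversely, given $\varphi\in\Hom_A(I/I^2,A/I)$, lift it to $\tilde\varphi:I\to A$ and set $J:=\{f+tb:f\in I,\ b\equiv\tilde\varphi(f)\pmod I\}$; one checks that $J$ is independent of the lift, is an ideal, has $J\bmod(t)=I$, and that $A[t]/(t^2)/J$ is flat over $D$. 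These operations are mutually inverse, yielding a natural bijection $\Hilb_{(Y_0\cap U,\,U)}(D)=\Hom_A(I/I^2,A/I)$.

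Finally, I would globalize. The affine-local bijection above is manifestly functorial under localization/restriction: passing to a principal open $\Spec A_g\subset\Spec A$ on the right corresponds to localizing $\Hom_A(I/I^2,A/I)$, which is precisely how sections of the sheaf $\clHom_{Y_0}(\clI/\clI^2,\clO_{Y_0})=\clN_{Y_0/X_0}$ behave; on the left it corresponds to restricting the ideal sheaf $\clI_D$. Compatibility of the local $\varphi$'s on overlaps therefore glues them into a global section of $\clN_{Y_0/X_0}$, while a global section conversely determines local ideals $J$ that glue uniquely into $\clI_D$. The main obstacle is the flatness step: pinning down exactly why $b$ is well-defined modulo $I$ (and not some smaller or larger submodule), and verifying that the ideal $J$ constructed from $\varphi$ indeed gives a $D$-flat quotient. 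Once that equivalence between flatness and well-definedness of $\varphi$ is secured, the remaining verifications ($A$-linearity, factoring through $I/I^2$, independence of the lift $\tilde\varphi$, and compatibility with restriction) are formal.
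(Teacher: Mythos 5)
The paper states this proposition without proof, citing \cite[Theorem 2.4]{Har-DT}, and your argument is precisely the standard one given there: the local criterion for flatness over $D$ identifies affine-local first-order embedded deformations (ideals $J\subset A[t]/(t^2)$ reducing to $I$ with $D$-flat quotient) with $\Hom_A(I/I^2,A/I)$, and naturality under localization glues these into $H^0(Y_0,\clN_{Y_0/X_0})$. The only cosmetic point is that the lift $\tilde\varphi$ need only be set-theoretic --- or one can avoid it entirely by defining $J=\{f+tb : f\in I,\ b\bmod I=\varphi(f)\}$ directly --- which you effectively handle by checking independence of the lift.
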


\begin{Example}[{\cite[Corollary 2.5]{Har-DT}}]
\label{ex: Tangent space to the Hilbert scheme}   Let $Y_0\subset \bP^n$ be a closed subscheme with Hilbert polynomial $P(t)$. Let $H=\Hilb_{P(t)}(\bP^n)$ be the Hilbert scheme which parametrizes closed subschemes of $\bP^n$ with Hilbert polynomial equal to $P(t)$. Then, the Zariski tangent space of $H$ at the point $Y_0$ can be computed as
\begin{align*}
    T_{H,Y_0}=H^0(Y_0,\clN_{Y_0/\bP^n})
\end{align*}
using Proposition \ref{prop:deformation-of-subscheme-over-D}.
\end{Example}

To study higher order embedded deformations, we have the following statement characterizing when extensions of embedded deformations exist.

\begin{Proposition}[Special case of {\cite[Theorem 6.2]{Har-DT}}]
     \label{thm:def-obs-for-closed-subschemes}
     Let $Y_0\subset X_0$ be a closed subscheme and let $R'\twoheadrightarrow R$ be a small extension in $\Art_k$ with kernel $J$ as in Definition \ref{def:ext-of-art}. Let $Y_R\subset X_0\otimes_k R$ be a given element of $\Hilb_{(Y_0,X_0)}(R)$ and consider lifts of this element to the set $\Hilb_{(Y_0,X_0)}(R')$.
     \begin{enumerate}[\normalfont(i)]
         \item The set of lifts is a pseudotorsor under the action of $H^0(Y_0,\clN_{Y_0/X_0}\otimes_kJ)$.
         \item If there exist local lifts of $Y_R\subset X\otimes_k R$ to $R'$, then there is an obstruction in $H^1(Y_0, \clN_{Y_0/X_0}\otimes_kJ)$ whose vanishing is equivalent to the global existence of a lift. Moreover, if there is one global lift, then the set of all global lifts is a torsor under the action of $H^0(Y_0, \clN_{Y_0/X_0}\otimes_kJ)$. 
     \end{enumerate}
\end{Proposition}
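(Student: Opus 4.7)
The plan is to globalize an affine/local embedded deformation theory statement via a Čech cohomology argument on $X_0$. The key point is that locally on $X_0$, the set of embedded lifts of $Y_R$ to $R'$ is controlled by $\clN_{Y_0/X_0} \otimes_k J$, so gluing local lifts gives an obstruction in $H^1$ and an ambiguity in $H^0$.

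First I would set up the local picture. Choose an affine open cover $\{U_\alpha = \Spec A_\alpha\}$ of $X_0$, and let $I_\alpha \subset A_\alpha$ and $I_\alpha^R \subset A_\alpha \otimes_k R$ denote the ideals cutting out $Y_0 \cap U_\alpha$ and $Y_R \cap U_\alpha$. A local lift to $R'$ is an ideal $I_\alpha^{R'} \subset A_\alpha \otimes_k R'$ reducing to $I_\alpha^R$ modulo $J$ whose quotient is flat over $R'$. The standard small-extension computation (the embedded analogue of Proposition \ref{prop:def-obs-for-affine-schemes}) shows that, when non-empty, the set of local lifts on $U_\alpha$ is a torsor under
\begin{align*}
\Hom_{A_\alpha}(I_\alpha/I_\alpha^2,\, A_\alpha/I_\alpha) \otimes_k J \;=\; \Gamma(U_\alpha \cap Y_0,\, \clN_{Y_0/X_0} \otimes_k J).
\end{align*}
Indeed, two local lifts differ by a $k$-linear map which, because $\fm_{R'} J = 0$ and both quotients are flat over $R'$, is forced to factor through $I_\alpha/I_\alpha^2$ with target $A_\alpha/I_\alpha \otimes_k J$; conversely, any such map perturbs a given lift to another flat lift.

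Next I would globalize. Under the local-existence hypothesis of (ii), pick a local lift $I_\alpha^{R'}$ on each $U_\alpha$. On each overlap $U_{\alpha\beta} := U_\alpha \cap U_\beta$, the restrictions of $I_\alpha^{R'}$ and $I_\beta^{R'}$ are two local lifts, so their difference is a section $c_{\alpha\beta} \in \Gamma(U_{\alpha\beta} \cap Y_0, \clN_{Y_0/X_0} \otimes_k J)$. A direct check shows $\{c_{\alpha\beta}\}$ is a Čech $1$-cocycle, and altering the local choices by $b_\alpha \in \Gamma(U_\alpha \cap Y_0, \clN_{Y_0/X_0} \otimes_k J)$ changes the cocycle by the coboundary $\delta\{b_\alpha\}$. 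Hence the resulting class in $H^1(Y_0, \clN_{Y_0/X_0} \otimes_k J)$ is well-defined and vanishes iff the local lifts can be adjusted to glue into a global lift $Y_{R'}$, which is the obstruction of (ii).

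Finally, for (i) and the last sentence of (ii): given two global lifts, restricting to each $U_\alpha$ yields local differences $s_\alpha \in \Gamma(U_\alpha \cap Y_0, \clN_{Y_0/X_0} \otimes_k J)$ which automatically agree on overlaps (being restrictions of globally defined lifts), hence glue to a global section. Conversely, any $s \in H^0(Y_0, \clN_{Y_0/X_0} \otimes_k J)$ acts on a given global lift via the local torsor action on each $U_\alpha$, and the result again glues because $s$ does. Freeness is immediate from the affine torsor statement. I expect the main technical obstacle to be the affine flatness bookkeeping used in the second paragraph — verifying the bijection between differences of local lifts and elements of $\Gamma(\clN_{Y_0/X_0} \otimes_k J)$ rests on smallness of $R' \twoheadrightarrow R$ together with the local criterion for flatness — after which the Čech globalization is a formal exercise.
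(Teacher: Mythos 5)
Your proposal is correct, and it is essentially the argument behind the result as cited: the paper gives no proof of its own but refers to \cite{Har-DT}*{Theorem 6.2}, whose proof is exactly this combination of the affine small-extension computation (local lifts form a torsor under $\Hom(I_\alpha/I_\alpha^2,A_\alpha/I_\alpha)\otimes_k J=\Gamma(U_\alpha\cap Y_0,\clN_{Y_0/X_0}\otimes_kJ)$, using $\fm_{R'}J=0$ and the local flatness criterion) with the \v{C}ech globalization producing the obstruction class in $H^1(Y_0,\clN_{Y_0/X_0}\otimes_kJ)$ and the $H^0$-torsor structure on global lifts. No gaps beyond routine bookkeeping.
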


%%%%%%%%%%%%%%%%%%%%%%%%%%%%%%%%%%%%%%%%%%%%%%%%%%%%%%%
\subsection{Local-to-global principle for deformations}\label{subsec:local-to-global} We record a useful result which allows one, under suitable assumptions, to lift local abstract deformations into global embedded deformations.

\begin{notation}
    For an affine scheme $U_0 = \Spec B_0$, write $T^1_{U_0} := T^1(B_0/k,B_0)$.
\end{notation}

\begin{Theorem}[{\cite[Theorem 29.7]{Har-DT}}] \label{thm: Local-to-global principle}
    Let $Z$ be a non-singular projective scheme and let $X_0\subset Z$ be a closed subscheme with isolated singularities. Let $U_0=\bigsqcup_i U_i$, where each $U_i\subset X_0$ is an affine open subscheme containing one and only one singularity of $X_0$. The morphism $U_0\to X_0$ induces a map
    \begin{align}\label{eqn:global-to-local-map}
        \Hilb_{(X_0,Z)}\to\Def_{U_0}
    \end{align}
    of functors of Artin rings.
    Suppose that $H^1(X_0,\clN_{X_0/Z}) = 0$ and that the map
    \begin{align}\label{eqn:linearized-global-to-local-map}
        H^0(X_0,\clN_{X_0/Z}) \to T^1_{U_0}
    \end{align}
    induced by \eqref{eqn:global-to-local-map}, combined with Propositions \ref{prop:deformation-of-subscheme-over-D} and \ref{prop:deformation-of-B-over-D}, is surjective. Then, the map \eqref{eqn:global-to-local-map} is strongly surjective in the sense of Definition \ref{def:functor-of-art}.
\end{Theorem}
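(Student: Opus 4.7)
The plan is to verify strong surjectivity directly by feeding the hypotheses into the obstruction theory for embedded deformations. First I would reduce to the case of a single small extension: any surjection $R'\twoheadrightarrow R$ in $\Art_k$ with kernel $J$ can be refined through the filtration $J\supset\fm_{R'}J\supset\fm_{R'}^2J\supset\cdots$, which terminates by nilpotency of $\fm_{R'}$, and strong surjectivity is preserved under composing the resulting chain of small extensions. Thus I may assume $\fm_{R'}J = 0$, in which case $J$ is a finite-dimensional $k$-vector space and for any coherent sheaf $\clF$ on $X_0$ one has $H^i(X_0,\clF\otimes_k J) = H^i(X_0,\clF)\otimes_k J$.

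Now fix compatible data $(X_R,U_{R'})$ with $X_R\in\Hilb_{(X_0,Z)}(R)$ and $U_{R'}\in\Def_{U_0}(R')$ restricting on $R$ to the abstract deformation induced by $X_R$; the goal is to produce $X_{R'}\in\Hilb_{(X_0,Z)}(R')$ lifting both. I would first check that \emph{local} embedded lifts of $X_R$ to $Z\otimes_k R'$ exist. Away from the singularities of $X_0$, the subscheme $X_R\subset Z\otimes_k R$ is cut out locally by a regular sequence, which lifts set-theoretically to $Z\otimes_k R'$ and then defines a flat local lift. At each singular point, take an affine open $V = \Spec A\subset Z$ meeting the relevant component $U_i$. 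Since $A$ is smooth over $k$ and hence formally smooth, the ring map $A\otimes_k R\twoheadrightarrow\Gamma(U_i\cap V,\clO_{X_R})$ can be lifted to an $R'$-algebra map $A\otimes_k R'\to\Gamma(U_i\cap V,\clO_{U_{R'}})$, where the target is the coordinate ring of the prescribed local piece of $U_{R'}$. This lift is surjective by Nakayama over the local Artinian ring $R'$, so it presents $U_{R'}|_V$ as a closed subscheme of $V\otimes_k R'$, giving the required local embedded lift at the singular points.

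With local lifts in hand, Proposition \ref{thm:def-obs-for-closed-subschemes}(ii) places the obstruction to a global lift in $H^1(X_0,\clN_{X_0/Z}\otimes_k J) = H^1(X_0,\clN_{X_0/Z})\otimes_k J$, which vanishes by hypothesis. So some global lift $X_{R'}'\subset Z\otimes_k R'$ of $X_R$ exists, but its restriction to $U_0$ need not match the prescribed $U_{R'}$. To correct this, note that by Proposition \ref{prop:def-obs-for-affine-schemes} the abstract lifts of $U_R$ over $R'$ form a torsor under $T^1_{U_0}\otimes_k J$, while by Proposition \ref{thm:def-obs-for-closed-subschemes}(ii) the global embedded lifts of $X_R$ form a torsor under $H^0(X_0,\clN_{X_0/Z}\otimes_k J)$, and the forgetful-then-restrict map between them is equivariant over the induced map
\begin{align*}
H^0(X_0,\clN_{X_0/Z})\otimes_k J\to T^1_{U_0}\otimes_k J,
\end{align*}
which is surjective because \eqref{eqn:linearized-global-to-local-map} is surjective by hypothesis and tensoring with the $k$-vector space $J$ preserves surjections. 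Hence the class $\xi\in T^1_{U_0}\otimes_k J$ comparing $U_{R'}$ with the restriction of $X_{R'}'$ lifts to some $\tilde\xi\in H^0(X_0,\clN_{X_0/Z}\otimes_k J)$, and translating $X_{R'}'$ by $\tilde\xi$ yields the desired $X_{R'}$.

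I expect the main obstacle to be the local-lift step at the singular points: extending the closed immersion from $R$ to $R'$ depends essentially on formal smoothness of $Z$ near the singularities of $X_0$, since otherwise one picks up a local $T^2$-type obstruction with no reason to vanish. Once that is in place, the rest is the familiar two-step pattern -- kill the $H^1$-obstruction to produce some global lift, then use surjectivity of the tangent map and torsor-equivariance to shift the global lift so that it matches the prescribed abstract deformation on $U_0$.
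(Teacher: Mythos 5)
The paper gives no proof of this statement—it is quoted directly from Hartshorne's \emph{Deformation Theory} (Theorem 29.7)—and your argument is essentially the proof given in that reference: reduce to small extensions, produce local embedded lifts (routinely away from the singular points, and at the singular points by lifting the presentation of $X_R$ into the prescribed local deformation using formal smoothness of the ambient nonsingular affine piece plus the nilpotent-Nakayama surjectivity argument), kill the global obstruction using $H^1(X_0,\clN_{X_0/Z})=0$, and finally correct the global lift by an element of $H^0(X_0,\clN_{X_0/Z}\otimes_k J)$ using surjectivity of \eqref{eqn:linearized-global-to-local-map} together with the torsor structures from Propositions \ref{prop:def-obs-for-affine-schemes} and \ref{thm:def-obs-for-closed-subschemes}. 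I see no gap; the one step you assert rather than prove (equivariance of the restrict-and-forget comparison with respect to the two torsor actions over the map $H^0(X_0,\clN_{X_0/Z})\otimes_k J\to T^1_{U_0}\otimes_k J$) is the same standard compatibility invoked in the cited proof.
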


We only require the following special case of Theorem \ref{thm: Local-to-global principle}.

\begin{Corollary}\label{cor:special-case-of-local-to-global}
    Using the notation of Theorem \ref{thm: Local-to-global principle}, consider the special case in which $X_0$ is a reduced curve and $Z = \bP^r\times\bP^s$. If we have 
    \begin{align}\label{eqn:local-to-global-positivity}
        H^1(X_0,\clO_{\bP^r}(1)|_{X_0}) = H^1(X_0,\clO_{\bP^s}(1)|_{X_0}) = 0,    
    \end{align}
    then the hypothesis of Theorem \ref{thm: Local-to-global principle} is satisfied, and as a result, the map \eqref{eqn:global-to-local-map} is strongly surjective.
\end{Corollary}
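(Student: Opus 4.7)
The plan is to verify the two hypotheses of Theorem \ref{thm: Local-to-global principle} directly, namely that $H^1(X_0, \clN_{X_0/Z}) = 0$ and that the map $H^0(X_0, \clN_{X_0/Z}) \to T^1_{U_0}$ appearing there is surjective. The key tool will be the four-term exact sequence of sheaves on $X_0$
\[
0 \to \clT^0_{X_0} \to T_Z|_{X_0} \to \clN_{X_0/Z} \to \clT^1_{X_0} \to 0,
\]
obtained by sheafifying the exact sequence of Proposition \ref{prop: s.e.s for T^i(B/k,M)} (applied with $M = \clO_{X_0}$) over local affine charts where $Z$ is cut out inside a polynomial algebra. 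Here $\clT^i_{X_0}$ denotes the sheafification of $T^i(-/k,-)$ on $X_0$, so that $\clT^0_{X_0}$ is the tangent sheaf, while $\clT^1_{X_0}$ is supported on the finite singular locus of the reduced curve $X_0$, hence is a skyscraper sheaf. By the choice of $U_0 = \bigsqcup_i U_i$, with each $U_i = \Spec B_i$ containing exactly one singularity, we have a natural identification $H^0(X_0, \clT^1_{X_0}) = \bigoplus_i T^1(B_i/k, B_i) = T^1_{U_0}$, and the connecting map $H^0(X_0, \clN_{X_0/Z}) \to T^1_{U_0}$ coming from the sequence above should match the one in Theorem \ref{thm: Local-to-global principle}.

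Next I would split the four-term sequence into two short exact sequences using $\clQ := \mathrm{Im}(T_Z|_{X_0} \to \clN_{X_0/Z})$. From $0 \to \clQ \to \clN_{X_0/Z} \to \clT^1_{X_0} \to 0$, using $H^1(X_0, \clT^1_{X_0}) = 0$ (as $\clT^1_{X_0}$ is a skyscraper), it will suffice to prove $H^1(X_0, \clQ) = 0$ in order to simultaneously obtain $H^1(X_0, \clN_{X_0/Z}) = 0$ and the surjectivity of $H^0(X_0, \clN_{X_0/Z}) \to T^1_{U_0}$. From the other short exact sequence $0 \to \clT^0_{X_0} \to T_Z|_{X_0} \to \clQ \to 0$, combined with the vanishing of $H^2$ of any coherent sheaf on the curve $X_0$, it in turn suffices to establish $H^1(X_0, T_Z|_{X_0}) = 0$.

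For this final vanishing, I would exploit the product structure $T_Z = \pi_1^{*} T_{\bP^r} \oplus \pi_2^{*} T_{\bP^s}$ to reduce matters to the analogous statement for each factor separately. The restricted Euler sequence
\[
0 \to \clO_{X_0} \to \clO_{\bP^r}(1)|_{X_0}^{\oplus(r+1)} \to T_{\bP^r}|_{X_0} \to 0
\]
yields a long exact cohomology fragment $H^1(X_0, \clO_{\bP^r}(1)|_{X_0})^{\oplus(r+1)} \to H^1(X_0, T_{\bP^r}|_{X_0}) \to H^2(X_0, \clO_{X_0})$, whose left term vanishes by the positivity hypothesis \eqref{eqn:local-to-global-positivity} and whose right term vanishes because $X_0$ is a curve, forcing the middle term to vanish; the same argument handles the $\bP^s$ factor. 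The most delicate conceptual point in this plan is checking that the map \eqref{eqn:global-to-local-map}, at the level of tangent spaces, agrees with the connecting map arising from the four-term exact sequence; this compatibility ultimately rests on the functoriality of the identifications in Propositions \ref{prop:deformation-of-subscheme-over-D} and \ref{prop:deformation-of-B-over-D}, and is essentially the same kind of compatibility check that underlies the general Theorem \ref{thm: Local-to-global principle}.
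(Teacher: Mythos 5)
Your proposal is correct and follows essentially the same route as the paper: the paper simply cites the proof of \cite[Proposition 29.9]{Har-DT} (replacing the Euler sequence for $\bP^r$ by its analogue for $\bP^r\times\bP^s$), and that argument is precisely the one you spell out — the four-term sequence $0\to\clT^0_{X_0}\to T_Z|_{X_0}\to\clN_{X_0/Z}\to\clT^1_{X_0}\to 0$, splitting off the image sheaf, and killing $H^1(T_Z|_{X_0})$ via the restricted Euler sequence(s) together with $H^2=0$ on a curve. The compatibility you flag between the connecting map and \eqref{eqn:global-to-local-map} on dual numbers is exactly the point Hartshorne's treatment establishes, so nothing is missing.
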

\begin{proof}
     Up to replacing $\bP^r\times\bP^s$ by $\bP^r$ and replacing the condition \eqref{eqn:local-to-global-positivity} by the condition $H^0(X_0,\clO_{\bP^r}(1)|_{X_0}) = 0$, this statement is shown in the proof of \cite[Proposition 29.9]{Har-DT}. To get the result for $Z = \bP^r\times\bP^s$, we replace the Euler exact sequence for $\bP^r$ by its analogue for $\bP^r\times\bP^s$, namely
    \begin{align*}
        0\to\clO_{X_0}\oplus\clO_{X_0}\to\clO_{\bP^r}(1)^{\oplus (r+1)}|_{X_0}\oplus\clO_{\bP^s}(1)^{\oplus (s+1)}|_{X_0}\to T_{\bP^r\times\bP^s}|_{X_0}\to 0.
    \end{align*}
    and use the condition \eqref{eqn:local-to-global-positivity} in place of $H^0(X_0,\clO_{\bP^r}(1)|_{X_0}) = 0$. The argument from \cite[Proposition 29.9]{Har-DT} then applies word for word.
\end{proof}

%%%%%%%%%%%%%%%%%%%%%%%%%%%%%%%%%%%%%%%%%%%%%%%%%%%%%%%%
\subsection{Topological invariance of \'etale topology}\label{subsec:top-inv-et-top} We record a useful result that allows us to lift deformations along \'etale morphisms.

\begin{notation}\label{not:et-sch-cat}
    For a scheme $S$, let $\Et/S$ denote the consider the category of \'etale morphisms $X\to S$ of schemes, considered as a full subcategory of the category of $S$-schemes.
\end{notation}

\begin{Theorem}[{\cite[\href{https://stacks.math.columbia.edu/tag/039R}{Tag 039R}]{stacks-project}}]\label{thm:top-inv-et-top} 
    Let $S$ be a scheme, and let $S_0 \subset S$ be a closed subscheme defined by a nilpotent ideal. Then, the functor
    \begin{align*}
        \Et/S &\to \Et/S_0\\ 
        X&\mapsto X\times_{S}S_0
    \end{align*}
    defines an equivalence of categories.
\end{Theorem}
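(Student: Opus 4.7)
The plan is to establish the equivalence by separately verifying fully faithfulness and essential surjectivity, first reducing to the case of a square-zero thickening. Since the defining ideal $\mathcal{I}$ of $S_0\subset S$ is nilpotent, say $\mathcal{I}^n=0$, the closed immersion factors as a chain of square-zero thickenings $S_0\hookrightarrow V(\mathcal{I}^{n-1})\hookrightarrow\cdots\hookrightarrow V(\mathcal{I}^2)\hookrightarrow S$. Since an equivalence of categories at each step composes to an equivalence for the whole chain, I may assume $\mathcal{I}^2=0$ throughout.

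For fully faithfulness, given étale $S$-schemes $X$ and $Y$, the claim is that the restriction map $\Hom_S(X,Y)\to\Hom_{S_0}(X_0,Y_0)$ is bijective. Given an $S_0$-morphism $X_0\to Y_0$, I would view it as an $S$-morphism $X_0\to Y$ by composing with $Y_0\hookrightarrow Y$. Since $X\to S$ is flat, the closed immersion $X_0\hookrightarrow X$ is again square-zero (it is pulled back from $S_0\hookrightarrow S$), and the formal étaleness of $Y\to S$ then produces a \emph{unique} $S$-morphism $X\to Y$ lifting the given one. This furnishes both injectivity and surjectivity of the restriction map.

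For essential surjectivity, given étale $X_0\to S_0$, I would construct $X\to S$ in two stages. Locally, around any point of $X_0$ an affine open admits a standard étale presentation $B_0=A_0[t_1,\ldots,t_n]_{g_0}/(f_{0,1},\ldots,f_{0,n})$ with invertible Jacobian determinant; choosing arbitrary lifts $f_i\in A[t]$ and $g\in A[t]$ of the $f_{0,i}$ and of $g_0$, the ring $B=A[t]_g/(f)$ is étale over $A$, since the Jacobian determinant remains a unit (it is a unit modulo the nilpotent ideal $\mathcal{I}B$, hence a unit by Nakayama). Globally, cover $X_0$ by such affine patches and lift each to an étale $S$-scheme; by the fully faithful step just established, the tautological identifications of two local lifts on their common double overlap extend uniquely to isomorphisms of the lifts themselves, and the uniqueness forces the cocycle condition on triple overlaps. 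Descent then yields a global étale $X\to S$ whose pullback to $S_0$ is $X_0$.

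The main obstacle, as expected, is essential surjectivity: while the local construction rests on the elementary observation that nilpotent ideals are contained in the Jacobson radical, the global gluing is only possible because of the uniqueness statement furnished by fully faithfulness. The two halves of the argument must therefore be executed in sequence rather than independently, and some care is needed to check that flatness is preserved under the gluing so that the resulting $S$-scheme is genuinely étale and not merely a disjoint union of étale local lifts.
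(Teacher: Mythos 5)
The paper does not prove this statement at all: it is quoted as background from the Stacks Project (Tag 039R), and \textsection\ref{sec:recall-defthy} explicitly defers all proofs of such results to the references. Your argument is essentially the standard proof of the cited result in the nilpotent (thickening) case, and it is correct: filtering by powers of the ideal reduces to a square-zero extension; full faithfulness follows from the unique infinitesimal lifting property of the formally \'etale morphism $Y\to S$ applied to the square-zero closed immersion $X_0\hookrightarrow X$ (note that flatness of $X\to S$ is not even needed here, since the ideal of $X_0$ in $X$ is $\clI\clO_X$ and its square vanishes automatically; also, since $X_0\hookrightarrow X$ need not be a thickening of affines, one should glue the affine-local lifts using their uniqueness, a routine point worth stating); and essential surjectivity follows by lifting standard \'etale presentations locally, where the Jacobian unit survives because a nilpotent ideal lies in the Jacobson radical, and then gluing the local lifts via the canonical isomorphisms over overlaps that full faithfulness provides, with the cocycle condition forced by uniqueness. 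Your worry about flatness being preserved under gluing is vacuous, as \'etaleness is local on the source. For comparison, the Stacks Project deduces the nilpotent case from the stronger theorem on universal homeomorphisms, whose proof is substantially more involved; your more elementary route (which is the classical EGA IV 18.1.2/SGA 1 argument) suffices for the statement as used in this paper, where the thickenings are of Artinian type.
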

\section{Local criterion for eventual smoothability}\label{sec:eventual-smoothability}

In this section, we discuss a sufficient condition for eventual smoothability of stable maps which depends only on the behavior of the stable map near its ghost components. In \textsection\ref{subsec:formal-smoothability-of-map}, we study the notion of \emph{formal smoothability} for stable maps. In \textsection\ref{subsec:pinching-a-curve}, we discuss the process of \emph{pinching} a prestable curve at a sub-curve. In \textsection\ref{subsec:smoothability-of-pinching} (resp. \textsection\ref{subsec:behavior-near-ghost-component}), we define a notion of \emph{local formal smoothability} for pinchings (resp. stable maps). Finally, in \textsection\ref{subsec:local-criterion-eventual-smoothability}, we state and prove the local criterion for eventual smoothability (Theorem \ref{thm:intro-local-criterion} in \textsection\ref{subsec:summary-of-results}).

%%%%%%%%%%%%%%%%%%%%%%%%%%%%%%%%%%%%%%%%%%%%%%%%%%%%%%%
\subsection*{Notation for this section} 
   \begin{longtable}{ r l }
    $X$ & Projective variety. \\
    $\Mbar(X)$ & Stack of stable maps to $X$ (Notation \ref{not:stable-map-moduli}). \\
    $\clM(X)$ & Stack of stable maps to $X$ with non-singular\\
    & domains (Notation \ref{not:stable-map-moduli}).\\
    $f:\sC\to X$ & Stable map to $X$. \\
    $\bD_n$ & Scheme $\Spec k[t]/(t^{n+1})$ for $n\ge 0$ (Notation \ref{not:1D-fat-points}).\\
     $\bD$ & Scheme $\Spec k[[t]]$ (Notation \ref{not:1D-fat-points}).\\
    $ \{\sC_n\to\bD_n,\,f_n:\sC_n\to X\}_{n\ge 0}$ &  Formal smoothing  of $f:\sC\to X$ (Definition \ref{def:formal-smoothability-of-map}).\\
    $\sS$ &Pinched curve (Definition \ref{def:pinching}, Figure \ref{fig: pinching}).\\
    $\nu:\sC\to\sS$ &Pinching morphism (Definition \ref{def:pinching}, Figure \ref{fig: pinching}).\\
    $s_i$ & Distinct closed points of $\sS$ for $1\le i\le r$ over \\ 
    & which $\nu$ is not an isomorphism (Definition \ref{def:pinching}).\\
    $C_i$ & Set-theoretic inverse image of $s_i$ under $\nu$ (Definition \ref{def:pinching}).\\
    $U_i$& Affine open neighborhood of $s_i\in\sS$ with $U_i\setminus\{s_i\}$ \\
    & non-singular (Notation \ref{not:zariski-nbd-of-pinched-point} and Figure  \ref{fig:etale-local-pinching}).\\
    $\sC_{U_i}$&Inverse image of $U_i$ under $\nu$ (Notation \ref{not:zariski-nbd-of-pinched-point}, Figure \ref{fig:etale-local-pinching}).\\
    $\tilde U_i$& Closure of $\sC_{U_i}\setminus C_i$ in $\sC_{U_i}$ (Notation \ref{not:zariski-nbd-of-pinched-point}).\\
    $V_i$& Reduced affine curve containing closed point $s'_i$ with \\ 
    & $V_i\setminus\{s_i'\}$ non-singular (Notation \ref{not:etale-nbd-of-pinched-point}, Figure \ref{fig:etale-local-pinching}).\\
    $\varphi_i:U_i\to V_i$& \'Etale morphism with $\varphi_i^{-1}(s_i') = \{s_i\}$ (Notation \ref{not:etale-nbd-of-pinched-point}, Figure \ref{fig:etale-local-pinching}).\\
    $\eta_i: \tilde V_i\to V_i$& Normalization of $V_i$ (Notation \ref{not:etale-nbd-of-pinched-point}, Figure \ref{fig:etale-local-pinching}).\\
    $\sC_{U_i,\varphi_i}$ & Curve obtained by gluing $C_i$ and $\tilde V_i$ (Notation \ref{not:etale-nbd-of-pinched-point}, Figure \ref{fig:etale-local-pinching}).\\
    $\nu_{U_i,\varphi_i}:\sC_{U_i,\varphi_i}\to V_i$ & \'Etale local version of pinching morphism \\
    & (Notation \ref{not:etale-nbd-of-pinched-point}, Remark \ref{rem:etale-base-change-pinching}, Figure \ref{fig:etale-local-pinching}).\\
    $\tilde\varphi_i:\sC_{U_i}\to\sC_{U_i,\varphi_i}$ & Morphism induced by $\varphi_i$ (Notation \ref{not:etale-nbd-of-pinched-point}, Figure \ref{fig:etale-local-pinching}).\\
    $C$ & Ghost sub-curve of $f:\sC\to X$ (Definition \ref{def:ghost-components}).\\
    & $C$ has connected components $C_1,\ldots,C_r$ (Figure \ref{fig: setup}).
   \end{longtable}

%%%%%%%%%%%%%%%%%%%%%%%%%%%%%%%%%%%%%%%%%%%%%%%%%%%%%%% 
\subsection{Formal smoothability of a stable map}\label{subsec:formal-smoothability-of-map}

Let $X$ be a projective variety.

\begin{notation}\label{not:stable-map-moduli}
    $\Mbar(X)$ denotes the stack of all stable maps to $X$, without marked points and with no restrictions on their genus or degree. The open substack of $\Mbar(X)$ parametrizing maps with non-singular domain curves is denoted by $\clM(X)$. Note that $\Mbar(X)$ is a Deligne--Mumford stack, locally of finite type over $k$ by \cite[Theorem 3.14]{behrend-manin}.
\end{notation}

\begin{Definition}[Smoothability of a stable map]\label{def:smoothability-of-map}
    A stable map $f:\sC\to X$ is said to be \emph{smoothable} if the point of $\Mbar(X)$ given by $(\sC,f)$ is contained in the closure of the open substack $\clM(X)$.
\end{Definition}

\begin{notation}\label{not:1D-fat-points}
    For integers $n\ge 0$, we write $\bD_n := \Spec k[t]/(t^{n+1})$. We also write $\bD := \Spec k[[t]]$. Observe that we have $\Spec k = \bD_0\subset\bD_1\subset\bD_2\subset\cdots\subset\bD$. Other than the closed point (contained in $\bD_n$ for every $n\ge 0$), the scheme $\bD$ also has a generic point. Note that $\bD$ is \emph{not} of finite type over $k$.
\end{notation}

The following definition is the analogue of Definition \ref{def:smoothability-of-map} from the perspective of formal deformations.

\begin{Definition}[Formal smoothability of a stable map]\label{def:formal-smoothability-of-map}
    A \emph{formal smoothing} of the stable map $f:\sC\to X$ is a formal deformation
    \begin{align}\label{eqn:intro-formal-smoothing}
        \{\sC_n\to\bD_n,\,f_n:\sC_n\to X\}_{n\ge 0}
    \end{align}
    of $(\sC,f)$, such that the following condition holds: 
    For any node $p\in\sC$, there is an affine open neighborhood $U\subset\sC$ such that 
    \begin{enumerate}[\normalfont(i)]
        \item $U\setminus\{p\}$ is non-singular, and
        \item $U$ is \emph{non-trivially deformed} by \eqref{eqn:intro-formal-smoothing}, i.e., there is an integer $n\ge 1$ such that the deformation of $U$, induced\footnote{Induced by restriction, in the sense of Remark \ref{rem:restriction-of-def-functor}.} by $\sC_n\to\bD_n$, is not isomorphic to the trivial deformation $U\times\bD_n\to\bD_n$.
    \end{enumerate}
    We say that the stable map $f:\sC\to X$ is \emph{formally smoothable}, if it has a formal smoothing.
\end{Definition}

\begin{remark}
    Note that a formal smoothing of $f:\sC\to X$ is a sequence of structure sheaves (and morphisms) defined on a \emph{single underlying topological space}.
\end{remark}

The next lemma shows that formal smoothability of a stable map implies its smoothability. The proof is similar to \cite[Proposition 29.5]{Har-DT}, where an analogous statement is proved for the Hilbert scheme in place of $\Mbar(X)$.

\begin{Lemma}[Formal smoothability implies smoothability]\label{lem:formal-implies-actual}
   If $f:\sC\to X$ is a formally smoothable stable map, then it is smoothable.
\end{Lemma}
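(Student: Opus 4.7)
The plan mimics the proof of the analogous statement for the Hilbert scheme in \cite[Proposition 29.5]{Har-DT}. I will first algebraize the formal smoothing into an honest morphism from the trait $\bD=\Spec k[[t]]$ to $\Mbar(X)$, then verify that its restriction to the generic point lands in $\clM(X)$, and finally conclude that $(\sC,f)$ is a specialization of a stable map with smooth domain.

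First, the formal smoothing \eqref{eqn:intro-formal-smoothing} amounts to a compatible system of $\bD_n$-points of $\Mbar(X)$, i.e.\ a formal morphism $\mathrm{Spf}\,k[[t]]\to\Mbar(X)$. Since $\Mbar(X)$ is a Deligne--Mumford stack locally of finite type over $k$ (Notation \ref{not:stable-map-moduli}), its formal deformations are effective; concretely one applies Grothendieck's existence theorem to a polarization such as $\omega_\sC^{\otimes 3}\otimes f^*\clO_X(1)^{\otimes k}$, which is ample on $\sC$ for $k$ large by stability of $f$, together with its formal lifts. This simultaneously algebraizes the family of curves and the morphisms to the projective target $X$, yielding a morphism $\varphi:\bD\to\Mbar(X)$ with $\varphi(0)=(\sC,f)$ corresponding to a flat family $\pi:\sC_\bD\to\bD$ together with $f_\bD:\sC_\bD\to X$.

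Next, I will show that the generic fibre $\sC_\bD\otimes_{k[[t]]}k((t))$ is non-singular. Away from the finitely many nodes of $\sC$ the total space $\sC_\bD$ is regular by flatness over $\bD$, so $\pi$ is smooth there and the generic fibre inherits smoothness. Near any node $p\in\sC$, Example \ref{exa:versal-def-of-node} gives an identification $\widehat{\clO}_{\sC_\bD,p}\cong k[[x,y,t]]/(xy-a(t))$ for a unique $a(t)\in(t)\subset k[[t]]$. The non-triviality clause in Definition \ref{def:formal-smoothability-of-map}, applied at an affine neighborhood $U\ni p$ whose only singularity is $p$, forces $a(t)\not\equiv 0 \pmod{t^{n+1}}$ at the level $n$ witnessing non-triviality, and in particular $a(t)\ne 0$ in $k[[t]]$. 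Inverting $t$ then renders $a(t)$ a unit of $k((t))$, so the generic fibre is locally of the form $\{xy=\mathrm{unit}\}$, which is smooth.

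To conclude, the morphism $\varphi:\bD\to\Mbar(X)$ sends the generic point into $\clM(X)$ and the closed point to $(\sC,f)$, so by specialization in the topological space of $\Mbar(X)$ we obtain $(\sC,f)\in\overline{\clM(X)}$, which is exactly smoothability of $f:\sC\to X$. The crux of the argument is the local analysis at the nodes: the content there is that the one-dimensional versal family of a node is classified by $a(t)\in k[[t]]$, and this classifying element already detects triviality at every finite order, so the existence of a single non-trivial infinitesimal level in the formal smoothing propagates to $a(t)\ne 0$ globally and hence to an actual smoothing over the generic point of $\bD$.
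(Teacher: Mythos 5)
Your proof is correct, and it follows the same overall blueprint as the paper's argument (both are modeled on \cite[Proposition 29.5]{Har-DT}): produce a $k[[t]]$-point of $\Mbar(X)$ extending the formal smoothing, and use the versal deformation of a node together with the non-triviality clause of Definition \ref{def:formal-smoothability-of-map} to see that the generic point of $\bD$ lands in $\clM(X)$. The difference lies in how the $k[[t]]$-point is obtained and how generic smoothness is detected. You invoke effectivity of the formal family --- Grothendieck existence applied to a polarization --- to build an honest family $\sC_\bD\to\bD$ with a morphism to $X$, and then verify smoothness of the generic fibre by the local computation $xy=a(t)$ with $a\neq 0$ at each node. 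The paper never algebraizes the family: it takes a finite-type affine \'etale chart $U=\Spec R\to\Mbar(X)$ through $(\sC,f)$, uses pro-representability of the deformation functor of $(\sC,f)$ by $\clO_{U,0}^\wedge$ to convert the formal smoothing into a ring map $R\to k[[t]]$ with prime kernel $\fp$, and then argues purely on the chart: each node-smoothing parameter $f_i\in R$ lies outside $\fp$ (by non-triviality plus Example \ref{exa:versal-def-of-node}), hence $\prod_i f_i\notin\fp$ by primality, so the corresponding point of $U$ lies in the preimage $U^\circ$ of $\clM(X)$ and contains $0$ in its closure. Your route yields a concrete smooth family over $\Spec k[[t]]$, at the price of heavier machinery (and note that effectivity is not a formal consequence of $\Mbar(X)$ being Deligne--Mumford and locally of finite type; it is exactly what your polarization argument, or equivalently the paper's chart argument, supplies, and algebraizing the maps $f_n$ requires the usual graph argument); the paper's route is more economical, staying inside the finite-type chart. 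Both versions rest, with the same brevity, on the same bridging fact: a non-trivial deformation of the affine neighborhood of a node at some finite order forces the versal smoothing parameter to be nonzero in $k[[t]]$.
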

\begin{proof}
    Using the algebraicity of $\Mbar(X)$, we will produce a morphism $\bD\to\Mbar(X)$ mapping the closed point of $\bD$ to $(\sC,f)$, and the generic point of $\bD$ into $\clM(X)$.
    
    Let $U\to\Mbar(X)$ be a representable \'etale morphism from a finite type affine $k$-scheme $U = \Spec R$, with a $k$-point $0\in U$ mapping to the $k$-point of $\Mbar(X)$ given by $f:\sC\to X$. Let $\fm\subset R$ be the maximal ideal corresponding to $0\in U$. If $\sC$ has exactly $N$ nodes, we can choose $U$ so that for each $1\le i\le N$, there is a regular function $f_i\in\fm$ which gives the smoothing parameter of the $i^\text{th}$ node. Let $U^\circ\subset U$ denote the inverse image of $\clM(X)$. Then, $U^\circ$ is the complement of the closed subscheme of $U$ defined by the vanishing of $\prod_{1\le i\le N} f_i$. The morphism $U\to\Mbar(X)$ yields a family $\sC_U\to U$ of prestable curves with a morphism $f_U:\sC_U\to X$ extending $f$.
    
    Now, the complete local ring $\clO_{U,0}^\wedge$ pro-represents the deformation functor\footnote{This functor is defined by mapping $R\in\Art_k$ to the set of isomorphism classes of $(\sC_R,f_R)$ where $\sC_R$ is a deformation of $\sC$ over $R$ (as in Definition \ref{def:deformation-of-scheme}) and $f_R:\sC_R\to X$ is a morphism extending $f$.} of the stable map $(\sC,f)$. Thus, the formal smoothing induces a compatible collection of local $k$-algebra maps 
    \begin{align*}
        \clO_{U,0}^\wedge\to k[t]/(t^{n+1})    
    \end{align*}
    for $n\ge 0$. Passing to the inverse limit, and composing with $R = \Gamma(U,\clO_U)\to \clO_{U,0}\to\clO_{U,0}^\wedge$, results in a $k$-algebra map
    \begin{align*}
        R\to k[[t]],
    \end{align*}
    whose kernel is a prime ideal $\fp$ contained in $\fm$. As each node of $\sC$ is non-trivially deformed by the formal smoothing, the explicit description of the versal deformation of a node (Example \ref{exa:versal-def-of-node}) shows that, for $1\le i\le N$, the image of the smoothing parameter $f_i$ under the map $R\to k[t]/(t^{n+1})$ is non-zero for some $n\ge 1$ (depending on $i$). In particular, $f_i\not\in\fp$, for $1\le i\le N$. As $\fp$ is prime, we also get $\prod_{1\le i\le N}f_i\not\in\fp$. Thus, the (not necessarily closed) point $\eta\in U$ corresponding to $\fp\subset R$, lies in $U^\circ$. We conclude by observing that $\eta$, and therefore $U^\circ$, contains $0$ in its closure.\footnote{This can be rephrased, avoiding any mention of non-closed points, as follows. Since we have $\prod_{1\le i\le N}f_i\not\in\fp$, the closed subvariety $\Spec(R/\fp)\subset U$ meets the open subscheme $U^\circ$ and contains $0$.}
\end{proof}

\begin{remark}\label{rem:actual-implies-formal}
    For completeness, we mention without proof that smoothability of a stable map implies its formal smoothability.
\end{remark}

\begin{remark}
    Due to Lemma \ref{lem:formal-implies-actual}, to show eventual smoothability (in the sense of Definition \ref{def:eventual-smoothability}) of a stable map $f:\sC\to X$, it is sufficient to produce an embedding $X\subset\bP^N$ so that the induced stable map $\sC\to X\subset\bP^N$ is formally smoothable.
\end{remark}

%%%%%%%%%%%%%%%%%%%%%%%%%%%%%%%%%%%%%%%%%%%%%%%%%%%%%%%%%%%%%%%
\subsection{Pinching a prestable curve}\label{subsec:pinching-a-curve}

Let $\sC$ be a prestable curve and let $C\subsetneq\sC$ be a union of some of the irreducible components of $\sC$. In our applications, $C$ will consist of the irreducible components of $\sC$ on which a given stable map $f:\sC\to X$ is constant. Suppose $C$ has $r\ge 1$ connected components, which we enumerate as $C_1,\ldots,C_r$.

\begin{Definition}[Pinching]\label{def:pinching}
    A \emph{pinching}\footnote{This terminology is borrowed from \cite[Example 29.10.3]{Har-DT}, where it is used to describe a construction of non-smoothable reduced irreducible curves appearing in \cite{mumford-path-4}.} of $\sC$ at $C$ is a proper reduced connected curve $\sS$ along with a surjective morphism $\nu:\sC\to\sS$ with the following properties.
    \begin{enumerate}[\normalfont(i)]
        \item There are distinct closed points $s_1,\ldots,s_r\in\sS$ such that the set-theoretic inverse image of $\{s_i\}$ under $\nu$ is $C_i$, for $1\le i\le r$, and $\nu$ induces an isomorphism 
        \begin{align*}
            \sC\setminus C\xrightarrow{\simeq}\sS\setminus\{s_1,\ldots,s_r\}.
        \end{align*}
        \item\label{cond:genus-conserved} For $1\le i\le r$, the vector space dimension of the stalk of $\nu_*\clO_\sC/\clO_\sS$ at $s_i$ coincides with the arithmetic genus $\dim H^1(C_i,\clO_{C_i})$ of $C_i$.
    \end{enumerate}
    In this situation, we call $\nu$ the \emph{pinching morphism}, and $\sS$ the \emph{pinched curve}.

       \begin{figure}[ht]
    \centering
\includegraphics[width=5cm]{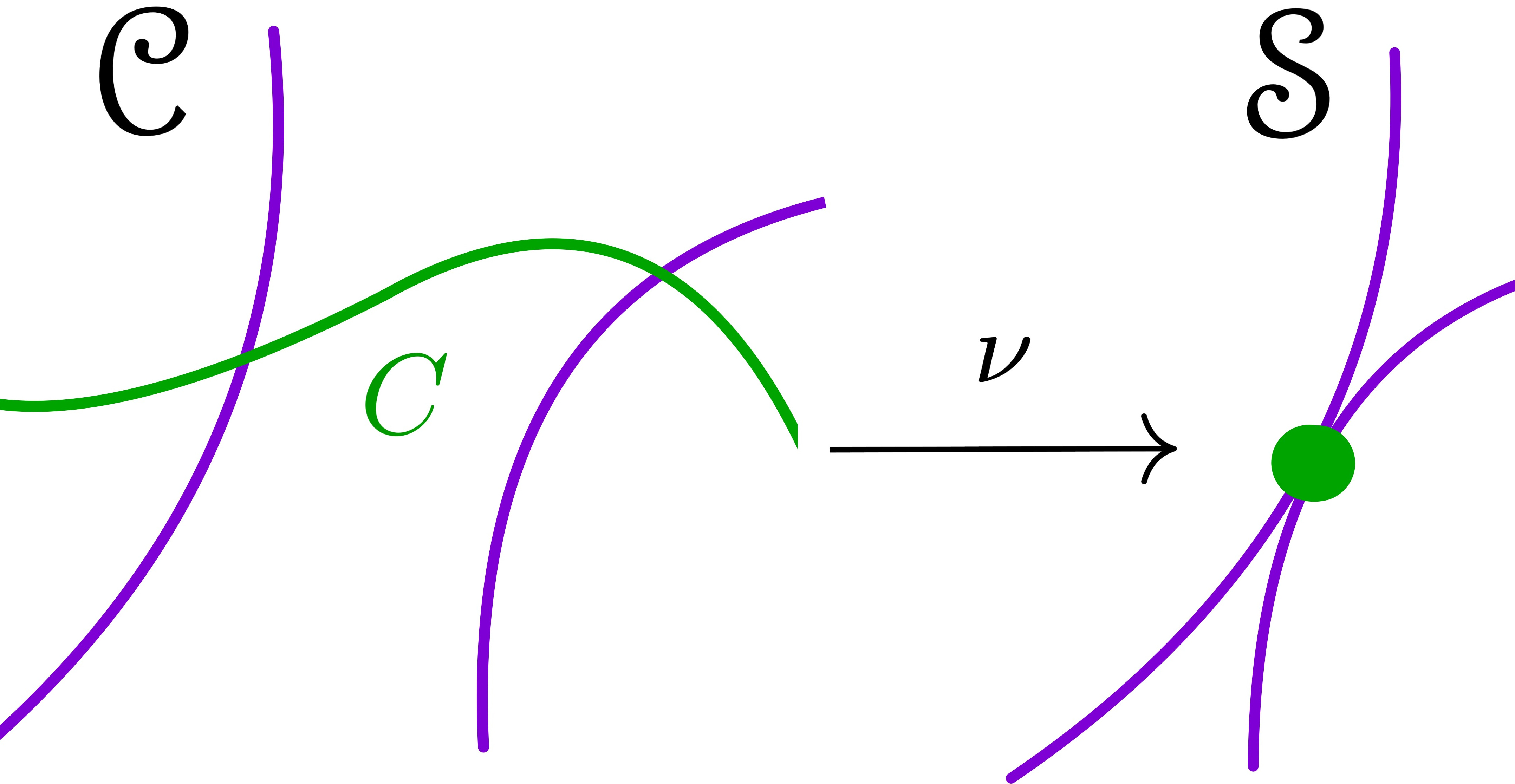}
    \caption{Pinching morphism (Definition \ref{def:pinching}). The first-order tangency of the two branches of $\sS$ means that $C$ is of genus $1$.}
    \label{fig: pinching}
\end{figure}
\end{Definition}

\begin{remark}
    In the situation of Definition \ref{def:pinching}, we have
    \begin{align}\label{eqn:pinching-genus-preserved}
        \chi(\clO_\sC) = \chi(R\nu_*\clO_\sC) = \chi(\clO_\sS) + \chi(\nu_*\clO_\sC/\clO_\sS) - \chi(R^1\nu_*\clO_\sC) = \chi(\clO_\sS),
    \end{align} 
  where $R\nu_*$ denotes the derived pushforward of coherent sheaves under the proper morphism $\nu$. The last equality in \eqref{eqn:pinching-genus-preserved} comes from condition \eqref{cond:genus-conserved} by adding up the $H^0$ contributions at $s_1,\ldots,s_r$ of the $0$-dimensional sheaves $\nu_*\clO_\sC/\clO_\sS$ and $R^1\nu_*\clO_\sC$. It follows that the curves $\sC$ and $\sS$ have the same arithmetic genus. (See Figure \ref{fig: pinching} for an example.)
\end{remark}

\begin{Example}[Genus $0$ pinching]\label{exa:genus-0-pinching}
    Following the notation of Definition \ref{def:pinching}, fix $1\le i\le r$. Denote the closure of $\sC\setminus C_i$ by $\Sigma_i\subset\sC$, and let $q_1,\ldots,q_n$ be an enumeration of $C_i\cap\Sigma_i$. Both $C_i$ and $\Sigma_i$ are non-singular at $q_j$ for $1\le j\le n$. We determine the structure of $\sS$ at $s_i$ when $H^1(C_i,\clO_{C_i}) = 0$, assuming that $q_1,\ldots,q_n$ lie in pairwise distinct irreducible components of $\Sigma_i$. Note that, since $H^1(C_i,\clO_{C_i}) = 0$, the curve $C_i$ is a tree of $\bP^1$ components.

    Restriction of regular functions gives an injective ring map 
    \begin{align*}
        (\nu_*\clO_\sC)_{s_i}\to\prod_{1\le j\le n}\clO_{\Sigma_i,q_j}.
    \end{align*}
    Its image is the sub-ring $\prod_{1\le i\le n}'\clO_{\Sigma_i,p_j}$ consisting of tuples $(h_j)_{1\le j\le n}$ such that the value $h_j(q_j)\in k$ is independent of $j$. Choosing local coordinates $x_j\in \clO_{\Sigma_i,q_j}$, this shows that $(\nu_*\clO_\sC)_{s_i}$ is a local $k$-algebra whose formal completion is identified with
    \begin{align*}
        k[[x_1,\ldots,x_n]]/(x_jx_{j'}\,:\,1\le j<j'\le n).
    \end{align*}
    From condition \eqref{cond:genus-conserved} of Definition \ref{def:pinching}, we find that the inclusion $\clO_{\sS,s_i} \subset (\nu_*\clO_\sC)_{s_i}$ is an equality. Thus, the pinched curve $\sS$, \'etale locally at $s_i$, is given by the union of the coordinate axes in $\bA^n = \Spec k[x_1,\ldots,x_n]$.
\begin{figure}[ht]
 \subcaptionbox*{}[.31\linewidth]{%
    \includegraphics[width=\linewidth]{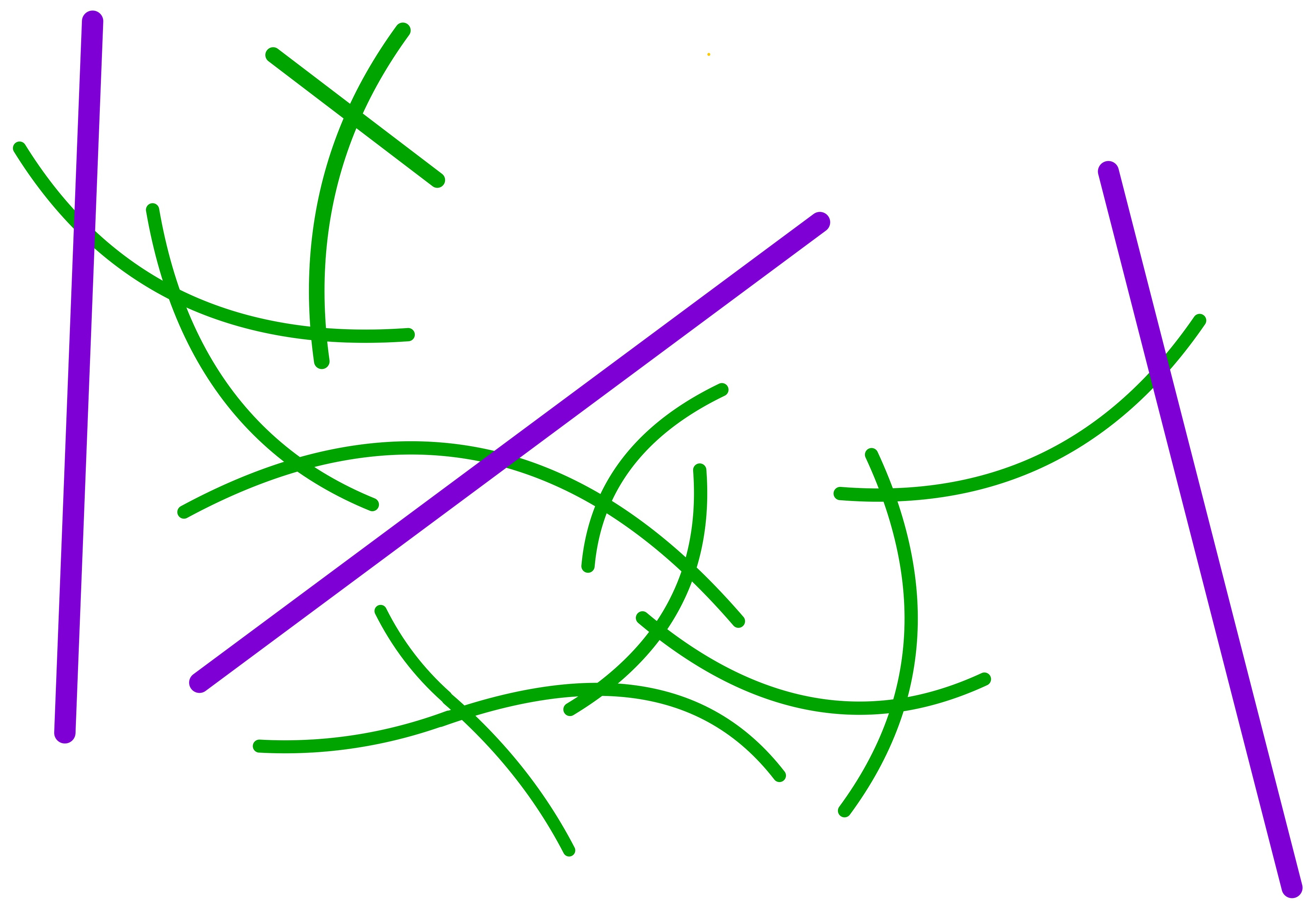}%
  }%
  \hskip15ex
  \subcaptionbox*{}[.24\linewidth]{%
    \includegraphics[width=\linewidth]{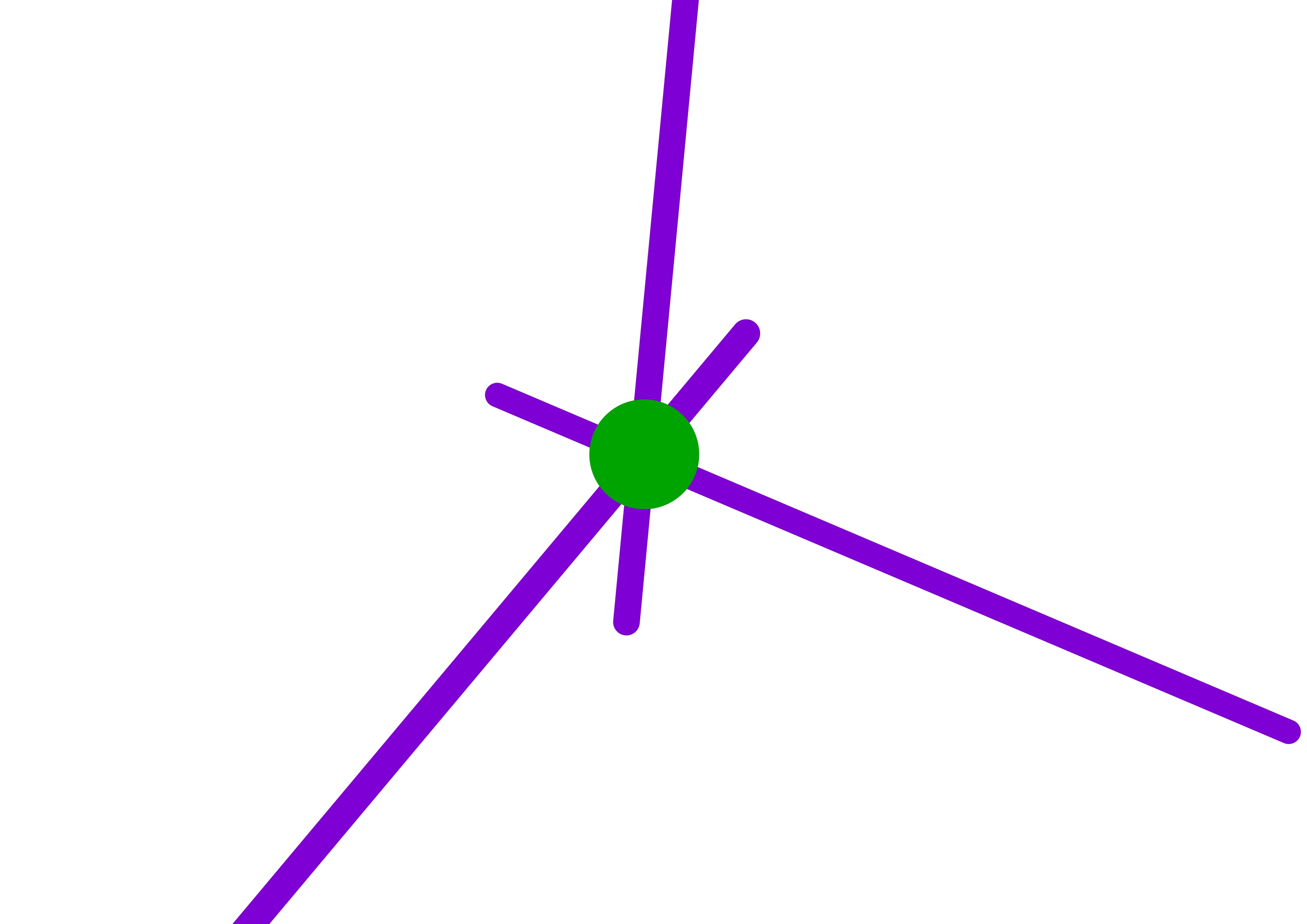}%
  }
  \caption{Depiction of $\sC$ (left) and $\sS$ (right), from Example \ref{exa:genus-0-pinching}, in the case $n=3$. The curve $\Sigma_i$ and its image in $\sS$ are in purple, while the curve $C_i$ and its image $s_i$ are in green. Note that $\sC$ can be embedded into a non-singular surface while $\sS$ cannot (its embedding dimension at $s_i$ is $3$).}
  \label{fig: pinched/un-pinched}
\end{figure}
\end{Example}

%%%%%%%%%%%%%%%%%%%%%%%%%%%%%%%%%%%%%%%%%%%%%%%%%%%%%%%%%%%%%%
\subsection{Local smoothability of a pinching}\label{subsec:smoothability-of-pinching}

Fix a pinching $\nu:\sC\to\sS$. We continue with the notation in Definition \ref{def:pinching}. In particular, the points $s_1,\ldots,s_r\in\sS$ are the respective images of $C_1,\ldots,C_r$ under $\nu$.

\begin{notation}\label{not:zariski-nbd-of-pinched-point}
    For $1\le i\le r$, let $U_i\subset\sS$ be an affine open neighborhood of $s_i$ such that $U_i\setminus\{s_i\}$ is non-singular. Define $\sC_{U_i} := \nu^{-1}(U_i)$. Let $\tilde U_i$ denote the closure of $\sC_{U_i}\setminus C_i$ inside $\sC_{U_i}$. Then, $\nu|_{\tilde U_i}:\tilde U_i\to U_i$ is seen to be the normalization of $U_i$.
\end{notation}

We would like to consider smoothings of the curves $\sC_{U_i}$ and $U_i$, which are compatible (in a sense to be made precise in Definition \ref{def:locally-smoothable-pinching}). We will see in Theorem \ref{thm:local-criterion-for-smoothability} that the existence of such compatible smoothings is related to eventual smoothability for stable maps.

\begin{remark}
    For technical reasons, it will be useful to carry out the discussion after replacing $U_i$ by an analytically isomorphic curve singularity. With this in mind, we introduce the following notation.
\end{remark}

\begin{notation}\label{not:etale-nbd-of-pinched-point}
    For $1\le i\le r$, let $V_i$ be a reduced affine curve with a closed point $s'_i$ such that $V_i\setminus\{s_i'\}$ is non-singular. Let $\varphi_i:U_i\to V_i$ be an \'etale morphism with $\varphi_i^{-1}(s_i') = \{s_i\}$. Let $\eta_i: \tilde V_i\to V_i$ be the normalization of $V_i$. Then, the \'etale morphism $\tilde\varphi_i:\tilde U_i\to\tilde V_i$ (induced by $\varphi_i$) restricts to a bijection 
    \begin{align}\label{eqn:etale-normalization-bijection}
        C_i\cap\tilde U_i = (\nu|_{\tilde U_i})^{-1}(s_i)\xrightarrow{\simeq}\eta^{-1}_i(s'_i).
    \end{align}
    Define the curve $\sC_{U_i,\varphi_i}$, with only ordinary nodes as singularities, as follows. Take the disjoint union $\tilde V_i\sqcup C_i$, and glue each point of $\eta_i^{-1}(s_i')\subset\tilde V_i$ to the corresponding point of $C_i\cap\tilde U_i\subset C_i$, under the bijection given by \eqref{eqn:etale-normalization-bijection}, to obtain $\sC_{U_i,\varphi_i}$ (see Figure \ref{fig:etale-local-pinching}). There is a natural morphism
    \begin{align}\label{eqn:etale-transfer-pinching}
        \nu_{U_i,\varphi_i}:\sC_{U_i,\varphi_i}\to V_i    
    \end{align}
    defined by $\eta_i$ on $\tilde V_i\subset \sC_{U_i,\varphi_i}$, and the constant morphism $s'_i$ on $C_i\subset \sC_{U_i,\varphi_i}$.
    
    There is an \'etale morphism $\sC_{U_i}\to\sC_{U_i,\varphi_i}$ given by combining the identity morphism $C_i\to C_i$ and $\tilde\varphi_i:\tilde U_i\to\tilde V_i$. By a minor abuse of notation, we denote this also by 
    \begin{align*}
        \tilde\varphi_i:\sC_{U_i}\to\sC_{U_i,\varphi_i}.
    \end{align*}
\end{notation}

\begin{remark}[\'Etale local version of pinching]\label{rem:etale-base-change-pinching}
    We have the following commutative fibre product diagram relating the various morphisms introduced in Notation \ref{not:etale-nbd-of-pinched-point}.
    \begin{equation}\label{eqn:fibre-product-pinching}
        \begin{tikzcd}
            \sC_{U_i} \arrow[r,"\tilde\varphi_i"] \arrow[d, "\nu"] & \sC_{U_i,\varphi_i} \arrow[d, "\nu_{U_i,\varphi_i}"] \\
            U_i\arrow[r, "\varphi_i"] & V_i
        \end{tikzcd}
    \end{equation}

         \begin{figure}[ht]
    \centering
\includegraphics[width=11cm]{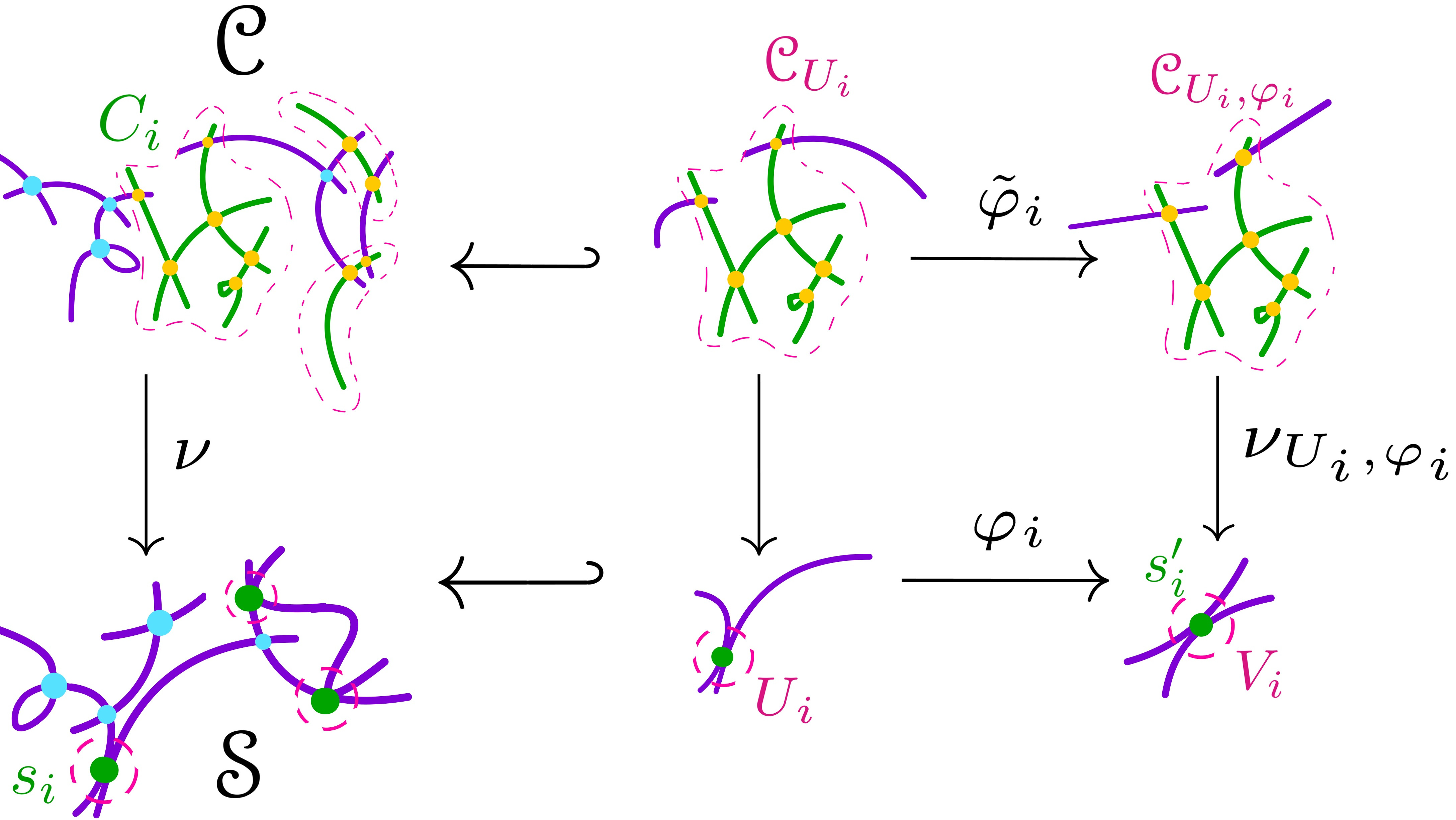}
    \caption{Depiction of the objects appearing in Remark \ref{rem:etale-base-change-pinching}. In the rightmost column, corresponding to $\nu_{U_i,\varphi_i}$, $C_i$ (top) and $s_i'$ (bottom) are shown in green while $\tilde V_i$ (top) and $V_i$ (bottom) are shown in purple.}
    \label{fig:etale-local-pinching}
\end{figure}
\end{remark}

We now introduce a notion of smoothability for a pinching.

\begin{Definition}[Local smoothability of pinching]\label{def:locally-smoothable-pinching}
    Fix $1\le i\le r$. We say that the pinching $\nu:\sC\to\sS$ is \emph{smoothable near $s_i$}, if there exist an affine open neighborhood $U_i$ of $s_i$, and an \'etale morphism $\varphi_i:(U_i,s_i)\to (V_i,s_i')$ such that the following holds, where we use Notations \ref{not:zariski-nbd-of-pinched-point} and \ref{not:etale-nbd-of-pinched-point} established above.

   First, we have the following data:
    
    \begin{enumerate}[\normalfont(i)]
        \item a non-singular affine curve $B_i$ with a closed point $0\in B_i$,
        \item a flat family $\clX_i\to B_i$ with fibre $\sC_{U_i,\varphi_i}$ over $0\in B_i$,
        \item a flat family $\clY_i\to B_i$ with fibre $V_i$ over $0\in B_i$, and
        \item  a proper morphism $\psi_i:\clX_i\to\clY_i$ of $B_i$-schemes, restricting to $\nu_{U_i,\varphi_i}:\sC_{U_i,\varphi_i}\to V_i$ over $0\in B_i$.
    \end{enumerate}

    Second, we require the following compatibility condition for this data. The morphism $\clY_i\to B_i$ is smooth over $B_i\setminus\{0\}$, and $\psi_i$ induces an isomorphism 
    \begin{align}\label{eqn:local-smoothable-punctured-iso}
        \clX_i\setminus C_i\xrightarrow{\simeq}\clY_i\setminus\{s_i'\}.
    \end{align}
    If this condition holds for each $1\le i\le r$, then we say that the pinching $\nu:\sC\to\sS$ is \emph{locally smoothable}.
\end{Definition}

\begin{remark}
   Intuitively, the compatibility condition says that if we smooth $V_i$ using $\clY_i/B_i$ then, by semistable reduction, we can convert this into the smoothing $\clX_i/B_i$ of $\sC_{U_i,\varphi_i}$ (with the curve $C_i$ appearing in place of the point $s_i'$). From this point of view, condition \eqref{cond:genus-conserved} of Definition \ref{def:pinching} intuitively says that $s_i'\in V_i$ provides the same local contribution to the arithmetic genus as $C_i\subset\sC_{U_i,\varphi_i}$.
\end{remark}

The following definition is the analogue of Definition \ref{def:locally-smoothable-pinching} from the perspective of formal deformations.

\begin{Definition}[Local formal smoothability of pinching]\label{def:locally-formally-smoothable-pinching}
    Fix $1\le i\le r$. We say that the pinching $\nu:\sC\to\sS$ is \emph{formally smoothable near $s_i$}, if there exist an affine open neighborhood $U_i$ of $s_i$, and an \'etale morphism $\varphi_i:(U_i,s_i)\to (V_i,s_i')$ such that the following holds, where we use Notations \ref{not:zariski-nbd-of-pinched-point} and \ref{not:etale-nbd-of-pinched-point} established above.

    First, we have the following data:
    \begin{enumerate}[\normalfont(i)]
        \item a formal deformation $\{\clX_{i,n}\to\bD_n\}_{n\ge 0}$ of $\sC_{U_i,\varphi_i}$,
        \item a formal deformation $\{\clY_{i,n}\to\bD_n\}_{n\ge 0}$ of $V_i$, and
        \item a compatible collection of $\bD_n$-morphisms $\psi_{i,n}:\clX_{i,n}\to\clY_{i,n}$ for $n\ge 0$, restricting to $\nu_{U_i,\varphi_i}:\sC_{U_i,\varphi_i}\to V_i$ for $n=0$.
    \end{enumerate}
    Second, we require the following compatibility condition for this data. The formal deformation $\{\clX_{i,n}\to\bD_n\}_{n\ge 0}$ induces a non-trivial deformation of each node of $\sC_{U_i,\varphi_i}$, and $\psi_{i,n}$ induces an isomorphism
    \begin{align}\label{eqn:local-formal-smoothable-punctured-iso}
        \clX_{i,n}\setminus C_i\to\clY_{i,n}\setminus\{s_i'\}
    \end{align}
    for $n\ge 0$. If this condition holds for each $1\le i\le r$, then we say that the pinching $\nu:\sC\to\sS$ is \emph{locally formally smoothable}.
\end{Definition}

The next lemma gives the relation between Definitions \ref{def:locally-smoothable-pinching} and \ref{def:locally-formally-smoothable-pinching}, which is also shown in Diagram \eqref{eqn:big-diagram}.

\begin{Lemma}\label{lem:actual-implies-formal-for-pinching}
    Assume we are in the situation of Definition \ref{def:locally-smoothable-pinching}. If the pinching $\nu:\sC\to\sS$ is smoothable near $s_i$, then it is formally smoothable near $s_i$.
\end{Lemma}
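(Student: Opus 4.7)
The plan is to pull back the given algebraic smoothings along the morphism $\Spec k[[t]] \to B_i$ induced by formal completion at $0$. Since $B_i$ is a non-singular affine curve, the completion $\clO_{B_i,0}^\wedge$ is isomorphic to $k[[t]]$; fixing such an isomorphism yields compatible morphisms $\iota_n : \bD_n \to B_i$ for $n \ge 0$. Keeping the same \'etale morphism $\varphi_i$ witnessing local smoothability, I would define
\begin{align*}
\clX_{i,n} := \clX_i \times_{B_i} \bD_n, \quad \clY_{i,n} := \clY_i \times_{B_i} \bD_n, \quad \psi_{i,n} := \psi_i \times_{B_i} \bD_n.
\end{align*}
Flatness of $\clX_{i,n}$ and $\clY_{i,n}$ over $\bD_n$ is inherited from flatness of $\clX_i, \clY_i$ over $B_i$; the fibres over $\bD_0 = \Spec k$ recover $\sC_{U_i,\varphi_i}, V_i$ together with $\psi_{i,0} = \nu_{U_i,\varphi_i}$; and the required isomorphism \eqref{eqn:local-formal-smoothable-punctured-iso} arises by base-changing \eqref{eqn:local-smoothable-punctured-iso} along $\iota_n$.

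The main obstacle is to verify that the resulting formal deformation non-trivially deforms each node $p$ of $\sC_{U_i,\varphi_i}$. Using Example \ref{exa:versal-def-of-node} together with the vanishing of $T^2$ at a node (which upgrades the versal family to a universal one), the completion $\widehat{\clO_{\clX_i, p}}$ can be identified with $k[[t,x,y]]/(xy - f)$ for a uniquely determined $f \in (t) \subset k[[t]] = \clO_{B_i,0}^\wedge$. A direct tensor product computation shows that the formal completion at $p$ of $\clX_{i,n}$ equals $(k[t]/(t^{n+1}))[[x,y]]/(xy - \bar f)$, where $\bar f := f \bmod t^{n+1}$; this is the trivial formal deformation precisely when $\bar f = 0$ (here one uses that the $k^\times$-automorphisms $(x,y) \mapsto (\alpha x, \alpha^{-1}y)$ of the node fix $xy$, and hence act trivially on the deformation parameter). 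Since Zariski triviality over $\bD_n$ of the deformation of any open neighborhood of $p$ implies formal triviality at $p$, non-triviality for some $n \ge 1$ reduces to the claim $f \ne 0$.

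To conclude $f \ne 0$, I would first observe that $\clX_i \to B_i$ is smooth over $B_i \setminus \{0\}$: the isomorphism $\clX_i \setminus C_i \simeq \clY_i \setminus \{s_i'\}$ over $B_i$, combined with the facts that $C_i$ and $\{s_i'\}$ lie entirely in the respective central fibres and that $\clY_i \to B_i$ is smooth away from $0$, delivers the desired smoothness. Supposing for contradiction that $f = 0$, the completion $\widehat{\clO_{\clX_i, p}} \cong k[[t,x,y]]/(xy)$ has a $1$-dimensional singular locus, namely the $t$-axis $\{x=y=0\}$, which projects isomorphically onto $\Spec k[[t]]$. By faithful flatness of $\clO_{B_i,0} \to \clO_{B_i,0}^\wedge$, the image in $B_i$ of the non-smooth locus of $\clX_i \to B_i$ near $p$ must contain points of $B_i \setminus \{0\}$ arbitrarily close to $0$, contradicting the smoothness established above. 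Running this argument for each $1 \le i \le r$ completes the proof.
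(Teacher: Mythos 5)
Your proposal is correct and follows essentially the same route as the paper: base change of $\psi_i:\clX_i\to\clY_i$ along $\bD_n\subset\Spec\clO_{B_i,0}^\wedge$, the punctured isomorphisms \eqref{eqn:local-formal-smoothable-punctured-iso} obtained by base change from \eqref{eqn:local-smoothable-punctured-iso}, and non-triviality at the nodes deduced from smoothness of $\clX_i\to B_i$ over $B_i\setminus\{0\}$, which in turn comes from the identification of $\clX_i\setminus C_i$ with $\clY_i\setminus\{s_i'\}$. The paper simply asserts that this smoothness suffices, whereas you spell out the local model $xy=f$ at each node; the only (harmless) imprecision is that $f$ is determined only up to a unit, since the node's deformation is versal rather than universal, but this does not affect the criterion $f\ne 0$.
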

\begin{proof}
    Fix choices of $\varphi_i:(U_i,s_i)\to(V_i,s_i')$, $0\in B_i$, and $\psi_i:\clX_i\to\clY_i$ over $B_i$ (as in Definition \ref{def:locally-smoothable-pinching}). Fix an identification $\clO_{B_i,0}^\wedge = k[[t]]$, by choosing a local coordinate in $\clO_{B_i,0}$. This gives $\bD = \Spec k[[t]] = \Spec \clO_{B_i,0}^\wedge$, and $\bD_n = \Spec k[t]/(t^{n+1})\subset\bD$.

    For $n \geq 0$, the inclusions $\bD_n\subset\bD = \Spec\clO_{B_i,0}^\wedge$ define the collection of flat families
    \begin{align*}
        \clY_{i,n} &:= \clY_i\times_{B_i}\bD_n\to\bD_n, \\
        \clX_{i,n} &:= \clX_i\times_{B_i}\bD_n\to\bD_n,
    \end{align*}
    and morphisms 
    \begin{align*}
        \psi_{i,n} := \psi_i\times_{B_i}\bD_n : \clX_{i,n}\to \clY_{i,n}.
    \end{align*} 
    For $n=0$, $\psi_{i,n}$ reduces to the morphism $\nu_{U_i,\varphi_i}:\sC_{U_i,\varphi_i}\to V_i$ from \eqref{eqn:etale-transfer-pinching}. The relation between these families is summarized in the following commutative diagram.

\begin{equation}\label{eqn:big-diagram}
\begin{tikzcd}
    [row sep=scriptsize, column sep=scriptsize]
    & V_i\arrow[dd] \arrow[rr,hook]  & & \clY_{i,1} \arrow[dd]  \arrow[r,hook]&\cdots \arrow[r,hook]  &  \clY_{i,n}\arrow[dd] \arrow[r,hook]&\cdots \arrow[r,hook]  & \clY_{i}  \arrow[dd] \\
    \sC_{U_i,\varphi_i} \arrow[ur,"\nu_{U_i,\varphi_i}"]\arrow[rr, crossing over,hook]  \arrow[dd]  & & \clX_{i,1} \arrow[ur,"\psi_{i,1}"]\arrow[r,hook]&\cdots \arrow[r,hook] &\clX_{i,n}  \arrow[ur,"\psi_{i,n}"] \arrow[dd] \arrow[r,hook]&\cdots \arrow[r,hook] &\clX_{i} \arrow[dd]\arrow[ur,"\psi_{i}"] \\
    &\{0\}   \arrow[rr,hook] & & \bD_1\arrow[r,hook]&\cdots \arrow[r,hook]  & \bD_{n}\arrow[r,hook]&\cdots \arrow[r,hook] &B_i\\
   \{0\}  \arrow[ur,equal] \arrow[rr,hook] & & \bD_1  \arrow[ur,equal] \arrow[from=uu, crossing over]\arrow[r,hook]&\cdots \arrow[r,hook]& \bD_{n}\arrow[ur,equal]\arrow[r,hook]&\cdots \arrow[r,hook]  & B_i\arrow[ur,equal]\\
\end{tikzcd}
\end{equation}

    The isomorphisms \eqref{eqn:local-formal-smoothable-punctured-iso} follow from \eqref{eqn:local-smoothable-punctured-iso}. To show that $\{\clX_{i,n}\to\bD_n\}_{n\ge 0}$ non-trivially deforms every node of $\sC_{U_i,\varphi_i}$, it suffices to check that $\clX_i\to B_i$ is smooth over $B_i\setminus\{0\}$. Definition \ref{def:locally-smoothable-pinching} says that $\clY_i\to B_i$ is smooth over $B_i\setminus\{0\}$, and that $\psi_i$ identifies $\clX_i|_{B_i\setminus\{0\}}$ with $\clY_i|_{B_i\setminus\{0\}}$ and thus, we are done.
\end{proof}

%%%%%%%%%%%%%%%%%%%%%%%%%%%%%%%%%%%%%%%%%%%%%%%%%%%%%%%%%%%%%%%
\subsection{Behavior near ghost components}\label{subsec:behavior-near-ghost-component}

Let $f:\sC\to X$ be a non-constant stable map with the target $X$ being a projective variety.

\begin{Definition}[Ghost]\label{def:ghost-components}
    The \emph{ghost sub-curve} of $(\sC,f)$, denoted by $C$, is defined to be the union of those irreducible components of $\sC$ on which $f$ restricts to a constant morphism. Each connected component of $C$ is called a \emph{ghost component} of $(\sC,f)$, or  simply a \emph{ghost}, when $(\sC,f)$ is clear from context.
\end{Definition}

We now introduce another notion of smoothability for stable maps, whose relation to eventual smoothability will be established in the next subsection.

\begin{Definition}[Local formal smoothability of stable map]\label{def:local-smoothability-stable-map}
    The non-constant stable map $f:\sC\to X$ is called \emph{locally formally smoothable} if the morphism $f$ factors through some locally formally smoothable pinching $\nu:\sC\to\sS$ of $\sC$ at its ghost sub-curve.
\end{Definition}

\begin{remark}\label{rem:no-ghost-vacuous}
    Note that the condition of local formal smoothability of a stable map is vacuously satisfied when there are no ghosts.
\end{remark}

%%%%%%%%%%%%%%%%%%%%%%%%%%%%%%%%%%%%%%%%%%%%%%%%%%%%%%%%%%%
\subsection{Local criterion for eventual smoothability}\label{subsec:local-criterion-eventual-smoothability}

The following result, stated as Theorem \ref{thm:intro-local-criterion} in \textsection\ref{subsec:summary-of-results}, explains the relation between Definition \ref{def:local-smoothability-stable-map} and eventual smoothability (Definition \ref{def:eventual-smoothability}), and is an application of the local-to-global principle stated in \textsection\ref{subsec:local-to-global}.

\begin{Theorem}[Local criterion for eventual smoothability]\label{thm:local-criterion-for-smoothability}
    If the non-constant stable map $f:\sC\to X$ is locally formally smoothable, then it is eventually smoothable.
\end{Theorem}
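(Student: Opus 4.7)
The plan is to use the pinched curve $\sS$ as an intermediary: prescribe local formal deformations of $\sS$ coming from the hypothesis, globalize them using Corollary \ref{cor:special-case-of-local-to-global} after choosing a sufficiently positive projective embedding, and finally patch these with the local deformations of $\sC$ (supplied by the morphisms $\psi_{i,n}$) to obtain a formal smoothing of the stable map $\sC\to X\hookrightarrow\bP^N$. By Definition \ref{def:local-smoothability-stable-map}, factor $f = \bar{f}\circ\nu$ where $\nu:\sC\to\sS$ is a locally formally smoothable pinching, with data $\psi_{i,n}:\clX_{i,n}\to\clY_{i,n}$ and \'etale charts $\varphi_i:U_i\to V_i$ for each ghost index $i$. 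Since $\nu$ contracts every ghost, $\bar{f}$ is non-constant on every irreducible component of $\sS$, so $\bar{f}^*\clA$ is ample on the reduced curve $\sS$ for any ample $\clA$ on $X$. Fix an ample $\clA_0$ on $X$ and choose (by Serre vanishing on $\sS$) an integer $m\gg 0$ so that $H^1(\sS,\bar{f}^*\clA_0^{\otimes m})=0$ and $\clA_0^{\otimes m}$ gives an embedding $X\hookrightarrow\bP^s$. Independently pick a very ample line bundle $\clL$ on $\sS$ with $H^1(\sS,\clL)=0$, giving $\sS\hookrightarrow\bP^r$; the product $\sS\hookrightarrow\bP^r\times\bP^s$ is then a closed immersion satisfying the positivity hypothesis \eqref{eqn:local-to-global-positivity}.

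Set $U_0 = \bigsqcup_j U_0^{(j)}$, a disjoint union of affine neighborhoods of the singular points of $\sS$: the points $s_i$ (for which $U_0^{(i)} := U_i$) together with the nodes of $\sC$ lying away from $C$. At each $s_i$, apply Theorem \ref{thm:top-inv-et-top} to the \'etale $\varphi_i$ to transfer the formal deformation $\clY_{i,n}$ of $V_i$ into a formal deformation $U_{i,n}$ of $U_i$ (and simultaneously an \'etale lift $U_{i,n}\to\clY_{i,n}$); at each remaining node, prescribe the standard formal smoothing (Example \ref{exa:versal-def-of-node}). These assemble into a compatible collection in $\Def_{U_0}(\bD_n)$. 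By Corollary \ref{cor:special-case-of-local-to-global}, the restriction $\Hilb_{(\sS,\bP^r\times\bP^s)}\to\Def_{U_0}$ is strongly surjective, so lifting inductively in $n$ yields a compatible sequence of formal embedded deformations $\sS_n\subset\bP^r\times\bP^s\times\bD_n$ of $\sS$ whose restriction to each $U_0^{(j)}$ is the prescribed local one.

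Now define $\sC_n\to\bD_n$ by gluing: on $\sC\setminus C\simeq\sS\setminus\{s_1,\ldots,s_r\}$ take the corresponding open piece of $\sS_n$, and near each ghost take $\sC_{U_i,n}:=\clX_{i,n}\times_{\clY_{i,n}}U_{i,n}$, using the \'etale lift of $\varphi_i$ above. The two local pictures agree over $U_i\setminus\{s_i\}$ because \eqref{eqn:local-formal-smoothable-punctured-iso} makes $\psi_{i,n}$ an isomorphism away from $C_i$, so the fibre product there collapses to $\sS_n|_{U_i\setminus\{s_i\}}$; thus $\sC_n$ is a well-defined flat prestable deformation of $\sC$ equipped with a morphism $\nu_n:\sC_n\to\sS_n$ lifting $\nu$, and composing with $\sS_n\subset\bP^r\times\bP^s\times\bD_n\to\bP^s\times\bD_n$ produces a morphism $f_n:\sC_n\to\bP^s$ extending $\sC\xrightarrow{f}X\hookrightarrow\bP^s$. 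Every node of $\sC$ is non-trivially deformed: those incident to $C$ by the hypothesis on $\clX_{i,n}$ in Definition \ref{def:locally-formally-smoothable-pinching}, and the remainder by the standard smoothings prescribed above. Hence $\{(\sC_n,f_n)\}_{n\ge 0}$ is a formal smoothing of the stable map $\sC\to X\hookrightarrow\bP^s$ and Lemma \ref{lem:formal-implies-actual} yields eventual smoothability. The principal difficulty is the positivity step: guaranteeing the $H^1$-vanishing on $\sS$ required by Corollary \ref{cor:special-case-of-local-to-global} forces us to use both the freedom to enlarge the embedding of $X$ (replacing $\clA_0$ by a high power) and the introduction of the auxiliary factor $\bP^r$ to bypass the possible non-injectivity of $\bar{f}:\sS\to X$; this is precisely where the liberty permitted by the definition of eventual smoothability is essentially used.
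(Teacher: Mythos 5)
Your proposal is correct and follows essentially the same route as the paper's proof: transfer the local formal data along the \'etale charts via topological invariance of the \'etale site (Theorem \ref{thm:top-inv-et-top}), globalize with Corollary \ref{cor:special-case-of-local-to-global} using a closed embedding of $\sS$ into a product of projective spaces chosen so that both $\clO(1)$-restrictions have vanishing $H^1$, glue the fibre products $\clX_{i,n}\times_{\clY_{i,n}}\widetilde\clY_{i,n}$ into the global family to get $\nu_n:\sC_n\to\sS_n$, and conclude via Lemma \ref{lem:formal-implies-actual}. The only, immaterial, difference is that the paper builds both factors of the product embedding from the single line bundle $g^*\clO_{\bP^N}(1)$ after a $d$-uple re-embedding, whereas you use an independent auxiliary very ample bundle on $\sS$ for the first factor and a high power of an ample bundle on $X$ for the second.
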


The remainder of this section is devoted to the proof of this theorem. To clarify the argument, we will divide it into five parts.

%%%%%%%%%%%%%%%%%%%%%%%%%%%%%%%%%%%%%%%%%%%%%%%%%%%%%%%%%%%%%%%%%%%
\subsubsection{Set up for the proof}\label{subsubsec:local-criterion-proof-1}

We fix the following data, which will be used in the proof.

\begin{enumerate}[\normalfont(i)]
    \item The ghost sub-curve $C=\bigsqcup_{i=1}^{r}C_i$  of $\sC$, where $C_i$ are its connected components.
    \item A locally formally smoothable pinching, $\nu:\sC\to\sS$, of $\sC$ at $C$, and a factorization $f = g\circ\nu$. Denote the image of $C_i$ under $\nu$ by $s_i\in\sS$, for $1\le i\le r$.
    \begin{equation*}
        \begin{tikzcd}
         C_i \arrow[d,mapsto] &     \sC \arrow[d,"\nu"] \arrow[dr,"f"] & \\
           s_i& \sS \arrow[r,"g"] & X
        \end{tikzcd}
    \end{equation*}

    \item For $1\le i\le r$, choices of $\varphi_i:(U_i,s_i)\to(V_i,s_i')$, and $\{\psi_{i,n}:\clX_{i,n}\to\clY_{i,n}\}_{n\ge 0}$, as in Definition \ref{def:locally-formally-smoothable-pinching}.
    \item An enumeration $p_1,\ldots,p_m$ of the nodes of $\sC\setminus C = \sS\setminus\{s_1,\ldots,s_r\}$.
    \item For $1\le j\le m$, an affine open neighborhood $U_{r+j}\subset\sC\setminus C$ of $p_j$, with $U_{r+j}\setminus\{p_j\}$ being non-singular. 
\end{enumerate}

We will refer to the nodes of $\sC$ that are contained in $C$ as \emph{internal} nodes. Similarly, we refer to the nodes of $\sC\setminus C$ as \emph{external} nodes. See Figure \ref{fig: setup}.

    \begin{figure}[ht]
    \centering
\includegraphics[width=8.5cm]{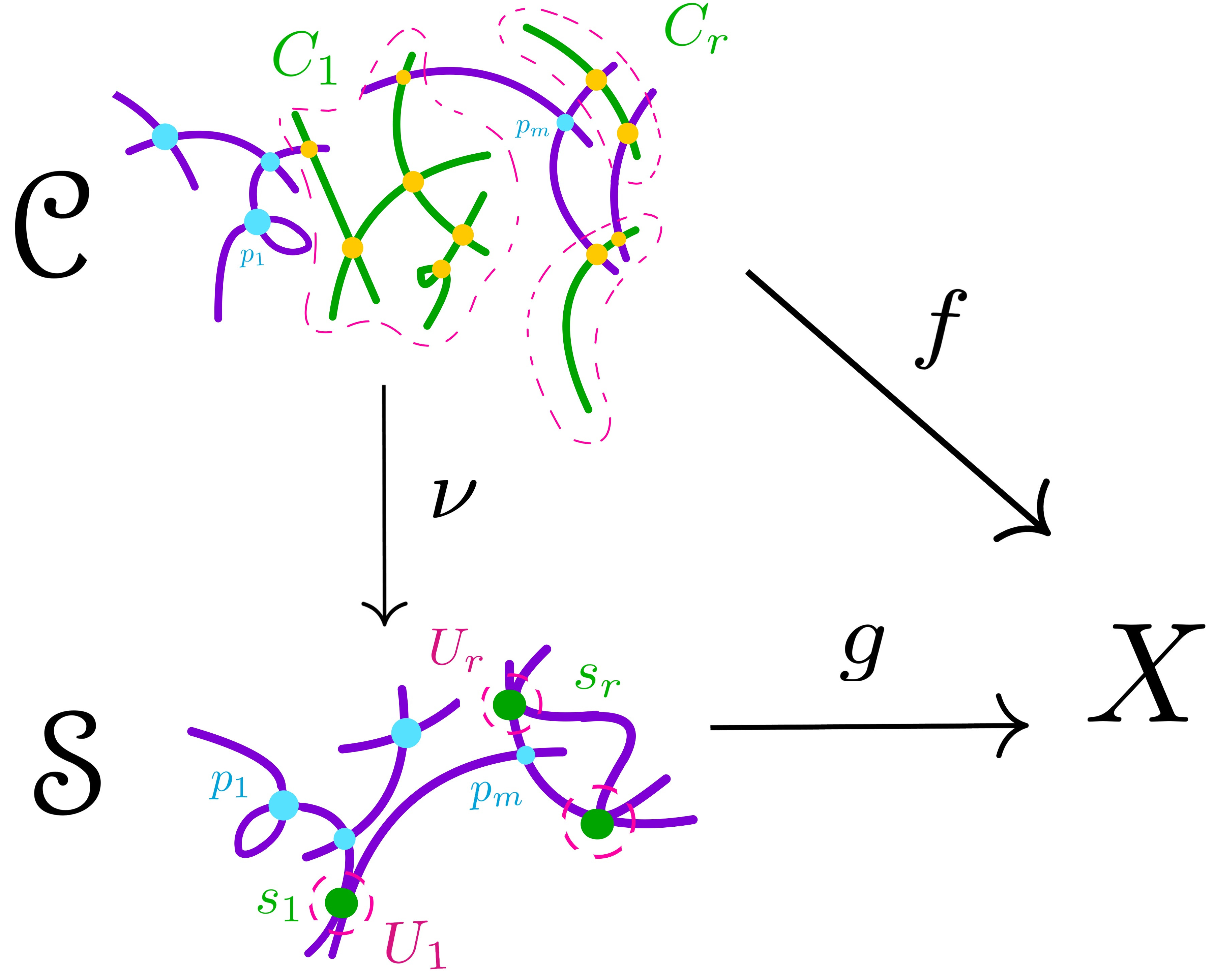}
    \caption{Depiction of the set up for the proof of Theorem \ref{thm:local-criterion-for-smoothability}. The ghost sub-curve $C$ is shown in green, while $\sC\setminus C = \sS\setminus\{s_1,\ldots,s_r\}$ is shown in purple. Internal (resp. external) nodes are shown in yellow (resp. blue).}
    \label{fig: setup}
\end{figure}

%%%%%%%%%%%%%%%%%%%%%%%%%%%%%%%%%%%%%%%%%%%%%%%%%%%%%%%%%%%%%%%%%%%
\subsubsection{Local deformation at the internal nodes}\label{subsubsec:local-criterion-proof-2} 

Fix $1\le i\le r$.
Consider the \'etale morphism $\varphi_i:U_i\to V_i$, and note that $V_i\subset\clY_{i,n}$ is the inclusion of a closed subscheme defined by a nilpotent ideal, for each $n\ge 1$. Applying Theorem \ref{thm:top-inv-et-top} (topological invariance of \'etale topology), we obtain a sequence of \'etale morphisms
\begin{align*}
    \varphi_{i,n}:\widetilde\clY_{i,n}\to\clY_{i,n},
\end{align*}
for $n\ge 0$, which successively extend\footnote{Explicitly, this means that we have $U_i = \widetilde\clY_{i,0}\subset\cdots\subset\widetilde\clY_{i,n}\subset\widetilde\clY_{i,n+1}\subset\cdots$ and that the restriction of $\varphi_{i,n+1}$ to $\widetilde\clY_{i,n}$ is the morphism $\varphi_{i,n}$, for $n\ge 0$. Moreover, we have $\varphi_{i,0} = \varphi_i:U_i\to V_i$. We also use the phrase \emph{successive extension} to indicate analogous situations appearing later in this proof.} the \'etale morphism $\varphi_i:U_i\to V_i$. Being a composition of flat morphisms, $\widetilde\clY_{i,n}\to\clY_{i,n}\to\bD_n$ is also flat. In particular, $\{\widetilde\clY_{i,n}\to\bD_n\}_{n\ge 0}$ defines a formal deformation of $U_i$.
    
For each $n\ge 0$, define the scheme $\widetilde\clX_{i,n}$ (along with the morphisms $\widetilde\varphi_{i,n}$ and $\widetilde\psi_{i,n}$), by the following commutative fibre product diagram.
\begin{equation}\label{eqn:fibre-prod-for-etale-lift}
    \begin{tikzcd}
        \widetilde\clX_{i,n} \arrow[r,"\widetilde\varphi_{i,n}"] \arrow[d, "\widetilde\psi_{i,n}"] & \clX_{i,n} \arrow[d, "\psi_{i,n}"] \\
            \widetilde\clY_{i,n}\arrow[r, "\varphi_{i,n}"] & \clY_{i,n}
    \end{tikzcd}
\end{equation}
Since $\widetilde\varphi_{i,n}$ is obtained by base change from the \'etale morphism $\varphi_{i,n}$, it is \'etale. As above, the composition $\widetilde\clX_{i,n}\to\clX_{i,n}\to\bD_n$ is flat. Using Remark \ref{rem:etale-base-change-pinching}, $\{\widetilde\clX_{i,n}\to\bD_n\}_{n\ge 0}$ defines a formal deformation of $\sC_{U_i}$ and the morphisms $\widetilde\varphi_{i,n}$ successively extend the morphism $\tilde\varphi_i:\sC_{U_i}\to\sC_{U_i,\varphi_i}$. The morphisms $\widetilde\psi_{i,n}$ commute with the projections to $\bD_n$. Moreover, by Remark \ref{rem:etale-base-change-pinching}, the morphisms $\widetilde\psi_{i,n}$ successively extend $\nu:\sC_{U_i}\to U_i$ (the restriction of the pinching morphism $\nu:\sC\to\sS$ over $U_i$). We may summarize the argument so far by saying that Diagram \eqref{eqn:fibre-product-pinching} is successively extended by Diagrams \eqref{eqn:fibre-prod-for-etale-lift} for $n\ge 0$. This is also shown in Diagram \eqref{eqn:successive-extn-etale-lift} below.

\begin{equation}\label{eqn:successive-extn-etale-lift}
\begin{tikzcd}
    [row sep=scriptsize, column sep=scriptsize]
    & \sC_{U_i,\varphi_i}\arrow[dd, "\nu_{U_i,\varphi_i}" near start] \arrow[r,hook]&\cdots \arrow[r,hook]  & \clX_{i,n} \arrow[dd, crossing over, "\psi_{i,n}" near start] \arrow[r,hook] &\cdots\\
    \sC_{U_i} \arrow[ur,"\tilde\varphi_i"] \arrow[dd, "\nu" near start]  \arrow[r,hook]&\cdots \arrow[r,hook] & \widetilde\clX_{i,n} \arrow[ur,"\widetilde\varphi_{i,n}"] \arrow[r,hook] &\cdots\\
    & V_i \arrow[r,hook]&\cdots \arrow[r,hook]  & \clY_{i,n} \arrow[r,hook] &\cdots\\
    U_i \arrow[ur,"\varphi_i"] \arrow[r,hook]&\cdots \arrow[r,hook]  & \widetilde\clY_{i,n}  \arrow[ur,"\varphi_{i,n}"] \arrow[from=uu, "\widetilde\psi_{i,n}" near start, crossing over] \arrow[r,hook] &\cdots  \\
\end{tikzcd}
\end{equation}
To understand the statement of the next lemma, note that the scheme $\widetilde\clX_{i,n}$ (resp. $\widetilde\clY_{i,n}$) has the same underlying topological space as the reduced scheme $\sC_{U_i}$ (resp. $U_i$), but a different structure sheaf, for $n\ge 1$.
    
\begin{Lemma}\label{lem:etale-lifts-of-XY-birational}
    For each $n\ge 0$, the morphism $\widetilde\psi_{i,n}$ induces an isomorphism 
    \begin{align*}
        \widetilde\clX_{i,n}\setminus C_i\xrightarrow{\simeq}\widetilde\clY_{i,n}\setminus \{s_i\}.
    \end{align*}
\end{Lemma}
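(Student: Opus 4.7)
The plan is to exploit the fact that \eqref{eqn:fibre-prod-for-etale-lift} is a pullback square, so $\widetilde\psi_{i,n}$ is the base change of $\psi_{i,n}$ along $\varphi_{i,n}$. Since base change preserves isomorphisms, and since Definition \ref{def:locally-formally-smoothable-pinching} guarantees that $\psi_{i,n}$ restricts to an isomorphism $\clX_{i,n}\setminus C_i\xrightarrow{\simeq}\clY_{i,n}\setminus\{s_i'\}$, pulling back this isomorphism along $\varphi_{i,n}$ immediately yields an isomorphism
\[
    \widetilde\psi_{i,n}^{-1}\bigl(\varphi_{i,n}^{-1}(\clY_{i,n}\setminus\{s_i'\})\bigr)\xrightarrow{\simeq}\varphi_{i,n}^{-1}(\clY_{i,n}\setminus\{s_i'\}).
\]

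The remaining task is to identify these two open subschemes with $\widetilde\clX_{i,n}\setminus C_i$ and $\widetilde\clY_{i,n}\setminus\{s_i\}$ respectively. For this it suffices to check the identifications on underlying topological spaces, since on either side we are just removing a reduced closed subset. Nilpotent thickening does not change topological spaces, so $\widetilde\clY_{i,n}$ and $\clY_{i,n}$ share the underlying spaces of $U_i$ and $V_i$, while $\widetilde\clX_{i,n}$ and $\clX_{i,n}$ share those of $\sC_{U_i}$ and $\sC_{U_i,\varphi_i}$. Hence, topologically, $\varphi_{i,n}$ agrees with $\varphi_i$, and by Notation \ref{not:etale-nbd-of-pinched-point} we have $\varphi_i^{-1}(s_i')=\{s_i\}$, so $\varphi_{i,n}^{-1}(\clY_{i,n}\setminus\{s_i'\}) = \widetilde\clY_{i,n}\setminus\{s_i\}$. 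Similarly, $\widetilde\psi_{i,n}$ restricts topologically to $\nu|_{\sC_{U_i}}:\sC_{U_i}\to U_i$, and Definition \ref{def:pinching} gives $\nu^{-1}(s_i) = C_i$, so $\widetilde\psi_{i,n}^{-1}(\widetilde\clY_{i,n}\setminus\{s_i\}) = \widetilde\clX_{i,n}\setminus C_i$.

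Substituting these two identifications into the displayed base-change isomorphism produces the claimed isomorphism $\widetilde\clX_{i,n}\setminus C_i\xrightarrow{\simeq}\widetilde\clY_{i,n}\setminus\{s_i\}$. The only delicate point is the passage from the hypothesis on $\psi_{i,n}$ to the conclusion on $\widetilde\psi_{i,n}$, and this is handled entirely by the universal property of the fibre product in \eqref{eqn:fibre-prod-for-etale-lift}, together with the observation that nilpotent thickenings leave topological preimages unchanged; so no real obstacle is expected.
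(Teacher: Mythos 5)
Your argument is correct and is essentially the paper's own proof: both reduce to the observation that the fibre-product square \eqref{eqn:fibre-prod-for-etale-lift} restricts, over the complement of $s_i'$ (equivalently, after removing $C_i$ upstairs and $\{s_i\}$ in $\widetilde\clY_{i,n}$, identified topologically since nilpotent thickenings do not change the underlying space), to a fibre-product square whose right vertical arrow $\clX_{i,n}\setminus C_i\to\clY_{i,n}\setminus\{s_i'\}$ is an isomorphism by Definition \ref{def:locally-formally-smoothable-pinching}, so the left vertical arrow is an isomorphism by base change. No gap to report.
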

\begin{proof}
Observe that we have $\varphi_{i,n}^{-1}(s_i') = \{s_i\}\subset\widetilde\clY_{i,n}$ and $\widetilde\varphi_{i,n}^{-1}(C_i) = C_i\subset\widetilde\clX_{i,n}$. Therefore, the following is a commutative fibre product diagram.
    \begin{equation}\label{eqn:fibre-prod-for-etale-lift-punctured}
        \begin{tikzcd}
            \widetilde\clX_{i,n}\setminus C_i \arrow[r,"\widetilde\varphi_{i,n}"] \arrow[d, "\widetilde\psi_{i,n}"] & \clX_{i,n}\setminus C_i \arrow[d, "\psi_{i,n}"] \\
            \widetilde\clY_{i,n}\setminus\{s_i\}\arrow[r, "\varphi_{i,n}"] & \clY_{i,n}\setminus\{s_i'\}
        \end{tikzcd}
    \end{equation}
The right vertical arrow in \eqref{eqn:fibre-prod-for-etale-lift-punctured} is an isomorphism, by Definition \ref{def:locally-formally-smoothable-pinching}.
It follows that the left vertical arrow in \eqref{eqn:fibre-prod-for-etale-lift-punctured} is also an isomorphism.
\end{proof}

The next lemma will be used to verify condition \eqref{cond:genus-conserved} of Definition \ref{def:formal-smoothability-of-map}.

\begin{Lemma}\label{lem:nodes-deformed-before-gluing}
    The formal deformation $\{\widetilde\clX_{i,n}\to\bD_n\}_{n\ge 0}$ induces a non-trivial deformation of each node of $\sC_{U_i}$.
\end{Lemma}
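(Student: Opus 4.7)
The plan is to transfer the non-triviality hypothesis from Definition \ref{def:locally-formally-smoothable-pinching} (which guarantees that $\{\clX_{i,n}\to\bD_n\}_{n\ge 0}$ non-trivially deforms every node of $\sC_{U_i,\varphi_i}$) across the étale morphism $\widetilde\varphi_{i,n}:\widetilde\clX_{i,n}\to\clX_{i,n}$, by using that the ``smoothing parameter'' of a node is an invariant of the completed local ring over $\bD_n$.

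First I would catalog the nodes of $\sC_{U_i}$. Since $\nu$ restricts to an isomorphism $\sC_{U_i}\setminus C_i\xrightarrow{\simeq}U_i\setminus\{s_i\}$ and $U_i\setminus\{s_i\}$ is non-singular by Notation \ref{not:zariski-nbd-of-pinched-point}, every node of $\sC_{U_i}$ either lies inside $C_i$ or is a point of $C_i\cap\tilde U_i$. The same analysis applies to $\sC_{U_i,\varphi_i}=\tilde V_i\cup_{\eta_i^{-1}(s_i')}C_i$, and by construction of $\tilde\varphi_i$ (identity on $C_i$, and the étale map $\tilde\varphi_i:\tilde U_i\to\tilde V_i$ on the other component, which is a bijection on $C_i\cap\tilde U_i\leftrightarrow\eta_i^{-1}(s_i')$ by \eqref{eqn:etale-normalization-bijection}), the morphism $\tilde\varphi_i$ sends nodes bijectively to nodes.

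The crucial local step is then the following: for each node $q\in\sC_{U_i}$ with image $q':=\tilde\varphi_i(q)$, the étale morphism $\widetilde\varphi_{i,n}:\widetilde\clX_{i,n}\to\clX_{i,n}$ induces an isomorphism
\begin{equation*}
\widehat{\clO}_{\clX_{i,n},q'}\xrightarrow{\;\simeq\;}\widehat{\clO}_{\widetilde\clX_{i,n},q}
\end{equation*}
of complete local algebras over $\widehat{\clO}_{\bD_n}=k[t]/(t^{n+1})$. This is the standard fact that an étale morphism between schemes of finite type over $k$, evaluated at $k$-points, induces an isomorphism on completed local rings.

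Finally I would invoke the versal family for a node (Example \ref{exa:versal-def-of-node}): deformations of the node $k[x,y]/(xy)$ over $\bD_n$ are classified by the image $\tau\in(t)/(t^{n+1})$ of the smoothing parameter, with non-triviality equivalent to $\tau\ne 0$. Since $\tau$ is extracted from the completed local ring, it is preserved under the isomorphism above. By the local formal smoothability hypothesis there exists $n\ge 1$ for which $\tau_{q'}\ne 0$ in $\clX_{i,n}$; the isomorphism then forces $\tau_q\ne 0$ in $\widetilde\clX_{i,n}$, proving the lemma.

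The substantive content is the chain of identifications ``étale base change preserves the smoothing parameter of a node,'' which is conceptual rather than computational; once this is in place the rest is a straightforward matching of nodes across $\tilde\varphi_i$, so I do not anticipate a serious obstacle.
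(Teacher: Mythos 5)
Your proposal is correct and follows essentially the same route as the paper: transfer the non-triviality of the node deformations across the étale morphisms $\widetilde\varphi_{i,n}$, using the node bijection induced by $\tilde\varphi_i$ together with the hypothesis from Definition \ref{def:locally-formally-smoothable-pinching}. The paper's proof is just a terser version of this, leaving implicit the mechanism (étale maps identify completed local rings at the nodes, and non-triviality is detected there via the versal family of Example \ref{exa:versal-def-of-node}) that you spell out.
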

\begin{proof}
    The \'etale morphism $\tilde\varphi_i:\sC_{U_i}\to\sC_{U_i,\varphi_i}$ gives a bijection between the nodes of $\sC_{U_i}$ and $\sC_{U_i,\varphi_i}$. The $\bD_n$-morphisms $\widetilde\varphi_{i,n}:\widetilde\clX_{i,n}\to\clX_{i,n}$ are \'etale and successively extend $\tilde\varphi_i$. Thus, we are done once we recall that $\{\clX_{i,n}\to\bD_n\}_{n\ge 0}$ non-trivially deforms every node of $\sC_{U_i,\varphi_i}$ (by Definition \ref{def:locally-formally-smoothable-pinching}).
\end{proof}

%%%%%%%%%%%%%%%%%%%%%%%%%%%%%%%%%%%%%%%%%%%%%%%%%%%%%%%%%%%%%%%
\subsubsection{Local deformation at the external nodes}\label{subsubsec:local-criterion-proof-3}

Fix $1\le j\le m$, and let $U_{r+j}$ be the affine open neighborhood of the external node $p_j$ chosen in \textsection\ref{subsubsec:local-criterion-proof-1}. Fix a formal deformation 
\begin{align*}
    \{\widetilde\clY_{r+j,n}\to\bD_n\}_{n\ge 0}
\end{align*} 
of $U_{r+j}$, which non-trivially deforms the node $p_j$.

%%%%%%%%%%%%%%%%%%%%%%%%%%%%%%%%%%%%%%%%%%%%%%%%%%%%%%%%%%%%%%%%
\subsubsection{Global embedded deformation}\label{local-criterion-proof-4}

Choose an embedding $X\subset\bP^N$ and replace $X$ by $\bP^N$. The morphism $g:\sS\to\bP^N$ (obtained by factorizing $f:\sC\to X$ through the pinching morphism $\nu:\sC\to\sS$) is non-constant on each irreducible component of $\sS$. Therefore, $g$ is a finite morphism and the line bundle $L = g^*\clO_{\bP^N}(1)$ is ample on $\sS$. 

Using a sufficiently large $d$-uple embedding, we can embed $\bP^N$ into a larger projective space, which has the effect of replacing $L$ by a large power $L^{\otimes d}$. Thus, we may assume that $L$ is very ample, and that $H^1(\sS,L) = 0$, by Serre's vanishing theorem \cite[Theorem III.5.2]{Har77}. Since $L$ is very ample, we can construct (possibly after enlarging $N$ again) a projective embedding $\iota:\sS\to\bP^N$ defined by sections of $L$. Both $g:\sS\to\bP^N$ and $\iota:\sS\to\bP^N$ pull $\clO_{\bP^N}(1)$ back to $L$. Now, the morphism 
\begin{align}\label{eqn:new-embedding-of-pinched}
    (g,\iota):\sS\to\bP^N\times\bP^N    
\end{align}
is a closed embedding. From this point on, we will use the embedding \eqref{eqn:new-embedding-of-pinched} to regard $\sS\subset\bP^N\times\bP^N$ as a closed subscheme.

Since $H^1(\sS,L) = 0$, we may now apply the special case (Corollary \ref{cor:special-case-of-local-to-global}) of the local-to-global principle for deformations (Theorem \ref{thm: Local-to-global principle}) to $\sS\subset\bP^N\times\bP^N$. From this, we get an embedded formal deformation
\begin{align*}
    \{\sS_n\subset\bP^N\times\bP^N\times\bD_n\}_{n\ge 0}
\end{align*}
of the closed subscheme $\sS\subset\bP^N\times\bP^N$ such that the induced formal deformation of $U_i\subset\sS$ is isomorphic to $\{\widetilde\clY_{i,n}\to\bD_n\}_{n\ge 0}$ for $1\le i\le r+m$. We represent these isomorphisms by a sequence of open embeddings
\begin{align*}
    \{ \alpha_{i,n}:\widetilde\clY_{i,n}\hookrightarrow\sS_n\}_{n\ge 0},
\end{align*}
for $1\le i\le r+m$, which successively extend the open inclusion $U_i\subset\sS$.

%%%%%%%%%%%%%%%%%%%%%%%%%%%%%%%%%%%%%%%%%%%%%%%%%%%%%%%%%%%%%%%
\subsubsection{Completing the proof}\label{subsubsec:local-criterion-proof-5}

Consider any $n\ge 0$. The open subschemes $\alpha_{i,n}(\widetilde\clY_{i,n})$ for $1\le i\le r$ and $\sS_n\setminus\{s_1,\ldots,s_r\}$ together cover the scheme $\sS_n$. Thus, $\sS_n$ can be built by gluing the $r+1$ schemes given by $\widetilde\clY_{i,n}$ for $1\le i\le r$ and $\sS_n\setminus\{s_1,\ldots,s_r\}$, along open subschemes, using suitable transition functions. We may express this by saying that the natural $\bD_n$-morphism, from the coequalizer of the diagram
\begin{align}\label{eqn:colim-to-define-S}
    \bigsqcup_{1\le i\le r}\left(\widetilde\clY_{i,n}\setminus\{s_i'\}\right)\sqcup\bigsqcup_{1\le i<i'\le r}\left(\widetilde\clY_{i,n}\times_{\sS_n}\widetilde\clY_{i',n}\right)\rightrightarrows\left(\sS_n\setminus\{s_1,\ldots,s_r\}\right)\sqcup\bigsqcup_{1\le i\le r}\widetilde\clY_{i,n}
\end{align}
to the scheme $\sS_n$, is an isomorphism.

For $1\le i\le r$, notice that the point $s_i\in\sS_n$ does not occur on the \emph{overlap} of any two elements of this open cover of $\sS_n$. Thus, for $1\le i\le r$, we may replace $\widetilde\clY_{i,n}$ by $\widetilde\clX_{i,n}$ and glue this new collection of $r+1$ schemes using the \emph{same transition functions} (and the isomorphisms $\widetilde\clX_{i,n}\setminus C_i\xrightarrow{\simeq}\widetilde\clY_{i,n}\setminus\{s_i\}$ provided by Lemma \ref{lem:etale-lifts-of-XY-birational}) obtain a new scheme $\sC_n\to\bD_n$. As before, we may express this by saying that the $\bD_n$-scheme $\sC_n$ is defined to be the coequalizer of the diagram
\begin{align}\label{eqn:colim-to-define-C}
    \bigsqcup_{1\le i\le r}\left(\widetilde\clY_{i,n}\setminus\{s_i'\}\right)\sqcup\bigsqcup_{1\le i<i'\le r}\left(\widetilde\clY_{i,n}\times_{\sS_n}\widetilde\clY_{i',n}\right)\rightrightarrows\left(\sS_n\setminus\{s_1,\ldots,s_r\}\right)\sqcup\bigsqcup_{1\le i\le r}\widetilde\clX_{i,n}
\end{align}
of $\bD_n$-schemes and open embeddings. Note that the left side of \eqref{eqn:colim-to-define-C} is the same as that of \eqref{eqn:colim-to-define-S} and only the right side has been modified. We also obtain a morphism 
\begin{align*}
    \nu_n:\sC_n\to\sS_n
\end{align*}
given by gluing together the morphisms $\widetilde\psi_{i,n}:\widetilde\clX_{i,n}\to\widetilde\clY_{i,n}$ for $1\le i\le r$ with the identity morphism of $\sS_n\setminus\{s_1,\ldots,s_r\}$. Again, this can be seen arising as the coequalizer (or colimit) of the natural morphism from the diagram \eqref{eqn:colim-to-define-C} to the diagram \eqref{eqn:colim-to-define-S}. Since $\sC_n$ is covered by open subschemes which are flat over $\bD_n$, we conclude that $\sC_n\to\bD_n$ is flat.

Letting $n\ge 0$ vary, we obtain a formal deformation $\{\sC_n\to\bD_n\}_{n\ge 0}$ of $\sC$, along with morphisms $\{\nu_n:\sC_n\to\sS_n\}_{n\ge 0}$, which successively extend the pinching morphism $\nu:\sC\to\sS$. 

\begin{Lemma}\label{lem:nodes-deformed-after-gluing}
    The formal deformation $\{\sC_n\to\bD_n\}_{n\ge 0}$ induces a non-trivial deformation of each node of $\sC$.
\end{Lemma}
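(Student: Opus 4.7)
The plan is to classify the nodes of $\sC$ by where they sit relative to the ghost sub-curve $C$ and then argue in each case that the formal deformation $\{\sC_n\to\bD_n\}_{n\ge 0}$ agrees, locally near the node, with one of the already-constructed formal deformations whose non-triviality is already known. Concretely, the nodes of $\sC$ fall into two disjoint classes: \emph{type (I)} nodes, namely those lying in the ghost sub-curve $C$ (equivalently, those of the form $\nu^{-1}(s_i)$ for some $i$, together with the attaching nodes of $C_i$ to $\sC\setminus C$ -- both of which are nodes of $\sC_{U_i}$), and \emph{type (E)} nodes, namely the external nodes $p_1,\ldots,p_m$ lying in $\sC\setminus C$.

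For a type (I) node $p$ contained in $C_i\cup\overline{\sC_{U_i}\setminus C_i}$, choose an affine open neighborhood $W\subset\sC$ of $p$ contained in $\sC_{U_i}$ and disjoint from the other $\sC_{U_{i'}}$'s. From the coequalizer description \eqref{eqn:colim-to-define-C} of $\sC_n$, the restriction of $\{\sC_n\to\bD_n\}_{n\ge 0}$ to $W$ is canonically isomorphic, as a formal deformation of $W$, to the restriction of $\{\widetilde{\clX}_{i,n}\to\bD_n\}_{n\ge 0}$ to the open subscheme $W\subset\sC_{U_i}=\widetilde{\clX}_{i,0}$; here one uses that $p$ does \emph{not} lie in any overlap appearing on the left-hand side of \eqref{eqn:colim-to-define-C}, since the only overlap between $\widetilde{\clX}_{i,n}$ and $\sS_n\setminus\{s_1,\ldots,s_r\}$ is the complement of $C_i$ mapped by $\widetilde\psi_{i,n}$ via the isomorphism of Lemma \ref{lem:etale-lifts-of-XY-birational}. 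Lemma \ref{lem:nodes-deformed-before-gluing} then gives non-triviality at $p$.

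For a type (E) node $p=p_j$, choose a small affine open neighborhood $W\subset\sC\setminus C$ of $p_j$ contained in $U_{r+j}$. Under the identification $\sC\setminus C\simeq\sS\setminus\{s_1,\ldots,s_r\}$, the coequalizer \eqref{eqn:colim-to-define-C} shows that $\{\sC_n|_W\to\bD_n\}_{n\ge 0}$ coincides with $\{\sS_n|_W\to\bD_n\}_{n\ge 0}$. By the local-to-global construction in \textsection\ref{local-criterion-proof-4}, the latter is isomorphic to the restriction of $\{\widetilde{\clY}_{r+j,n}\to\bD_n\}_{n\ge 0}$ to $W$, which by our choice in \textsection\ref{subsubsec:local-criterion-proof-3} deforms $p_j$ non-trivially.

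Thus every node of $\sC$ is non-trivially deformed by $\{\sC_n\to\bD_n\}_{n\ge 0}$. The only slightly delicate point is the bookkeeping in the first case -- verifying that the gluing transition data really does restrict to the identity on a neighborhood of a type (I) node inside $\widetilde{\clX}_{i,n}$ -- but this is immediate from the fact that type (I) nodes avoid the overlap region used in the coequalizer. I do not anticipate any substantive obstacle beyond this bookkeeping.
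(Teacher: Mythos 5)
Your proof is correct and follows essentially the same route as the paper: the paper's own argument is precisely the two-case split, citing Lemma \ref{lem:nodes-deformed-before-gluing} for the internal (your type (I)) nodes and the choice of $\{\widetilde\clY_{r+j,n}\to\bD_n\}_{n\ge 0}$ from \textsection\ref{subsubsec:local-criterion-proof-3} for the external (your type (E)) nodes. The extra bookkeeping you supply -- that the coequalizer \eqref{eqn:colim-to-define-C} makes $\sC_n$ agree near each node with $\widetilde\clX_{i,n}$, respectively with $\sS_n|_{U_{r+j}}\simeq\widetilde\clY_{r+j,n}$ -- is exactly what the paper leaves implicit in its gluing construction.
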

\begin{proof}
    For the internal nodes, this follows from Lemma \ref{lem:nodes-deformed-before-gluing}. For the external nodes, this follows from the choice of $\{\widetilde\clY_{r+j,n}\to\bD_n\}_{n\ge 0}$ in \textsection\ref{subsubsec:local-criterion-proof-3} for $1\le j\le m$.
\end{proof}

Now, we can complete the proof of Theorem \ref{thm:local-criterion-for-smoothability}. For $n\ge 0$, recalling that $\sS_n\subset\bP^N\times\bP^N\times\bD_n$ is a subscheme, we get a morphism
\begin{align*}
    f_n:\sC_n\to\bP^N,
\end{align*} 
by composing $\nu_n:\sC_n\to\sS_n$ with the projection from $\sS_n$ to the first $\bP^N$ coordinate. By construction, the morphisms $f_n:\sC_n\to\bP^N$ for $n\ge 0$ successively extend $f:\sC\to\bP^N$. We have therefore produced a formal smoothing of $(\sC,f)$ after replacing $X$ by $\bP^N$. An appeal to Lemma \ref{lem:formal-implies-actual} finishes the proof.\qed 
\section{Stable maps with model ghosts}\label{sec:local-models}

In this section, we introduce a class of stable maps which we show to be eventually smoothable. In \textsection\ref{subsec:model-sing-and-smoothing}, we describe a class of pinchings (called \emph{model pinchings}) and study their properties. In \textsection\ref{subsec:stable-maps-with-model-ghosts}, we define and study the class of \emph{stable maps with model ghosts}. The proof that these are eventually smoothable (Theorem \ref{thm:intro-model-ghost} in \textsection\ref{subsec:summary-of-results}) relies on some technical details worked out in Appendices \ref{appendix:analysis-model-smoothing} and \ref{appendix:genus-0-pinchings}. Finally, in \textsection\ref{subsec:example-model-sing}, we give several examples of model pinchings (and the corresponding examples of stable maps with model ghosts).

%%%%%%%%%%%%%%%%%%%%%%%%%%%%%%%%%%%%%%%%%%%%%%%%%%%%%%%%%
\subsection*{Notation for this section}
\begin{longtable}{ r l }

   $C$& Smooth projective curve of genus $g(C)\ge 1$ (Notation \ref{not:model-sing-input}).\\
   $n$ & Positive integer denoting number of points on $C$ (Notation \ref{not:model-sing-input}).\\
   $p_i$& Distinct closed points on $C$ for $1\le i\le n$ (Notation \ref{not:model-sing-input}).\\
   $d_i$& Relatively prime positive integers for $1\le i\le n$ (Notation \ref{not:model-sing-input}).\\
   $\Delta$& $\bQ$-divisor $\sum\frac 1{d_i}\cdot p_i$ on $C$ (Notation \ref{not:model-sing-input}).\\
   $U_0$ & Complement of $\{p_1,\ldots,p_n\}\subset C$ (Notation \ref{not:model-sing-input}).\\
   $R_0$ & Coordinate ring of the affine variety $U_0$ (Notation \ref{not:model-sing-input}).\\
   $A$  & Graded algebra $\bigoplus_{m\ge 0}A_m$ with $A_m = H^0(C,\clO_C(\lfloor m\Delta\rfloor)$ (Definition \ref{def:model-smoothing}).\\
   & $A$ is a finitely generated $k$-algebra (Lemma \ref{lem:A-finite-generation}).\\   
   $t$ & Fixed element of $A_1\subset A$ and coordinate on $\bA^1$ (Definition \ref{def:model-smoothing}). \\
   $\pi_\sY:\sY\to\bA^1$ & $\bG_m$-equivariant family with $\sY = \Spec A$ (Definition \ref{def:model-smoothing}, Figure \ref{fig:appendixA}).\\
   & $\pi_\sY$ is flat with general fibre $U_0$ (Lemma \ref{lem:flatness-and-smoothing}). \\
   & $\sY$ is a normal surface (Corollary \ref{cor:contraction-normal}).\\
   $\fA$ & Graded algebra $\bigoplus_{m\ge 0}\fA_m$ with $\fA_m = A_m/A_{m-1}$ (Definition \ref{def:model-sing}).\\
   $\sY_0$ & Model singularity, $\sY_0=\Spec(\fA)$ (Definition \ref{def:model-sing}, Figure \ref{fig:Partial normalization}).\\
   &  $\sY_0=\pi_\sY^{-1}(0)$ (Lemma \ref{lem:model-sing}). $\sY_0$ inherits $\bG_m$-action from $\sY$.\\
   $A_+$ & Maximal ideal $\bigoplus_{m>0}A_m$ of $A$ (Definition \ref{def:cone-point}).\\
   $\overline{c}$ & Closed point of $\sY_0$ corresponding to $A_+\subset A$ (Definition \ref{def:cone-point}).\\
    $\gamma:E\to\sY_0$ & Seminormalization of $\sY_0$ (Definition \ref{def:partial-norm}, Figure \ref{fig:Partial normalization}).\\
   $Q$ & Graded coordinate ring $\bigoplus_{m\ge 0}Q_m$ of $E$ (Definition \ref{def:partial-norm}).\\
   & $\fA\subset Q$ is a graded sub-algebra and $\dim Q/\fA = g(C)$ (Corollary \ref{cor:coord-ring-of-curve-sing-final}).\\
   $\overline{E}_i$ & Projective line compactifying $T_{C,p_i}^\vee$ for $1\le i\le n$ (Notation \ref{not:compactified-model-sing}).\\
   $\overline{\sY}_0$ & Compactification of $\sY_0$, adds $n$ points at infinity (Notation \ref{not:compactified-model-sing}).\\
   $\nu:\overline{\sX}_0\to\overline{\sY}_0$ & Model pinching (Definition \ref{def:model-pinching}).\\
   $\sX_0$ & Inverse image of $\sY_0$ under $\nu$ (Notation \ref{not:compactified-model-sing}, Definition \ref{def:model-pinching}).\\
   $f:\sC\to X$ & Stable map with model ghosts (Definition \ref{def:stable-map-with-model-ghosts}). \\
   $\sE$ & Union of irreducible components where $f$ is non-constant (Notation \ref{not:stable-map-eff-comp}).
\end{longtable}

%%%%%%%%%%%%%%%%%%%%%%%%%%%%%%%%%%%%%%%%%%%%%%%%%%%%%%%%
\subsection{Model curve singularity and its smoothing}\label{subsec:model-sing-and-smoothing}

We will construct a class of locally smoothable pinchings (in the sense of Definitions \ref{def:pinching} and \ref{def:locally-smoothable-pinching}). These will be called \emph{model pinchings}. We begin by describing the input data required for the construction.

\begin{notation}[Input for construction of model pinching]\label{not:model-sing-input}
    Fix a smooth projective curve $C$ of genus $g(C)\ge 1$. Also fix an integer $n\ge 1$, a collection of distinct closed points $p_1,\ldots,p_n\in C$ and integers $d_1,\ldots,d_n\ge 1$ with $\gcd(d_1,\ldots,d_n)=1$. Define the $\bQ$-divisor 
    \begin{align}\label{eqn:ample-Q-div}
        \Delta := \sum_{1\le i\le n} \frac1{d_i}\cdot p_i
    \end{align}
    on $C$. Denote $C\setminus\{p_1,\ldots,p_n\}$ by $U_0 = \Spec R_0$
\end{notation}

We now define the model singularity $\sY_0$ (and its smoothing $\pi_\sY:\sY\to\bA^1$), associated to the data $C$, $\{p_i\}_{1\le i\le n}$ and $\{d_i\}_{1\le i\le n}$ fixed in Notation \ref{not:model-sing-input}.

\begin{Definition}[Model smoothing]\label{def:model-smoothing}
   Define $\sY := \Spec A$, where the graded $k$-algebra $A$ is defined by
    \begin{align*}
        A := \bigoplus_{m\ge 0} H^0(C,\clO_C(\lfloor m\Delta\rfloor))
    \end{align*}
    with $\lfloor m\Delta\rfloor := \sum\lfloor\frac{m}{d_i}\rfloor\cdot p_i$. For $m\ge 0$, denote by $A_m$ the summand of $A$ in grading $m$. Multiplication in $A$ is given by the tensoring via the maps
    \begin{align*}
        A_m\otimes A_{m'}\to A_{m+m'}
    \end{align*}
    for $m,m'\ge 0$, which are well-defined since $\lfloor m\Delta\rfloor + \lfloor m'\Delta\rfloor\le\lfloor(m+m')\Delta\rfloor$.
    The element $1\in H^0(C,\clO_C)$ determines an element $t\in A_1$ via the inclusion $\clO_C\subset\clO_C(\lfloor\Delta\rfloor)$ of coherent sheaves. This yields an injective graded ring map $k[t]\to A$ which determines a dominant $\bG_m$-equivariant morphism
    \begin{align*}
        \pi_{\sY}:\sY\to\bA^1 = \Spec k[t].
    \end{align*}
    Note that the $\bG_m$-actions on $\sY$ and $\bA^1$ are induced by the gradings on their respective coordinate rings.
\end{Definition}

See Figure \ref{fig:appendixA} (in Appendix \ref{appendix:analysis-model-smoothing}) for a depiction of $\pi_\sY:\sY\to\bA^1$.

\begin{remark}\label{rem:model-smoothing-subalg}
    For $m\ge 0$, we have a natural inclusion $A_m\subset R_0 = \Gamma(U_0,\clO_{U_0})$ given by restricting sections of $\clO_C(\lfloor m\Delta\rfloor)$ to $U_0$. From this, we get an inclusion $A = \bigoplus_{m\ge 0} A_m\subset\bigoplus_{m\ge 0} R_0 = R_0[t]$ of graded $k[t]$-algebras. Another related observation which will be useful is that $R_0$ is the increasing union of $A_m$ for $m\ge 0$. Here, the inclusion $A_m\subset A_{m+1}$, for $m\ge 0$, comes from the corresponding inclusion of coherent sheaves $\clO_C(\lfloor m\Delta\rfloor)\subset\clO_C(\lfloor (m+1)\Delta\rfloor)$. The increasing filtration $\{A_m\}_{m\ge 0}$ of $R_0$ respects multiplication in the sense that we have $A_m\cdot A_{m'}\subset A_{m+m'}\subset R_0$ for $m,m'\ge 0$.
\end{remark}

Next, we introduce the model curve singularity.

\begin{Definition}[Model singularity]\label{def:model-sing}
    Define $\sY_0 := \Spec\fA$, where the $k$-algebra
    \begin{align*}
        \fA := \bigoplus_{m\ge 0} A_m/A_{m-1}
    \end{align*}
    is the associated graded\footnote{For convenience, we set $A_{-1} = 0$.} ring for the increasing filtration $\{A_m\}_{m\ge 0}$ of $R_0$ described in Remark \ref{rem:model-smoothing-subalg}. We call $\sY_0$ the \emph{model singularity} determined by $C$, $\{p_i\}_{1\le i\le n}$ and $\{d_i\}_{1\le i\le n}$.
\end{Definition}

To justify the notation $\sY_0$, we have the following lemma.

\begin{Lemma}\label{lem:model-sing}
    We have a natural identification $\sY_0 = \pi_\sY^{-1}(0)$.
\end{Lemma}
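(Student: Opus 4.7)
The plan is to compute $\pi_\sY^{-1}(0)$ directly from the ring map $k[t]\to A$ and match it against the definition of $\fA$. Since the ideal of $0\in\bA^1=\Spec k[t]$ is $(t)$, the scheme-theoretic fibre is $\pi_\sY^{-1}(0)=\Spec(A/tA)$, so the whole task reduces to producing a graded $k$-algebra isomorphism $A/tA\simeq\fA$.

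The first step is to identify multiplication by $t$ on the graded pieces of $A$. The element $t\in A_1$ was defined as the image of $1\in H^0(C,\clO_C)$ under the inclusion of sheaves $\clO_C\hookrightarrow\clO_C(\lfloor\Delta\rfloor)$. Under the inclusion $A\subset R_0[t]$ from Remark \ref{rem:model-smoothing-subalg}, this $t$ corresponds to the element $1\in R_0$ placed in grading $1$. Therefore the multiplication map $t\cdot:A_m\to A_{m+1}$ is precisely the inclusion $H^0(C,\clO_C(\lfloor m\Delta\rfloor))\hookrightarrow H^0(C,\clO_C(\lfloor(m+1)\Delta\rfloor))$ induced by the inclusion of sheaves $\clO_C(\lfloor m\Delta\rfloor)\hookrightarrow\clO_C(\lfloor(m+1)\Delta\rfloor)$. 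In particular, this map is injective (so $t$ is a nonzerodivisor in $A$), and its image is exactly the subspace $A_m\subset A_{m+1}$ appearing in the filtration of $R_0$ described in Remark \ref{rem:model-smoothing-subalg}.

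Combining these observations, the graded ideal $tA\subset A$ has degree-$m$ part equal to the image of $A_{m-1}\hookrightarrow A_m$ for $m\ge 1$, and equal to $0$ for $m=0$. Consequently, as a graded $k$-vector space,
\begin{align*}
    A/tA=\bigoplus_{m\ge 0}A_m/A_{m-1}=\fA,
\end{align*}
using the convention $A_{-1}=0$. The ring structures match because both multiplications on $\bigoplus_{m\ge 0}A_m/A_{m-1}$ are induced from the multiplication on $A$: on $A/tA$ this is by passage to the quotient, while on $\fA$ it is by definition of the associated graded algebra with respect to the filtration $\{A_m\}_{m\ge 0}$, using the compatibility $A_m\cdot A_{m'}\subset A_{m+m'}$ recorded in Remark \ref{rem:model-smoothing-subalg}. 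This gives the desired graded $k$-algebra isomorphism $A/tA\simeq\fA$, hence $\pi_\sY^{-1}(0)=\Spec(A/tA)=\Spec\fA=\sY_0$.

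I do not anticipate any real obstacle in this argument; the lemma is essentially the statement that $\fA$, defined as the associated graded ring of the filtration $\{A_m\}$ on $R_0$, is recovered as the quotient of the Rees-type algebra $A$ by the canonical degree-one uniformizer $t$.
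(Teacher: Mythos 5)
Your proof is correct and follows essentially the same route as the paper: both arguments hinge on the observation that multiplication by $t\in A_1$ recovers the inclusion $A_{m-1}\subset A_m$ from Remark \ref{rem:model-smoothing-subalg}, the paper phrasing this as "the kernel of $A\twoheadrightarrow\fA$ is generated by $t$" while you phrase it as "$A/tA\simeq\fA$". The two formulations are interchangeable, so there is nothing to add.
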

\begin{proof}
    The canonical quotient maps $A_m\twoheadrightarrow A_m/A_{m-1}$ for $m\ge 0$ together give a surjective map $A\twoheadrightarrow\fA$ of graded rings. To complete the proof, we show that the homogeneous ideal $\ker(A\twoheadrightarrow\fA) =: I = \bigoplus I_m$ is generated by $t$. For any $m\ge 1$, the restriction of the multiplication map
    \begin{align*}
        A_1\otimes A_{m-1}\to A_m
    \end{align*}
    to the subspace $t\otimes A_{m-1}\simeq A_{m-1}$ recovers the inclusion $A_{m-1}\subset A_m$ from Remark \ref{rem:model-smoothing-subalg}. Since $I_m\subset A_m$ is the same as $A_{m-1}\subset A_m$, this shows that $I$ is generated by $t\in A_1$ as required.
\end{proof}

\begin{remark}[Coprimality condition]
    Relaxing the coprimality condition $\gcd(d_1,\ldots,d_n) = 1$ in Notation \ref{not:model-sing-input} leads to no new examples of model singularities. Indeed, for any integer $d>1$, if we replace $(d_1,\ldots,d_n)$ by $(dd_1,\ldots,dd_n)$, then the $\bQ$-divisor $\Delta$ from \eqref{eqn:ample-Q-div} gets replaced by $\frac1d\Delta$ and the algebra $A$ (from Definition \ref{def:model-smoothing}) gets replaced by $A\otimes_{k[t]}k[t^{1/d}]$. Thus, the fibre over $0$ remains isomorphic to $\fA$.
\end{remark}

\begin{Lemma}\label{lem:A-finite-generation}
    $A$ is a finitely generated $k$-algebra.
\end{Lemma}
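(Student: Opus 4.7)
The plan is to exploit the fact that $\Delta$ is an ample $\bQ$-divisor on $C$ (since $C$ is a smooth projective curve and $\Delta$ has positive degree) and reduce the claim to the classical fact that the section ring of an ample line bundle on a projective variety is a finitely generated $k$-algebra.

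Set $N := \operatorname{lcm}(d_1,\ldots,d_n)$, so that $N\Delta = \sum (N/d_i)\cdot p_i$ is an integral divisor on $C$ of positive degree, and $L := \clO_C(N\Delta)$ is an ample line bundle. For any integer $m = qN + r$ with $0 \le r < N$, the equality $\lfloor m\Delta\rfloor = qN\Delta + \lfloor r\Delta\rfloor$ holds, since $qN\Delta$ is already integral. Setting $F_r := \clO_C(\lfloor r\Delta\rfloor)$, this yields a canonical identification $A_{qN+r} = H^0(C,\, F_r \otimes L^{\otimes q})$. In particular, the $N$-th Veronese subring $A^{(N)} := \bigoplus_{q \ge 0} A_{qN}$ coincides with the section ring $\bigoplus_{q\ge 0} H^0(C, L^{\otimes q})$ of the ample line bundle $L$ on the projective curve $C$, which is a finitely generated $k$-algebra by standard theory.

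For each $0 \le r < N$, the graded $A^{(N)}$-module
\[
M^{(r)} := \bigoplus_{q\ge 0} A_{qN+r} = \bigoplus_{q\ge 0} H^0(C,\, F_r \otimes L^{\otimes q})
\]
is, by the same classical theory, a finitely generated $A^{(N)}$-module (being the module of global sections of a coherent sheaf twisted by non-negative powers of an ample line bundle). Decomposing $A = \bigoplus_{r=0}^{N-1} M^{(r)}$ as an $A^{(N)}$-module then shows that $A$ is a finitely generated $A^{(N)}$-module; combined with the finite generation of $A^{(N)}$ over $k$, this yields that $A$ is a finitely generated $k$-algebra. I expect no substantive obstacle here: the only real subtlety is bookkeeping around the floor function, which is handled cleanly by passage to the $N$-th Veronese subring.
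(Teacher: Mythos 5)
Your proof is correct and follows essentially the same route as the paper: decompose $A$ into residue classes modulo a common multiple of the $d_i$, identify the corresponding Veronese subring with the section ring of the (very) ample divisor, and show each residue piece is a finite module over it. The only cosmetic difference is that you take $N=\operatorname{lcm}(d_1,\ldots,d_n)$ and invoke the classical finiteness results for an ample line bundle, whereas the paper takes $d\gg 1$ so that $d\Delta$ is very ample and reduces to the corresponding statement for coherent sheaves on $\bP^s$ via the Stacks project.
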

\begin{proof}
    Choose $d\gg 1$ to be a large integer divisible by all the $d_i$. Then, $\Delta'=d\Delta$ is an effective divisor which is also very ample. For each integer $0\le r< d$, define
    \begin{align*}
        S_r := \bigoplus_{m\ge 0} H^0(C,\clO_C(m\Delta'+\lfloor r\Delta\rfloor)). 
    \end{align*}
    We claim that $S_0$ is a finitely generated $k$-algebra and that $S_r$ is finitely generated as an $S_0$-module for $1\le r<d$. Since $A$ is the direct sum of $S_r$ over $0\le r<d$, the claim implies the lemma. Embed $C$ into some projective space $\bP^s = \Proj k[x_0,\ldots,x_s]$ using $\Delta'$. By pushing forward along this embedding, regard $\clO_C(\lfloor r\Delta\rfloor)$ as a coherent sheaf on $\bP^s$, for $0\le r<d$. Moreover, regard $\clO_C(\Delta')$ as the restriction of $\clO_{\bP^s}(1)$ to $C$. The claim now follows from assertion (5) of \cite[\href{https://stacks.math.columbia.edu/tag/01YS}{Tag 01YS}]{stacks-project} which shows that, for any coherent sheaf $\clF$ on $\bP^s$, the graded vector space $\bigoplus_{m\ge 0} H^0(\bP^s,\clF(m))$ is finitely generated as a module over $k[x_0,\ldots,x_s] = \bigoplus_{m\ge 0} H^0(\bP^s,\clO_{\bP^s}(m))$.
\end{proof}

\begin{Corollary}\label{cor:Y-affine-variety}   
    $\sY$ is an affine variety.
\end{Corollary}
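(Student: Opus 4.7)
The plan is to show that the ring $A$ is a finitely generated integral domain over $k$, which by the conventions of the paper makes $\sY = \Spec A$ an affine variety (finite type plus irreducible plus reduced).

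Finite generation is already established in Lemma \ref{lem:A-finite-generation}, so $\sY$ is of finite type over $k$. For the domain property, I would invoke Remark \ref{rem:model-smoothing-subalg}, which provides an inclusion of graded $k[t]$-algebras
\begin{align*}
    A = \bigoplus_{m\ge 0} A_m \hookrightarrow \bigoplus_{m\ge 0} R_0 = R_0[t],
\end{align*}
where $R_0 = \Gamma(U_0,\clO_{U_0})$ is the coordinate ring of the affine open $U_0 = C \setminus \{p_1,\ldots,p_n\}$. Since $C$ is a smooth projective curve (in particular, irreducible and reduced), the open subscheme $U_0 \subset C$ is an integral affine scheme, so $R_0$ is a domain. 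Consequently $R_0[t]$ is a domain, and any subring of a domain is a domain, so $A$ is a domain.

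This establishes that $\Spec A$ is irreducible and reduced, hence a variety in the sense of the paper's conventions. Combined with finite generation, this gives that $\sY$ is an affine variety, completing the proof. No step here should present a real obstacle; the only mild care needed is to be explicit about why $R_0$ is a domain (which is immediate from irreducibility of $C$ and the fact that an open subscheme of an integral scheme is integral).
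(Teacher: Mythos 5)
Your proof is correct and follows the same route as the paper: the inclusion $A\subset R_0[t]$ from Remark \ref{rem:model-smoothing-subalg} gives that $A$ is a domain, and combining this with Lemma \ref{lem:A-finite-generation} yields that $\sY=\Spec A$ is an affine variety. Your version simply spells out why $R_0$ is a domain, which the paper leaves implicit.
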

\begin{proof}
    Remark \ref{rem:model-smoothing-subalg} implies that $A$ is an integral domain. Combining this with Lemma \ref{lem:A-finite-generation}, we see that $\sY = \Spec A$ is an affine variety.
\end{proof}

The next lemma shows that $\pi_\sY:\sY\to\bA^1$ is indeed a smoothing of $\sY_0$.

\begin{Lemma}[$\sY$ is a smoothing of $\sY_0$]\label{lem:flatness-and-smoothing}
    The morphism $\pi_\sY:\sY\to\bA^1$ is flat with $1$-dimensional fibres and we have a natural $\bG_m$-equivariant identification
    \begin{align}\label{eqn:Y-generic-trivial}
        \sY|_{\bA^1\setminus\{0\}} = U_0\times(\bA^1\setminus\{0\})
    \end{align}
    of families over $\bA^1\setminus\{0\}$.
\end{Lemma}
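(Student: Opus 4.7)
The plan is to verify the two assertions separately, handling the product description \eqref{eqn:Y-generic-trivial} first, since it also pins down the dimension of the generic fibre.

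For the identification over $\bA^1\setminus\{0\}$, I would work with the inclusion $A\subset R_0[t]$ of graded $k[t]$-algebras recalled in Remark \ref{rem:model-smoothing-subalg}, under which an element $a\in A_m$ corresponds to $a\cdot t^m$. The key step is to show that $A[t^{-1}] = R_0[t^{\pm}]$ inside $R_0[t^{\pm}]$. One inclusion is immediate. For the reverse, since $R_0 = \bigcup_m A_m$, any $f\in R_0$ lies in some $A_m$, giving $f\cdot t^m\in A$ and hence $f = (f\cdot t^m)\cdot t^{-m}\in A[t^{-1}]$. The resulting isomorphism of $\bA^1$-schemes $\sY|_{\bA^1\setminus\{0\}}\simeq U_0\times(\bA^1\setminus\{0\})$ is $\bG_m$-equivariant because it matches the $\bZ$-grading on $A[t^{-1}]$ with the standard grading on $R_0[t^{\pm}]$ in which $R_0$ has degree $0$ and $t$ has degree $1$.

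For flatness, I plan to exploit that $k[t]$ is a PID, so it suffices to show that $A$ is torsion-free, i.e.\ that multiplication by $t$ is injective on $A$. Given $a = \sum_m a_m$ (homogeneous decomposition) with $ta = 0$, comparing degrees forces $ta_m = 0$ in $A_{m+1}$ for every $m$. But by Remark \ref{rem:model-smoothing-subalg} the map $t\cdot\colon A_m\to A_{m+1}$ is precisely the inclusion $H^0(C,\clO_C(\lfloor m\Delta\rfloor))\hookrightarrow H^0(C,\clO_C(\lfloor (m+1)\Delta\rfloor))$ induced by the sheaf injection $\clO_C(\lfloor m\Delta\rfloor)\hookrightarrow \clO_C(\lfloor (m+1)\Delta\rfloor)$, hence is injective. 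Therefore each $a_m = 0$ and $a = 0$.

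For the fibre dimensions, Corollary \ref{cor:Y-affine-variety} already gives that $\sY$ is an irreducible affine variety, and \eqref{eqn:Y-generic-trivial} yields $\dim\sY = \dim U_0 + 1 = 2$. To conclude, I would observe that $\pi_\sY$ is surjective: by flatness its image is a constructible $\bG_m$-stable subset of $\bA^1$, it contains the generic point by \eqref{eqn:Y-generic-trivial}, and it contains $0$ because $\sY_0 = \Spec\fA$ is nonempty (indeed $\fA_0 = A_0 = H^0(C,\clO_C) = k$). A flat surjective morphism from an irreducible finite-type variety to an irreducible smooth curve has equidimensional fibres of dimension $\dim\sY - \dim\bA^1 = 1$, giving the claim. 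The only real obstacle is bookkeeping in the first step: one must track carefully that $a\in A_m$ sits in $R_0[t]$ as $a\cdot t^m$ rather than $a\cdot t^0$, since it is precisely this shift, combined with $R_0 = \bigcup_m A_m$, that makes the equality $A[t^{-1}] = R_0[t^{\pm}]$ work.
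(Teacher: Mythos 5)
Your proof is correct and follows essentially the same route as the paper: flatness from torsion-freeness of $A$ over the PID $k[t]$ (the paper simply notes that $A$ is a domain, which is also the cleanest way to justify your step ``torsion-free, i.e.\ multiplication by $t$ is injective'' --- that equivalence needs the positive grading or the domain property, and is not true for arbitrary $k[t]$-modules), and the identification \eqref{eqn:Y-generic-trivial} obtained by inverting $t$ in $A\subset R_0[t]$ using $R_0=\bigcup_m A_m$. Your explicit verification of the $1$-dimensional-fibres claim goes slightly beyond the paper's proof, which leaves that point implicit, but it is a correct and harmless addition.
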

\begin{proof}
    \cite[\href{https://stacks.math.columbia.edu/tag/00HD}{Tag 00HD}]{stacks-project} shows that a module $M$ over a ring $R$ is flat if and only if $I\otimes_R M\to R\otimes_RM = M$ is injective for all ideals $I\subset R$. Thus, a module over the principal ideal domain $k[t]$ is flat if and only if it is torsion-free. Since the ring map $k[t]\to A$ is injective and $A$ is an integral domain, we get the flatness of $\pi_\sY$.

    To prove the second assertion, recall from Remark \ref{rem:model-smoothing-subalg} that we have an inclusion $A\subset R_0[t]$ of graded $k[t]$-algebras and that $R_0$ is the increasing union of $A_m$ for $m\ge 0$. Thus, adjoining the inverse of $t$ to $A$ yields the graded $k[t^{\pm1}]$-algebra $R_0[t^{\pm1}]$, and as a result, we get the identification \eqref{eqn:Y-generic-trivial}.
\end{proof}

\begin{remark}[Flatness over a DVR]\label{rem:flatness-over-dvr}
    The argument used in the proof of Lemma \ref{lem:flatness-and-smoothing} shows that a dominant morphism from a variety to a non-singular curve is automatically flat. The main point is that the local ring of a non-singular curve at a closed point is a discrete valuation ring (and therefore, a principal ideal domain).
\end{remark}

\begin{remark}
    It follows from Lemma \ref{lem:flatness-and-smoothing} that $\sY$ is a surface and $\sY_0$ is a curve.
\end{remark}

\begin{remark}[Motivation for the definitions of $\sY$ and $\sY_0$]\label{rem:model-motivation-pinkham-surf-sing}
    Definitions \ref{def:model-smoothing} and \ref{def:model-sing} are inspired by a special case of \cite[Theorem 5.1]{Pinkham-surf-sing}. In this theorem, Pinkham provides a completely explicit description of the graded coordinate ring of any $\bG_m$-equivariant affine normal surface singularity (in terms of the geometry of a $\bG_m$-equivariant resolution of singularities).
\end{remark}

We will compute examples of the model singularity $\sY_0$ (obtained by taking specific choices of $C$, $n$, $p_i$ and $d_i$) in \textsection\ref{subsec:example-model-sing}. For this, it will be helpful to have an explicit algebraic description of the normalization of $\sY_0$. After some preliminaries, this is given in Lemma \ref{lem:justifying-partial-norm}

\begin{Definition}\label{def:cone-point}
    Let $\overline{c}\in\sY$ be the closed point corresponding to the maximal ideal $A_+ := \bigoplus_{m>0} A_m \subset A$. Since $t\in A_1\subset A_+$, we have $\overline{c}\in\pi_{\sY}^{-1}(0) = \sY_0$.
\end{Definition}

\begin{notation}
    Define the coherent sheaves $\clQ_m$ on $C$ for integers $m\ge 0$ as follows. Set $\clQ_0 = \clO_C$, and for $m\ge 1$, define $\clQ_m$ by requiring
    \begin{align}\label{eqn:ses-Q}
        0\to\clO_C(\lfloor(m-1)\Delta\rfloor)\to \clO_C(\lfloor m\Delta\rfloor)\to\clQ_m\to 0
    \end{align}
    to be a short exact sequence.
\end{notation}

For $m\ge 1$, the key point to note about the coherent sheaf $\clQ_m$ is that its support is contained in $\{p_1,\ldots,p_n\}\subset C$, and its stalks are given by\footnote{The computation depends on the following assertion: for any $m\in\bZ$ and closed point $p\in C$, we have $\clO_C(m\cdot p)\otimes_{\clO_C}(\clO_{C,p}/\fm_{C,p}) = (T_{C,p})^{\otimes m}$. Note that $\clO_C(-p)$ is the ideal sheaf of $p$ in $C$, and we have $\clO_C(-p)\otimes_{\clO_C}(\clO_{C,p}/\fm_{C,p}) = \fm_{C,p}/\fm_{C,p}^2 = T_{C,p}^\vee$. The assertion now follows since the invertible sheaf $\clO_C(m\cdot p)$ is the $(-m)^\text{th}$ tensor power of the invertible sheaf $\clO_C(-p)$.}
\begin{align}\label{eqn:Q-local-stalks}
    (\clQ_m)_{p_i} = \begin{cases}
        0 & \text{when } d_i\nmid m \\
        (T_{C,p_i})^{\otimes\frac m{d_i}} & \text{when } d_i\mid m
    \end{cases}
\end{align}
for $1\le i\le n$.

\begin{remark}\label{rem:curve-sing-incl-in-Q}
    The long exact sequence in cohomology associated to \eqref{eqn:ses-Q} gives an inclusion $\fA_m\subset H^0(C,\clQ_m)$ for $m\ge 1$. Moreover, when $m=0$, we have $\fA_0 = H^0(C,\clQ_0) = k$. Taking the direct sum over $m\ge 0$, we obtain an inclusion
    \begin{align}\label{eqn:curve-sing-incl-in-Q}
        \fA\subset \bigoplus_{m\ge 0} H^0(C,\clQ_m).
    \end{align}
\end{remark}

\begin{Definition}[Seminormalization of $\sY_0$]\label{def:partial-norm}
    Consider the $k$-vector space
    \begin{align}
        V := \bigoplus_{1\le i\le n}T_{C,p_i}^\vee     
    \end{align}
    as the affine variety $\Spec(\Sym V^\vee)$ equipped with the $\bG_m$-action given by
    \begin{align*}
        \lambda\cdot(v_1,\ldots,v_n) = (\lambda^{d_1}v_1,\ldots,\lambda^{d_n}v_n).
    \end{align*}
    Define $E\subset V$ to be the reduced closed subscheme given by the union of the coordinate lines $T^\vee_{C,p_i}$. Denote by $Q = \bigoplus_{m\ge 0} Q_m$ the coordinate ring of the affine scheme $E$, graded using the $\bG_m$-action inherited from $V$, with $Q_0 := k$ and
    \begin{align}\label{eqn:partial-normalization-coord-ring}
        Q_m := \bigoplus_{d_i\mid m} (T_{C,p_i})^{\otimes\frac m{d_i}}
    \end{align}
    for $m\ge 1$. For $m\ge 1$, denote by $Q_{m,i}$ the summand of $Q_m$ in \eqref{eqn:partial-normalization-coord-ring} corresponding to $1\le i\le n$, with $Q_{m,i} = 0$ when $d_i\nmid m$. The computation \eqref{eqn:Q-local-stalks} yields the identification
    \begin{align}\label{eqn:coord-ring-E-and-Q}
        Q_m = H^0(C,\clQ_m)
    \end{align}
    for $m\ge 0$. Combining \eqref{eqn:curve-sing-incl-in-Q} and \eqref{eqn:coord-ring-E-and-Q} gives an inclusion $\fA\subset Q$, which is a map of graded $k$-algebras. The $\bG_m$-equivariant morphism of affine schemes 
     \begin{align}\label{eqn:partial-normalization}
    \gamma:E\to\sY_0 
    \end{align}
    corresponding to the inclusion $\fA\subset Q$ of rings is called the \emph{seminormalization} of $\sY_0$.

    \begin{figure}[ht]
    \centering
\includegraphics[width=8.3cm]{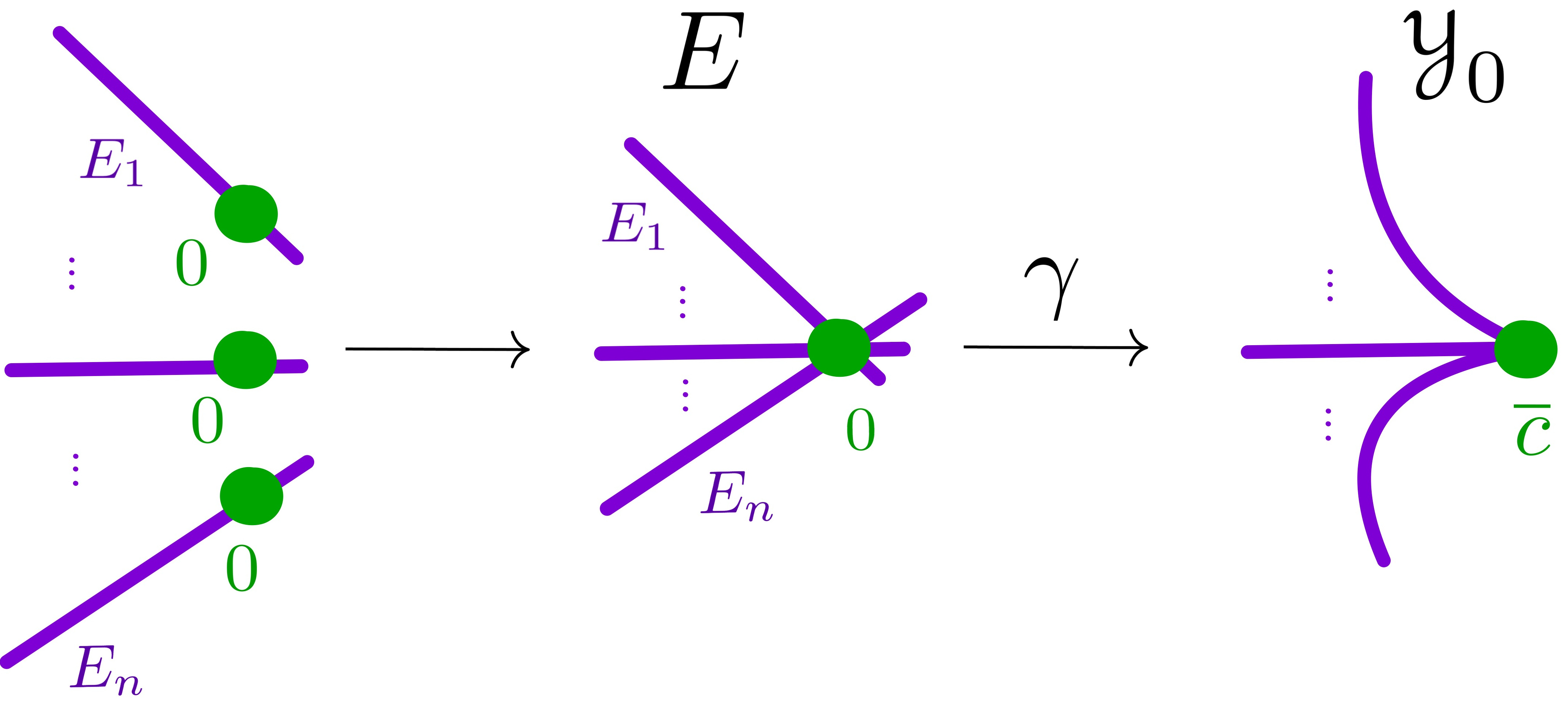}
    \caption{Seminormalization of the model singularity $\sY_0$.}
    \label{fig:Partial normalization}
\end{figure}
\end{Definition}

The next lemma (which depends on the results of Appendix \ref{appendix:analysis-model-smoothing}) is the justification for calling $\gamma$ the \emph{seminormalization} of $\sY_0$ (see Figure \ref{fig:Partial normalization}).

\begin{Lemma}[Normalization of $\sY_0$]\label{lem:justifying-partial-norm}
    The morphism $\gamma$ from \eqref{eqn:partial-normalization} is bijective on points and induces an isomorphism 
    \begin{align}
        E\setminus\{0\}\simeq\sY_0\setminus\{\overline{c}\}.    
    \end{align}
    Thus, the natural morphism $\bigsqcup_{1\le i\le n}T_{C,p_i}^\vee\to E$ followed by $\gamma$ yields the normalization morphism of $\sY_0$.
\end{Lemma}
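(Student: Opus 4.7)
The plan is to deduce the lemma from Corollary \ref{cor:coord-ring-of-curve-sing-final} (established in Appendix \ref{appendix:analysis-model-smoothing}), which asserts that $\fA\subset Q$ is an inclusion of graded $k$-algebras with $\dim_k(Q/\fA) = g(C) < \infty$. In particular $Q$ is a finite $\fA$-module, so $\gamma$ is a finite morphism, and since $Q/\fA$ is a finite-dimensional graded $k$-vector space there exists an integer $N\ge 1$ such that $\fA_m = Q_m$ for all $m\ge N$. Consequently every homogeneous element of $\fA_+$ of sufficiently high degree annihilates $Q/\fA$ as an $\fA$-module. This finite length statement will be the technical engine for the rest of the argument.

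I would then establish the isomorphism $E\setminus\{0\}\simeq \sY_0\setminus\{\overline{c}\}$ by a localization argument on the graded ring $Q$. Fix a closed point $y\in E\setminus\{0\}$ with corresponding maximal ideal $\fn\subset Q$; since $y\ne 0$ we have $\fn \ne Q_+$, so $Q_+\not\subset \fn$, and because $Q_+$ is generated by homogeneous elements of positive degree some homogeneous $x\in Q_r$ with $r\ge 1$ lies outside $\fn$. For $k\ge N/r$, the power $x^k$ lies in $Q_{kr} = \fA_{kr}\subset \fA_+$, remains outside $\fn$ by primeness, and annihilates $Q/\fA$. Setting $\fm := \fn\cap\fA$, we thus have an element $x^k\in\fA\setminus\fm$ that annihilates $Q/\fA$, so $(Q/\fA)\otimes_\fA\fA_\fm = 0$ and $\fA_\fm \xrightarrow{\simeq} Q\otimes_\fA\fA_\fm$. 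The right-hand side is therefore local, which forces $\fn$ to be the unique maximal ideal of $Q$ lying over $\fm$ and yields the local isomorphism $\fA_\fm\simeq Q_\fn$ at $y$.

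The remaining bijectivity claim at the origin is a similar graded computation. From $\fA_m\subset Q_m$ for every $m$ one has $Q_+\cap\fA = \fA_+$, so $\gamma(0) = \overline{c}$. Conversely, if $\fn'\subset Q$ is maximal with $\fn'\cap\fA = \fA_+$, then any homogeneous $z\in Q_m$ with $m\ge 1$ satisfies $z^k\in \fA_{km}\subset\fA_+\subset\fn'$ once $k\ge N/m$, and primeness of $\fn'$ forces $z\in \fn'$. This gives $Q_+\subset\fn'$, whence $\fn' = Q_+$ by maximality of $Q_+$. Combined with the previous step, $\gamma$ is bijective on closed points and restricts to an isomorphism $E\setminus\{0\}\simeq \sY_0\setminus\{\overline{c}\}$.

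The final assertion is then formal. The morphism $\bigsqcup_{1\le i\le n}T_{C,p_i}^\vee\to E$ separates the $n$ branches of $E$ at the origin, its source is a disjoint union of affine lines (hence normal), and composing with $\gamma$ produces a finite morphism onto $\sY_0$ that is an isomorphism away from the preimage of $\overline{c}$. The composition is therefore a finite birational morphism from a normal scheme onto the reduced scheme $\sY_0$, which by the universal property of normalization coincides with the normalization morphism of $\sY_0$. The genuine content of the argument sits in Corollary \ref{cor:coord-ring-of-curve-sing-final}; once that finiteness input is granted, the lemma reduces to a short exercise in graded commutative algebra.
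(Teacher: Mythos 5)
Your argument is correct, but it takes a genuinely different route from the paper's. The paper proves this lemma geometrically, as a byproduct of Appendix \ref{appendix:analysis-model-smoothing}: there $\sY_0$ is realized as the image of $\bigsqcup_i E_i\subset\sX$ under the contraction $\Phi:\sX\to\sY$, Lemma \ref{lem:contraction} supplies bijectivity and the isomorphism away from $\overline{c}$, and Lemma \ref{lem:partial-normalization-ring-map} is a coordinate computation identifying the resulting map $E\to\sY_0$ with the map $\gamma$ defined by the inclusion $\fA\subset Q$. You bypass the surface $\sX$ entirely and argue by graded commutative algebra from the single input that $\fA_m=Q_m$ in all sufficiently large degrees: high-degree homogeneous elements annihilate $Q/\fA$, which kills $Q/\fA$ after localizing at any maximal ideal of $\fA$ other than $\fA_+$ and gives $\fA_\fm\simeq Q\otimes_\fA\fA_\fm$, i.e.\ precisely the stalkwise isomorphism $\clO_{\sY_0}\to\gamma_*\clO_E$ away from $\overline{c}$, while the graded argument at the irrelevant ideal pins the fibre over $\overline{c}$ down to $\{0\}$; the normalization claim then follows from the standard fact that a finite birational morphism from a normal scheme to a reduced scheme is the normalization. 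This buys a short, self-contained proof of the lemma; the paper's route costs the appendix machinery, but that machinery is needed anyway for Proposition \ref{prop:local-smoothability-of-model-pinching-appx}, and Lemma \ref{lem:partial-normalization-ring-map} identifies not just the underlying map but the specific graded algebra map, which the paper reuses there.

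Two points need attention, though neither is a genuine gap. First, the citation: Corollary \ref{cor:coord-ring-of-curve-sing-final} is stated in \textsection\ref{subsec:example-model-sing}'s preceding subsection, \emph{after} the lemma (not in the appendix), and the proof of its second assertion invokes the lemma itself (it uses that $\gamma$ is bijective and an isomorphism away from $\overline{c}$). You only use the first assertion, namely $\fA_m=Q_m$ once $\sum_i\lfloor (m-1)/d_i\rfloor>2g(C)-2$, hence $\dim Q/\fA<\infty$; that part is proved purely from the long exact sequence of \eqref{eqn:ses-Q} and Serre duality with no reference to $\gamma$, so your argument is not circular in substance, but you should cite only that assertion (or reproduce its short cohomological proof) rather than the corollary wholesale. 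Second, ``bijective on points'' also requires surjectivity of $\gamma$, which you never state; it is immediate from lying over for the finite injective extension $\fA\subset Q$ (you do establish finiteness), and since fibres of a finite morphism over closed points consist of closed points, bijectivity on all points follows. These are one-line additions.
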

\begin{proof}
    This is the content of Lemma \ref{lem:partial-normalization-ring-map} and the paragraph preceding it.
\end{proof}

\begin{Corollary}\label{cor:coord-ring-of-curve-sing-final}
    The inclusion $\fA_m \subset Q_m$ from Definition \ref{def:partial-norm} is an equality if
    \begin{align}\label{eqn:coord-ring-cutoff-point}
        \sum_{i=1}^n\left\lfloor\frac{m-1}{d_i}\right\rfloor > 2\cdot g(C)-2.
    \end{align}
    The coherent sheaf $(\gamma_*\clO_{E}/\clO_{\sY_0})$ is zero on $\sY_0\setminus\{\overline{c}\}$ and its stalk at $\overline{c}$ is identified with the $g(C)$-dimensional vector space $Q/\fA$.
\end{Corollary}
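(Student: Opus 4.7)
The plan is to extract both assertions from the long exact sequence in cohomology associated to \eqref{eqn:ses-Q}, together with the standard vanishing $H^1(C, L) = 0$ for $\deg L > 2g(C) - 2$.

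First I would note that $\clQ_m$ is supported on finitely many points, so $H^1(C, \clQ_m) = 0$, and after identifying $H^0(C,\clQ_m)$ with $Q_m$ via \eqref{eqn:coord-ring-E-and-Q}, the long exact sequence of \eqref{eqn:ses-Q} yields
\begin{align*}
    0 \to A_{m-1} \to A_m \to Q_m \to H^1(C,\clO_C(\lfloor(m-1)\Delta\rfloor)) \to H^1(C,\clO_C(\lfloor m\Delta\rfloor)) \to 0.
\end{align*}
By construction (Remark \ref{rem:curve-sing-incl-in-Q} and Definition \ref{def:partial-norm}), the map $A_m \to Q_m$ factors as the quotient $A_m \twoheadrightarrow \fA_m$ followed by the inclusion $\fA_m \subset Q_m$, so one reads off
\begin{align*}
    Q_m/\fA_m = \ker\bigl(H^1(C,\clO_C(\lfloor(m-1)\Delta\rfloor))\to H^1(C,\clO_C(\lfloor m\Delta\rfloor))\bigr).
\end{align*}

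For the first assertion, the observation is that $\deg\lfloor(m-1)\Delta\rfloor = \sum_i\lfloor(m-1)/d_i\rfloor$, so whenever \eqref{eqn:coord-ring-cutoff-point} holds, Serre duality forces $H^1(C,\clO_C(\lfloor(m-1)\Delta\rfloor))=0$, giving $\fA_m = Q_m$.

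For the second assertion, Lemma \ref{lem:justifying-partial-norm} says $\gamma$ is an isomorphism over $\sY_0\setminus\{\overline{c}\}$, so $(\gamma_*\clO_E/\clO_{\sY_0})$ is supported at $\overline{c}$. Since $\sY_0$ is affine and $\gamma$ is finite, this sheaf corresponds to the $\fA$-module $Q/\fA$, which by the first assertion satisfies $Q_m/\fA_m = 0$ for $m\gg 0$; hence $Q/\fA$ is a finite-dimensional $k$-vector space and therefore coincides with its own stalk at $\overline{c}$. To obtain the dimension, I would telescope $\dim_k Q/\fA = \sum_{m \geq 1}\dim_k Q_m/\fA_m$ using the kernel description above; the successive differences $\dim H^1(C,\clO_C(\lfloor(m-1)\Delta\rfloor)) - \dim H^1(C,\clO_C(\lfloor m\Delta\rfloor))$ collapse to $\dim H^1(C,\clO_C) = g(C)$, the terminal $H^1$ terms vanishing for $m\gg 0$ by the first assertion. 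There is no real obstacle here; the argument is essentially bookkeeping around the long exact sequence, with the dimension count a straightforward telescoping.
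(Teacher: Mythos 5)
Your proposal is correct and follows essentially the same route as the paper: the long exact sequence of \eqref{eqn:ses-Q} identifies $Q_m/\fA_m$ with the kernel of $H^1(C,\clO_C(\lfloor(m-1)\Delta\rfloor))\to H^1(C,\clO_C(\lfloor m\Delta\rfloor))$, Serre duality under \eqref{eqn:coord-ring-cutoff-point} gives the first assertion, and the telescoping sum plus the support argument via Lemma \ref{lem:justifying-partial-norm} gives the second. No gaps.
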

\begin{proof}
    For $m\ge 1$, the long exact sequence in cohomology associated to \eqref{eqn:ses-Q} shows that $Q_m/\fA_m$ is isomorphic to the kernel of the surjection
    \begin{align}\label{eqn:decreasing-cohomologies}
        H^1(C,\clO_C(\lfloor (m-1)\Delta\rfloor))\twoheadrightarrow H^1(C,\clO_C(\lfloor m\Delta\rfloor)).
    \end{align}
    The first assertion now follows since \eqref{eqn:coord-ring-cutoff-point} implies, by Serre duality, that the domain of the map \eqref{eqn:decreasing-cohomologies} is zero. In particular, it follows that $Q/\fA = \bigoplus Q_m/\fA_m$ is a finite dimensional $k$-vector space. Counting dimensions via \eqref{eqn:decreasing-cohomologies} and summing over $m$, we obtain a telescoping series, which shows that 
    \begin{align}\label{eqn:generalized-g-gaps}
        \dim Q/\fA = \dim H^1(C,\clO_C) = g(C).
    \end{align}
    Next, observe that the finite morphism $\gamma$ is bijective on points and an isomorphism over $\sY_0\setminus\{\overline c\}$. As a result, the sheaf $(\gamma_*\clO_{E}/\clO_{\sY_0})$ is zero away from $\overline{c}$. Its stalk at $\overline{c}$ is therefore identified with its space $Q/\fA$ of global sections. Now, \eqref{eqn:generalized-g-gaps} completes the proof of the second assertion.
\end{proof}

We now introduce the model pinching.

\begin{notation}\label{not:compactified-model-sing}
    For $1\le i\le n$, the projective line $\overline{E}_i$ is defined to be the compactification of $T_{C,p_i}^\vee$. Let $\overline{\sY}_0$ be the scheme obtained by gluing $\sY_0$ and $\bigsqcup_{1\le i\le n}(\overline{E}_i\setminus\{0\})$ along the isomorphism
    \begin{align*}
        \sY_0\setminus\{\overline{c}\}\simeq E\setminus\{0\} = \bigsqcup_{1\le i\le n}(T_{C,p_i}^\vee\setminus\{0\})
    \end{align*}
    induced by $\gamma$. Note that $\overline\sY_0$ is a reduced proper curve with an isolated singularity at $\overline{c}$.

    Define the prestable curve $\overline{\sX}_0$ as follows. To obtain $\overline{\sX}_0$, We take $C\sqcup\bigsqcup_{1\le i\le n}\overline{E}_i$ and glue $p_i\in C$ to $0\in T_{C,p_i}^\vee\subset\overline{E}_i$, for $1\le i\le n$. Define $\sX_0\subset\overline{\sX}_0$ to be the complement of the $n$ points at infinity (one in each $\overline{E}_i$).
\end{notation}

\begin{Definition}[Model pinching]\label{def:model-pinching}
    Define the morphism
    \begin{align*}
        \nu:\sX_0\to\sY_0
    \end{align*}
    as the following composition. First, map $\sX_0$ to $E$ via the identity morphism on each $E_i$ and the constant morphism $0$ on $C$; then apply $\gamma:E\to\sY_0$. This extends to a morphism $\overline{\sX}_0\to\overline{\sY}_0$, which is still denoted by $\nu$. 
    
    We call $\nu:\overline{\sX}_0\to\overline{\sY}_0$ the \emph{model pinching} associated to $C$, $\{p_i\}_{1\le i\le n}$ and $\{d_i\}_{1\le i\le n}$.
\end{Definition}

Lemma \ref{lem:justifying-partial-norm} and Corollary \ref{cor:coord-ring-of-curve-sing-final} show that $\nu:\overline{\sX}_0\to\overline{\sY}_0$ is a pinching of $\overline{\sX}_0$ at $C$ (in the sense of Definition \ref{def:pinching}). This justifies the terminology in Definition \ref{def:model-pinching}. The significance of the model pinching comes from the next proposition. The proof of this proposition depends on the detailed analysis of $\pi_\sY:\sY\to\bA^1$ carried out in Appendix \ref{appendix:analysis-model-smoothing}.

\begin{Proposition}[Local smoothability of model pinching]\label{prop:local-smoothability-of-model-pinching}
    The pinching $\nu:\overline{\sX}_0\to\overline{\sY}_0$ is locally smoothable in the sense of Definition \ref{def:locally-smoothable-pinching}.
\end{Proposition}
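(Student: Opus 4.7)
Since $\overline{\sY}_0$ has a unique non-smooth point $\overline{c}$ (the image of the single connected ghost $C$), we need only verify Definition~\ref{def:locally-smoothable-pinching} at $s_1 = \overline c$. Take $U_1 = V_1 := \sY_0$ (an affine open neighborhood of $\overline c$ in $\overline{\sY}_0$ with $\sY_0\setminus\{\overline c\}$ non-singular, by Lemma~\ref{lem:justifying-partial-norm}), $\varphi_1 := \mathrm{id}$, and $s_1' := \overline c$. Then $\sC_{U_1,\varphi_1} = \nu^{-1}(\sY_0) = \sX_0$ and $\nu_{U_1,\varphi_1}$ is the restriction of $\nu$. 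For the deformation base, take $B_1 := \bA^1$ with origin $0$, and set $\clY_1 := \sY$ with $\clY_1\to B_1$ equal to $\pi_\sY$. By Lemmas~\ref{lem:flatness-and-smoothing} and \ref{lem:model-sing}, $\clY_1\to B_1$ is flat with central fibre $V_1 = \sY_0$ and smooth over $B_1\setminus\{0\}$, since the general fibre $U_0\subset C$ is smooth.

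The construction of $\clX_1$ and $\psi_1$ follows the strategy of \textsection\ref{subsec:proof-strategy}. Using Lemma~\ref{lem:flatness-and-smoothing} to identify $\sY|_{B_1\setminus\{0\}}$ with $U_0\times(B_1\setminus\{0\})$, projection to the first factor followed by the open inclusion $U_0\hookrightarrow C$ gives a morphism $\sY|_{B_1\setminus\{0\}}\to C$, and hence a rational map $\sY\dashrightarrow C$. We define $\clX_1\subset\sY\times_k C$ to be the scheme-theoretic closure of the graph of this rational map, and $\psi_1:\clX_1\to\sY$ to be the projection. Then $\psi_1$ is proper (as $C$ is projective) and restricts to an isomorphism over $\sY|_{B_1\setminus\{0\}}$ by construction. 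Flatness of $\clX_1\to B_1$ is immediate from Remark~\ref{rem:flatness-over-dvr}, since $\clX_1$ is an irreducible variety dominating $B_1$.

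The remaining step is to identify the central fibre of $\clX_1\to B_1$ with $\sX_0$ compatibly with $\nu_{U_1,\varphi_1}$. The rational map $\sY\dashrightarrow C$ extends to a morphism on $\sY\setminus\{\overline c\}$: on the punctured central fibre $\sY_0\setminus\{\overline c\}$, which Lemma~\ref{lem:justifying-partial-norm} identifies with $\bigsqcup_{i=1}^n (T_{C,p_i}^\vee\setminus\{0\})$, the extension is the constant map with value $p_i$ on each component, as can be verified by a local computation in the graded algebra $A$. Consequently $\psi_1$ is an isomorphism over $\sY\setminus\{\overline c\}$. The main obstacle is then to show that $\psi_1^{-1}(\overline c)$ is scheme-theoretically a copy of $C\subset\overline{\sX}_0$, attached to the closures of $T_{C,p_i}^\vee\setminus\{0\}$ in the central fibre exactly at the points $p_i$, so that the central fibre of $\clX_1$ agrees with $\sX_0$ and $\psi_1$ restricts to $\nu_{U_1,\varphi_1}$ over $0$. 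This relies on the detailed study of the local ring of $\sY$ at $\overline c$ via the graded algebra $A$ of Definition~\ref{def:model-smoothing}, carried out in Appendix~\ref{appendix:analysis-model-smoothing}, together with a valuative computation relating tangent directions at $\overline c\in\sY$ to points of $C$. Once these identifications are in place, the isomorphism $\clX_1\setminus C\simeq\sY\setminus\{\overline c\}$ required by Definition~\ref{def:locally-smoothable-pinching} follows by combining the generic isomorphism with the isomorphism $\sX_0\setminus C\simeq\sY_0\setminus\{\overline c\}$ from Lemma~\ref{lem:justifying-partial-norm}.
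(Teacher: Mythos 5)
Your framing matches the paper's proof exactly: reduce to the single point $s_1=\overline c$, take $U_1=V_1=\sY_0$, $\varphi_1=\mathrm{id}$, $B_1=\bA^1$, and $\clY_1=\sY$ with $\pi_\sY$, so that everything rests on producing a flat family $\clX_1\to\bA^1$ with special fibre $\sX_0$ and a proper $\bA^1$-morphism $\psi_1:\clX_1\to\sY$ restricting to $\nu$ over $0$ and to an isomorphism away from $\overline c$. The gap is in your choice of $\clX_1$: you take the scheme-theoretic closure $\Gamma\subset\sY\times C$ of the graph of the rational map $\sY\dashrightarrow C$, whereas the paper's family $\sX$ is the \emph{normalization} of $\Gamma$ (see the remark following Proposition \ref{prop:local-smoothability-of-model-pinching} and Lemma \ref{lem:inverse-contraction}), constructed concretely as an open part of the blow-up of $C\times\bA^1$ along $\{p_i\}\times\Spec k[t]/(t^{d_i})$ (Definitions \ref{def:main-family}, \ref{def:main-family-int}). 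In general $\Gamma$ is not normal along $\{\overline c\}\times C$ and its special fibre is \emph{not} the prestable curve $\sX_0$, so the step you defer as ``the main obstacle'' is not merely unproved for your family -- it is false. Concretely, take $n=1$, $d_1=1$ and $p_1$ a non-Weierstrass point on a curve of genus $g\ge 1$ (Example \ref{exa:non-W-pt}). Near the node $q_1$ of the central fibre of $\sX$ there are coordinates $z_1,w_1$ with $z_1w_1=t$, and the local ring of $\Gamma$ at $(\overline c,p_1)$ is generated by $z_1$ together with pullbacks of homogeneous elements of $A_+$, which have the form $h_1w_1^{\alpha}t^{m-\alpha}$ with $\alpha$ a pole order at $p_1$, hence $\alpha\in\{0\}\cup\{g+1,g+2,\ldots\}$; setting $t=0$, the component of $\Gamma\times_{\bA^1}\{0\}$ dominated by $E_1$ is the higher cusp parametrized by $w_1\mapsto(w_1^{g+1},\ldots,w_1^{2g+1})$ rather than the smooth line $T_{C,p_1}^\vee$. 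Thus the special fibre of $\Gamma$ is not $\sX_0$ and $\psi_1$ does not restrict to $\nu_{U_1,\varphi_1}$ over $0$, so Definition \ref{def:locally-smoothable-pinching} is not satisfied; your appeal to Appendix \ref{appendix:analysis-model-smoothing} does not close this, because the appendix analyzes the blow-up/normalization $\sX$, not $\Gamma$.

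The missing content is precisely what the appendix supplies for the correct family: replace $\Gamma$ by its normalization (equivalently, use the explicit weighted blow-up model), prove $\Gamma(\sX,\clO_\sX)=A$ so as to obtain the contraction $\Phi:\sX\to\sY$ and the isomorphism off $C_0$ (Lemmas \ref{lem:coord-ring-iso} and \ref{lem:contraction}), prove properness of $\Phi$ via finiteness of $\sX\to\Gamma$ (Lemma \ref{lem:inverse-contraction}) -- note that for a normalization this is a genuine statement, not automatic from projectivity of $C$ -- and identify $\Phi\times_{\bA^1}\{0\}$ with $\nu:\sX_0\to\sY_0$ (Lemma \ref{lem:partial-normalization-ring-map}, packaged as Proposition \ref{prop:local-smoothability-of-model-pinching-appx}). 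These verifications are the substance of the proof; as written, your construction stops one normalization short of them and, taken literally, yields a family with the wrong special fibre.
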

\begin{proof}
    To begin, note that $\pi_\sY:\sY\to\bA^1$ is smooth over $\bA^1\setminus\{0\}$ (by Lemma \ref{lem:flatness-and-smoothing}) and has fibre $\sY_0$ over $0\in\bA^1$ (by Lemma \ref{lem:model-sing}). Proposition \ref{prop:local-smoothability-of-model-pinching-appx} produces a proper birational morphism $\Phi:\sX\to\sY$ such that the following properties hold.
    \begin{enumerate}[(i)]
        \item The composition $\sX\to\sY\to\bA^1$ is flat and $\Phi\times_{\bA^1}\{0\}$ is identified with $\nu:\sX_0\to\sY_0$.
        \item $\Phi$ restricts to an isomorphism over $\sY\setminus\{\overline c\}$.
    \end{enumerate}
    
    We claim that the existence of a morphism $\Phi:\sX\to\sY$ as above implies the local smoothability of the pinching $\nu:\overline{\sX}_0\to\overline{\sY}_0$. Indeed, we may take the identity morphism of $(\sY_0,\overline{c})$, the pointed curve $0\in\bA^1$ and the morphism $\Phi:\sX\to\sY$ to play the respective roles of $\varphi_i:(U_i,s_i)\to(V_i,s_i')$, $0\in B_i$ and $\psi_i:\clX_i\to\clY_i$ from Definition \ref{def:locally-smoothable-pinching}.
\end{proof}

\begin{remark}
    For the reader's convenience, we explicitly describe the $\sY$-scheme $\sX$ mentioned in the proof of Proposition \ref{prop:local-smoothability-of-model-pinching}. More details can be found in Lemma \ref{lem:inverse-contraction}. 
    
    Via the isomorphism \eqref{eqn:Y-generic-trivial}, the coordinate projection $U_0\times(\bA^1\setminus\{0\})\to U_0$ defines a rational map $\sY\dashrightarrow C$. Let $\Gamma\subset\sY\times C$ be the closure of the graph of this rational map. The $\sY$-scheme $\sX$ can then be described as the normalization of the $\sY$-scheme $\Gamma$.
\end{remark}

%%%%%%%%%%%%%%%%%%%%%%%%%%%%%%%%%%%%%%%%%%%%%%%%%%%%%%%%%%
\subsection{Stable maps with model ghosts}\label{subsec:stable-maps-with-model-ghosts} We will now introduce a class of stable maps, called \emph{stable maps with model ghosts}, which we later prove to be eventually smoothable. To understand the definition of this class of stable maps, the reader should recall the definitions of ghost components (Definition \ref{def:ghost-components}) and pinchings (Definition \ref{def:pinching}).

\begin{notation}\label{not:stable-map-eff-comp}
    Let $X$ be a projective variety and let $f:\sC\to X$ be a non-constant stable map. Let $\sE$ denote the non-empty union of those irreducible components of $\sC$ on which $f$ is non-constant.
\end{notation}

\begin{Definition}[Stable map with model ghosts]\label{def:stable-map-with-model-ghosts}
    We say that $f:\sC\to X$ is a \emph{stable map with model ghosts} if $f$ factors through a pinching $\nu_f:\sC\to\sS_f$ of $\sC$ at its ghost sub-curve, satisfying the following condition at each of its ghost components. 
    
    For a ghost component $C$ of $f$, let $s\in\sS_f$ be its image under $\nu_f$. Enumerate the points of $C\cap\sE$ as $p_1,\ldots,p_n$. Then, $p_1,\ldots,p_n$ lie on pairwise distinct irreducible components of $\sE$. Further, exactly one of the following two conditions holds.
    \begin{enumerate}[\normalfont(a)]
        \item $C$ is of arithmetic genus $0$, i.e., $H^1(C,\clO_C) = 0$, but possibly singular.
        \item $C$ is non-singular of genus $g(C)\ge 1$. Moreover, there exist integers $d_1,\ldots d_n\ge 1$ with $\gcd(d_1,\ldots,d_n) = 1$ such that, near $s\in\sS_f$, the pinching $\nu_f$ is an \'etale pullback of the model pinching
        \begin{align*}
            \nu:\overline{\sX}_0\to\overline{\sY}_0    
        \end{align*}
        associated to $C$, $\{p_i\}_{1\le i\le n}$ and $\{d_i\}_{1\le i\le n}$ by Definition \ref{def:model-pinching}.
        
        More precisely, there is an affine open neighborhood $U\subset\sS_f$ of $s$, an \'etale morphism $\varphi_s:U\to\sY_0$ with $\varphi_s^{-1}(\overline{c}) = \{s\}$, and a commutative diagram
        \begin{equation}\label{eqn:pullback-of-model-pinching}
            \begin{tikzcd}
                \nu_f^{-1}(U) \arrow[r,"\simeq"] \arrow[dr,"\nu_f"] & U\times_{\sY_0}\sX_0 \arrow[d] \arrow[r]  & \sX_0 \arrow[d,"\nu"] \\
                & U \arrow[r,"\varphi_s"] & \sY_0
            \end{tikzcd}
        \end{equation}
        where the \'etale morphism $\nu_f^{-1}(U)\to\sX_0$ restricts on $C$ to the identity morphism.
    \end{enumerate}

    \begin{figure}[ht]
    \centering
\includegraphics[width=8.8cm]{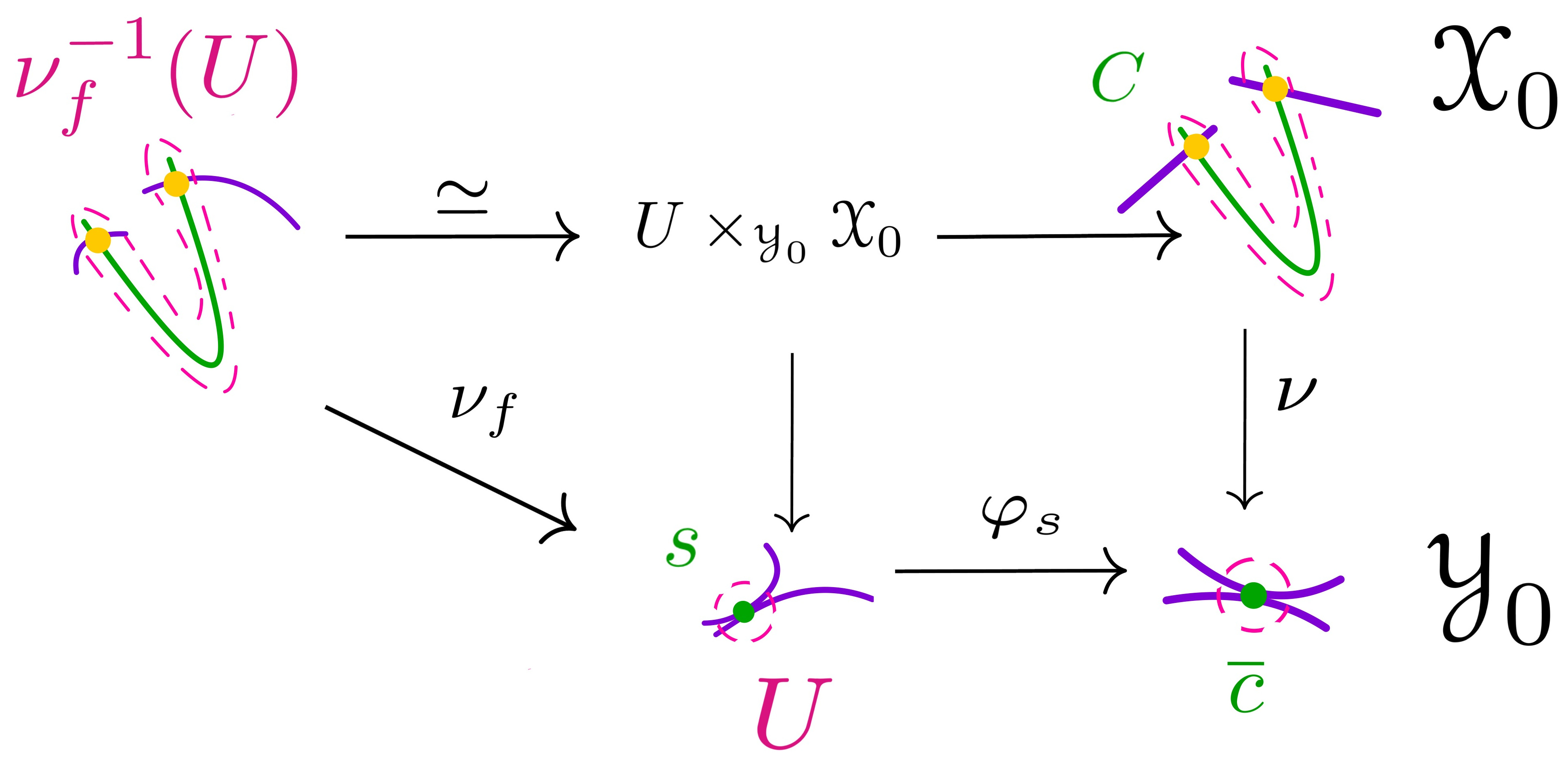}

    \caption{Local picture of the pinching $\nu_f:\sC\to\sS_f$, from Definition \ref{def:stable-map-with-model-ghosts}, with $C$ (shown in green) being a non-singular curve of genus $g(C)\ge 1$.}
    \label{fig: stable-map with model ghost}
\end{figure}
\end{Definition}

\begin{remark}[Model ghost condition is preserved by embeddings]\label{rem:model-ghost-functorial}
    If $\sC\to X$ is a stable map with model ghosts and $X\hookrightarrow X'$ is an embedding of $X$ into another projective variety $X'$, then the composition $\sC\to X\hookrightarrow X'$ is also a stable map with model ghosts.
\end{remark}

\begin{remark}[Genus $0$ versus higher genus pinchings]\label{rem:genus-0-vs-higher}
    Example \ref{exa:genus-0-pinching} shows that the local structure of a genus $0$ pinching is uniquely determined. This is why there is no need to specify a local picture like \eqref{eqn:pullback-of-model-pinching} in condition (a) of Definition \ref{def:stable-map-with-model-ghosts}.
\end{remark}

Definition \ref{def:stable-map-with-model-ghosts} is stated in terms of first finding suitable pinching $\nu_f:\sC\to\sS_f$ (through which $f:\sC\to X$ factors) and then finding \'etale morphisms $\varphi_s$ (which exhibit $\nu_f$ locally as a pullback of model pinchings). The next remark gives a detailed reformulation of this definition in terms of local coordinates.

\begin{remark}[Model ghost condition in coordinates]\label{rem:model-ghost-in-coord}
    We continue with the notation of Definition \ref{def:stable-map-with-model-ghosts}. Constructing the \'etale morphism $\nu_f^{-1}(U)\to\sX_0$ from \eqref{eqn:pullback-of-model-pinching}, near a ghost component $C\subset\sC$, amounts to specifying local coordinates on $\sE$ at $p_i$ (with values in $T_{C,p_i}^\vee$) for $1\le i\le n$, i.e., specifying elements
    \begin{align}\label{eqn:coords-comparing-to-model}
        \zeta_i\in \fm_{\sE,p_i}\otimes T_{C,p_i}^\vee
    \end{align}
    with nonzero images in $T_{\sE,p_i}^\vee\otimes T_{C,p_i}^\vee$ for $1\le i\le n$. 
    
    Near the point $s\in\sS_f$, we know that $\sS_f\setminus\{s\}$ should be identified (by the pinching morphism $\nu_f$) with $\sE\setminus\{p_1,\ldots,p_n\}$, near $C\subset\sC$. If we have $H^1(C,\clO_C) = 0$, then we can reconstruct the pinching $\nu_f$, near $s$, as in Example \ref{exa:genus-0-pinching}. When $C$ is non-singular of genus $g(C)\ge 1$, the key point is that the elements \eqref{eqn:coords-comparing-to-model}, together with the model pinching $\nu:\overline{\sX}_0\to\overline{\sY}_0$ determined by $\{d_i\}_{1\le i\le n}$, are sufficient to \emph{reconstruct} the local ring 
    \begin{align}\label{eqn:actual-from-model-local-ring}
        \clO_{\sS_f,s}\subset\prod_{1\le i\le n}\clO_{\sE,p_i}.
    \end{align}
    This, of course, would allow us to also reconstruct the pinching $\nu_f$ near $s\in\sS_f$. Explicitly, \eqref{eqn:actual-from-model-local-ring} is determined as follows. 
    \begin{enumerate}[(i)]
        \item Introduce the local sub-ring $\prod_{1\le i\le n}'\clO_{\sE,p_i}\subset\prod_{1\le i\le n}\clO_{\sE,p_i}$, as in Example \ref{exa:genus-0-pinching}, consisting of tuples $(h_i)_{1\le i\le n}$ such that $h_i(p_i)\in k$ is independent of $i$. Denote the maximal ideal of this local ring by $\fm'$.
        \item The elements \eqref{eqn:coords-comparing-to-model} provide a local $k$-algebra map 
        \begin{align}\label{eqn:intermediate-map-zeta}
            \zeta:Q\to\textstyle\prod_{1\le i\le n}'\clO_{\sE,p_i},
        \end{align}
        where $Q$ is the coordinate ring of the union $E$ of the lines $\{T_{C,p_i}^\vee\}_{1\le i\le n}$ with all their origins identified (Definition \ref{def:partial-norm}). Denote the maximal ideal of $Q$ defining $0\in E$ by $\fm_{Q,0}$. Then, the map $\zeta$ from \eqref{eqn:intermediate-map-zeta}, after passing to the $\fm_{Q,0}$-adic completion of $Q$ and the $\fm'$-adic completion of $\prod_{1\le i\le n}'\clO_{\sE,p_i}$, induces an isomorphism $\zeta^\wedge$ of complete local $k$-algebras. Note that $\zeta^\wedge$ sets up a bijection between sub-algebras of $Q$ containing a power of $\fm_{Q,0}$ and sub-algebras of $\prod_{1\le i\le n}'\clO_{\sE,p_i}$ containing a power of $\fm'$.
        \item Recall that we have the sub-algebra $\clO_{\sY_0,\overline{c}} = \fA\subset Q$ (Definition \ref{def:partial-norm}), corresponding to the local ring of the model singularity $\sY_0$ at its unique singular point $\overline{c}$. Corollary \ref{cor:coord-ring-of-curve-sing-final} shows that $\fA$ contains a power of $\fm_{Q,0}$ and is of codimension $g(C)$ in $Q$. Thus, as noted above in (ii), $\zeta^\wedge$ allows us to transfer $\fA\subset Q$ to obtain a sub-algebra of $\prod_{1\le i\le n}'\clO_{\sE,p_i}$, containing a power of $\fm'$. This is exactly \eqref{eqn:actual-from-model-local-ring}.
    \end{enumerate} 
   
   Denote the image of the ghost $C$ under $f$ by $q\in X$. The condition that $f$ factors through $\nu_f$ then takes the following  form in terms of the above discussion: the ring map $\clO_{X,q}\to\prod_{1\le i\le n}'\clO_{\sE,p_i}$, induced by $f|_\sE$, must have image contained in the sub-ring $\clO_{\sS_f,s}$.  This translates to $g(C)$ linearly independent conditions on the Taylor expansion of $f|_\sE$ at the points $p_i$ for $1\le i\le n$, using the coordinates \eqref{eqn:coords-comparing-to-model}. We make these Taylor expansion conditions explicit in the examples given in \textsection\ref{subsec:example-model-sing}.
\end{remark}

It turns out that stable maps with model ghosts are always eventually smoothable. Before stating and proving a theorem to this effect, we need the following auxiliary proposition (whose proof depends on Appendix \ref{appendix:genus-0-pinchings}). To understand the statement of this proposition, the reader should recall the discussion of local smoothability of pinchings given in \textsection\ref{subsec:smoothability-of-pinching}.

\begin{Proposition}[Local smoothability of genus $0$ pinching]\label{prop:local-smoothability-genus-0-pinching}
    Let $\nu:\sC\to\sS$ be a pinching. Let $s_1,\ldots,s_r\in\sS$ be the points over which $\nu$ fails to be an isomorphism. For $1\le i\le r$, let $C_i\subset\sC$ be the set-theoretic inverse image of $s_i$ under $\nu$. 
    
    Fix $1\le i\le r$. Denote the closure of $\sC\setminus C_i$ by $\Sigma_i\subset\sC$. Suppose that no irreducible component of $\Sigma_i$ meets $C_i$ more than once and that $H^1(C_i,\clO_{C_i}) = 0$. Then, the pinching $\nu:\sC\to\sS$ is smoothable near $s_i$, in the sense of Definition \ref{def:locally-smoothable-pinching}.
\end{Proposition}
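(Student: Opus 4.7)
The plan is to work étale-locally at $s_i$ and build the smoothing as a smooth surface containing $\sC_{U_i,\varphi_i}$ as its central fiber, with the ghost $C_i$ realized as a contractible exceptional configuration.

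First, by Example \ref{exa:genus-0-pinching}, the hypotheses on $C_i$ and $\Sigma_i$ force the $\fm_{\sS,s_i}$-adic completion of $\clO_{\sS,s_i}$ to be isomorphic to $k[[x_1,\ldots,x_n]]/(x_jx_{j'}:1\le j<j'\le n)$, where $n=\#(C_i\cap\Sigma_i)$. By Artin approximation, this formal isomorphism can be algebraized to an étale morphism $\varphi_i:(U_i,s_i)\to(V_i,s_i')$, where $V_i=\Spec k[x_1,\ldots,x_n]/(x_jx_{j'}:1\le j<j'\le n)$ is the reduced union of the $n$ coordinate axes in $\bA^n$ and $U_i\subset\sS$ is an affine open neighborhood of $s_i$ with $s_i$ as its only singular point. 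The problem thus reduces to constructing compatible smoothings of the pinching $\nu_{U_i,\varphi_i}:\sC_{U_i,\varphi_i}\to V_i$, whose upstairs curve is $C_i$ with the $n$ affine lines $\tilde V_i$ attached, one at each marked point.

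Next, I would construct $\clX_i$ directly as a smooth surface fibered over a smooth affine curve $B_i$, with central fiber $\sC_{U_i,\varphi_i}$ and smooth generic fibers. Since $\sC_{U_i,\varphi_i}$ has only ordinary nodes as singularities, it can be embedded locally at each node as the central fiber of the standard smoothing $xy=t$, and these local pictures glue to a smooth surface structure on $\clX_i$. The crucial feature is that, because $C_i$ is a tree of smooth rational curves (by $H^1(C_i,\clO_{C_i})=0$) with $n$ marked points on pairwise distinct components, the self-intersections of the components of $C_i$ inside $\clX_i$ can be chosen so that $C_i$ is contractible in $\clX_i$ in the algebraic sense --- concretely, by realizing $C_i$ as the total exceptional divisor of an iterated blow-up sequence on a smooth surface. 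The downstairs family $\clY_i\to B_i$ is then defined as the contraction of $C_i\subset\clX_i$, so that the induced morphism $\psi_i:\clX_i\to\clY_i$ is an isomorphism over $\clY_i\setminus\{s_i'\}$ and collapses $C_i$ to $s_i'$; the central fiber of $\clY_i$ is then $V_i$ by construction. For $n=2$, $\clY_i$ is itself smooth (e.g.\ $\{xy=t\}$, with $\clX_i$ its blow-up at the origin of the central fiber), while for $n\ge 3$ the surface $\clY_i$ acquires a singularity at $(s_i',0)$ which is resolved by $\clX_i$ with exceptional divisor $C_i$.

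The main obstacle is the inductive realization of an arbitrary tree $C_i$, with the prescribed distribution of marked points on its components, as the exceptional configuration of an iterated blow-up sequence whose contraction produces $V_i$ inside the central fiber. This is carried out in Appendix \ref{appendix:genus-0-pinchings} by induction on the number of irreducible components of $C_i$: the base case is a single blow-up of a smooth surface introducing one exceptional $\bP^1$, and the inductive step attaches one further $\bP^1$ via a single blow-up at a smooth point on the current central fiber, chosen on the correct existing component according to the tree structure of $C_i$. The hypothesis that the marked points lie on pairwise distinct components of $\Sigma_i$ is what permits the strict transforms of the $n$ branches of $\tilde V_i$ to be routed to distinct components of $C_i$ without collisions; without this, the combinatorics could force two branches to attach to the same $\bP^1$-component of $C_i$.
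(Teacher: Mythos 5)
Your reduction to the model $V_i$ given by the union of coordinate axes is acceptable in outcome but not via the cited tool: Artin approximation only produces a \emph{common} \'etale neighborhood of $(U_i,s_i)$ and $(V_i,s_i')$, not an \'etale morphism $U_i\to V_i$. (The morphism you want can instead be written down directly: the chosen local coordinates $x_j$ at the attaching points give elements $(0,\ldots,x_j,\ldots,0)$ of $(\nu_*\clO_\sC)_{s_i}=\clO_{\sS,s_i}$, and these define a map to the axes which is \'etale at $s_i$; this is essentially the reduction the paper makes.) The genuine gap is in your contraction step. First, in a smoothing $\clX_i\to B_i$ with smooth total surface over a smooth curve, the self-intersections of the components of the central fibre are not yours to choose: each component $E$ satisfies $E^2=-\#\{\text{nodes of the central fibre on }E\}$ (negative definiteness of the intersection matrix of $C_i$ then holds automatically, by Zariski's lemma). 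Second, and decisively, your mechanism of realizing $C_i$ as the total exceptional divisor of an iterated blow-up of a smooth surface means contracting $C_i$ to a \emph{smooth} point, which is impossible once $n\ge 3$: if $\clY_i$ were smooth at the image point, its central fibre would be a curve germ on a smooth surface, with Zariski tangent space of dimension at most $2$, whereas the required central fibre $V_i$ has embedding dimension $n$ at $s_i'$ (Example \ref{exa:genus-0-pinching}); equivalently, a planar $n$-fold point has delta invariant $\binom n2\neq n-1$. Your own concluding remark that $\clY_i$ is singular for $n\ge 3$ contradicts the induction you propose, whose base case and inductive step are single blow-ups at smooth points. Already for $C_i$ irreducible with $n=3$ branches, $C_i$ is a $(-3)$-curve and its contraction is the cone over the rational normal cubic, not a smooth point.

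Even if one replaces ``contract to a smooth point'' by ``contract the negative definite configuration $C_i$'', the two assertions you make ``by construction'' are precisely where the work lies: (a) such a contraction exists a priori only as an algebraic (or analytic) space, while Definition \ref{def:locally-smoothable-pinching} requires schemes --- the paper flags exactly this issue before building its contraction; and (b) flatness of $\clY_i\to B_i$ together with the identification of the \emph{scheme-theoretic} central fibre of $\clY_i$ with the reduced, seminormal curve $V_i$ (rather than a non-reduced or planar degeneration) needs proof. The paper's Appendix~\ref{appendix:genus-0-pinchings} handles both by constructing the contraction projectively: starting from a smoothing obtained from $\Mbar_{0,m}$ with sections $\sigma_j$ through points of the external $\bP^1$'s, it uses the base-point-free linear system of $\clL=\clO_{\overline\clX}(\sum\sigma_j)$ to map to $\bP^n\times B$, gets flatness of the image for free (a dominant morphism to a smooth curve), and pins down the central fibre as the union of $n$ lines with independent tangents by a Hilbert polynomial comparison (Proposition \ref{prop:local-smoothability-genus-0-pinching-appx}). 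Your route could likely be repaired along these lines, but as written the key contraction step fails for $n\ge 3$ and the flatness/central-fibre identification is missing.
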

\begin{proof}
    Let $\Sigma_{i,1},\ldots,\Sigma_{i,n}$ be an enumeration of the irreducible components of $\Sigma_i$ which meet $C_i$. By assumption, for $1\le j\le n$, the intersection $C_i\cap\Sigma_{i,j}$ consists of a single point, which we denote by $p_j$. Since we are investigating \emph{local} smoothability at $s_i$, we may replace $\sC$ by the sub-curve $C_i\cup\bigcup_{1\le j\le n}\Sigma_{i,j}$. It is enough to exhibit $\nu$ as an \'etale pullback (near $s_i$) of another locally smoothable pinching. By choosing a local coordinate in $\clO_{\Sigma_{i,j},p_j}$, for $1\le j\le n$, this means we may replace each $\Sigma_{i,j}$ by a copy of $\bP^1$.
    
    Let us summarize the reductions so far. We now have the pinching of a genus $0$ prestable curve at connected sub-curve. Moreover, the complement of this sub-curve consists of $n$ pairwise disjoint copies of $\bA^1$. We are left to show that this special kind of pinching is locally smoothable. The last assertion is exactly the content of Proposition \ref{prop:local-smoothability-genus-0-pinching-appx}.
\end{proof}

We now come to the main result on stable maps with model ghosts, which was stated as Theorem \ref{thm:intro-model-ghost} in \textsection\ref{subsec:summary-of-results}.

\begin{Theorem}[Stable maps with model ghosts are eventually smoothable]\label{thm:model-ghost-implies-eventual-smoothability}
    Let $f:\sC\to X$ be a stable map with model ghosts in the sense of Definition \ref{def:stable-map-with-model-ghosts}. Then, it is eventually smoothable in the sense of Definition \ref{def:eventual-smoothability}.
\end{Theorem}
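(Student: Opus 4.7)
The plan is to reduce the theorem to Theorem \ref{thm:local-criterion-for-smoothability} via the two local smoothability propositions (\ref{prop:local-smoothability-of-model-pinching} and \ref{prop:local-smoothability-genus-0-pinching}) and Lemma \ref{lem:actual-implies-formal-for-pinching}. Given a stable map with model ghosts $f:\sC\to X$, I would begin from the factorization $f = g\circ\nu_f$ for the distinguished pinching $\nu_f:\sC\to\sS_f$ of $\sC$ at its ghost sub-curve supplied by Definition \ref{def:stable-map-with-model-ghosts}. To prove that $f$ is locally formally smoothable in the sense of Definition \ref{def:local-smoothability-stable-map}, it suffices by Lemma \ref{lem:actual-implies-formal-for-pinching} to verify that $\nu_f$ is locally smoothable (Definition \ref{def:locally-smoothable-pinching}) near each pinched point $s\in\sS_f$; Theorem \ref{thm:local-criterion-for-smoothability} will then deliver the eventual smoothability of $f$.

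Fix a ghost component $C\subset\sC$ with $\nu_f(C) = \{s\}$. I will argue separately in the two cases of Definition \ref{def:stable-map-with-model-ghosts}. In case (a), when $H^1(C,\clO_C) = 0$, Proposition \ref{prop:local-smoothability-genus-0-pinching} applies once I verify the hypothesis that no irreducible component of $\Sigma := \overline{\sC\setminus C}$ meets $C$ more than once. Since distinct ghost components of $\sC$ cannot intersect (they would otherwise lie in the same connected component of the ghost sub-curve), the intersection $C\cap\Sigma$ coincides with $C\cap\sE = \{p_1,\ldots,p_n\}$; the stipulation in Definition \ref{def:stable-map-with-model-ghosts} that $p_1,\ldots,p_n$ lie on pairwise distinct irreducible components of $\sE$ is precisely the required non-self-intersection property.

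In case (b), when $C$ is non-singular of genus $g(C)\ge 1$, Definition \ref{def:stable-map-with-model-ghosts} furnishes an affine neighborhood $U\subset\sS_f$ of $s$, an \'etale morphism $\varphi_s:(U,s)\to(\sY_0,\overline{c})$, and the pullback identification \eqref{eqn:pullback-of-model-pinching} exhibiting $\nu_f|_{\nu_f^{-1}(U)}$ as the $\varphi_s$-pullback of the model pinching $\nu:\sX_0\to\sY_0$. Proposition \ref{prop:local-smoothability-of-model-pinching} provides a smoothing $\Phi:\sX\to\sY$ of $\nu:\sX_0\to\sY_0$ over $(\bA^1,0)$ via $\pi_\sY$. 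I would feed this data directly into Definition \ref{def:locally-smoothable-pinching}, using $\varphi_s:(U,s)\to(\sY_0,\overline{c})$ in the role of the \'etale morphism $\varphi_i$ and $\Phi:\sX\to\sY$ over $\bA^1$ in the role of $\psi_i:\clX_i\to\clY_i$ over $B_i$; the identification $\sC_{U,\varphi_s} = U\times_{\sY_0}\sX_0$ from Remark \ref{rem:etale-base-change-pinching}, combined with \eqref{eqn:pullback-of-model-pinching}, ensures that this family has the correct central fibre $\nu_f^{-1}(U)\to U$.

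The substantive obstacle is not expected to lie in the reduction above but in the two propositions on which it rests: Proposition \ref{prop:local-smoothability-of-model-pinching} requires the construction (deferred to Appendix \ref{appendix:analysis-model-smoothing}) of the proper birational contraction $\Phi:\sX\to\sY$ that realises the model smoothing, while Proposition \ref{prop:local-smoothability-genus-0-pinching} depends on the parallel analysis in Appendix \ref{appendix:genus-0-pinchings} of smoothings of genus $0$ pinchings of trees of rational curves. Once these appendix results are in hand, assembling the two cases completes the proof of the theorem.
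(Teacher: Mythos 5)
Your proposal is correct and follows essentially the same route as the paper: reduce via Lemma \ref{lem:actual-implies-formal-for-pinching} and Theorem \ref{thm:local-criterion-for-smoothability} to local smoothability of $\nu_f$ near each pinched point, then invoke Proposition \ref{prop:local-smoothability-genus-0-pinching} in the genus $0$ case and Proposition \ref{prop:local-smoothability-of-model-pinching} (via the \'etale pullback data of Definition \ref{def:stable-map-with-model-ghosts}) in the positive genus case. Your explicit verification of the hypotheses of Proposition \ref{prop:local-smoothability-genus-0-pinching} and of how the model smoothing feeds into Definition \ref{def:locally-smoothable-pinching} only spells out what the paper's proof leaves implicit.
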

\begin{proof}
    We have a pinching $\nu_f:\sC\to\sS_f$, as in Definition \ref{def:stable-map-with-model-ghosts}, through which $f$ factors. Using the local criterion for eventual smoothability (Theorem \ref{thm:local-criterion-for-smoothability}), it suffices to check that $\nu_f$ is \emph{formally smoothable} (in the sense of Definition \ref{def:locally-formally-smoothable-pinching}) near the image $s\in\sS_f$ of each ghost component $C\subset\sC$. By Lemma \ref{lem:actual-implies-formal-for-pinching}, it suffices to check \emph{local smoothability} of $\nu_f$ (in the sense of Definition \ref{def:locally-smoothable-pinching}) instead of local formal smoothability.

    With this in mind, fix a ghost component $C\subset\sC$ with image $s\in\sS_f$ under $\nu_f$. When $C$ is of genus $0$, smoothability of $\nu_f$ near $s$ follows from Proposition \ref{prop:local-smoothability-genus-0-pinching}. When $C$ is of positive genus (and therefore non-singular by Definition \ref{def:stable-map-with-model-ghosts}), smoothability of $\nu_f$ near $s$ follows from Proposition \ref{prop:local-smoothability-of-model-pinching}. This completes the proof.
\end{proof}

\begin{remark}[Generalizing the notion of stable maps with model ghosts]\label{rem:etale-subtleties}
    We expect that it is possible to generalize Definition \ref{def:stable-map-with-model-ghosts} by dropping the requirement that the points $p_1,\ldots,p_n\in\sE$ lie on pairwise distinct irreducible components of $\sE$. This requires replacing the Zariski neighborhood $U$ in \eqref{eqn:pullback-of-model-pinching} by an \emph{\'etale neighborhood} of $s\in\sS_f$. The reason for this is that a globally irreducible curve with an analytically reducible singularity\footnote{Recall that $\sS_f$ is said to be \emph{analytically reducible} at the closed point $s\in\sS_f$ if the completed local ring $\clO_{\sS_f,s}^\wedge$ is not an integral domain. This can happen even when $\clO_{\sS_f,s}$ is itself an integral domain.} remains irreducible Zariski locally (and becomes reducible only \emph{\'etale locally}, see Figure \ref{fig:etale-local-reducibility}). 

    \begin{figure}[ht]
    \centering
\includegraphics[width=4cm]{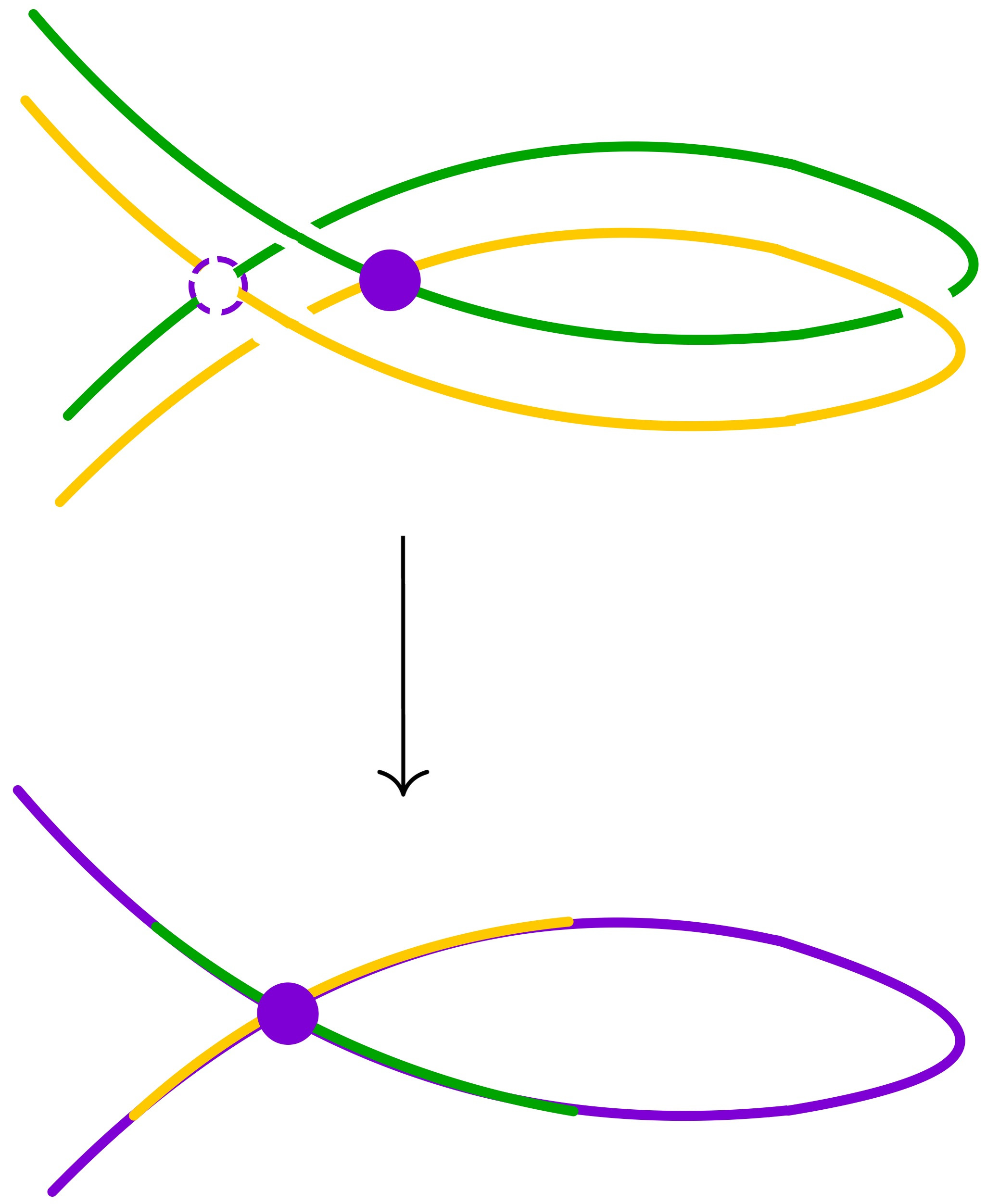}
    \caption{\'Etale local reducibility of an irreducible nodal curve. The analytic branches at the node (shown in yellow and green) become separate irreducible components after passing to an \'etale neighborhood of the node.}
    \label{fig:etale-local-reducibility}
\end{figure}
    
    For Theorem \ref{thm:model-ghost-implies-eventual-smoothability} to remain valid with this generalized definition, we also have to replace the Zariski neighborhoods $(U_i,s_i)$ appearing in the definition of local formal smoothability of a pinching (Definition \ref{def:locally-formally-smoothable-pinching}) by \'etale neighborhoods, and correspondingly, generalize the proof of the local criterion for eventual smoothability (Theorem \ref{thm:local-criterion-for-smoothability}). Working with \'etale neighborhoods would also allow us to drop the assumption in Proposition \ref{prop:local-smoothability-genus-0-pinching} stating that no irreducible component of $\Sigma_i$ meets $C_i$ more than once. 
\end{remark}

%%%%%%%%%%%%%%%%%%%%%%%%%%%%%%%%%%%%%%%%%%%%%%%%%%%%%%%%
\subsection{Examples}\label{subsec:example-model-sing}
We now compute the model singularity $\sY_0$ for some interesting choices of $C$, $\{p_i\}_{1\le i\le n}$ and $\{d_i\}_{1\le i\le n}$. We also explain what it concretely means for a stable map $f:\sC\to X$ with a ghost component $C$, to factor through a pinching of $\sC$ at $C$ which arises as an \'etale pullback of the model pinching $\overline{\sX}_0\to\overline{\sY}_0$. As mentioned in Remark \ref{rem:model-ghost-in-coord}, this factorization condition (which we refer to, in the examples below, as the \emph{model ghost condition}) is expressed in terms of the Taylor expansions of $f|_\sE$ at $p_1,\ldots,p_n\in C\cap\sE$. 

In the subsections below, we start with an abstract discussion, which is then specialized to obtain some concrete computational examples.

%%%%%%%%%%%%%%%%%%%%%%%%%%%%%%%%%%%%%%%%%%%%%%%%%%%%%%%%
\subsubsection{Monomial curves and Weierstrass points}\label{subsubsec:monomial-curves}

This set of examples is motivated by the results in \cite[\textsection{13}]{Pinkham-thesis}. Suppose $n = 1$ and write $p = p_1$. We then have $d_1 = 1$, $\Delta = p$ and
\begin{align*}
    A_m = H^0(C,\clO_C(m\cdot p))
\end{align*}
for $m\ge 0$. Define $W\subset\bZ_{\ge 0}$ to be the \emph{Weierstrass semigroup} of $C$ at $p$, i.e., we have $m\in W$ if and only if there exists a rational function on $C$ which is regular on $C\setminus\{p\}$ and has a pole of order exactly $m\ge 0$ at $p$. Using the long exact sequence in cohomology associated to \eqref{eqn:ses-Q} and the isomorphism \eqref{eqn:Q-local-stalks}, we get
\begin{align*}
    \fA = \bigoplus_{m\in W}(T_{C,p})^{\otimes m}.
\end{align*}
Via the choice of a nonzero tangent vector $x\in T_{C,p}$, we may identify $\fA$ with the sub-algebra $k[x^W]\subset k[x]$ generated by the monomials $\{x^m\}_{m\in W}$. The scheme $\Spec k[x^W]$ is called the \emph{monomial curve} associated to the semigroup $W$. Observe that the last assertion of Corollary \ref{cor:coord-ring-of-curve-sing-final} recovers the familiar fact (known as Weierstrass' gap theorem) that $\bZ_{\ge 0}\setminus W$ consists of exactly the $g(C)$ \emph{gap values} of $C$ at $p$.

\begin{Example}[Non-Weierstrass point]\label{exa:non-W-pt}
    If $p\in C$ is not a Weierstrass point, we have $W = \{0\}\cup\{j\in\bZ:j>g\}$, where $g:=g(C)\ge 1$. Note that the condition of being a non-Weierstrass point is vacuous when $g=1$.
    
    The semigroup $W$ is generated by the integers $j$ satisfying $g+1\le j\le 2g + 1$. Thus, $\fA$ can be identified with the image of the ring map $k[u_1,\ldots,u_{g+1}]\to k[x]$ given by $u_i\mapsto x^{g+i}$. Geometrically, this means that $\sY_0 = \Spec\fA$ is the image of the morphism $\bA^1\to\bA^{g+1}$ given by $x\mapsto(x^{g+1},\ldots,x^{2g+1})$.

\begin{figure}[ht]
  \subcaptionbox*{}[.14\linewidth]{%
    \includegraphics[width=\linewidth]{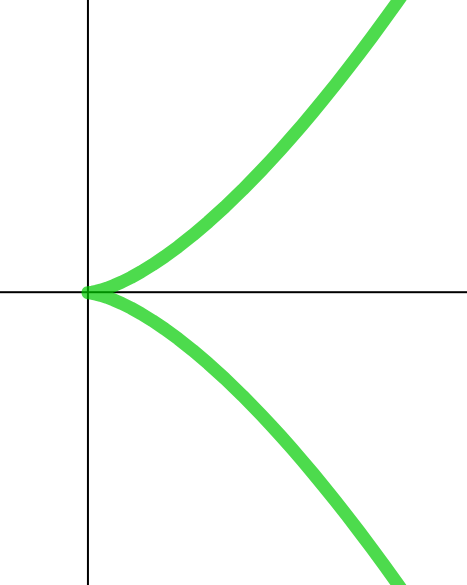}%
  }%
 \hskip18ex
  \subcaptionbox*{}[.31\linewidth]{%
    \includegraphics[width=\linewidth]{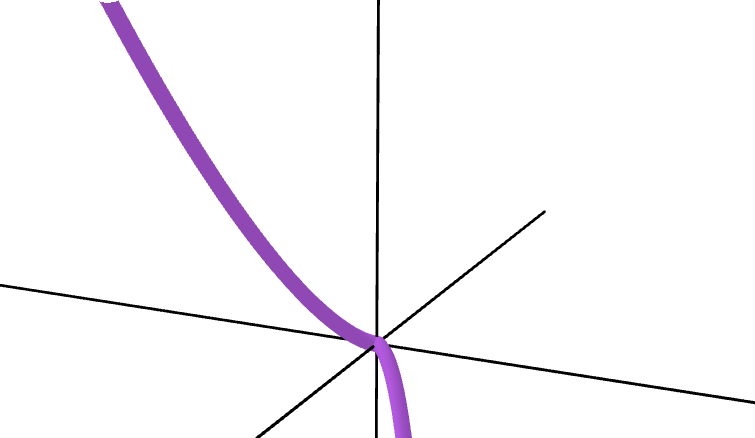}%
  }
  \caption{The model singularity, from Example \ref{exa:non-W-pt}, associated to a general point on a curve of genus $1$ (left) or genus $2$ (right).}
  \label{fig: non-W}
\end{figure}

  The model ghost condition takes a simple form in this case. It just states that the first $g$ derivatives of $f|_\sE$ at $p$ are required to vanish.
\end{Example}

\begin{Example}[Hyperelliptic fixed point]\label{exa:hyperelliptic}
    Assume $g:=g(C)\ge 2$, and that we have a degree $2$ morphism $C\to\bP^1$. Let $\iota$ be the generator of $\Aut(C/\bP^1) = \bZ/2\bZ$ and let $p\in C$ be a fixed point of $\iota$. When $g = 2$, the canonical linear series defines a degree $2$ morphism to $\bP^1$, in this case, $p$ is one of the six Weierstrass points on $C$.
    
    The semigroup $W$ is generated by $2$ and $2g + 1$. This means $\fA$ is the image of $k[u,v]\to k[x]$ given by $u\mapsto x^2$ and $v\mapsto x^{2g + 1}$ and it follows that $\sY_0 = \Spec \fA = \Spec k[u,v]/(v^2 - u^{2g + 1})$ is a higher order cusp.

    \begin{figure}[H]
    \centering
    \includegraphics[width=2.1cm]{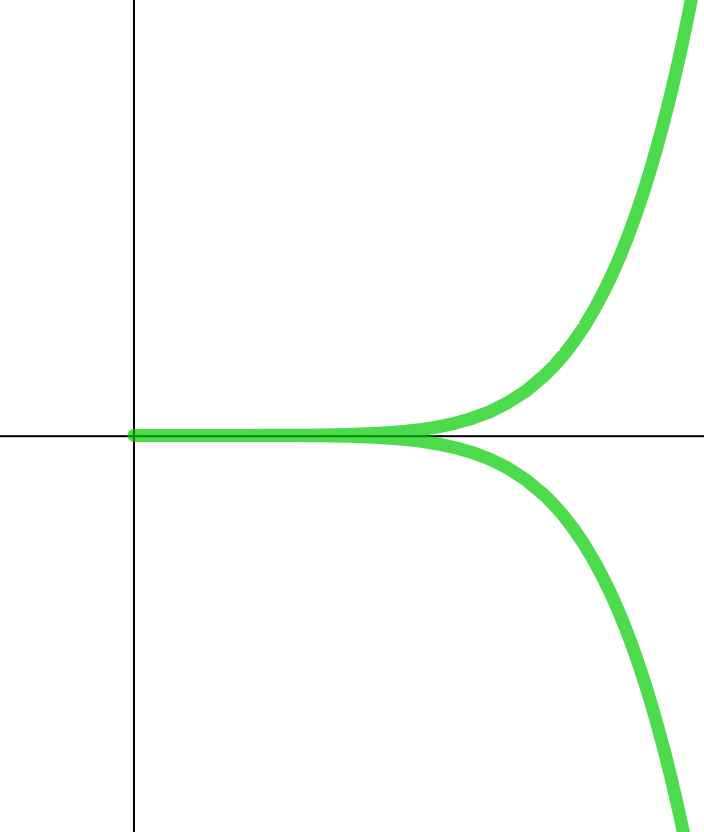}
    \caption{The higher order cusp $v^2=u^{13}$, from Example \ref{exa:hyperelliptic}, associated to a fixed point of the involution on a hyperelliptic curve of genus $6$.}
    \label{fig: hyperelliptic 2,2g+1}
\end{figure}

In this case, the model ghost condition says that there is a local coordinate $z\in \clO_{\sE,p}$ such that the Taylor expansion of $f|_\sE$ at $p$ contains no odd power of $z$ smaller than $z^{2g+1}$.

\end{Example}

\begin{Example}\label{exa:special-genus-3}
    Suppose $g:=g(C)\ge 3$ and let $p\in C$ be a point with Weierstrass semigroup $W = \bZ_{\ge 0}\setminus\{1,\ldots,g-1,g+1\}$. Such $(C,p)\in\clM_{g,1}$ exist by \cite[Theorem 1]{Eisenbud-Harris-Wpoint}. 
    
    The semigroup $W$ is generated by $g$, $2g+1$ and the integers $j$ satisfying $g+2\le j\le 2g-1$. As in Example \ref{exa:non-W-pt}, this set of generators shows that $\sY_0$ can be identified with the image of the morphism $\bA^1\to\bA^g$ given by $x\mapsto (x^g,x^{g+2},\ldots,x^{2g-1},x^{2g+1})$.
    
    For an explicit example, take $C\subset\bP^2$ be the non-singular genus $3$ curve defined by the polynomial $X_0^3X_2-X_1^4 + X_2^4$, with $[X_0:X_1:X_2]$ being the homogeneous coordinates on $\bP^2$. Let $p\in C$ be the point $[0:1:1]$. The local intersection multiplicity at $p$ of $C$ with any line in $\bP^2$ takes values in the set $\{0,1,3\}$. Since $C$ is a \emph{canonical curve},\footnote{This means that the embedding $C\subset\bP^2$ is defined by the linear system associated to the canonical bundle $\omega_C$ of the curve.} it follows that $W = \bZ_{\ge 0}\setminus\{1,2,4\}$. The semigroup $W$ is generated by $3,5$ and $7$. Thus, $\sY_0$ is the image of the morphism $\bA^1\to\bA^3$ given by $x\mapsto(x^3,x^5,x^7)$.
    \begin{figure}[ht]
    \centering
\includegraphics[width=2.8cm]{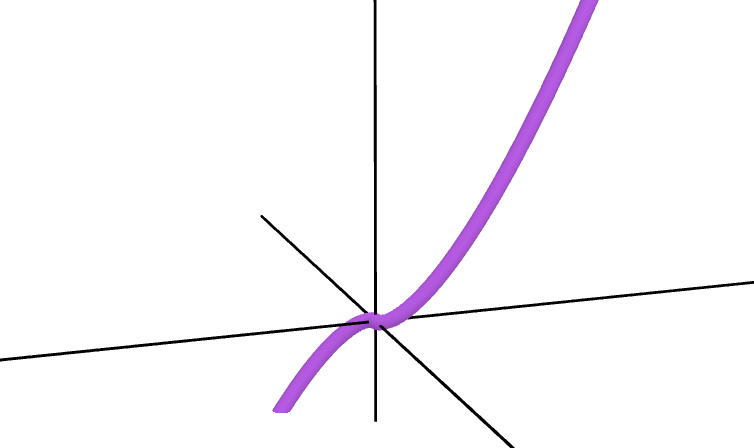}
    \caption{The model singularity, from Example \ref{exa:special-genus-3}, associated to a point on a curve of genus $3$ with Weierstrass semigroup $\bZ_{\ge 0}\setminus\{1,2,4\}$.}
    \label{fig: parametric curve 3,5,7}
\end{figure}

In this case, the model ghost condition can be divided into two assertions. First, we have the vanishing of the first $g-1$ derivatives of $f|_\sE$ at $p$. Second, there is a local coordinate $z\in\clO_{\sE,p}$ such that the Taylor expansion of $f|_\sE$ doesn't contain the power $z^{g+1}$.

\end{Example}

%%%%%%%%%%%%%%%%%%%%%%%%%%%%%%%%%%%%%%%%%%%%%%%%%%%%%%%%%
\subsubsection{Several marked points compared to genus}\label{subsubsec:ghost-with-many-eff}

Suppose that we have $n\ge 2\cdot g(C)-1$ and $d_1=\cdots=d_n=1$. In this case, we have $\Delta = \sum p_i$ and
\begin{align*}
    A_m = H^0(C,\clO_C(m\Delta))
\end{align*}
for $m\ge 0$. We have $\fA_0 = Q_0 = k$, and using Corollary \ref{cor:coord-ring-of-curve-sing-final} and $n>2\cdot g(C) - 2$, we get $\fA_m = Q_m = \bigoplus_{1\le i\le n} (T_{C,p_i})^{\otimes m}$ for $m\ge 2$. To complete the description of $\fA$, we are left to determine $\fA_1$. For this, use the long exact sequence in cohomology associated \eqref{eqn:ses-Q} with $m=1$ to obtain the short exact sequence
\begin{align}\label{eqn:ghost-with-many-eff}
    0\to\fA_1\to\bigoplus_{1\le i\le n} T_{C,p_i}\to H^1(C,\clO_C)\to 0.
\end{align}
Here, we have used $n>2\cdot g(C) - 2$ and Serre duality to get $H^1(C,\clO_C(\Delta)) = 0$. Now, using \eqref{eqn:ghost-with-many-eff} and Serre duality, we see that a tuple $(v_i)\in\bigoplus_{1\le i\le n} T_{C,p_i}$ lies in $\fA_1$ if and only if we have $\sum_i\alpha_{p_i}(v_i) = 0$ for all global regular $1$-forms $\alpha$ on $C$.

\begin{Example}[Genus $1$ with several points]\label{exa:genus-1-with-many-eff}
    Assume $g(C) = 1$ and $n\ge 2$. Using a trivialization of the tangent bundle $T_C$, we obtain canonical isomorphisms $T_{C,p_i} = T_{C,p_j}$ for $1\le i,j\le n$. Choose nonzero vectors $x_i\in T_{C,p_i}$ which correspond to each other under these isomorphisms. 
    
    Then, $\fA$ is the sub-algebra of
    $Q = k[x_1,\ldots,x_n]/(x_ix_j:1\le i<j\le n)$ generated by the elements $x_i - x_j$ for $1\le i,j\le n$ and $x_i^2$ for $1\le i\le n$. Breaking the symmetry between $x_1,\ldots,x_n$, we obtain $\fA$ as the image of $k[y_1,\ldots,y_n]\to Q$ given by $y_1\mapsto x_1^2$ and $y_i\mapsto x_1 - x_i$ for $2\le i\le n$. 
    
    Geometrically, this means that $\sY_0$ is embedded in $\bA^n = \Spec k[y_1,\ldots,y_n]$ and its $n$ irreducible components are the last $n-1$ coordinate axes and the image of the morphism $\bA^1\to\bA^n$ given by $x_1\mapsto(x_1^2,x_1,\ldots,x_1)$.
\begin{figure}[ht]
 \subcaptionbox*{}[.15\linewidth]{%
    \includegraphics[width=\linewidth]{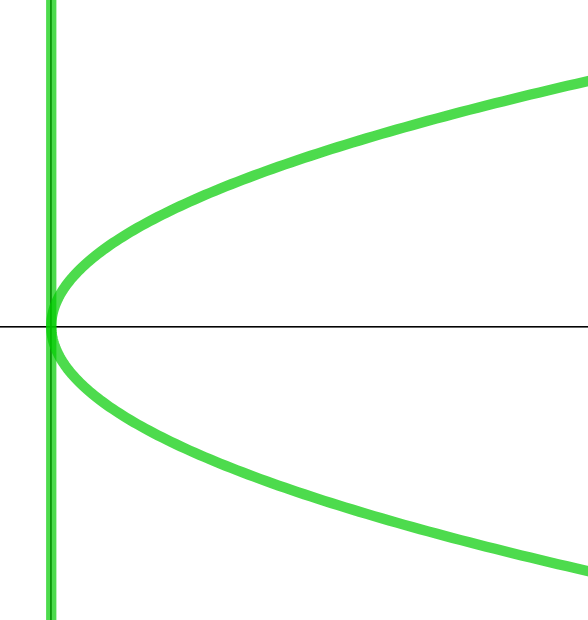}%
  }%
  \hskip18ex
  \subcaptionbox*{}[.23\linewidth]{%
    \includegraphics[width=\linewidth]{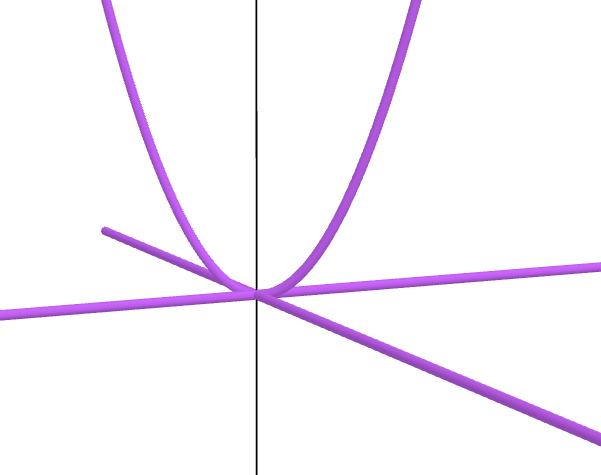}%
  }
  \caption{The model singularity, from Example \ref{exa:genus-1-with-many-eff}, associated to two points (left) or three points (right) on a curve of genus $1$.}
  \label{fig: genus1 with several points}
\end{figure}

In this case, the model ghost condition says that the first derivatives 
\begin{align*}
    d(f|_\sE)_{p_i}:T_{\sE,p_i}\to T_{X,q}
\end{align*} 
have linearly dependent images, where $q\in X$ is the image of the ghost $C$ under $f$. This is consistent with Figure \ref{fig: genus1 with several points}, where the branches of the model singularity have linearly dependent tangent lines at the origin.
    
\end{Example}

\begin{Example}[Genus $2$ with 3 points]\label{exa:genus-2-with-many-eff}
    Assume $g(C) = 2$ and $n = 3$. Choose nonzero vectors $x_i\in T_{C,p_i}$ for $1\le i\le 3$. After re-ordering $p_1,p_2,p_3$, we may assume that the images of $x_1$ and $x_2$, under the map from \eqref{eqn:ghost-with-many-eff}, form a basis of $H^1(C,\clO_C)$. This means that $p_1$ and $p_2$ are not exchanged by the hyperelliptic involution of $C$. Now, a basis of $\fA_1$ is given by the element
    \begin{align*}
        x_3 - a_1x_1 - a_2x_2
    \end{align*}
    with $a_1,a_2\in k$ being the coefficients in the linear expression for the image of $x_3$ in $H^1(C,\clO_C)$ in terms of the images of $x_1,x_2$. We now use this information to characterize $\fA$ as a sub-algebra of $Q = k[x_1,x_2,x_3]/(x_1x_2,x_1x_3,x_2x_3)$.

    The general situation is when $a_1$ and $a_2$ are both nonzero, i.e., no pair of points among $p_1,p_2,p_3$ is exchanged by the hyperelliptic involution. In this case, we scale $x_1$ and $x_2$ to assume that $a_1 = a_2 = -1$, i.e., $\fA_1$ is spanned by $x_1 + x_2 + x_3$. Thus, $\fA$ is the sub-algebra of $Q$ generated by the elements $x_1^2,x_2^2$, and $x_1+x_2+x_3$. Geometrically, this means that $\sY_0$ is embedded in $\bA^3$ and its three irreducible components are the the images of the morphisms $\bA^1\to\bA^3$ given by
    \begin{align*}
        x_1&\mapsto (x_1^2,0,x_1), \\
        x_2&\mapsto (0,x_2^2,x_2), \\
        x_3&\mapsto (0,0,x_3).
    \end{align*}

    \begin{figure}[ht]
    \centering
\includegraphics[width=2cm]{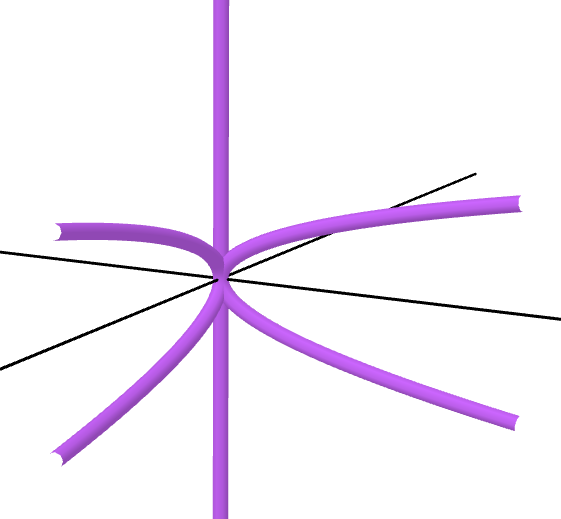}
    \caption{The model singularity, from Example \ref{exa:genus-2-with-many-eff}, associated to three general points on a curve of genus $2$.}
    \label{fig: Example genus2 with 3 points}
\end{figure}
In this general situation, the model ghost condition says that the images of the first derivatives $d(f|_{\sE})_{p_i}:T_{\sE,p_i}\to T_{X,q}$ are pairwise linearly dependent, for $1\le i\le 3$, where $q\in X$ is the image of the ghost $C$ under $f$. This is consistent with Figure \ref{fig: Example genus2 with 3 points}, where the three branches of the model singularity share a common tangent line at the origin.

    The special situation is when one of $a_1$ and $a_2$ is zero. Note that we can't have $a_1 = a_2 = 0$ since $g(C) > 0$. After re-ordering $p_1,p_2$ and scaling $x_1,x_2$, we may assume that $a_1 = 1$ and $a_2 = 0$. Thus, $\fA$ is the sub-algebra of $Q$ generated by the elements $x_1-x_3,x_1^2,x_2^2$, and $x_2^3$. Geometrically, we get $\sY_0\hookrightarrow\bA^4$ and its three irreducible components are the images of the morphisms $\bA^1\to\bA^4$ given by
    \begin{align*}
        x_1 &\mapsto (x_1,x_1^2,0,0) \\
        x_2 &\mapsto (0,0,x_2^2,x_2^3) \\
        x_3&\mapsto (-x_3,0,0,0).
    \end{align*}
    
    It is interesting to note that the first and third components lie in $\bA^2\times\{(0,0)\}$ and together recover the $n=2$ case of Example \ref{exa:genus-1-with-many-eff}, which is shown on the left half of Figure \ref{fig: genus1 with several points}. Similarly, the second component lies in $\{(0,0)\}\times\bA^2$ and recovers the ordinary cusp from Example \ref{exa:non-W-pt}, which is shown on the left half of Figure \ref{fig: non-W}. 
    
    In this special situation, the model ghost condition says that the first derivatives $d(f|_{\sE})_{p_1}$ and $d(f|_{\sE})_{p_3}$ have linearly dependent images in $T_{X,q}$ while the first derivative $d(f|_{\sE})_{p_2}$ vanishes. Here, $q\in X$ is the image of the ghost $C$ under $f$. Observe that this is a degenerate version of the model ghost condition appearing in the general situation considered above.
\end{Example}

%%%%%%%%%%%%%%%%%%%%%%%%%%%%%%%%%%%%%%%%%%%%%%%%%%%%%%%%%%
\subsubsection{Marked points form a canonical divisor}\label{subsubsec:ghost-with-special-eff}

Suppose $g(C)\ge 2$, $n = 2\cdot g(C) - 2$ and $d_1=\cdots=d_n=1$. If $\Delta = \sum p_i$ is not a canonical divisor, then we get $H^1(C,\clO_C(\Delta)) = 0$ by Serre duality and the situation is essentially identical to \textsection\ref{subsubsec:ghost-with-many-eff}. Therefore, we will assume that $\Delta$ is a canonical divisor. Pick a global regular $1$-form $\varphi$ on $C$ (unique up to scaling) such that $\Delta$ is its divisor of zeros. We have $\fA_0 = Q_0 = k$, and using Corollary \ref{cor:coord-ring-of-curve-sing-final} and $n = 2\cdot g(C) - 2$, we also get $\fA_m = Q_m = \bigoplus_{1\le i\le n} (T_{C,p_i})^{\otimes m}$ for $m\ge 3$. To complete the description of $\fA$, we must determine $\fA_1$ and $\fA_2$. Use the long exact sequences in cohomology associated to \eqref{eqn:ses-Q} with $m=1$ and $m=2$ to obtain the following two exact sequences.
\begin{align}\label{eqn:ghost-with-special-eff-1}
    0\to\fA_1\to\bigoplus_{1\le i\le n} T_{C,p_i}\to H^1(C,\clO_C)\to H^1(C,\clO_C(\Delta))\to 0.\\
    \label{eqn:ghost-with-special-eff-2}
    0\to\fA_2\to\bigoplus_{1\le i\le n} (T_{C,p_i})^{\otimes 2}\to H^1(C,\clO_C(\Delta))\to 0.
\end{align}
Note that $H^1(C,\clO_C(\Delta))$ is the quotient of $H^1(C,\clO_C)$ by the subspace annihilated by $\varphi$ under the Serre duality pairing. In particular, $H^1(C,\clO_C(\Delta))$ is $1$-dimensional. Exactly as in \textsection\ref{subsubsec:ghost-with-many-eff}, from \eqref{eqn:ghost-with-special-eff-1}, we see that a tuple $(v_i)\in\bigoplus_{1\le i\le n}T_{C,p_i}$ lies in the linear subspace $\fA_1$ of codimension $g(C)-1$ if and only if we have $\sum_i\alpha_{p_i}(v_i) = 0$ for all global regular $1$-forms $\alpha$ on $C$. Finally, using \eqref{eqn:ghost-with-special-eff-2} and Serre duality, it follows that a tuple $(w_i)\in \bigoplus_{1\le i\le n} (T_{C,p_i})^{\otimes 2}$ lies in the linear subspace $\fA_2$ of codimension $1$ if and only if we have 
\begin{align}
    \sum_{1\le i\le n} (\nabla_{p_i}\varphi)(w_i) = 0
\end{align}
where, for $1\le i\le n$, we define the element $\nabla_{p_i}\varphi\in (T_{C,p_i}^\vee)^{\otimes 2}$ by considering $\varphi$ as a section of $\omega_C(-\Delta)$ and taking its image in $\omega_C(-\Delta)\otimes_{\clO_C}(\clO_{C,p_i}/\fm_{C,p_i}) = (T_{C,p_i}^\vee)^{\otimes 2}$.

\begin{Example}[Genus $2$ with hyperelliptic conjugate points]\label{exa:genus-2-with-hyperelliptic-conjugate-points}
    Assume $g(C) = 2$, $n=2$ and that $p_1 + p_2$ is a canonical divisor. This means that $p_1$ and $p_2$ are exchanged by the hyperelliptic involution of $C$. Choose nonzero vectors $x_i\in T_{C,p_i}$ for $i=1,2$ such that the image of $x_1 + x_2$, under the map from \eqref{eqn:ghost-with-special-eff-1}, is zero in $H^1(C,\clO_C)$. Thus, $\fA_1$ has basis given by $x_1 + x_2$. 
    
    It follows that $\fA$ is the sub-algebra of $Q = \Spec k[x_1,x_2]/(x_1x_2)$ generated by the elements $x_1+x_2$ and $x_1^3$. Geometrically, this means that $\sY_0$ embeds in $\bA^2$ and its two irreducible components are the images of the morphisms $\bA^1\to\bA^2$ given by
    \begin{align*}
        x_1&\mapsto(x_1,x_1^3)\\
        x_2&\mapsto(x_2,0).
    \end{align*}
\begin{figure}[ht]
    \centering
\includegraphics[width=2.3cm]{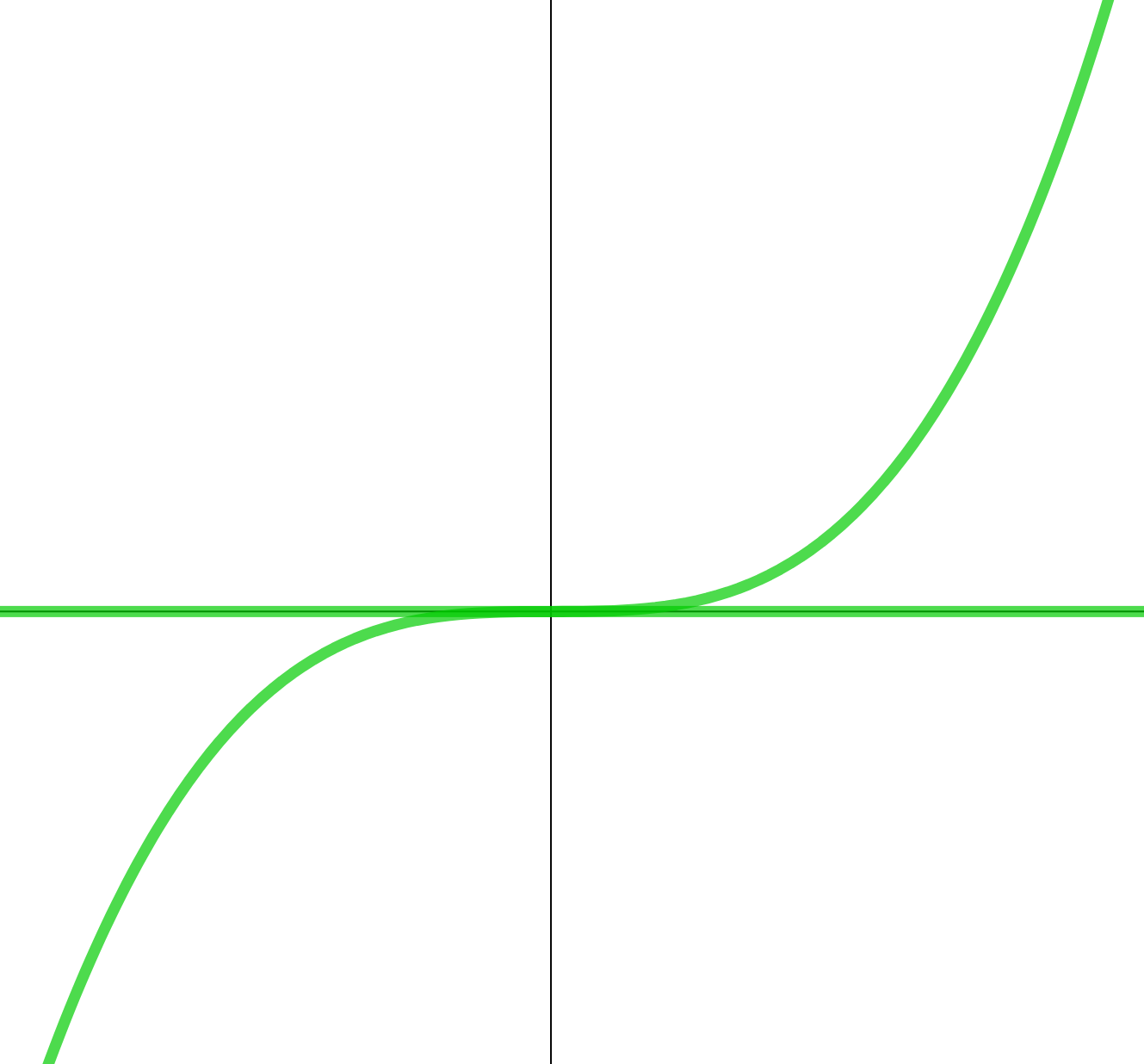}
    \caption{The model singularity, from Example \ref{exa:genus-2-with-hyperelliptic-conjugate-points}, associated to a pair of points on a genus $2$ curve which are exchanged by the hyperelliptic involution.}
    \label{fig:genus-2-with-hyperelliptic-conjugates}
\end{figure}
In this case, the model ghost condition says that there are tangent vectors $v_i\in T_{\sE,p_i}$, for $i=1,2$, such that the first two derivatives of $f$ at $p_1,p_2$ satisfy the following identities, where $q\in X$ is the image of the ghost $C$ under $f$.
\begin{align}
    d(f|_{\sE})_{p_1}(v_1) &+ d(f|_{\sE})_{p_2}(v_2) = 0 \in T_{X,q}.\\
    \label{eqn:genus-2-with-hyp-conj-2} d^2(f|_{\sE})_{p_1}(v_1^{\otimes 2}) &+ d^2(f|_{\sE})_{p_2}(v_2^{\otimes 2}) = 0\in T_{X,q}.
\end{align}
Informally, these identities say that the two branches of $f|_{\sE}$, at $p_1$ and $p_2$, have a second order tangency at $q\in X$. This is consistent with the behavior of the two branches of the model singularity shown in Figure \ref{fig:genus-2-with-hyperelliptic-conjugates}.

\end{Example}

%%%%%%%%%%%%%%%%%%%%%%%%%%%%%%%%%%%%%%%%%%%%%%%%%%%%%%%%%%
\subsubsection{Suspension}\label{subsubsec:suspension}

We give a procedure to create new model singularities out of old ones.

Start with $(C,p_1,\ldots,p_n)$ and $d_1,\ldots,d_n\ge 1$ as before and let $\fA$ be the coordinate ring of the model singularity (with $n$ branches) associated to this data. Using Corollary \ref{cor:coord-ring-of-curve-sing-final}, define $M\ge 1$ to be the smallest integer for which we have $\fA_m = Q_m$ for all $m\ge M$. Now, choose an integer $r\ge 1$ and $r$ distinct points $p_{n+1},\ldots,p_{n+r}\in C\setminus\{p_1,\ldots,p_n\}$ and integers $d_{n+1},\ldots,d_{n+r}\ge M$. Let $\fA'$ be the coordinate ring of the model singularity (with $n+r$ branches) associated to the data of $(C,p_1,\ldots,p_{n+r})$ and $d_1,\ldots,d_{n+r}$. Let $Q'$ be the coordinate ring of its seminormalization (as in Definition \ref{def:partial-norm}) with $\fA'\subset Q'$ being the induced inclusion of graded algebras. We can completely describe $\fA'$ in terms of $\fA$.

Since we chose $d_{n+1},\ldots,d_{n+r}\ge M$, we have $\fA'_m = \fA_m$ and $Q'_m = Q_m$ for $0\le m< M$. Corollary \ref{cor:coord-ring-of-curve-sing-final} shows that $Q/\fA = \bigoplus_{0\le m<M}Q_m/\fA_m$ and $Q'/\fA'$ are both $g(C)$-dimensional, implying that we must have $\fA'_m = Q'_m$ for $m\ge M$. We refer to the model singularity $\Spec\fA'$ as a \emph{suspension} of the model singularity $\Spec\fA$.

\begin{remark}[Explicit geometric description of a suspension]\label{rem:geom-suspension}
    Given a closed embedding $\sY_0 = \Spec\fA\subset\bA^N$ with $\overline{c}\in\sY_0$ mapping to $0\in\bA^N$, then we can describe the suspension $\Spec\fA'\subset\bA^N\times\bA^r = \bA^{N+r}$ as the union of the following two pieces. The first is the product of $\sY_0\subset\bA^N$ with $\{0\}\subset\bA^r$ while the second is the product of $\{0\}\subset\bA^N$ with the union of the $r$ coordinate axes in $\bA^r$.
\end{remark}

\begin{Example}[Suspending a cusp]\label{exa:suspend-cusp}
    Let $\fA$ be the coordinate ring of the model singularity associated to $C$ of genus $1$, $n=1$, a point $p_1\in C$ and $d_1 = 1$. The model singularity in this case was computed in Example \ref{exa:non-W-pt} and is the ordinary cusp $\Spec k[u,v]/(v^2-u^3)$. The model ghost condition says that $f|_{\sE}$ has vanishing first derivative at $p_1$.
    
    Now, let us suspend this using $r=1$, a point $p_2\in C\setminus\{p_1\}$ and $d_2 = 2$. Then, the suspended model singularity $\Spec\fA'$ is embedded in $\bA^3 = \Spec k[u,v,w]$ as the union of the $w$-axis and the cusp $\Spec\fA$, with the latter contained in the $uv$-plane. 
    
    In this case, the model ghost condition says that $f|_\sE$ has vanishing first derivative at $p_1$ and there is \emph{no additional condition} on $f|_\sE$ near $p_2$.
    \begin{figure}[ht]
  \subcaptionbox*{}[.18\linewidth]{%
    \includegraphics[width=\linewidth]{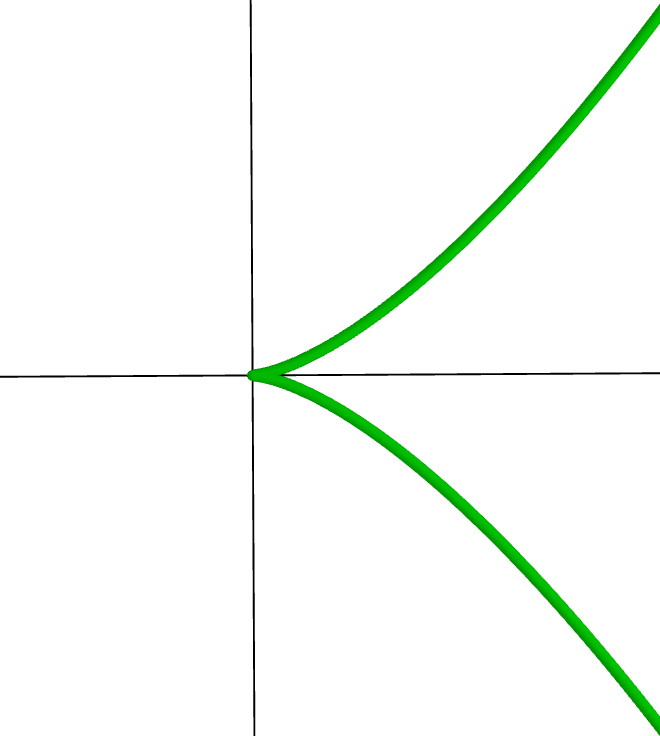}%
  }%
 \hskip18ex
  \subcaptionbox*{}[.25\linewidth]{%
    \includegraphics[width=\linewidth]{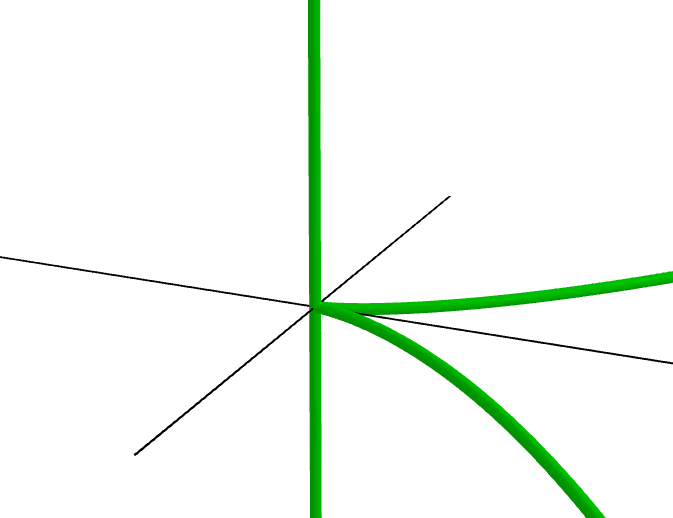}%
  }
  \caption{The ordinary cusp $v^2=u^3$ (left), and the model singularity, from Example \ref{exa:suspend-cusp}, obtained by suspending it (right).}
  \label{fig: Susp-cusp}
\end{figure}
\end{Example}

\begin{Example}[Suspending a tacnode]\label{exa:suspend-tacnode}
    Let $\fA$ be the coordinate ring of the model singularity associated to $C$ of genus $1$, $n=2$, distinct points $p_1,p_2\in C$, and $d_1 = d_2 = 1$. The model singularity in this case was computed in Example \ref{exa:genus-1-with-many-eff} and is a tacnode given by $\Spec k[u,v]/(u^2-uv^2)$, shown on the left half of Figure \ref{fig: genus1 with several points}. The model ghost condition says that the first derivatives of $f|_{\sE}$ at the points $p_1,p_2$ have linearly dependent images.
    
    Now, let us suspend this using $r=1$, a point $p_3\in C\setminus\{p_1,p_2\}$ and $d_3 = 2$. Then, the suspended model singularity $\Spec\fA'$ is embedded in $\bA^3 = \Spec k[u,v,w]$ as the union of the $w$-axis and the tacnode $\Spec\fA$, with the latter contained in the $uv$-plane. 
    
    In this case, the model ghost condition says that the first derivatives at of $f|_\sE$ at $p_1,p_2$ have linearly dependent images and there is \emph{no additional condition} on $f|_\sE$ near $p_3$.

        \begin{figure}[ht]
  \subcaptionbox*{}[.20\linewidth]{%
    \includegraphics[width=\linewidth]{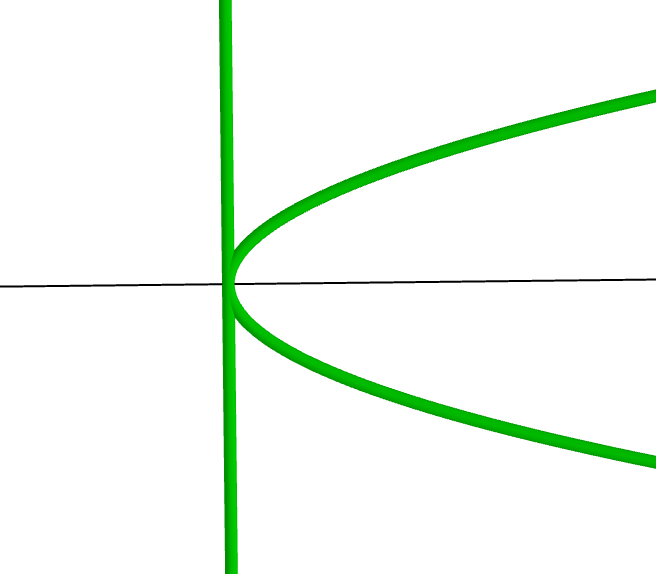}%
  }%
 \hskip18ex
  \subcaptionbox*{}[.22\linewidth]{%
    \includegraphics[width=\linewidth]{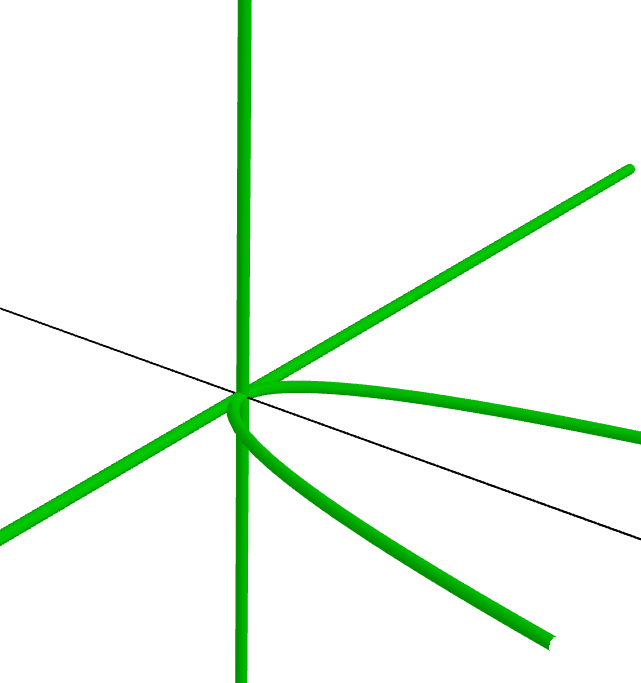}%
  }
  \caption{The ordinary tacnode $u^2=uv^2$ (left), and the model singularity, from Example \ref{exa:suspend-tacnode}, obtained by suspending it (right).}
  \label{fig: tacnode-cusp}
\end{figure}
\end{Example}

\begin{remark}[Model ghost condition for suspension]\label{rem:model-ghost-for-suspension}
    We can generalize the observation of Examples \ref{exa:suspend-cusp} and \ref{exa:suspend-tacnode} as follows. The model ghost condition for the suspension $\Spec\fA'$ amounts to the same condition on $f|_\sE$ near $p_1,\ldots,p_n$ as for $\Spec\fA$ and \emph{no additional condition} on $f|_\sE$ near $p_{n+1},\ldots,p_{n+r}$.
    
    This can be informally explained as follows. A ghost component modeled on $\Spec \fA'$ may be smoothed in two steps as follows. To begin, we smooth the nodes at $p_1,\ldots,p_n$, using the model $\Spec\fA$. Now that the ghost is gone, we are free to smooth the remaining nodes $p_{n+1},\ldots,p_{n+r}$ without any additional conditions. To carry out this $2$-step procedure within a single $1$-parameter family, we simply \emph{postpone} smoothing the nodes $p_{n+1},\ldots,p_{n+r}$ until the smoothing of the nodes $p_1,\ldots,p_n$ is done, i.e., we take the rates of smoothing $1/d_{n+1},\ldots,1/d_{n+r}$ to be small in comparison to $1/d_1,\ldots,1/d_n$.
\end{remark}

\begin{remark}\label{rem:suspension-increases-d}
    The examples given so far (with the exception of Examples \ref{exa:suspend-cusp} and \ref{exa:suspend-tacnode}) had $d_i = 1$ for $1\le i\le n$. We can use suspension to convert any of the earlier examples into new examples having $d_i>1$ for some $i$. Example \ref{exa:genus-3-with-2-points-d=2,3} below exhibits a model singularity, not arising from a suspension, which has $d_i>1$ for all $i$.
\end{remark}

%%%%%%%%%%%%%%%%%%%%%%%%%%%%%%%%%%%%%%%%%%%%%%%%%%%%%%%%%
\subsubsection{An example with all $d_i>1$}\label{subsubsec:all-d-bigger-than-1}
Let $C\subset\bP^2$ be a non-singular curve of genus $3$. Then, $C$ is a canonical curve. Fix two general points $p_1,p_2\in C$. Explicitly, we require that the tangent line to $C\subset\bP^2$ at $p_1$ doesn't contain $p_2$ (and vice versa). Choose nonzero vectors $x_i\in T_{C,p_i}$, for $i=1,2$. We will consider the model singularity associated to $n=2$, $p_1,p_2\in C$, and $(d_1,d_2)=(2,3)$. Before this, it is interesting to work out the case $(d_1,d_2)=(1,1)$.

\begin{Example}\label{exa:genus-3-with-2-points-d=1}
    Take $d_1 = d_2 = 1$. Then, we have $\Delta = p_1 + p_2$. We determine the coordinate ring $\fA$ of the associated model singularity $\sY_0$ as follows.
    
    There is a unique line in $\bP^2$ joining $p_1$ and $p_2$. We thus have $\dim H^1(C,\clO_C(\Delta)) = \dim H^0(C,\omega_C(-\Delta)) = 1$, by Serre duality. The exact sequence \eqref{eqn:ghost-with-special-eff-1} now gives $\fA_1 = 0$. Since $p_1,p_2\in C$ are general points, the tangent lines to $C\subset\bP^2$ at these two points are distinct. Again, a Serre duality argument shows that $H^1(C,\clO_C(m\Delta)) = 0$ for $m\ge 2$. Thus, we have $\fA_m = Q_m = \bigoplus_{1\le i\le 2}T_{C,p_i}^{\otimes m}$ for $m\ge 3$. The exact sequence \eqref{eqn:ghost-with-special-eff-2} shows that $\fA_2$ is identified with the $1$-dimensional kernel of the surjection
    \begin{align}\label{eqn:genus-3-with-2-points-d=1}
        T_{C,p_1}^{\otimes 2}\oplus T_{C,p_2}^{\otimes 2}\twoheadrightarrow H^1(C,\clO_C(\Delta)).
    \end{align}
    Since $p_2$ is not on the tangent line to $C\subset\bP^2$ at $p_1$, Serre duality implies that we have $\dim H^1(C,\clO_C(2p_1+p_2)) = \dim H^0(C,\omega_C(-2p_1-p_2)) = 0$. Using the long exact sequence in cohomology associated to the short exact sequence
    \begin{align*}
        0\to\clO_C(\Delta)\to\clO_C(2p_1+p_2)\to T_{C,p_1}^{\otimes 2}\to 0,
    \end{align*}
    we find that the restriction of \eqref{eqn:genus-3-with-2-points-d=1} to $T_{C,p_1}^{\otimes 2}$ is an isomorphism. By symmetry, the restriction of \eqref{eqn:genus-3-with-2-points-d=1} to $T_{C,p_2}^{\otimes 2}$ is also an isomorphism. Thus, after replacing $x_1$ and $x_2$ by suitable scalar multiples, we may assume that $\fA_2$ has basis $x_1^2 + x_2^2$.

    Thus, $\fA$ is the sub-algebra of $Q = k[x_1,x_2]/(x_1x_2)$ generated by the elements $x_1^2 + x_2^2$, $x_1^3$, $x_2^3$, and $x_1^4$. Geometrically, this means that $\sY_0$ is embedded in $\bA^4$ and its two irreducible components are the images of the morphisms $\bA^1\to\bA^4$ given by
    \begin{align*}
        x_1&\mapsto(x_1^2,x_1^3,0,x_1^4)\\
        x_2&\mapsto(x_2^2,0,x_2^3,0).
    \end{align*}
    
    In this case, the model ghost condition can be divided into two assertions. First, for $i=1,2$, we have the vanishing of the first derivatives $d(f|_\sE)_{p_i}$. Second, there are vectors $v_i\in T_{\sE,p_i}$, for $i=1,2$, such that the identity \eqref{eqn:genus-2-with-hyp-conj-2} for second derivatives holds.
\end{Example}

\begin{Example}\label{exa:genus-3-with-2-points-d=2,3}
    Take $d_1 = 2$ and $d_2 = 3$. Then, we have $\Delta = \frac12p_1 + \frac13p_2$. The table below computes the divisor $\lfloor m\Delta\rfloor$ for $0\le m\le 6$.
    \begin{center}
    \begin{tabular}{|p{0.9cm}||p{0.5cm}|p{0.5cm}|p{0.5cm}|p{1.2cm}|p{1.3cm}|p{1.3cm}|p{1.3cm}|}
        \hline
        $m$ & $0$ & $1$ & $2$ & $3$ & $4$ & $5$ & $6$\\
        \hline
        $\lfloor m\Delta\rfloor$ & $0$ & $0$ & $p_1$ & $p_1 + p_2$ & $2p_1+p_2$ & $2p_1 + p_2$ & $3p_1 + 2p_2$ \\
        \hline
    \end{tabular}
    \end{center}
    We get $\fA_1 = \fA_5 = 0$ from the table. We have $\dim H^1(C,\clO_C(\lfloor2\Delta\rfloor)) = 2$, since this is identified with $\dim H^0(C,\omega_C(-p_1))$ by Serre duality. The computations in Example \ref{exa:genus-3-with-2-points-d=1} show that $\dim H^1(C,\clO_C(\lfloor3\Delta\rfloor)) = 1$ and $\dim H^1(C,\clO_C(\lfloor4\Delta\rfloor)) = 0$. We now get $\dim H^0(\clO_C(\lfloor m\Delta\rfloor)) = 1$ for $1\le m\le 4$, by the Riemann--Roch formula and it follows that $\fA_2 = \fA_3 = \fA_4 = 0$. By Corollary \ref{cor:coord-ring-of-curve-sing-final}, we have $\dim Q/\fA = 3$, and therefore, it must be the case that we have $\fA_m = Q_m$ for $m\ge 6$.

    Thus, $\fA$ is the sub-algebra of $Q = k[x_1,x_2]/(x_1x_2)$ generated by the elements $x_1^3,x_1^4,x_1^5,x_2^2$ and $x_2^3$. Geometrically, this means that $\sY_0$ is embedded in $\bA^5$ and its two irreducible components are the images of the morphisms $\bA^1\to\bA^5$ given by
    \begin{align*}
        x_1&\mapsto(x_1^3,x_1^4,x_1^5,0,0)\\
        x_2&\mapsto(0,0,0,x_2^2,x_2^3).
    \end{align*}
    
    It is interesting to note that the first component lies in $\bA^3\times\{(0,0)\}$ and recovers the $g=2$ case of Example \ref{exa:non-W-pt}, which is shown on the right half of Figure \ref{fig: non-W}. Similarly, the second component lies in $\{(0,0,0)\}\times\bA^2$ and recovers the $g=1$ case of Example \ref{exa:non-W-pt}, which is shown on the left half of Figure \ref{fig: non-W}.

    In this case, the model ghost condition can be divided into two assertions. First, we have the vanishing of the first two derivatives of $f|_\sE$ at $p_1$. Second, we have the vanishing of the first derivative of $f|_\sE$ at $p_2$. Observe that this is a degenerate version of the model ghost condition in Example \ref{exa:genus-3-with-2-points-d=1}.
\end{Example}
\appendix
\setcounter{section}{0}
\section{Analysis of model smoothing}\label{appendix:analysis-model-smoothing} 

In this appendix, we carry out a detailed study of the geometry of the family $\pi_\sY:\sY\to\bA^1$ introduced in Definition \ref{def:model-smoothing}. This involves the construction of another family $\pi_\sX:\sX\to\bA^1$, such that $\pi_\sX^{-1}(0)$ has only ordinary nodes as singularities, and a proper birational morphism $\Phi:\sX\to\sY$ of $\bA^1$-schemes. The key statements from this appendix which are needed in the main body of the paper (specifically, in \textsection\ref{subsec:model-sing-and-smoothing}) are Lemma \ref{lem:partial-normalization-ring-map} and Proposition \ref{prop:local-smoothability-of-model-pinching-appx}.

%%%%%%%%%%%%%%%%%%%%%%%%%%%%%%%%%%%%%%%%%%%%%%%%%%%%%%%%
\subsection*{Notation for this appendix}
\begin{longtable}{ r l }
   $C$& Smooth projective curve of genus $g(C)\ge 1$ (Notation \ref{not:model-sing-input}).\\
   $n$ & Positive integer denoting number of points on $C$ (Notation \ref{not:model-sing-input}).\\
   $p_i$& Distinct closed points on $C$ for $1\le i\le n$ (Notation \ref{not:model-sing-input}).\\
   $d_i$& Relatively prime positive integers for $1\le i\le n$ (Notation \ref{not:model-sing-input}).\\
   $\Delta$& $\bQ$-divisor $\sum\frac 1{d_i}\cdot p_i$ on $C$ (Notation \ref{not:model-sing-input}).\\
   $A$  & Graded algebra $\bigoplus_{m\ge 0}A_m$ with $A_m = H^0(C,\clO_C(\lfloor m\Delta\rfloor)$ (Definition \ref{def:model-smoothing}).\\
   & $A$ is a finitely generated $k$-algebra (Lemma \ref{lem:A-finite-generation}).\\   
   $t$ & Fixed element of $A_1\subset A$ and coordinate on $\bA^1$ (Definition \ref{def:model-smoothing}). \\
   $\pi_\sY:\sY\to\bA^1$ & Model smoothing with $\sY = \Spec A$ (Definition \ref{def:model-smoothing}, Figure \ref{fig:appendixA}).\\
   & $\sY$ is a normal surface (Corollary \ref{cor:contraction-normal}).\\
   $\fA$ & Graded algebra $\bigoplus_{m\ge 0}\fA_m$ with $\fA_m = A_m/A_{m-1}$ (Definition \ref{def:model-sing}).\\
   $\sY_0$ & Model singularity, $\sY_0=\Spec(\fA)$ (Definition \ref{def:model-sing}, Figure \ref{fig:Partial normalization}).\\
   &  $\sY_0=\pi_\sY^{-1}(0)$ (Lemma \ref{lem:model-sing}). $\sY_0$ inherits $\bG_m$-action from $\sY$.\\
   $A_+$ & Maximal ideal $\bigoplus_{m>0}A_m$ of $A$ (Definition \ref{def:cone-point}).\\
   $\overline{c}$ & Closed point of $\sY_0$ corresponding to $A_+\subset A$ (Definition \ref{def:cone-point}).\\
    $\gamma:E\to\sY_0$ & Seminormalization of $\sY_0$ (Definition \ref{def:partial-norm}).\\
   $Q$ & Graded coordinate ring $\bigoplus_{m\ge 0}Q_m$ of $E$ (Definition \ref{def:partial-norm}).\\
   & $\fA\subset Q$ is a graded sub-algebra and $\dim Q/\fA = g(C)$ (Corollary \ref{cor:coord-ring-of-curve-sing-final}).\\
    $U_i$ & Element of a fixed affine open cover of $C$ for $0\le i\le n$ (Notation \ref{not:C-cover}). \\
   & For $1\le i,j\le n$, $p_j\in U_i$ iff $i=j$. Moreover, $U_0 = C\setminus\{p_1,\ldots,p_n\}$. \\
   $R_i$ & Coordinate ring of $U_i$ for $0\le i\le n$ (Notation \ref{not:C-cover}).\\
   $z_i$ & Fixed element of $R_i$ vanishing precisely at $p_i$ for $1\le i\le n$ (Notation \ref{not:C-cover}).\\
   
   $\pi_{\overline\sX}:\overline\sX\to\bA^1$ &  $\bG_m$-equivariant degeneration of $C$ (Definition \ref{def:main-family}, Remark \ref{rem:Gm-equivariance}).\\
   $\overline{E}_i$ & Inverse image of $(p_i,0)$ under the morphism $\overline\sX\to C\times\bA^1$ (Definition \ref{def:main-family}).\\
   & $\overline{E}_i$ is the projectivization of $T_{C,p_i}\oplus (T_{\bA^1,0})^{\otimes d_i}$ (Footnote \ref{fn:excep}).\\
   $\pi_{\sX}:\sX\to\bA^1$ &  Restriction of $\pi_{\overline\sX}$ to the $\bG_m$-invariant open subset\\
   & $\sX\subset\overline\sX$ (Definition \ref{def:main-family-int}, Figure  \ref{fig:appendixA}).\\
   $C_0$ & Proper transform of $C\times\{0\}$ contained in $\sX$ (Definition \ref{def:main-family-int}).\\ $E_i$ & Open subset $\overline{E}_i\cap\sX$ of $\overline{E}_i$ (Definition \ref{def:main-family-int}).\\
   $V_i$ & Element of a fixed affine open cover of $\sX$ for $0\le i\le n$ (Notation \ref{not:X-int-cover}).\\
   & $V_0 = U_0\times\bA^1 = \Spec (R_0[t])$ and $V_i = \sX|_{U_i\times\bA^1}$ for $1\le i\le n$.\\
   
   $\Phi:\sX\to\sY$ & Contraction morphism with $\Phi(C_0) = \{\overline{c}\}$ (Definition \ref{def:contraction}, Figure  \ref{fig:appendixA}).\\
   & $\Phi$ induces an isomorphism $\sX\setminus C_0 \simeq \sY\setminus\{\overline{c}\}$ (Lemma \ref{lem:contraction}).
\end{longtable}

\begin{notation}[Affine open cover of $C$]\label{not:C-cover}
    For each $1\le i\le n$, fix an open affine neighborhood $p_i\in U_i = \Spec R_i\subset C$ such that $p_j\not\in U_i$ for $j\ne i$ and an element $z_i\in R_i$ whose scheme-theoretic zero locus is the reduced point $p_i\in U_i$. Note that the image of $z_i$ in $\clO_{C,p_i}$ is a local coordinate. Also recall $U_0 = C\setminus\{p_1,\ldots,p_n\} = \Spec R_0$ from Notation \ref{not:model-sing-input}. The collection $\{U_i\}_{0\le i\le n}$ forms an affine open cover of $C$. This will be used for coordinate computations.    
\end{notation}

\begin{Definition}[$1$-parameter degeneration $\overline\sX$ of $C$]\label{def:main-family}
    Define the family
    \begin{align*}
        \pi_{\overline\sX}:\overline{\sX}\to\bA^1 = \Spec k[t]
    \end{align*}
    as follows. $\overline\sX$ is the blow-up of $C\times\bA^1$ along the disjoint union of the $0$-dimensional closed subschemes $\{p_i\}\times\Spec k[t]/(t^{d_i})$ for $1\le i\le n$. The projection $\pi_{\overline\sX}$ is given by the blow-down morphism $\overline\sX\to C\times\bA^1$ followed by the coordinate projection $C\times\bA^1\to\bA^1$. For $1\le i\le n$, denote by $\overline{E}_i\subset\overline\sX$ the inverse image of the point $(p_i,0)\in C\times\bA^1$.
\end{Definition}

We can describe $\pi_{\overline\sX}:\overline\sX\to\bA^1$ in coordinates as follows. Using the open cover fixed in Notation \ref{not:C-cover}, we get the explicit description\footnote{Recall that, for an affine variety $T = \Spec S$ and an ideal $I = (f_0,\ldots,f_r)\subset S$, the blow-up $\text{Bl}_Z(T) = \Proj\bigoplus_{m\ge 0} I^m$ of $T$ along the closed subscheme $Z = \Spec S/I$ has a closed embedding into $\bP^r_S = \Proj S[x_0,\ldots,x_r]$ given by $x_i\mapsto f_i$, for $0\le i\le r$. The image of this embedding is the closure of the graph of the rational map $[f_0:\cdots:f_n]:T\dashrightarrow\bP^r$.} 
\begin{align}\label{eqn:main-family-coord}
  \overline\sX|_{U_i\times\bA^1} = \Proj\left(\frac{R_i[t,u_i,v_i]}{(z_iv_i-t^{d_i}u_i)}\right)\subset U_i\times\bA^1\times\bP^1
\end{align}
for $1\le i\le n$, where the $\Proj$ construction is done with respect to the grading which assigns degree $0$ to $R_i[t]$ and degree $1$ to the variables $u_i,v_i$. From \eqref{eqn:main-family-coord}, we get the identification $\overline{E}_i = \Proj(k[u_i,v_i])$. The blow-down morphism $\overline\sX\to C\times\bA^1$ is projective, birational, and is an isomorphism over the complement of $\{p_1,\ldots,p_n\}\times\{0\}\subset C\times\bA^1$.

\begin{remark}\label{rem:Gm-equivariance}
    The natural action of $\bG_m = \Spec k[t^{\pm1}]$ on $C\times\bA^1$ (trivial on $C$ and by scaling on $\bA^1$) lifts to an action on $\overline\sX$, making $\pi_{\overline\sX}$ a $\bG_m$-equivariant morphism. This is because the closed subscheme of $C\times\bA^1$ which is blown up to define $\overline\sX$ is itself $\bG_m$-invariant. $\bG_m$-equivariance of $\pi_{\overline\sX}$ shows that its fibres over all closed points with $t\ne 0$ are isomorphic to $C$. The fibre over $t = 0$ is the prestable curve whose irreducible components consist of the proper transform of $C\times\{0\}$ along with the rational curves $\overline{E}_i$ for $1\le i\le n$.
\end{remark}

\begin{remark}[Flatness of $\pi_{\overline\sX}$]\label{rem:X-flatness}
    By Remark \ref{rem:flatness-over-dvr}, the morphism $\pi_{\overline\sX}$ is flat.
\end{remark}

\begin{Definition}[Interior $\sX$ of $\overline\sX$]\label{def:main-family-int}
    Define the open subvariety $\sX\subset\overline\sX$ to be the complement in $\overline\sX$ of the proper transforms $H_i$ of $\{p_i\}\times\bA^1\subset C\times\bA^1$ for $1\le i\le n$. Let $\pi_{\sX}:\sX\to\bA^1$ denote the restriction of $\pi_{\overline\sX}$. Denote the proper transform of $C\times\{0\}\subset C\times\bA^1$ by $C_0\subset\sX$. Write $E_i := \overline{E}_i\cap\sX$ and $\{q_i\} := E_i\cap C_0$ for $1\le i\le n$. 
\end{Definition}

In terms of the coordinate presentation \eqref{eqn:main-family-coord} of $\overline\sX$, the proper transform $H_i$ of $\{p_i\}\times\bA^1$ is given by the equation $\{u_i = 0\}$ and $\sX|_{U_i\times\bA^1} = (\overline\sX\setminus H_i)|_{U_i\times\bA^1}$ for $1\le i\le n$. Introducing $w_i = v_i/u_i$, we get the explicit description
\begin{align}\label{eqn:main-family-int-coord}
    \sX|_{U_i\times\bA^1} = \Spec\left(\frac{R_i[t,w_i]}{(z_iw_i-t^{d_i})}\right)
\end{align}
for $1\le i\le n$. Note that $\sX|_{U_i\times\bA^1}$ is $\bG_m$-invariant and the action of $\bG_m$ is encoded by the grading which assigns degree $0$ to $R_i$, degree $1$ to $t$, and degree $d_i$ to $w_i$. In terms of \eqref{eqn:main-family-int-coord}, the curves $C_0$ and $E_i$ are given by the respective ideals $(w_i, t)$ and $(z_i,t)$. It follows from \eqref{eqn:main-family-int-coord} that the surface $\sX$ has an $A_{d_i-1}$  singularity\footnote{For $m\ge 0$, a point $p$ on a surface is an $A_m$ singularity if it is \'etale locally modeled on the normal toric variety $\Spec k[z,w,t]/(zw-t^{m+1})$, with $p$ given by $(z,w,t)$. An \emph{$A_0$ singularity} is a non-singular point.} at $q_i$ for $1\le i\le n$ and is non-singular away from these $n$ points. Normality is local in the smooth topology by \cite[\href{https://stacks.math.columbia.edu/tag/034F}{Tag 034F}]{stacks-project}, which shows $\sX$ is normal. Additionally, since $\overline{E}_i$ is the projectivization\footnote{Observe that the conormal module of $\Spec k[t]/(t^{d_i})\subset\bA^1$ is given by $(t^{d_i})/(t^{2d_i})$ and it restricts, over the reduced point $0\in\bA^1$, to the vector space $(t^{d_i})/(t^{d_i+1}) = (T_{\bA^1,0}^\vee)^{\otimes d_i}$.\label{fn:excep}} of the $\bG_m$-representation $T_{C,p_i}\oplus (T_{\bA^1,0})^{\otimes d_i}$, we get the identification $E_i =  T_{C,p_i}^\vee$ under which $q_i = 0$.

\begin{notation}[Affine open cover of $\sX$]\label{not:X-int-cover}
    Fix the affine open cover $\{V_i\}_{0\le i\le n}$ of $\sX$ given by $V_0 := U_0\times\bA^1 = \Spec(R_0[t])$, and for $1\le i\le n$, $V_i:=\sX|_{U_i\times\bA^1}$ described in \eqref{eqn:main-family-int-coord}. This cover by $\bG_m$-invariant open subsets will be used in coordinate computations.
\end{notation}

We would like to obtain a (singular) surface by contracting the curve $C_0\subset\sX$ to a point. In general, contracting a curve in a quasi-projective surface may take us outside of the category of quasi-projective varieties, e.g., see \cite[Example 3.1]{Badescu-book}. When $k=\bC$, it is possible to obtain the contraction as a complex analytic space \cite{Grauert-contraction}, and in general, it can be obtained as an algebraic space \cite{Artin-modifications}. Fortunately, in our special situation, it will turn out that contracting $C_0\subset\sX$ to a point results in $\sY$ (see Lemma \ref{lem:contraction}).

We first need to construct the morphism $\Phi:\sX\to\sY$ which exhibits $\sY$ as the result of contracting $C_0\subset\sX$ to a point. Since $\sY$ is an affine variety, this amounts to constructing the corresponding ring map $A\to\Gamma(\sX,\clO_{\sX})$. The next lemma accomplishes this task by constructing a canonical isomorphism of these rings. To understand its statement, note that we have an inclusion $\Gamma(\sX,\clO_{\sX})\subset R_0[t]$ of graded $k[t]$-algebras (via the birational \emph{morphism} $U_0\times\bA^1\hookrightarrow\sX$), and also recall Remark \ref{rem:model-smoothing-subalg} which gives the inclusion $A\subset R_0[t]$.

\begin{Lemma}\label{lem:coord-ring-iso}
    As graded $k[t]$-sub-algebras of $R_0[t]$, we have
    \begin{align}
        \label{eqn:coord-ring-iso}
        A = \Gamma(\sX,\clO_{\sX}).
    \end{align}
\end{Lemma}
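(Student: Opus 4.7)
The plan is to exploit the $\bG_m$-equivariance of the entire setup to reduce the equality \eqref{eqn:coord-ring-iso} to a weight-by-weight comparison, and then to check extendability of a single homogeneous element by a direct coordinate computation in each chart $V_i$.

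First, both $A$ and $\Gamma(\sX,\clO_\sX)$, viewed inside $R_0[t]$, are $\bG_m$-invariant graded $k[t]$-subalgebras. For $A$, this is built into Definition \ref{def:model-smoothing}; for $\Gamma(\sX,\clO_\sX)$, it follows from the $\bG_m$-equivariance of $\pi_{\overline\sX}$ (Remark \ref{rem:Gm-equivariance}) together with the fact that $V_0\subset\sX$ is a $\bG_m$-equivariant affine open subset, so that restriction to $V_0$ is a $\bG_m$-equivariant inclusion. Since the weight $m$ piece of $R_0[t]$ is $R_0\cdot t^m$, the claim reduces, for every $m\ge 0$, to the following assertion: given $h\in R_0$, the element $h\cdot t^m$ lies in $\Gamma(\sX,\clO_\sX)$ if and only if $h\in H^0(C,\clO_C(\lfloor m\Delta\rfloor))$, i.e., the pole order $a_i$ of $h$ at $p_i$ satisfies $a_i\le\lfloor m/d_i\rfloor$ for all $1\le i\le n$.

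To test extendability I would use the affine cover $\{V_0,V_1,\ldots,V_n\}$ from Notation \ref{not:X-int-cover} and argue in the chart $V_i = \Spec(R_i[t,w_i]/(z_iw_i-t^{d_i}))$ of \eqref{eqn:main-family-int-coord}. Writing the restriction of $h$ to $R_i[z_i^{-1}]$ as $\tilde h_i/z_i^{a_i}$ with $\tilde h_i\in R_i$ nonvanishing at $p_i$, the relation $t^{d_i}=z_iw_i$ rewrites $h\cdot t^m$ as $\tilde h_i\cdot t^{m-d_ia_i}\cdot w_i^{a_i}$ whenever $a_i\le\lfloor m/d_i\rfloor$, showing that $h\cdot t^m$ extends to $V_i$. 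For the converse, I would use the normal form $\sum_{0\le a<d_i,\,b\ge 0}c_{a,b}\,t^a w_i^b$ (with $c_{a,b}\in R_i$) for elements of this coordinate ring, translated on $V_i\cap V_0$ via $t^a w_i^b=t^{a+d_ib}/z_i^b$, and compare the coefficient of $t^m$ on both sides. Only the single pair $(a,b)=(m\bmod d_i,\lfloor m/d_i\rfloor)$ contributes; since $\tilde h_i$ is a unit in $\clO_{C,p_i}$ while $z_i^{a_i-\lfloor m/d_i\rfloor}$ vanishes to positive order at $p_i$ when $a_i>\lfloor m/d_i\rfloor$, this forces the desired inequality $a_i\le\lfloor m/d_i\rfloor$.

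Intersecting the resulting conditions over $1\le i\le n$ gives $A_m = \Gamma(\sX,\clO_\sX)_m$ in every degree, and summing yields \eqref{eqn:coord-ring-iso}. The hard part will be the ``only if'' direction of the local criterion on $V_i$; the $\bG_m$-grading is essential there, because it lets one argue one weight at a time and rules out cancellations between terms of different $t$-degrees that could otherwise conceal a higher-order pole after restriction to $V_0$.
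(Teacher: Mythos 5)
Your proposal is correct and follows essentially the same route as the paper: both reduce to a weight-by-weight comparison inside $R_0[t]$ using the $\bG_m$-invariant cover $\{V_i\}$ and then check, chart by chart via \eqref{eqn:main-family-int-coord} and the relation $w_i=t^{d_i}/z_i$, that $F\,t^m$ extends over $V_i$ precisely when the pole order of $F$ at $p_i$ is at most $\lfloor m/d_i\rfloor$. Your normal-form argument for the ``only if'' direction just makes explicit a step the paper's proof asserts more briefly, so no substantive difference.
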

\begin{proof}
  The affine open cover $\{V_i\}_{0\le i\le n}$ of $\sX$ fixed in Notation \ref{not:X-int-cover} consists of $\bG_m$-invariant open subsets and can therefore be used to compute $B=\Gamma(\sX,\clO_{\sX})$ as a graded $k[t]$-algebra. Since each $\Gamma(V_i,\clO_{V_i})$ is non-negatively graded, the same is true for $B$. To follow the computations below, recall that we fixed an open cover $\{U_i\}_{0\le i\le n}$ along with a local coordinate $z_i$ at $p_i$ for $1\le i\le n$ in Notation \ref{not:C-cover}.
    
    Fix $m\ge 0$ and denote by $B_m$ the summand of $B$ in grading $m$. By restricting to the open subset $V_0$, we may write any element $g_m\in B_m$ uniquely as $f_m t^m$, for some $f_m\in R_0$. Thus, the image of the (injective) map $B_m\to R_0$ (given by $g_m\mapsto f_m$) consists of those $F\in R_0$ for which the expression $Ft^m$ on $V_0$ extends as a regular function on the remaining open subsets $V_i$ for $1\le i\le n$. With this in mind, fix $1\le i\le n$, and regarding $F$ as a rational function on $U_i = \Spec R_i$, let $\alpha_i\ge 0$ be the smallest non-negative integer such that $h_i = z_i^{\alpha_i}F\in R_i$. Such an $\alpha_i$ exists since $U_i$ does not contain any $p_j$ with $j\ne i$ and $F$ is already regular on $U_0\cap U_i = U_i\setminus\{p_i\}$. Now, we may write $F t^m = h_i t^mz_i^{-\alpha_i}$. Recalling that the rational function $w_i = t^{d_i}z_i^{-1}$ is regular on $V_i$, we see that $Ft^m$ is regular on $V_i$ if and only if $d_i\alpha_i\le m$, i.e., $\alpha_i\le\lfloor \frac m{d_i}\rfloor$. Thus, we get $B_m = A_m\subset R_0$ as required.
\end{proof}

\begin{Corollary}\label{cor:contraction-normal}
    $\sY$ is normal.
\end{Corollary}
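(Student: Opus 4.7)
The plan is to combine Lemma \ref{lem:coord-ring-iso} with the previously established normality of $\sX$. The core principle is that the global sections of the structure sheaf of a normal integral scheme form a normal ring, since being integrally closed is preserved under arbitrary intersections (taken inside a common ambient field).

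First, I would recall from the paragraph following \eqref{eqn:main-family-int-coord} that $\sX$ is normal: it has at worst $A_{d_i-1}$ singularities at the points $q_i$ (which are normal toric surface singularities) and is smooth elsewhere, and normality is smooth-local by \cite[\href{https://stacks.math.columbia.edu/tag/034F}{Tag 034F}]{stacks-project}. Since $\sX$ is also integral (it is irreducible and reduced, being an open subvariety of the blow-up $\overline\sX$ of the integral scheme $C\times\bA^1$), it has a well-defined function field $K := K(\sX)$.

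Next, for any normal integral scheme $X$ with function field $K$, and any affine open cover $\{W_\alpha\}$, each ring $\Gamma(W_\alpha, \clO_X)$ is integrally closed in $K$; hence so is their intersection $\Gamma(X, \clO_X) = \bigcap_\alpha \Gamma(W_\alpha, \clO_X)$. Applying this to $\sX$, the ring $\Gamma(\sX, \clO_\sX)$ is integrally closed in $K$. By Lemma \ref{lem:coord-ring-iso}, this ring equals $A$, and by Corollary \ref{cor:Y-affine-variety}, $A$ is an integral domain; therefore $\mathrm{Frac}(A) \subset K$, and since $A$ is integrally closed in the larger field $K$, it is a fortiori integrally closed in $\mathrm{Frac}(A)$. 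This means $A$ is a normal domain, so $\sY = \Spec A$ is normal.

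There is no real obstacle here once Lemma \ref{lem:coord-ring-iso} is in hand; the argument is essentially a one-line deduction from the standard fact that sections of normal schemes are normal. If desired, one could alternatively invoke \cite[\href{https://stacks.math.columbia.edu/tag/0358}{Tag 0358}]{stacks-project}, which records precisely this closure-under-global-sections statement.
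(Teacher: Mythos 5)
Your proof is correct and follows essentially the same route as the paper: both reduce to Lemma \ref{lem:coord-ring-iso} together with the normality of $\sX$, the paper simply spelling out by hand (take $f/g$ integral over $A$, view it as a rational function on $\sX$, conclude it is everywhere regular and hence lies in $\Gamma(\sX,\clO_\sX)=A$) the general fact you invoke, that global sections of a normal integral scheme form a ring integrally closed in the function field.
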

\begin{proof}
     If $f/g$ is integral over $A$ with $f,g\in A$ and $g\ne 0$, then we get a similar statement over $\sX$ using $A = \Gamma(\sX,\clO_{\sX})$. From the discussion below \eqref{eqn:main-family-int-coord}, recall that $\sX$ is a normal surface. It follows that the rational function $f/g$ is everywhere locally (and therefore globally) regular on $\sX$, i.e., there exists $h\in\Gamma(\sX,\clO_{\sX}) = A$ with $f = gh$. Thus, $f/g=h\in A$ and we conclude that $\sY$ is normal.
\end{proof}

\begin{Definition}[Contraction morphism]\label{def:contraction}
    Define
    \begin{align*}
        \Phi:\sX\to\sY
    \end{align*}
    to be the morphism corresponding to the identification \eqref{eqn:coord-ring-iso} of graded $k[t]$-algebras. By construction, $\Phi$ is $\bG_m$-equivariant and compatible with the projections $\pi_\sX$ and $\pi_\sY$ to $\bA^1$.
\end{Definition}

\begin{remark}\label{rem:affine-initial-object}
    $\Phi$ is an initial object in the category of morphisms from $\sX$ to affine schemes. Indeed, if $\sX\to\Spec S$ is a morphism, then we get a ring map $S\to\Gamma(\sX,\clO_{\sX}) = A$. By the correspondence between morphisms of affine schemes and ring maps, this yields a morphism $\sY = \Spec A\to\Spec S$ factoring the original morphism $\sX\to\Spec S$ through $\Phi$. 
\end{remark}

\begin{figure}[h]
    \centering
\includegraphics[width=12.3cm]{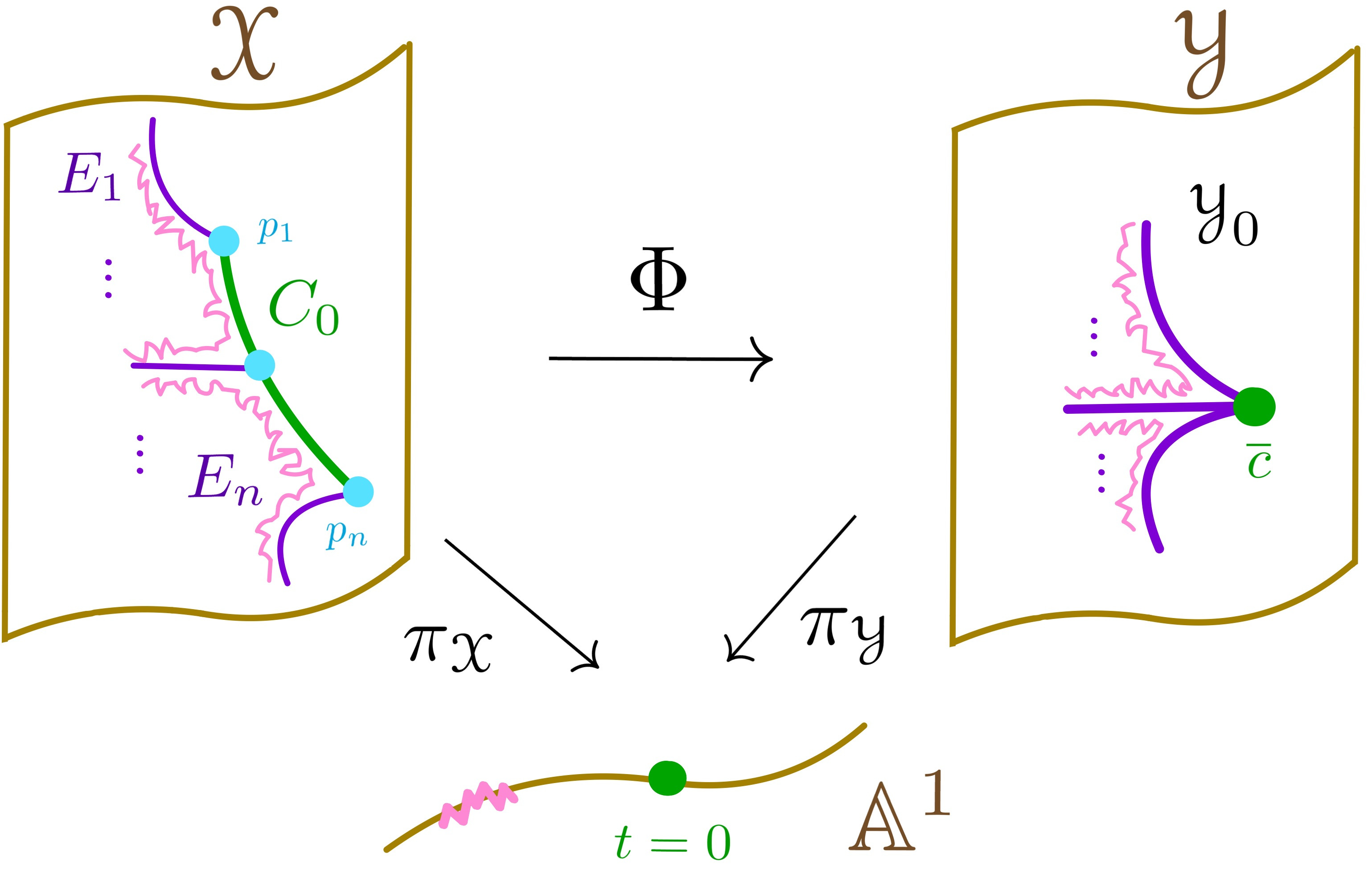}
    \caption{The contraction morphism $\Phi:\sX\to\sY$, from Definition \ref{def:contraction}.}
    \label{fig:appendixA}
\end{figure}

The following technical observation will be useful later.

\begin{Lemma}\label{lem:affine-technical}
    If $0\ne g\in A_+$ is any homogeneous element in grading $m\ge 1$, then the complement $\sX_g\subset\sX$ of the zero locus of $g$ is affine.
\end{Lemma}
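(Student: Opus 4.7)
The plan is to show that $\Phi$ restricts to an isomorphism $\Phi|_{\sX_g}\colon \sX_g \xrightarrow{\sim} \sY_g$, where $\sY_g := \Spec A[g^{-1}]$ is the principal affine open of $\sY = \Spec A$ cut out by $g \ne 0$. Since $\sY_g$ is affine, this will yield the conclusion.

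First, I will verify that $g$ vanishes identically on $C_0$, so that $\sX_g \subset \sX \setminus C_0$. Following the proof of Lemma \ref{lem:coord-ring-iso}, on $V_0$ the element $g$ can be written as $g = f\,t^m$ with $f \in A_m \subset R_0$, while on each $V_i$ (for $1 \le i \le n$), using the relation $z_i w_i = t^{d_i}$, we have $g = h_i\, t^r\, z_i^{q - \alpha_i}\, w_i^q$ for some $h_i \in R_i$, with $q = \lfloor m/d_i \rfloor$, $r = m \bmod d_i$, and $0 \le \alpha_i \le q$. Since $m \ge 1$, direct inspection shows $g|_{C_0 \cap V_0} = 0$ (because $t = 0$ there) and $g|_{C_0 \cap V_i} = 0$ (because either $w_i^q$ or $t^r$ vanishes on $C_0 \cap V_i = \{w_i = t = 0\}$).

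Next, since $\sX$ is normal and $V(g) \subset \sX$ is an effective Cartier divisor (as $\sX$ is integral and $g \ne 0$), any regular function on $\sX_g$ becomes regular on all of $\sX$ after multiplication by a sufficiently high power of $g$. Combined with the identification $\Gamma(\sX, \clO_\sX) = A$ from Lemma \ref{lem:coord-ring-iso}, this yields $\Gamma(\sX_g, \clO_{\sX_g}) = A[g^{-1}]$, matching $\Gamma(\sY_g, \clO_{\sY_g})$.

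Finally, I will establish that $\Phi|_{\sX_g}$ is an isomorphism. Over $\{t \ne 0\}$, Lemma \ref{lem:flatness-and-smoothing} identifies $\Phi$ with the identity on $U_0 \times \bG_m$, which handles the locus $\sX_g \cap V_0 \subset \{t \ne 0\}$. For the remaining points of $\sX_g$, lying inside some $E_i$, the formula for $g$ on $V_i$ forces $\sX_g \cap E_i \ne \emptyset$ only when $d_i \mid m$ and $\alpha_i = m/d_i$; in that case, a direct coordinate computation in $V_i$ shows that $\Phi$ is an \'etale-local isomorphism at each point of $E_i \setminus \{q_i\}$. The main technical hurdle is precisely this last step, namely matching the local ring of $\sX$ at a smooth point of $E_i \setminus \{q_i\}$ with the local ring of $\sY$ at the corresponding point on the $i$-th branch of $\sY_0 \setminus \{\overline{c}\}$.
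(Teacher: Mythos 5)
Your proposal runs the paper's logic in the wrong direction, and the step that would make it work is missing. In the paper, the affineness of $\sX_g$ is proved \emph{first} (by exhibiting an explicit finite, hence affine, morphism from $\sX_g$ onto an explicitly affine hypersurface $\sV = \Spec\bigl(R_\text{aff}[t,w^{\pm1}]/(wG-t^m)\bigr)$, built from $G=1/F$ and the universal property of blow-ups), and only \emph{then} is the isomorphism $\sX_g\xrightarrow{\simeq}\sY_g$ deduced in Lemma \ref{lem:contraction}: there the identification $\Gamma(\sX_g,\clO_{\sX_g})=A_g$ turns $\Phi|_{\sX_g}$ into the tautological map $\sX_g\to\Spec\Gamma(\sX_g,\clO_{\sX_g})$, which is an isomorphism \emph{because} $\sX_g$ is already known to be affine. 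You propose instead to prove directly that $\Phi|_{\sX_g}:\sX_g\to\sY_g$ is an isomorphism, which is essentially the statement of Lemma \ref{lem:contraction}; your ingredients (vanishing of $g$ on $C_0$, $\Gamma(\sX_g,\clO_{\sX_g})=A_g$, and a local-isomorphism claim along $E_i\setminus\{q_i\}$) do not suffice for this. A morphism to an affine scheme that is a local isomorphism (even an open immersion) and induces an isomorphism on global sections need not be an isomorphism: the inclusion $\bA^2\setminus\{0\}\hookrightarrow\bA^2$ has both properties. Since $\sY$ is a normal surface, removing finitely many closed points from $\sY_g$ does not change its ring of global functions, so your global-sections computation is structurally incapable of detecting exactly the failure mode (a quasi-affine $\sX_g$ missing finitely many points of $\sY_g$) that the affineness assertion is meant to rule out.

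Concretely, the missing ingredient is surjectivity of $\Phi|_{\sX_g}$ onto $\sY_g$ (equivalently, some form of properness of $\Phi$ over $\sY_g$, or an integrality statement for the induced map on central fibres such as the fact that a positive power of $w_i$ lies in the image of $\fA\to k[w_i]$). In the paper these facts appear only later, in Lemmas \ref{lem:contraction}, \ref{lem:inverse-contraction} and \ref{lem:partial-normalization-ring-map}, whose proofs depend on Lemma \ref{lem:affine-technical}; invoking them here would be circular, and your sketch does not supply an independent argument. In addition, the "direct coordinate computation" matching the local rings of $\sX$ and $\sY$ along $E_i\setminus\{q_i\}$ — which you defer as the main hurdle — is itself nontrivial, since it requires understanding $A$ localized away from $A_+$; the paper never needs this at the present stage precisely because it reduces affineness to finiteness of the explicit map $\sX_g\to\sV$, checked chart by chart. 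As written, the proposal therefore has a genuine gap.
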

\begin{proof}
    We ask the reader to consult the list of notations provided at the beginning of this appendix to follow the computations below. Exactly as in the proof of Lemma \ref{lem:coord-ring-iso}, write $g = F t^m$ over $V_0$ for some regular function $F$ on $U_0$ with poles of order at most $\lfloor\frac m{d_i}\rfloor$ at the points $p_i$, for $1\le i\le n$. For $1\le i\le n$, write $g = h_i w_i^{\alpha_i}t^{r_i}$ over $V_i$ with $0\le\alpha_i$ being the smallest non-negative integer such that $h_i = z_i^{\alpha_i}F$ is regular on $U_i$ and $r_i = m - d_i\alpha_i\ge 0$. Define $I:=\{1\le i\le n\,|\,r_i=0\}$. If $i\in I$, then we must have $\alpha_i = \frac{m}{d_i}>0$ since $m\ge 1$.

    In this paragraph, we describe the zero locus of $g$. We do so set-theoretically, rather than scheme-theoretically, since we are only interested in its complement. Let $D\subset U_0$ denote the zero locus of $F|_{U_0}$. Then, the zero locus of $g$ on $V_0$ is the union of $D\times\bA^1$ and $U_0\times\{0\}$. Now, fix $1\le i\le n$ and consider the zero locus of $g = h_iw_i^{\alpha_i}t^{r_i}$ on $V_i$. If $i\in I$, then $\alpha_i>0$ and the zero locus of $w_i^{\alpha_i}$ is $C_0\cap V_i$. If $i\not\in I$, then the zero locus of $t^{r_i}$ is the union of $C_0\cap V_i$ and $E_i$. Finally, we consider the zero locus of $h_i$ on $V_i$. Since $h_i = z_i^{\alpha_i}F_i$, the zero locus of $h_i$ on $V_i$ contains $(U_i\cap D)\times\bA^1$. If $\alpha_i > 0$, then this containment is an equality since $h_i = z_i^{\alpha_i}F$ is non-vanishing at $p_i$ (owing to the minimality of $\alpha_i$), $z_i$ has no zeros on $U_i\setminus\{p_i\}$, and $p_j\not\in U_i$ for $j\ne i$. If $\alpha_i = 0$, then $i\not\in I$ and any zeros of $h_i$ on $V_i$ which are not in $(U_i\cap D)\times\bA^1$ must lie over $p_i\in U_i$ are therefore contained in $E_i$ (which appears in the zero locus of $t^{r_i}$ as seen above). Putting all this together, we find that the zero locus of $g$ on $\sX$ consists of the union of $D\times\bA^1$, $C_0$, and those $E_i$ for which $i\not\in I$.

    To show that $\sX_g$ is affine, we argue as follows. To begin with, we claim that the union of $D$ and $\{p_i\}_{i\not\in I}$ is a non-empty set. Indeed, if $I=\{1,\ldots,n\}$, then the rational function $F$ \emph{must} have a pole of order $\alpha_i = m/{d_i}>0$ at each $p_i$, and therefore, $D$ \emph{must} be non-empty. It follows that the open subset $C_\text{aff}\subset C$, defined as the complement of the (non-empty) union of $D$ and $\{p_i\}_{i\not\in I}$, is affine. Write $C_\text{aff} = \Spec R_\text{aff}$. Observe that we still have $\{p_i\}_{i\in I}\subset C_\text{aff}$ and $\sX_g$ is obtained by taking the blow-up of $C_\text{aff}\times\bA^1$ along the disjoint union of the closed subschemes $\{p_i\}\times\Spec k[t]/(t^{d_i})$ for $i\in I$ and excising from it the proper transforms of $C_\text{aff}\times\{0\}$ and $\{p_i\}\times\bA^1$ for $i\in I$. Observe now that $G = 1/F \in R_\text{aff}$ is a regular function on $C_\text{aff}$ whose scheme-theoretic zero locus is $\sum_{i\in I} \alpha_i\cdot p_i$. Blowing up $C_\text{aff}\times\bA^1$ along the closed subscheme defined by the ideal $(G,t^m)$ and excising the proper transforms of $C_\text{aff}\times\{0\}$ and $\{p_i\}\times\bA^1$ for $i\in I$ yields the affine scheme 
    \begin{align}\label{eqn:aux-blowup}
        \sV := \Spec\left(\frac{R_\text{aff}[t,w^{\pm1}]}{(wG-t^m)}\right)
    \end{align}
    where $w$ is a new variable. For $i\in I$, let $S_i$ denote the coordinate ring of $C_\text{aff}\cap U_i$, i.e., we have $C_\text{aff}\cap U_i = \Spec S_i$. We can then write $G|_{C_\text{aff}\cap U_i} = z_i^{\alpha_i}u_i$ for some $u_i\in S_i^\times$. As a result, the image of the ideal $(G,t^m)$ in $S_i[t]$ can be re-expressed as $(z_i^{\alpha_i},t^m)$. Since $\alpha_i = m/{d_i}$ (for $i\in I$), we get a morphism $\sX_g\to\sV$ (respecting the projection to $C_\text{aff}\times\bA^1$, and therefore, birational) via the universal property of blowing up \cite[\href{https://stacks.math.columbia.edu/tag/0806}{Tag 0806}]{stacks-project}. For $i\in I$, this morphism is explicitly described over $C_\text{aff}\cap U_i$ by the $S_i[t]$-algebra map
    \begin{align}\label{eqn:map-between-blowups}
        \frac{S_i[t,w^{\pm1}]}{(wG-t^m)}\to\frac{S_i[t,w_i^{\pm1}]}{(z_iw_i-t^{d_i})}
    \end{align}
    defined by $w\mapsto w_i^{\alpha_i}/u_i$. Here, we have used \eqref{eqn:aux-blowup} and \eqref{eqn:main-family-int-coord} to determine the source and target of \eqref{eqn:map-between-blowups}. The relation $w_i^{\alpha_i} = wu_i$ shows that the target of \eqref{eqn:map-between-blowups} is a finite type module over the source of \eqref{eqn:map-between-blowups}. Moreover, $\sX_g\to\sV$ is an isomorphism over $C_\text{aff}\cap U_0$. Being a finite morphism is local on the target by \cite[\href{https://stacks.math.columbia.edu/tag/01WI}{Tag 01WI}]{stacks-project}. Thus, the preceding argument shows that the morphism $\sX_g\to\sV$ is finite. Finite morphisms are affine morphisms by \cite[\href{https://stacks.math.columbia.edu/tag/01WH}{Tag 01WH}]{stacks-project}. Since $\sV$ is affine, the same must be true for $\sX_g$.
\end{proof}

The next lemma formalizes the assertion that $\sY$ is obtained from $\sX$ by contracting $C_0$ to the point $\overline{c}\in\sY$ (Definition \ref{def:cone-point}) via the morphism $\Phi$. See Figure \ref{fig:appendixA}.

\begin{Lemma}[Contraction property]\label{lem:contraction}
    The set-theoretic inverse image of $\overline{c}\in\sY$ under $\Phi$ is $C_0\subset\sX$. Moreover, $\Phi$ restricts to an isomorphism 
    \begin{align}\label{eqn:iso-away-from-core}
        \sX\setminus C_0\xrightarrow{\simeq}\sY\setminus\{\overline{c}\}
    \end{align}
    on the complement.
\end{Lemma}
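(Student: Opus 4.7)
The plan is to identify $\Phi^{-1}(\overline{c})$ with the common zero locus of $A_+$ in $\sX$, and then upgrade this set-theoretic description to the claimed scheme isomorphism via principal open affines. Since $\overline{c}$ corresponds to the maximal ideal $A_+\subset A$, a point $x\in\sX$ lies in $\Phi^{-1}(\overline{c})$ precisely when every element of $A_+$ vanishes at $x$. The inclusion $C_0\subseteq\Phi^{-1}(\overline{c})$ is immediate: by Remark \ref{rem:model-smoothing-subalg} and Lemma \ref{lem:coord-ring-iso}, any homogeneous $g\in A_m$ with $m\ge 1$ has the form $g|_{V_0}=Ft^m$ for some $F\in R_0$, which vanishes on $U_0\times\{0\}=C_0\cap V_0$; density of $C_0\cap V_0$ in $C_0$ then forces $g|_{C_0}=0$.

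\textbf{The reverse inclusion.} Given $x\in\sX\setminus C_0$, I need to produce a homogeneous $g\in A_+$ with $g(x)\ne 0$. If $t(x)\ne 0$ then $t\in A_1$ suffices, so assume $x$ lies on the central fibre; since its irreducible components meet only at the $q_i\in C_0$, we get $x\in E_i\setminus\{q_i\}$ for a unique $1\le i\le n$, i.e.\ $z_i(x)=t(x)=0$ and $w_i(x)\ne 0$ in the coordinates \eqref{eqn:main-family-int-coord} on $V_i$. The idea is to pick $F\in A_m$ having a pole of order exactly $m/d_i$ at $p_i$; writing $F=z_i^{-m/d_i}h_i$ with $h_i\in R_i$ non-vanishing at $p_i$ and using $t^{d_i}=z_iw_i$, one obtains $g:=Ft^m=h_iw_i^{m/d_i}$ on $V_i$, which does not vanish at $x$. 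To produce such an $F$, I take $m$ to be a large multiple of $d_i$: the short exact sequence \eqref{eqn:ses-Q} together with \eqref{eqn:Q-local-stalks} yields a map $A_m\to H^0(C,\clQ_m)\twoheadrightarrow(\clQ_m)_{p_i}=(T_{C,p_i})^{\otimes m/d_i}$ whose first arrow is surjective once $H^1(C,\clO_C(\lfloor(m-1)\Delta\rfloor))=0$. By Serre duality and the positivity of $\Delta$, this vanishing holds for all $m\gg 0$, so any preimage of a nonzero element works. This is the technical core of the argument---it is precisely here that the ampleness of the $\bQ$-divisor $\Delta$ is used.

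\textbf{Isomorphism on complements.} Having shown $C_0=V(A_+)$ set-theoretically, I cover $\sX\setminus C_0$ by the principal opens $\sX_g$ for homogeneous $g\in A_+$, each of which is affine by Lemma \ref{lem:affine-technical}. Correspondingly $\sY\setminus\{\overline{c}\}=\bigcup_g\sY_g$ with $\sY_g=\Spec A_g$, and $\Phi^{-1}(\sY_g)=\sX_g$ since $g$ pulls back to $g$. Because $\sX$ is quasi-compact and quasi-separated, localization commutes with global sections, giving $\Gamma(\sX_g,\clO_{\sX_g})=\Gamma(\sX,\clO_\sX)_g=A_g$ by Lemma \ref{lem:coord-ring-iso}. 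The ring map $A_g\to A_g$ induced by $\Phi|_{\sX_g}:\sX_g\to\sY_g$ is therefore the identity, so each $\Phi|_{\sX_g}$ is an isomorphism of affine schemes, and these patch to the desired global isomorphism $\sX\setminus C_0\simeq\sY\setminus\{\overline{c}\}$.
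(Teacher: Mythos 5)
Your proof is correct and follows essentially the same route as the paper's: the inclusion $C_0\subseteq\Phi^{-1}(\overline{c})$ via $g|_{V_0}=Ft^m$, the reverse inclusion by exhibiting a homogeneous element of $A_+$ equal to $h_iw_i^{\alpha_i}$ on $V_i$ with $h_i(p_i)\ne 0$, and the isomorphism \eqref{eqn:iso-away-from-core} by covering with the principal opens $\sX_g$, identifying $\Gamma(\sX_g,\clO_{\sX_g})=A_g$, and invoking Lemma \ref{lem:affine-technical}. The only (harmless) deviations are cosmetic: you produce the section with maximal pole at $p_i$ from the long exact sequence of \eqref{eqn:ses-Q} plus Serre duality rather than from base-point-freeness of $d\Delta$, and you justify $\Gamma(\sX_g,\clO_{\sX_g})=\Gamma(\sX,\clO_\sX)_g$ by the standard quasi-compact quasi-separated localization fact rather than the paper's function-field argument.
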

\begin{proof}
    Let $g\in A_+$ be any homogeneous element in grading $m\ge 1$. As in the proof of Lemma \ref{lem:coord-ring-iso}, write $g|_{V_0} = Ft^m$ for a regular function $F$ on $U_0$. This shows that $U_0\times\{0\}$, and therefore its closure $C_0$, lies in the zero locus of $g$ in $\sX$. As $g$ was arbitrary, it follows that $C_0\subset\Phi^{-1}(\overline{c})$.
    
    If $q\in\sX\setminus C_0$ is a closed point, then in order to show that $\Phi(q)\ne\overline{c}$, we must produce an element of $A_+$ which is non-vanishing at $q$. Let $p\in C$ be the image of $q$ under the projection $\sX\to C\times\bA^1\to C$. If $p\in U_0$, then $t\in A_1\subset A_+$ is already non-vanishing at $p$. If $p = p_i$ for some $1\le i\le n$, then take a large integer $d\gg 1$ which is divisible by all the $d_i$. The divisor $d\Delta$ is then base point free and we can find a regular function $F$ on $U_0$, corresponding to a section $\sigma$ of $\clO_C(d\Delta)$ which generates its stalk at $p_i$, such that $g = Ft^d$ is regular on $\sX$. Over $U_i$, we can write $F = h_i/z_i^{\alpha_i}$ with $\alpha_i = d/d_i$ being an integer, and $h_i$ being a regular function on $U_i$ such that $h_i(p_i)\ne 0$. (This is possible since $\sigma$ generates $\clO_C(d\Delta)$ at $p_i$.) This shows that $g = h_iw_i^{\alpha_i}$ on $V_i = \sX|_{U_i\times\bA^1}$. Since $w_i(q)\ne 0$, we get $g(q)\ne 0$.

    To establish the isomorphism \eqref{eqn:iso-away-from-core}, it suffices to show that for any homogeneous element $0\ne g\in A_+$ in grading $m\ge 1$, $\Phi$ induces an isomorphism between the complements of zero loci of $g$ in $\sX$ and $\sY$. (Indeed, since such elements $g$ generate $A_+$, these complements form an affine open cover of $\sY\setminus\{\overline{c}\}$.)
    Denote these open complements by $\sX_g\subset\sX$ and $\sY_g = \Spec A_g\subset\sY$. Note that $A_g$ denotes the localization of the ring $A$ at the element $g$, i.e., it is obtained from $A$ by inverting $g$. We then have the identifications 
    \begin{align}\label{eqn:localize-coord-ring}
        A_g = \Gamma(\sX,\clO_{\sX})_g = \Gamma(\sX_g,\clO_{\sX_g}).
    \end{align} 
    The first equality in \eqref{eqn:localize-coord-ring} is obtained from \eqref{eqn:coord-ring-iso} by localizing at the element $g$. To justify the second equality in \eqref{eqn:localize-coord-ring}, consider the two sides as sub-rings of the function field of $\sX$. The inclusion of the left side into the right side is clear. The opposite inclusion follows by noting that any regular function on $\sX_g$, after multiplication by a sufficiently high power of $g$, becomes a regular function on $\sX$. Under \eqref{eqn:localize-coord-ring}, the morphism $\sX_g\to\sY_g$ induced by $\Phi$ is simply the tautological morphism $\sX_g\to\Spec \Gamma(\sX_g,\clO_{\sX_g})$ which, by Lemma \ref{lem:affine-technical}, is an isomorphism.
\end{proof}

Combining Lemma \ref{lem:contraction} and Remark \ref{rem:affine-initial-object} leads to the following.

\begin{Corollary}[Universal property of contraction]\label{cor:univ-prop-contraction}
    Any morphism of schemes $\sX\to T$ which maps $C_0$ to a point in $T$ must factor (necessarily uniquely) through $\Phi$. Moreover, we have an isomorphism $\clO_{\sY} = \Phi_*\clO_{\sX}$ of sheaves.
\end{Corollary}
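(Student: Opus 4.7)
The plan is to first upgrade Lemma \ref{lem:coord-ring-iso} from a global statement into a local one on a natural basis of open subsets of $\sY$, obtaining $\Phi_*\clO_\sX = \clO_\sY$, and then deduce the universal factorization property from this sheaf isomorphism by a straightforward gluing argument.

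For the sheaf isomorphism $\clO_\sY \simeq \Phi_*\clO_\sX$, I would check the isomorphism on a basis of open subsets of $\sY$. On $\sY\setminus\{\overline{c}\}$ the morphism $\Phi$ is already an isomorphism by Lemma \ref{lem:contraction}, so there is nothing to check there. For an open neighborhood of $\overline{c}$, I would use the basis consisting of the principal open subsets $\sY_g = \Spec A_g$ indexed by nonzero homogeneous $g\in A_+$; these do form a neighborhood basis because $A_+$ is the maximal ideal defining $\overline{c}$. In the proof of Lemma \ref{lem:contraction}, it was already shown (via equation \eqref{eqn:localize-coord-ring}) that $\Phi^{-1}(\sY_g) = \sX_g$ and that $\Gamma(\sX_g,\clO_{\sX_g}) = A_g$, so
\begin{equation*}
    (\Phi_*\clO_\sX)(\sY_g) \;=\; \Gamma(\sX_g,\clO_{\sX_g}) \;=\; A_g \;=\; \clO_\sY(\sY_g),
\end{equation*}
and these identifications are compatible with restriction. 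This gives the desired isomorphism.

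For the universal property, suppose $f:\sX\to T$ contracts $C_0$ to a point $t_0\in T$. Cover $T$ by affine opens, and for each affine open $U\subset T$ either disjoint from $\{t_0\}$ or containing $t_0$, consider $f^{-1}(U)\subset\sX$. Since $\sX\setminus f^{-1}(U)$ is disjoint from $C_0$, Lemma \ref{lem:contraction} shows that $\Phi$ maps it isomorphically onto a closed subset of $\sY\setminus\{\overline{c}\}$; let $W_U\subset\sY$ be its complement, an open subset with $\Phi^{-1}(W_U) = f^{-1}(U)$. Using $\Phi_*\clO_\sX = \clO_\sY$, the ring map $\Gamma(U,\clO_T)\to\Gamma(f^{-1}(U),\clO_\sX) = \Gamma(W_U,\clO_\sY)$ defined by $f$ yields a morphism $W_U\to U$. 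These morphisms glue over intersections (by their construction via $\Phi$), producing a morphism $\sY\to T$ that factors $f$ through $\Phi$. Uniqueness follows because $\Phi$ is surjective on points (Lemma \ref{lem:contraction}) and schematically dominant (the map $\clO_\sY\to\Phi_*\clO_\sX$ is injective, being an isomorphism). The only mildly delicate point in the whole argument is the verification that $W_U$ is well-defined, which rests on the fact established above that $\Phi$ is an isomorphism over the complement of $\overline{c}$; everything else is formal.
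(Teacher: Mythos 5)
Your proof of the sheaf isomorphism has a genuine gap at its central point. The principal opens $\sY_g=\Spec A_g$ with $g$ a nonzero homogeneous element of $A_+$ do \emph{not} form a neighborhood basis of $\overline{c}$: since $\overline{c}$ is precisely the point cut out by the maximal ideal $A_+$, every such $g$ vanishes at $\overline{c}$, so $\overline{c}\notin\sY_g$ for all of them (indeed, as used in the proof of Lemma \ref{lem:contraction}, these opens cover exactly $\sY\setminus\{\overline{c}\}$). Consequently your basis check only verifies $\clO_\sY\simeq\Phi_*\clO_\sX$ over $\sY\setminus\{\overline{c}\}$, where it is already immediate from Lemma \ref{lem:contraction}; the entire content of the statement, namely $\Gamma(V,\clO_\sY)=\Gamma(\Phi^{-1}(V),\clO_\sX)$ for open sets $V$ containing $\overline{c}$, is never addressed. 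You also cannot simply rerun your computation for principal opens $D(g)$ with $g\notin A_+$ (which are the ones containing $\overline{c}$), because the identity $\Gamma(\sX_g,\clO_{\sX_g})=A_g$ rests on Lemma \ref{lem:affine-technical}, which is only proved for homogeneous $g\in A_+$. A correct argument at $\overline{c}$ is available but requires a new ingredient: given $h\in\Gamma(\Phi^{-1}(V),\clO_\sX)$ with $\overline{c}\in V$, transport $h$ to $V\setminus\{\overline{c}\}$ via the isomorphism of Lemma \ref{lem:contraction} and extend it across $\overline{c}$ using that $\sY$ is a normal surface (Corollary \ref{cor:contraction-normal}) and $\{\overline{c}\}$ has codimension $2$; the pullback of the extension agrees with $h$ on the dense open $\Phi^{-1}(V)\setminus C_0$ of the reduced scheme $\Phi^{-1}(V)$, hence equals $h$. (The paper itself instead leans on Remark \ref{rem:affine-initial-object}, i.e.\ on $\Gamma(\sX,\clO_\sX)=A$, together with Lemma \ref{lem:contraction}.)

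In the factorization argument there is a smaller, fixable omission: you define $W_U$ as the complement of $Z=\Phi(\sX\setminus f^{-1}(U))$ and call it open, but you only know $Z$ is closed in $\sY\setminus\{\overline{c}\}$; to conclude $W_U$ is open in $\sY$ you need $Z$ closed in $\sY$, i.e.\ you need $\Phi$ to be a closed map. This is true because $\Phi$ is projective, hence proper (Lemma \ref{lem:inverse-contraction}), but your argument never invokes it, and without it the step fails. Also, the sentence ``$\sX\setminus f^{-1}(U)$ is disjoint from $C_0$'' holds only for the opens $U$ containing $t_0$ (in the other case $C_0\subset\sX\setminus f^{-1}(U)$, though your formula for $W_U$ still works there). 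The uniqueness argument via surjectivity plus injectivity of $\clO_\sY\to\Phi_*\clO_\sX$ is fine, and the same epimorphism property is what you should cite to justify the gluing over overlaps, which is currently waved through.
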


The next statement describes how to recover the $\sX$ from $\sY$, i.e., it describes the inverse of the contraction process.

\begin{Lemma}[Inverse of contraction]\label{lem:inverse-contraction}
    Define the subvariety 
    \begin{align*}
        \Gamma\subset\sY\times C
    \end{align*}
    to be the closure of the graph of the rational map $\sY\dashrightarrow C$ given by $\Phi^{-1}$ followed by the projection $\sX\to C\times\bA^1\to C$. Then, the natural birational morphism 
    \begin{align}\label{eqn:blowup-to-graph}
        \sX\to\Gamma
    \end{align}
    is finite, and therefore, projective. In particular, $\Phi:\sX\to\sY$ is projective and \eqref{eqn:blowup-to-graph} is the normalization morphism of $\Gamma$.
\end{Lemma}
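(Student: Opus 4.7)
The plan is to verify that the natural morphism $\psi\colon\sX\to\Gamma$ is birational and quasi-finite, then to establish its properness by exploiting the projectivity of $C$, from which finiteness will follow; the projectivity and normalization assertions are then formal consequences.

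First I would note that, by Lemma \ref{lem:contraction}, $\Phi$ restricts to an isomorphism $\sX\setminus C_0\xrightarrow{\sim}\sY\setminus\{\overline c\}$, and combined with the projection $\sX\to C$ this identifies $\sX\setminus C_0$ with the graph $\Gamma^\circ\subset(\sY\setminus\{\overline c\})\times C$, which is open and dense in $\Gamma$. The image of $C_0$ in $\sY\times C$ is $\{\overline c\}\times C$, since $\Phi(C_0)=\{\overline c\}$ and the proper transform $C_0\to C$ is an isomorphism; this image lies in $\Gamma=\overline{\Gamma^\circ}$ because $C_0$ lies in the closure of $\sX\setminus C_0$ inside $\sX$. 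Thus $\psi$ is surjective, and the fiber over any $(\overline c,p)$ consists of the single point of $C_0$ over $p$, so $\psi$ is quasi-finite.

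The crux will be to prove that $\Phi\colon\sX\to\sY$ is proper by the valuative criterion; once this is done, properness of $\psi$ follows by cancellation from the factorization $\sX\xrightarrow{\psi}\Gamma\to\sY$ of $\Phi$ through the separated (in fact projective) morphism $\Gamma\to\sY$. Given a DVR $R\subset K$ and a compatible pair $(\Spec K\to\sX,\ \Spec R\to\sY)$, the only non-trivial case is when the closed point of $\Spec R$ maps to $\overline c$; otherwise $\Phi$ is an isomorphism near the image and the lift is immediate. In this case the induced valuation $v$ on $k(\sY)=k(C)(t)$ is centered at $\overline c$, which forces $v(g)>0$ for every $g\in A_+$. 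I would then invoke the projectivity of $C$ to conclude that $v|_{k(C)}$ is either trivial or the discrete valuation at a unique closed point $q\in C$; the corresponding point $x_R\in C_0$ (via $C_0\simeq C$) lies in an affine chart $V_0$ or $V_i$ of $\sX$. In the $V_0$-chart (when $q\in U_0$) the local ring $\clO_{\sX,x_R}$ is manifestly dominated by $v$, while in the $V_i$-chart (when $q=p_i$) the centering condition forces $d_i\cdot v(t)>v(z_i)$ and hence $v(w_i)=d_iv(t)-v(z_i)>0$ via the local equation $z_iw_i=t^{d_i}$ from \eqref{eqn:main-family-int-coord}; together with $v(z_i),v(t)>0$ this yields the required domination of $\clO_{\sX,q_i}$ by $R$. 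Separatedness of $\sX$ over $k$ gives uniqueness of the lift.

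Once $\psi$ is shown to be proper and quasi-finite, it is finite. Combined with the fact that $\Gamma\to\sY$ is projective (as a closed subscheme of $\sY\times C$ with $C$ projective), this gives that $\psi$ is projective, and by composition so is $\Phi$. Finally, since $\sX$ is normal (its only singularities being the toric $A_{d_i-1}$ points arising from \eqref{eqn:main-family-int-coord}), the finite birational morphism $\psi$ from a normal scheme to the integral scheme $\Gamma$ is necessarily the normalization morphism of $\Gamma$. The hard part will be the valuative criterion argument: correctly identifying the center $x_R\in C_0$ from the restriction $v|_{k(C)}$ and verifying domination at the singular points $q_i$ of $\sX$, where the numerical constraints $v(t),v(z_i)>0$ must be tracked carefully against the exponents $d_i$ underlying the $A_{d_i-1}$ singularity.
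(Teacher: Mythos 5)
Your proposal is correct in outline, but it takes a genuinely different route from the paper. The paper never invokes the valuative criterion: it proves finiteness of \eqref{eqn:blowup-to-graph} directly, by showing that the morphism $\sX\to\sY\times C$ is finite chart by chart over an affine cover of $C$ — over $U_0$ the ring map $A\otimes R_0\to R_0[t]$ is surjective (a closed embedding, since $t\in A_+$), and over a shrunk neighborhood $U_i'$ of $p_i$ a positive power $w_i^{\alpha_i}$ is exhibited in the image of the ring map by mapping $F_it^d\otimes(1/h_i)$ to it, where $F_i$ is a rational function with pole of exact order $\alpha_i=d/d_i$ at $p_i$ and $h_i=z_i^{\alpha_i}F_i$ is a unit near $p_i$. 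Projectivity of $\Phi$ and the normalization statement are then deduced from finiteness, i.e., the paper's logical order is the reverse of yours: you get properness first (valuative criterion for $\Phi$, cancellation along the projective $\Gamma\to\sY$, then proper plus quasi-finite implies finite), the paper gets finiteness first by explicit algebra. Your route leans on standard big theorems (the refined valuative criterion for an integral finite-type scheme, which lets you work with valuations of $k(\sY)=k(C)(t)$ and with DVRs, and the fact that a proper quasi-finite morphism is finite); the paper's route is more elementary and hands-on, and as a by-product produces explicit module generators for $\clO_\sX$ over $\clO_{\sY\times C}$, which is also what makes Lemma \ref{lem:coord-ring-iso} and Lemma \ref{lem:contraction} run on the same fuel.

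One step in your sketch is asserted rather than proved, and it is exactly where the paper's explicit construction reappears: the claim that centering at $\overline c$ forces $d_i\,v(t)>v(z_i)$. This does not follow from $v(A_+)>0$ formally; you need to produce a homogeneous element of $A_+$ whose positivity of valuation encodes that inequality. The standard way is the paper's own trick: take $d\gg 0$ divisible by $d_i$, choose a section of $\clO_C(\lfloor d\Delta\rfloor)$ generating the stalk at $p_i$, i.e., a rational function $F$ with pole of exact order $\alpha_i=d/d_i$ at $p_i$ and admissible poles elsewhere, so that $g=Ft^d\in A_d$ and, writing $F=h_iz_i^{-\alpha_i}$ with $h_i$ a unit at $p_i$ (hence $v(h_i)=0$ whether $v|_{k(C)}$ is trivial or centered at $p_i$), the condition $v(g)>0$ gives $d\,v(t)>\alpha_i v(z_i)$, i.e., $d_i v(t)>v(z_i)$ and so $v(w_i)>0$ in the chart \eqref{eqn:main-family-int-coord}. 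With that inserted (and handling the case $v|_{k(C)}$ trivial, where the center is the generic point of $C_0$ rather than a closed point), your argument goes through.
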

\begin{proof}
    The natural morphism
    \begin{align*}
        \sX\to\Gamma\subset\sY\times C,    
    \end{align*}
    given by $\Phi$ on the first factor and $\sX\subset\overline\sX\to C$ on the second factor, is bijective on closed points (and is even an isomorphism away from $\{\overline{c}\}\times C$). This follows from Lemma \ref{lem:contraction} and the fact that $C_0\subset\sX\to C$ is an isomorphism. Showing that \eqref{eqn:blowup-to-graph} is finite amounts to showing that $\sX\to\sY\times C$ is finite. Note that, for $0\le i\le n$, the restriction of $\sX\to\sY\times C$ over the open subset $U_i\subset C$ is given by the morphism $V_i\to \sY\times U_i$ of affine varieties. 
    
    When $i=0$, this corresponds to the ring map $A\otimes R_0\to R_0[t]$. Here, $R_0\subset R_0[t]$ is the evident inclusion, while $A\subset R_0[t]$ comes from Remark \ref{rem:model-smoothing-subalg}. Since $t\in A_+\subset A$, this ring map is surjective and $V_0\to\sY\times U_0$ is a closed embedding. Now, fix $1\le i\le n$. The morphism $V_i\to\sY\times U_i$ corresponds to the ring map
    \begin{align}\label{eqn:blowup-to-graph-ring-map}
        A\otimes R_i\to \frac{R_i[t,w_i]}{(z_iw_i-t^{d_i})}
    \end{align}
    where we have used \eqref{eqn:main-family-int-coord} to determine the coordinate ring of $V_i$. Now, take an integer $d\gg 1$ divisible by $d_i$ and write $\alpha_i = d/d_i$. We can then find a global section $\sigma_i$ of the invertible sheaf $\clO_C(\alpha_i\cdot p_i)$ which generates its stalk at $p_i$. The section $\sigma_i$ corresponds to a rational function $F_i$ on $C$, which is regular away from $p_i$ and has a pole of order $\alpha_i$ at $p_i$. Let $U_i' = \Spec R_i'\subset U_i$ be an affine open neighborhood of $p_i$ on which $h_i:=z_i^{\alpha_i}F_i$ is a nowhere vanishing regular function, i.e., $h_i\in(R_i')^\times$. Localizing the map \eqref{eqn:blowup-to-graph-ring-map} from $U_i = \Spec R_i$ to $U_i' = \Spec R_i'$, we obtain the map
    \begin{align}\label{eqn:blowup-to-graph-localized-ring-map}
        A\otimes R'_i\to \frac{R'_i[t,w_i]}{(z_iw_i-t^{d_i})}.
    \end{align}
    We claim that \eqref{eqn:blowup-to-graph-localized-ring-map} is a finite ring map. Since $t\in A_+\subset A$, it suffices to check that a positive power of $w_i$ lies in the image of \eqref{eqn:blowup-to-graph-localized-ring-map}. For this, note that the expression $F_it^d$ defines a global regular function on $\sX$, i.e., an element of $\Gamma(\sX,\clO_{\sX}) = A$. Moreover, its image in the coordinate ring of $V_i$ can be written as $h_iw_i^{\alpha_i}$, using the relation $z_iw_i=t^{d_i}$. Thus, we find that \eqref{eqn:blowup-to-graph-localized-ring-map} maps the element $F_it^d\otimes(1/h_i)\in A\otimes R_i'$ to $w_i^{\alpha_i}$.
    
    We have shown that $\sX\to\sY\times C$ is a finite morphism when restricted over the open cover $\{U_0\}\cup\{U_i'\}_{1\le i\le n}$ of $C$. Being a finite morphism is local on the target by \cite[\href{https://stacks.math.columbia.edu/tag/01WI}{Tag 01WI}]{stacks-project}. This shows that $\sX\to\sY\times C$ is finite. By the discussion below \eqref{eqn:main-family-int-coord}, $\sX$ is normal. Since \eqref{eqn:blowup-to-graph} is finite and birational, we conclude that $\sX$ is the normalization of $\Gamma$.
\end{proof}

\begin{remark}[Comparison with \cite{Pinkham-surf-sing}]\label{rem:pinkham-comparison}
    As indicated in Remark \ref{rem:model-motivation-pinkham-surf-sing}, we take (a special case of) the \emph{result} of \cite[Theorem 5.1]{Pinkham-surf-sing} as the basis for our \emph{definition} of the model smoothing $\pi_\sY:\sY\to\bA^1$. The sequence of statements in this appendix (leading up to Lemma \ref{lem:inverse-contraction}) provide an alternate approach to \cite[\textsection 3]{Pinkham-surf-sing}, which constructs an explicit $\bG_m$-equivariant resolution of any $\bG_m$-equivariant affine normal surface singularity. The equivariant resolution is used to determine the graded coordinate ring of the singularity in \cite[Theorem 5.1]{Pinkham-surf-sing}. This is parallel to our Lemma \ref{lem:coord-ring-iso}.

    The construction in \cite[\textsection 3]{Pinkham-surf-sing} involves a transcendental step which uses the topological fundamental group $\pi_1$, and therefore, is written under the assumption $k=\bC$. Our approach in this appendix is a purely algebraic version of this construction (which works over any algebraically closed $k$ of characteristic $0$) in the special case of interest to us.
\end{remark}

The next lemma proves the explicit description of the (semi)normalization of $\sY_0$ stated in Lemma \ref{lem:justifying-partial-norm}. Before this, we need a brief preliminary discussion.

Note that $\sY_0$ is the image of $\bigsqcup_{1\le i\le n} E_i\subset\sX$ under the contraction morphism $\Phi$. By Lemma \ref{lem:contraction}, for $1\le i\le n$, the point $q_i\in E_i$ maps to $\overline{c}\in\sY_0$ while $E_i\setminus\{q_i\}$ maps isomorphically onto its image in $\sY_0\setminus\{\overline{c}\}$. Thus, the $\bG_m$-equivariant morphism 
\begin{align*}
    \bigsqcup_{1\le i\le n} E_i\to\sY_0    
\end{align*}
induced by $\Phi$ is the normalization morphism of $\sY_0$. Recall from the discussion below \eqref{eqn:main-family-int-coord} that we have identifications $E_i = T^\vee_{C,p_i}$ with $q_i = 0$ (and $\bG_m$ acting with weight $d_i$) for $1\le i\le n$. Thus, gluing the lines $E_i$ along the points $q_i$ for $1\le i\le n$ recovers the scheme $E = \Spec Q$ introduced in Definition \ref{def:partial-norm}. Since the points $q_i\in E_i$ for $1\le i\le n$ all map to $\overline{c}\in\sY_0$, we get a factorization
\begin{align}\label{eqn:norm-factor}
    \bigsqcup_{1\le i\le n} E_i\to E\to\sY_0
\end{align}
of the normalization morphism of $\sY_0$ (see Figure \ref{fig:Partial normalization}).

\begin{Lemma}[Alternate descriptions of seminormalization]\label{lem:partial-normalization-ring-map}
    The morphisms $E\to\sY_0$ in \eqref{eqn:norm-factor} and \eqref{eqn:partial-normalization} are the same.
\end{Lemma}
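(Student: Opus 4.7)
The plan is to show that both morphisms agree on the level of coordinate rings. Both morphisms are $\bG_m$-equivariant, so they correspond to graded ring maps $\fA \to Q$. For \eqref{eqn:partial-normalization} this map is, tautologically, the inclusion of Definition \ref{def:partial-norm}, which under the identification \eqref{eqn:coord-ring-E-and-Q} and Remark \ref{rem:curve-sing-incl-in-Q} is precisely the direct sum (over $m \ge 0$) of the maps on global sections induced from the coherent sheaf surjections $\clO_C(\lfloor m\Delta\rfloor) \twoheadrightarrow \clQ_m$. So the goal reduces to identifying the ring map induced by \eqref{eqn:norm-factor} with the same direct sum.

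To carry out this identification, I would first observe that the ring map induced by \eqref{eqn:norm-factor} agrees with the normalization ring map $\fA \hookrightarrow \prod_{1\le i\le n}\Sym^\bullet T_{C,p_i}$ modulo the inclusion $Q \hookrightarrow \prod_i\Sym^\bullet T_{C,p_i}$ (which is forced by the factorization \eqref{eqn:norm-factor}). Thus it suffices to compute, for each homogeneous $g \in A_m$ representing $\bar g \in \fA_m$, the restriction $g|_{E_i} \in (T_{C,p_i})^{\otimes m/d_i}$ coming from the contraction $\Phi$, and to match it with the image of $g$ under evaluation $H^0(C,\clO_C(\lfloor m\Delta\rfloor)) \to (\clQ_m)_{p_i}$ as given in \eqref{eqn:Q-local-stalks}.

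This computation is carried out in local coordinates via Notation \ref{not:X-int-cover}. Fix $1 \le i \le n$, and write $\alpha_i = \lfloor m/d_i\rfloor$ and $r_i = m - d_i\alpha_i \in \{0,1,\ldots,d_i-1\}$. Using the trivialization of $\clO_C(\lfloor m\Delta\rfloor)$ on $U_i$ by $z_i^{-\alpha_i}$, write $g|_{U_i} = h_i \cdot z_i^{-\alpha_i}$ with $h_i \in R_i$. Then on $V_i = \Spec(R_i[t,w_i]/(z_iw_i - t^{d_i}))$ the expression $g = (h_i z_i^{-\alpha_i})t^m = h_i\, w_i^{\alpha_i}\, t^{r_i}$ is regular (using $z_i w_i = t^{d_i}$), and restriction to $E_i = \{z_i = 0 = t\} \cap V_i$ yields $g|_{E_i} = 0$ when $r_i > 0$ (i.e.\ $d_i \nmid m$), and $g|_{E_i} = h_i(p_i)\, w_i^{\alpha_i}$ when $d_i \mid m$. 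The value $h_i(p_i)$ is, by construction, the image of $g$ in $(\clQ_m)_{p_i} = (T_{C,p_i})^{\otimes m/d_i}$ under the identification given by the trivialization $z_i^{-\alpha_i}$, and $w_i^{\alpha_i}$ is the matching element of $(T_{C,p_i})^{\otimes m/d_i}$ (as a coordinate on $E_i = T^\vee_{C,p_i}$), so $g|_{E_i}$ coincides with the image of $g$ under the map induced by $\clO_C(\lfloor m\Delta\rfloor) \twoheadrightarrow \clQ_m$ on $p_i$-stalks.

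Summing over $1 \le i \le n$ and over $m \ge 0$, the ring map $\fA \to Q$ induced by \eqref{eqn:norm-factor} agrees with the one induced by \eqref{eqn:partial-normalization}, so the two morphisms coincide. The main thing to be careful about is the bookkeeping of canonical identifications---in particular, ensuring that the $\bG_m$-weights match (which forces $w_i$ to be in degree $d_i$, consistent with the $T^\vee_{C,p_i}$-identification of $E_i$ in Definition \ref{def:partial-norm}) and that the trivializations of $\clO_C(\lfloor m\Delta\rfloor)$ on $U_i$ are compatible with the identifications $(\clQ_m)_{p_i} = (T_{C,p_i})^{\otimes m/d_i}$ used in both \eqref{eqn:Q-local-stalks} and the coordinate calculation above.
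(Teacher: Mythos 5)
Your proposal is correct and follows essentially the same route as the paper: reduce to comparing graded ring maps $\fA\to Q$, then compute $g|_{E_i}$ in the coordinates of \eqref{eqn:main-family-int-coord} as $h_i w_i^{\alpha_i}t^{r_i}$ and match it, via \eqref{eqn:Q-local-stalks} and \eqref{eqn:coord-ring-E-and-Q}, with the image of $g$ in $(\clQ_m)_{p_i}$. The only cosmetic difference is that you take $\alpha_i=\lfloor m/d_i\rfloor$ (so $h_i(p_i)$ may vanish, with both sides then zero), whereas the paper uses the actual pole order of $F$ at $p_i$ and treats the sub-maximal-pole case by noting both images vanish unless $r_i=0$; the substance of the verification is identical.
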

\begin{proof}
    To understand the proof, the reader should recall the notations introduced in Definition \ref{def:partial-norm}. We need to show that the inclusion $\fA\subset Q = \bigoplus_{m\ge 0}Q_m$ induced by \eqref{eqn:norm-factor} and the inclusion $\fA\subset\bigoplus_{m\ge 0}H^0(C,\clQ_m)$ from \eqref{eqn:curve-sing-incl-in-Q} coincide under the canonical identification $Q_m = H^0(C,\clQ_m)$ from \eqref{eqn:coord-ring-E-and-Q}. It suffices to check this after composing with the natural surjection $A\to\fA$ (coming from Lemma \ref{lem:model-sing}). Moreover, we can check this one grading at a time. There is nothing to check for grading $m=0$. Now, let $0\ne g\in A_+$ be a homogeneous element in grading $m\ge 1$ and let $1\le i\le n$. We will show that the image of $g$ in $(\clQ_m)_{p_i}$, defined via the exact sequence \eqref{eqn:ses-Q}, corresponds to the image of $g$ in $Q_{m,i}$, defined by restriction of functions from $\sX$ to $E_i$, under the identification provided by \eqref{eqn:coord-ring-E-and-Q}. Recall from Definition \ref{def:partial-norm} that $Q_{m,i}$ is the summand of $Q_m$ corresponding to $1\le i\le n$.

    We use the notation of Lemma \ref{lem:coord-ring-iso}. Over $V_0 = U_0\times\bA^1$, write $g = Ft^m$ for some regular function $F$ on $U_0$. Let $\alpha_i\ge 0$ be the smallest non-negative integer such that $h_i = z_i^{\alpha_i}F$ becomes regular on $U_i$ and define $r_i = m - d_i\alpha_i\ge 0$. Using the coordinate presentation \eqref{eqn:main-family-int-coord}, we rewrite $g = h_iw_i^{\alpha_i}t^{r_i}$ on $V_i$. Recalling from the discussion below \eqref{eqn:main-family-int-coord} that $E_i\subset\sX$ is defined by the ideal $(z_i,t)$, we see that $g|_{E_i}$ vanishes unless $r_i = 0$. Similarly, when we regard $F$ as a section of $\clO_C(\lfloor m\Delta\rfloor)$, we see that it also has vanishing image in $(\clQ_m)_{p_i}$ unless $r_i = 0$. Therefore, we will suppose that $r_i = 0$ for the remainder of the argument.
    
    In this case, since $F$ has a pole of order $\alpha_i = m/d_i>0$ at $p_i$, we get $h_i(p_i)\ne 0$. This shows that $g|_{E_i}$ is simply the function $h_i(p_i)w_i^{\alpha_i}$ in the coordinate ring $k[w_i]$ of $E_i$. On the other hand, the image of $F$ in $(\clQ_m)_{p_i}$ is the same as the image of the local section $h_i(p_i)z_i^{-\alpha_i}$ in $(\clQ_m)_{p_i}$ since their difference $F - h_i(p_i)z_i^{-\alpha_i}$ has (at worst) a pole of order strictly less than $\alpha_i$ at $p_i$. Unwinding the computation \eqref{eqn:Q-local-stalks}, we see that the monomial $w_i^{\alpha_i}$ in $Q_m$ corresponds to $z_i^{-\alpha_i}$ in $(\clQ_m)_{p_i}$. Thus, the images of $g$ in $Q_m$ and $(\clQ_m)_{p_i}$ coincide under the identification \eqref{eqn:coord-ring-E-and-Q}.
\end{proof}

We conclude this appendix with the following proposition, which is the key to proving local smoothability of the model pinching (Proposition \ref{prop:local-smoothability-of-model-pinching}). Its proof just consists of putting together everything we have already proved in this appendix.

\begin{Proposition}\label{prop:local-smoothability-of-model-pinching-appx}
    The morphism $\Phi:\sX\to\sY$, produced by Definitions \ref{def:main-family}, \ref{def:main-family-int} and \ref{def:contraction}, is a proper birational morphism and has the following properties.
    \begin{enumerate}[\normalfont(i)]
        \item The morphism $\pi_\sX = \pi_\sY\circ\Phi:\sX\to\bA^1$ is flat and $\Phi\times_{\bA^1}\{0\}$ is identified with the morphism $\nu:\sX_0\to\sY_0$ produced by Definition \ref{def:model-pinching}.
        \item $\Phi$ restricts to an isomorphism over $\sY\setminus\{\overline c\}$.
    \end{enumerate}
\end{Proposition}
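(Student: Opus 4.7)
The plan is to assemble the proposition from the results already established in this appendix. Properness and projectivity of $\Phi$ are immediate from Lemma \ref{lem:inverse-contraction}, which factors $\Phi$ through a finite morphism $\sX \to \Gamma$ to a closed subvariety of $\sY \times C$. Birationality and property (ii) are literally the content of Lemma \ref{lem:contraction}: $\Phi$ is an isomorphism over $\sY \setminus \{\overline{c}\}$, contracts $C_0$ to $\overline{c}$, and therefore restricts to an isomorphism between dense open subsets.

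For the flatness in (i), I would invoke Remark \ref{rem:flatness-over-dvr}: $\sX$ is a variety (a non-empty open subvariety of the irreducible $\overline\sX$ from Definition \ref{def:main-family}), and $\pi_\sX$ is dominant over the non-singular curve $\bA^1$, hence automatically flat. To identify the scheme-theoretic fibre $\pi_\sX^{-1}(0)$ with the curve $\sX_0$ from Notation \ref{not:compactified-model-sing}, I would set $t=0$ in the coordinate presentation \eqref{eqn:main-family-int-coord}: on each $V_i$ this yields $\Spec R_i[w_i]/(z_i w_i)$, the transverse union of $C_0 \cap V_i$ (cut out by $w_i$) and the exceptional line $E_i = T_{C,p_i}^\vee$ (cut out by $z_i$) at the node $q_i$. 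Gluing these local pictures across the cover $\{V_i\}_{0 \le i \le n}$ of Notation \ref{not:X-int-cover} reproduces $\sX_0$ on the nose.

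The substantive step --- and the main obstacle, though it is short given the earlier work --- is identifying the restriction $\Phi|_{\sX_0}$ with the model pinching $\nu: \sX_0 \to \sY_0$ of Definition \ref{def:model-pinching}. Both are morphisms from the reduced scheme $\sX_0$ to the separated scheme $\sY_0$, so it suffices to prove agreement on the dense open subset $\sX_0 \setminus C_0 = \bigsqcup_{1 \le i \le n}(E_i \setminus \{q_i\})$. On this locus, Lemma \ref{lem:contraction} presents $\Phi$ as the isomorphism onto $\sY_0 \setminus \{\overline{c}\}$ induced by the factorization \eqref{eqn:norm-factor} of the normalization of $\sY_0$, and Lemma \ref{lem:partial-normalization-ring-map} identifies this factorization with the partial normalization $\gamma: E \to \sY_0$ of Definition \ref{def:partial-norm}; since $\nu$ is defined on $\sX_0 \setminus C_0$ as precisely the restriction of $\gamma$, the two morphisms coincide there and hence everywhere. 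The real content of this last step is thus Lemma \ref{lem:partial-normalization-ring-map}, which matches the two a priori different inclusions $\fA \subset Q$ --- the one arising from restriction of regular functions along $\bigsqcup E_i \hookrightarrow \sX \xrightarrow{\Phi} \sY$, and the combinatorial one of Definition \ref{def:partial-norm}.
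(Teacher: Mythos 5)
Your overall route is the same as the paper's: properness via Lemma \ref{lem:inverse-contraction}, property (ii) and birationality via Lemma \ref{lem:contraction}, flatness via Remark \ref{rem:flatness-over-dvr} (the paper routes this through Remark \ref{rem:X-flatness}, which is the same observation applied to $\overline\sX$), the fibre identification $\pi_\sX^{-1}(0)=\sX_0$ from the local presentation \eqref{eqn:main-family-int-coord} (the paper cites Remark \ref{rem:Gm-equivariance} and the discussion below \eqref{eqn:main-family-int-coord} instead of redoing the computation), and finally Lemma \ref{lem:partial-normalization-ring-map} as the substantive input identifying $\Phi\times_{\bA^1}\{0\}$ with $\nu$. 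All of that is fine.

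There is, however, one incorrect step in your last reduction: you claim that $\sX_0\setminus C_0=\bigsqcup_{1\le i\le n}(E_i\setminus\{q_i\})$ is a \emph{dense} open subset of $\sX_0$, and deduce that two morphisms to the separated scheme $\sY_0$ agreeing there must agree everywhere. But $C_0\simeq C$ is an irreducible component of $\sX_0$ (of positive genus), not contained in the closure of the $E_i$, so $\sX_0\setminus C_0$ is not dense: its closure is $\bigcup_i E_i$, and agreement on it says nothing about the restriction to $C_0$. The gap is easily closed: since $\sX_0$ is reduced, it suffices to check agreement on each irreducible component separately. On each $E_i$, the restriction of $\Phi$ is by construction the composite $E_i\to E\to\sY_0$ from \eqref{eqn:norm-factor}, which Lemma \ref{lem:partial-normalization-ring-map} identifies with $\gamma$ composed with the inclusion, i.e.\ with $\nu|_{E_i}$ (so you do not even need the density/separatedness argument there). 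On $C_0$, Lemma \ref{lem:contraction} shows $\Phi$ contracts $C_0$ to $\overline{c}$, and $\nu|_C$ is by Definition \ref{def:model-pinching} the constant morphism $\overline{c}$; since $C_0$ is reduced and connected, a morphism whose set-theoretic image is the single $k$-point $\overline{c}$ factors uniquely through $\Spec k\hookrightarrow\sY_0$, so the two restrictions coincide. With that one-line patch your argument is complete and agrees with the paper's proof.
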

\begin{proof}
    Property (ii) follows from Lemma \ref{lem:contraction}. Birationality of $\Phi$ is a consequence of property (ii). Properness of $\Phi$ follows from Lemma \ref{lem:inverse-contraction}. By the construction of $\Phi$ in Definition \ref{def:contraction}, we get $\pi_\sY\circ\Phi = \pi_\sX$. The morphism $\pi_{\overline\sX}$ is flat by Remark \ref{rem:X-flatness}. Being the restriction of $\pi_{\overline\sX}$ to the open subvariety $\sX\subset\overline\sX$, the morphism $\pi_\sX$ is also flat. By Remark \ref{rem:Gm-equivariance}, the fibre $\pi_\sX^{-1}(0)$ is the union of the curves $E_i$ for $1\le i\le n$ and $C_0$ (these curves were introduced in Definition \ref{def:main-family-int}). From the discussion below \eqref{eqn:main-family-int-coord}, we have the identification $E_i = T_{C,p_i}^\vee$. Therefore, we get the desired identification $\pi_{\sX}^{-1}(0) = \sX_0$. We are left with checking that, under this identification, $\Phi\times_{\bA^1}\{0\}$ is the same as $\nu:\sX_0\to\sY_0$. This is an immediate consequence of Lemma \ref{lem:partial-normalization-ring-map}.
\end{proof}

%%%%%%%%%%%%%%%%%%%%%%%%%%%%%%%%%%%%%%%%%%%%%%%%%%%%
\section{Local smoothability of genus 0 pinchings}\label{appendix:genus-0-pinchings}

In this short appendix, we prove the following proposition, which is the key to establishing the local smoothability of a genus $0$ pinching (Proposition \ref{prop:local-smoothability-genus-0-pinching}).

\begin{Proposition}\label{prop:local-smoothability-genus-0-pinching-appx}
    Let $\nu:\sC\to\sS$ be the pinching of a prestable genus $0$ curve at a connected sub-curve $C\subsetneq\sC$, in the sense of Definition \ref{def:pinching}. Moreover, assume that the closure of $\sC\setminus C$ is a disjoint union of finitely many copies of $\bP^1$. Then, $\nu$ is locally smoothable, in the sense of Definition \ref{def:locally-smoothable-pinching}.
\end{Proposition}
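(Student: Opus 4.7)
The plan is to construct a local smoothing of $\nu:\sC\to\sS$ near $s$ by first building a smooth surface $\sX$ whose central fibre realises $\sC_{U,\varphi}$, and then contracting the ghost $C$ inside the central fibre to obtain $\clY$. To set up, note that $H^1(C,\clO_C)=0$ (since $\sC$ has arithmetic genus $0$ and $C\subsetneq\sC$ is connected) and that by hypothesis the closures of $\sC\setminus C$ meet $C$ at $n$ distinct points lying on pairwise distinct $\bP^1$ components. Example \ref{exa:genus-0-pinching} then shows that $\sS$ is \'etale locally isomorphic near $s$ to the $n$-fold ordinary singularity $V_n:=\Spec k[x_1,\ldots,x_n]/(x_ix_j:1\le i<j\le n)\subset\bA^n$. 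I would therefore set $V:=V_n$ and pick an affine open $U\subset\sS$ of $s$ together with an \'etale morphism $\varphi:(U,s)\to(V,0)$, arranging the induced bijection between the branches of $V$ at $0$ and the external components of $\sC$ meeting $C$ to match the natural labelling. With this choice, $\sC_{U,\varphi}$ is identified with $C\cup E_1\cup\cdots\cup E_n$, where each $E_j\cong\bA^1$ is attached transversally at the point $p_j\in C$.

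Next I would build $\sX$ in two stages. Since $C$ is a proper nodal curve of arithmetic genus $0$, i.e., a tree of $\bP^1$'s, its deformation functor is unobstructed, and a standard local-to-global gluing of the local node-smoothings $\{xy=t\}$ yields a proper flat family $\sZ\to B:=\bA^1$ with smooth total space $\sZ$, central fibre $\sZ_0=C$, and smooth generic fibre $\sZ_b\cong\bP^1$. Now let $\bar\sX$ be the blowup of $\sZ$ at the smooth closed points $(p_1,0),\ldots,(p_n,0)$, with exceptional divisors $F_1,\ldots,F_n\cong\bP^1$. For each $j$, choose a point $F_j^\infty\in F_j$ distinct from the intersection $F_j\cap C_{\mathrm{strict}}$, and define $\sX:=\bar\sX\setminus\bigsqcup_j\{F_j^\infty\}$. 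Then $\sX\to B$ is a smooth surface, flat over $B$, with central fibre $C\cup E_1\cup\cdots\cup E_n\cong\sC_{U,\varphi}$ (where $E_j:=F_j\setminus\{F_j^\infty\}\cong\bA^1$) and generic fibre still $\cong\bP^1$.

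Finally, to produce $\clY$ and $\psi:\sX\to\clY$, I would contract $C\subset\sX$ to a single closed point. Since $\bar\sX$ is proper over $B$ and $C$ is a proper subset of the components of its central fibre $C\cup F_1\cup\cdots\cup F_n$, the intersection matrix of the components of $C$ (in $\bar\sX$, hence in $\sX$) is negative definite. As $C$ is a tree of smooth rational curves, the singularity obtained by contracting $C$ is rational, and Artin's contraction theorem then yields a proper birational morphism $\psi:\sX\to\clY$ of schemes collapsing $C$ to a closed point $s'\in\clY$ and restricting to an isomorphism on $\sX\setminus C$. The scheme $\clY$ is an irreducible reduced surface mapping dominantly to $B$, hence flat over $B$ by Remark \ref{rem:flatness-over-dvr}; its generic fibre inherits smoothness from $\sX$, while its central fibre is obtained from $\sC_{U,\varphi}$ by collapsing $C$, leaving $n$ disjoint affine lines meeting at $s'$---precisely $V_n=V$. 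The restriction of $\psi$ over $0\in B$ is identified with $\nu_{U,\varphi}:\sC_{U,\varphi}\to V$, verifying all conditions of Definition \ref{def:locally-smoothable-pinching}. The main technical hurdle is the invocation of Artin's contraction theorem, which applies precisely because $C$ is a negative-definite tree of smooth rational curves yielding a rational singularity, so that the contraction is a morphism of schemes rather than merely of algebraic spaces.
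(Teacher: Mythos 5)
Your route is genuinely different from the paper's. The paper smooths the \emph{whole} curve $\sC$ (a transverse slice through $\Mbar_{0,m}$, with sections $\sigma_i$ through auxiliary points $q_i\in E_i$) and never invokes general contraction theory: the contraction of $C$ is produced by the explicit line bundle $\clL=\clO_{\overline\clX}(\sum\sigma_i)$, cohomology and base change shows $\pi_*\clL$ is free of rank $n+1$, the induced map $\overline\clX\to\bP^n\times B$ collapses exactly $C$, and the scheme structure of the central fibre of the image is pinned down by a Hilbert-polynomial/flatness argument. Your plan (smooth the ghost $C$ alone, blow up the attaching points in the central fibre to create the $\bA^1$-tails, then contract the strict transform of $C$) can be made to work, but two steps are not justified as written.

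First, your reason for rationality --- ``$C$ is a tree of smooth rational curves'' --- is false as a general principle: a central curve $E_0$ with $E_0^2=-2$ meeting four disjoint $(-3)$-curves is a negative-definite tree of smooth rational curves whose fundamental cycle $Z=2E_0+E_1+\cdots+E_4$ has $p_a(Z)=1$, so the contraction is minimally elliptic, not rational; and without rationality the contraction is in general only an algebraic space (this is precisely the issue the paper flags via \cite{Badescu-book} and \cite{Artin-modifications}), so your appeal to Artin's scheme-theoretic contraction theorem would collapse. Rationality \emph{does} hold in your situation, but for a reason you must supply: the components of (the strict transform of) $C$ are part of the reduced central fibre $F=C\cup F_1\cup\cdots\cup F_n$ of $\overline\sX\to B$, so each component $C_i$ satisfies $C_i\cdot C=C_i\cdot F-\sum_j C_i\cdot F_j\le 0$; hence the reduced cycle is the fundamental cycle, it has $p_a=0$ since it is a reduced tree of rational curves, and Artin's criterion applies (after compactifying $\overline\sX$, since that criterion is stated for projective surfaces). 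Second, the claim that the central fibre of $\clY$ is ``precisely $V_n$'' with $\psi|_0=\nu_{U,\varphi}$ is exactly the scheme-theoretic content of the proposition and is asserted rather than proved: flatness of $\clY\to B$ does not by itself exclude a non-reduced structure at $s'$. In your setup it again follows from rationality: $R^1\psi_*\clO_{\sX}=0$ applied to the sequence $0\to\clO_{\sX}\to\clO_{\sX}\to\clO_{\sX_0}\to 0$ (multiplication by the parameter $t$) gives $\clO_{\clY_0}=\psi_*\clO_{\sX_0}$, whose stalk at $s'$ is the ring of tuples of branch germs agreeing at the origin, i.e.\ the pinched structure of Example \ref{exa:genus-0-pinching}; the paper gets the same identification from the Hilbert polynomial of the flat family $\overline\clY\subset\bP^n\times B$. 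A smaller point: an honest \'etale morphism $\varphi:(U,s)\to(V_n,0)$ from a Zariski neighborhood, as required by Definition \ref{def:locally-smoothable-pinching}, still has to be constructed (e.g.\ by functions on a neighborhood of $s$ restricting to a local coordinate on one branch and to zero on the others); Example \ref{exa:genus-0-pinching} by itself only gives a formal/\'etale-local isomorphism, whereas the paper sidesteps this by taking $V$ to be an open subset of $\sS$ itself and $\varphi$ the identity.
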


The remainder of this appendix gives the proof of this proposition.

\begin{figure}[ht]
    \centering
\includegraphics[width=12.3cm]{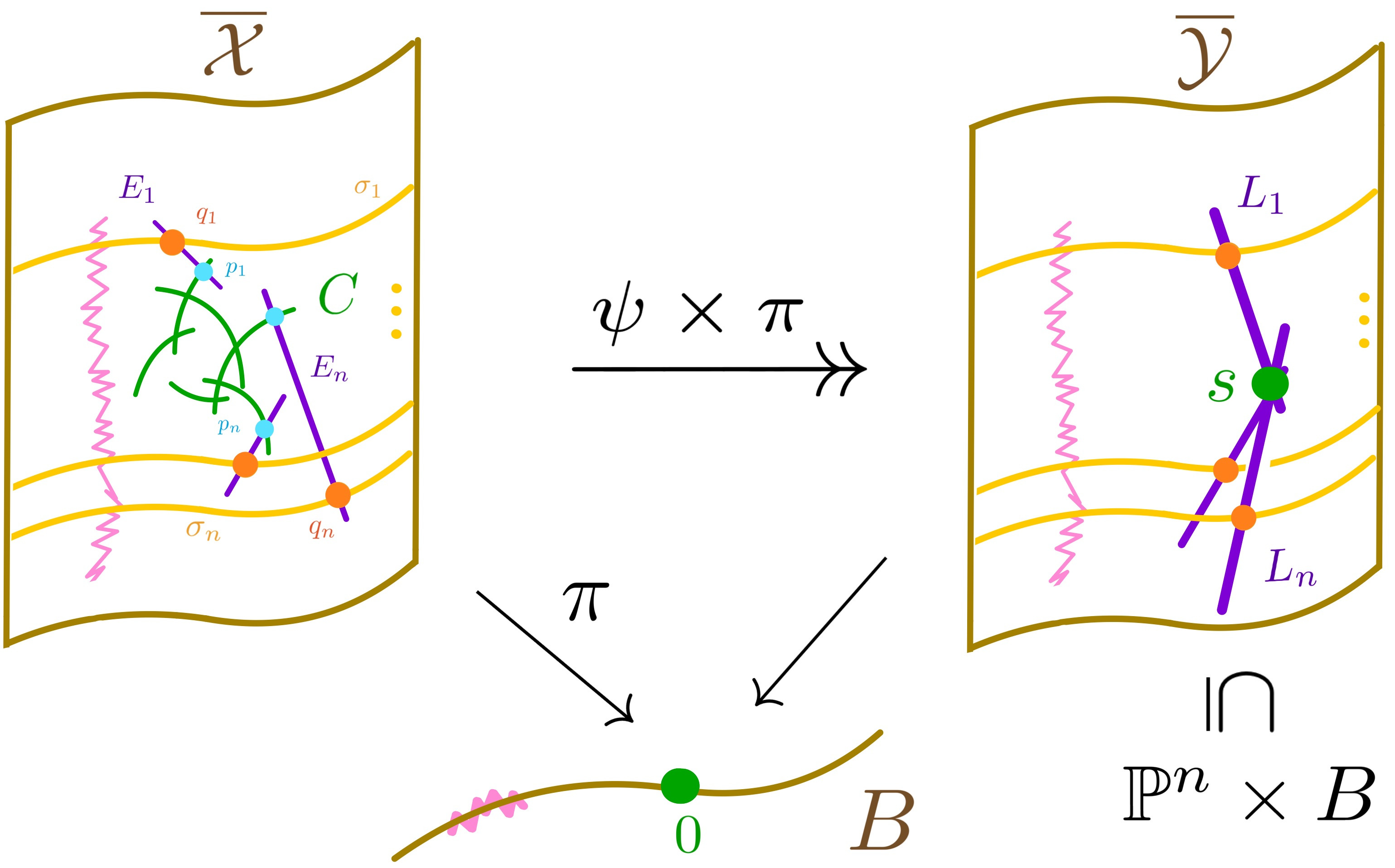}
    \caption{Depiction of the various objects appearing in the proof of Proposition \ref{prop:local-smoothability-genus-0-pinching-appx}.}
    \label{fig:appendixB}
\end{figure}

\begin{notation}\label{not:genus-0-pinching-data}
Enumerate the $\bP^1$ components of the closure of $\sC\setminus C$ as $E_1,\ldots,E_n$. For $1\le i\le n$, let $p_i$ be the unique point in $C\cap E_i$. Fix choices of points $q_i\in E_i\setminus\{p_i\}$ for $1\le i\le n$. Now, choose the data of 
\begin{enumerate}[(i)]
    \item a non-singular affine curve $B$ with a closed point $0\in B$,
    \item a flat, projective morphism $\pi:\overline\clX\to B$ which is smooth over $B\setminus\{0\}$, such that $\overline\clX$ is a non-singular irreducible surface with an identification $\overline\clX\times_B\{0\} = \sC$, and 
    \item pairwise disjoint sections $\sigma_i:B\to\overline\clX$ of $\pi$ with $\sigma_i(0) = q_i$ for $1\le i\le n$.
\end{enumerate}
It follows that the fibre of $\pi:\overline{\clX}\to B$ over any closed point in $B\setminus\{0\}$ is isomorphic to the projective line $\bP^1$. Denote by
\begin{align*}
    \clL = \clO_{\overline\clX}(\sum\sigma_i)
\end{align*}
the line bundle on $\clX$ determined by the smooth divisor $\sum\sigma_i(B)\subset\overline\clX$. Finally, for any closed point $b\in B$, we will write $\overline\clX_b:=\pi^{-1}(b)$ and $\clL_b:=\clL|_{\overline\clX_b}$. In particular, $\overline\clX_0 = \sC$.
\end{notation}

\begin{remark}
    One way to choose the data $0\in B$, $\pi:\overline\clX\to B$ and $\{\sigma_i\}_{1\le i\le n}$, as in Notation \ref{not:genus-0-pinching-data}, is the following. By suitably adding marked points (including $q_1,\ldots,q_n$) to $\sC$, we may view $\sC$ as a point of the smooth variety $\Mbar_{0,m}$ for some $m\ge 3$. We may now choose a non-singular affine curve $B\subset\Mbar_{0,m}$ which passes through this point (which we label as $0\in B$) and is transverse to each irreducible component of the boundary divisor of $\Mbar_{0,m}$. The family $\pi:\overline\clX\to B$ can be defined by restricting the universal family (of stable $m$-pointed genus $0$ curves) on $\Mbar_{0,m}$ to $B$. Pulling back the sections of the universal family corresponding to the marked points $q_1,\ldots,q_n\in\sC$ gives the desired sections $\sigma_1,\ldots,\sigma_n$ of $\pi$.
\end{remark}

The following standard result, concerning cohomology and base change, will be used in the proof of Lemma \ref{lem:nice-linear-system} below.
\begin{Lemma}[{\cite[Theorem III.12.11]{Har77}}]\label{lem:hartshorne-cohom-bc}
    Let $f:X\to Y$ be a projective morphism of noetherian schemes, and let $\clF$ be a coherent sheaf on $X$, which is flat over $Y$. Let $y$ be a point of $Y$. Then, we have the following.
    \begin{enumerate}[\normalfont(a)]
    \item If the natural map
        \begin{align*}
            \phi^i(y):R^if_*(\clF)\otimes_{\clO_{Y,y}} (\clO_{Y,y}/\fm_{Y,y})\to H^i(X_y,\clF_y)
        \end{align*} 
        is surjective, then it is an isomorphism, and the same is true for all points $y'$ in a suitable neighborhood of $y$.
        
    \item Assume that $\phi^i(y)$ is surjective. Then, the following two conditions are equivalent.
        \begin{enumerate}[\normalfont(i)]
        \item The map $\phi^{i-1}(y)$ is surjective.
        \item $R^if_*(\clF)$ is locally free in a neighborhood of $y$.
        \end{enumerate}
    \end{enumerate}
\end{Lemma}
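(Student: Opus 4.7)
The plan is to use the line bundle $\clL=\clO_{\overline\clX}(\sum_i\sigma_i)$ on $\overline\clX$ to construct a proper morphism $\Psi:\overline\clX\to\bP^n\times B$ of $B$-schemes, whose image $\overline\clY$ will be a flat family of rational curves over $B$ with central fibre canonically isomorphic to $\sS$. Restricting $\overline\clX\to\overline\clY$ to appropriate affine neighborhoods of the pinched point then supplies the data required by Definition \ref{def:locally-smoothable-pinching} (with the \'etale morphism $\varphi$ there taken to be the identity of an affine open $U\subset\sS$ containing $s$).

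\textbf{Construction of $\Psi$.} First I would compute the cohomology of $\clL_b$ on each fibre. For $b\ne 0$, $\overline\clX_b\simeq\bP^1$ with $\clL_b\simeq\clO_{\bP^1}(n)$, giving $h^0=n+1$ and $h^1=0$. On the central fibre, $\clL_0|_C\simeq\clO_C$ (because the $\sigma_i$ avoid $C$) and $\clL_0|_{E_i}\simeq\clO_{\bP^1}(q_i)$; using that $C$ is a connected tree of $\bP^1$'s (so $H^0(C,\clO_C)=k$), a direct count shows that a global section of $\clL_0$ on $\sC$ is precisely a common value $a\in k$ on $C$ together with, on each $E_i$, a section of $\clO_{\bP^1}(q_i)$ taking the value $a$ at $p_i$. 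This gives $h^0(\sC,\clL_0)=n+1$, whence $h^1(\sC,\clL_0)=0$ by constancy of $\chi$ under flatness. Two applications of Lemma \ref{lem:hartshorne-cohom-bc} (first with $i=1$, then with $i=0$) then show, after shrinking $B$ to be affine, that $\pi_*\clL$ is a free $\clO_B$-module of rank $n+1$ whose formation commutes with base change. I would then choose a basis $\tau_0,\ldots,\tau_n$ of $\Gamma(\overline\clX,\clL)$ adapted to the decomposition of the central fibre so that $\tau_0|_C\equiv 1$ and $\tau_i|_{E_j}$ corresponds to $\delta_{ij}\cdot x_i$ for chosen local coordinates $x_i$ on $E_i$ at $p_i$. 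A short case check (at points of $C$ the section $\tau_0$ is nonzero; at $q_i\in E_i$ the section $\tau_i$ is nonzero) confirms that $\clL$ is base-point free on each fibre, and hence on $\overline\clX$. The sections $[\tau_0:\cdots:\tau_n]$ therefore define a morphism $\Psi=([\tau_0:\cdots:\tau_n],\pi):\overline\clX\to\bP^n\times B$.

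\textbf{Analysis on fibres.} On $\overline\clX_b$ for $b\ne 0$, $\Psi$ embeds $\bP^1$ as a degree-$n$ rational normal curve. On the central fibre, $\Psi$ contracts $C$ to the point $[1:0:\cdots:0]$ and embeds each $E_i$ as the coordinate line joining $[1:0:\cdots:0]$ to $[0:\cdots:1:\cdots:0]$, so the images of the $E_i$ meet transversely with pairwise linearly independent tangent directions at $[1:0:\cdots:0]$. In the affine chart $\{\tau_0\ne 0\}$ with coordinates $y_i=\tau_i/\tau_0$, the image of $\sC$ is the reduced closed subscheme $\{y_iy_j=0:i\ne j\}\subset\bA^n$, which is precisely the local model of $\sS$ at $s$ provided by Example \ref{exa:genus-0-pinching}. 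I would then define $\overline\clY\subset\bP^n\times B$ to be the scheme-theoretic image of $\Psi$; it is an integral $2$-dimensional closed subscheme, so $\overline\clY\to B$ is flat by Remark \ref{rem:flatness-over-dvr} (a dominant morphism from a variety to a non-singular curve).

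\textbf{Main obstacle and conclusion.} The principal technical step will be upgrading the set-theoretic identification $|\overline\clY_0|=|\sS|$ to a canonical isomorphism of schemes. I would tackle this by comparing local rings at the pinched points: Example \ref{exa:genus-0-pinching} identifies $\clO_{\sS,s}$ with $\prod'_i\clO_{E_i,p_i}$, and the affine chart computation above gives the same description of $\clO_{\overline\clY_0,[1:0:\cdots:0]}$ in terms of the coordinates $y_i$. Equivalently, the contraction $\Psi|_\sC:\sC\to\overline\clY_0$ satisfies the defining property of the pinching (contracting $C$ and being an isomorphism elsewhere, with the correct local structure at the image of $C$), so by the universal property of $\nu$ the induced morphism $\sS\to\overline\clY_0$ is bijective with isomorphic local rings, hence an isomorphism. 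Once $\overline\clY_0\simeq\sS$ is established, I would choose an affine open $\clY\subset\overline\clY$ containing $(s,0)$ and set $\clX:=\Psi^{-1}(\clY)\subset\overline\clX$. The data $(B,0,\clY\to B,\clX\to B,\Psi|_\clX:\clX\to\clY)$ then satisfies all the requirements of Definition \ref{def:locally-smoothable-pinching}: flatness of both $\clX\to B$ and $\clY\to B$ follows from openness inside flat $B$-families; smoothness of $\clY\to B$ over $B\setminus\{0\}$ follows from the generic fibres being $\bP^1$; properness of $\Psi|_\clX$ is inherited from that of $\Psi$; and the isomorphism $\clX\setminus C\to\clY\setminus\{s\}$ follows because $\Psi$ is an isomorphism away from $C\subset\overline\clX$ (as it is fibrewise an embedding outside the contracted locus).
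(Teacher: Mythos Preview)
Your proposal is not a proof of the stated lemma. The statement you were asked to prove is Lemma~\ref{lem:hartshorne-cohom-bc}, which is Hartshorne's cohomology and base change theorem \cite[Theorem III.12.11]{Har77}. The paper does not prove this lemma; it simply cites it as a standard result and then \emph{applies} it in the proof of Lemma~\ref{lem:nice-linear-system}. What you have written is instead a proof sketch of Proposition~\ref{prop:local-smoothability-genus-0-pinching-appx} (local smoothability of genus $0$ pinchings), which is a different statement entirely. In particular, you even invoke Lemma~\ref{lem:hartshorne-cohom-bc} in your argument (``Two applications of Lemma~\ref{lem:hartshorne-cohom-bc}\ldots''), so your proposal is circular if read as a proof of the lemma itself.

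If the intent was actually to prove Proposition~\ref{prop:local-smoothability-genus-0-pinching-appx}, then your approach is essentially the same as the paper's: use $\clL$ to map $\overline\clX$ to $\bP^n\times B$, take the image $\overline\clY$, and verify that the central fibre recovers the pinching. One difference worth noting is how the scheme-theoretic equality $\overline\clY_0=\sS$ is established. You argue via comparison of local rings at the pinched point; the paper instead uses a Hilbert polynomial argument, computing that both $\overline\clY_0$ and the reduced union $Z=L_1\cup\cdots\cup L_n$ of coordinate lines have Hilbert polynomial $nt+1$, forcing $\overline\clY_0=Z$. The Hilbert polynomial route is cleaner because it avoids any delicate analysis of the scheme structure of the image at the singular point. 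Also, the paper takes $\clX$ and $\clY$ to be the complements of the sections $\sigma_i(B)$ rather than an arbitrary affine open containing $(s,0)$; this guarantees that $\Psi|_\clX$ is proper without further argument.
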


\begin{Lemma}\label{lem:nice-linear-system}
    The coherent sheaf $\pi_*\clL$ is locally free of rank $n+1$ on $B$. The fibre of the resulting vector bundle at any closed point $b\in B$ is identified with $H^0(\overline\clX_b,\clL_b)$.
\end{Lemma}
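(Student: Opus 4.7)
The plan is to apply cohomology and base change (Lemma \ref{lem:hartshorne-cohom-bc}) in a standard way, reducing the statement to the vanishing of $H^1(\overline\clX_b,\clL_b)$ at every closed point $b\in B$. Once this vanishing is established, part (a) of that lemma, applied with $i=1$, shows that $R^1\pi_*\clL$ vanishes in a neighborhood of each point of $B$; combined with part (b) applied first with $i=1$ (to conclude surjectivity of $\phi^0(b)$ for every $b$) and then with $i=0$ (using that $\phi^{-1}(b)$ is trivially surjective), we conclude that $\pi_*\clL$ is locally free near every $b\in B$ and that the natural map
\begin{align*}
    (\pi_*\clL)\otimes_{\clO_{B,b}}\bigl(\clO_{B,b}/\fm_{B,b}\bigr)\xrightarrow{\sim} H^0(\overline\clX_b,\clL_b)
\end{align*}
is an isomorphism. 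The rank of $\pi_*\clL$ is then computed at a single fibre.

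For the $H^1$ vanishing, the easy case is $b\ne 0$: the fibre $\overline\clX_b$ is a smooth projective rational curve isomorphic to $\bP^1$, and because the sections $\sigma_i$ are pairwise disjoint, the divisor $\sum_i\sigma_i(B)\cap\overline\clX_b$ consists of $n$ distinct reduced points, so $\clL_b\simeq\clO_{\bP^1}(n)$ and $H^1=0$, while $\dim H^0 = n+1$. For $b=0$, note that pinching preserves arithmetic genus (Eq.~\eqref{eqn:pinching-genus-preserved}), so $\sC$ has arithmetic genus $0$ and, being a connected nodal curve, is a tree of $\bP^1$ components. The restriction of $\clL_0=\clO_{\overline\clX}(\sum\sigma_i)$ to $E_i$ has degree $1$ (since $\sigma_i(0)=q_i\in E_i\setminus\{p_i\}$ and the remaining sections are disjoint from $E_i$), while $\clL_0$ restricts to the trivial line bundle on every other component.

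To conclude $H^1(\sC,\clL_0)=0$, I would induct on the number of irreducible components of $\sC$, removing one leaf component at a time. Given a leaf $Y\subset\sC$ meeting the closure $\sC'$ of $\sC\setminus Y$ at a single node $p$, the partial normalization at $p$ yields an exact sequence
\begin{align*}
    0\to\clL_0\to\clL_0|_{\sC'}\oplus\clL_0|_Y\to(\clL_0)_p\to 0,
\end{align*}
whose long exact sequence in cohomology reduces the vanishing of $H^1(\sC,\clL_0)$ to (i) $H^1(\sC',\clL_0|_{\sC'})=0$ and $H^1(Y,\clL_0|_Y)=0$, and (ii) surjectivity of the evaluation $H^0(\sC',\clL_0|_{\sC'})\oplus H^0(Y,\clL_0|_Y)\to(\clL_0)_p$. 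The first is the inductive hypothesis together with the fact that line bundles of degree $\ge 0$ on $\bP^1$ have no $H^1$; the second follows because $\clL_0|_Y$ is globally generated on $Y\simeq\bP^1$ (as it has degree $0$ or $1$), so already $H^0(Y,\clL_0|_Y)\to(\clL_0)_p$ is surjective. The only subtle case to double-check is the induction base and the choice of a leaf, which is routine for a tree of $\bP^1$'s; this is where I would expect to spend a line of care, but no real obstacle is involved.
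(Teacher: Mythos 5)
Your proof is correct and takes essentially the same route as the paper: establish the fibrewise vanishing $H^1(\overline\clX_b,\clL_b)=0$ together with $\dim H^0(\overline\clX_b,\clL_b)=n+1$, and then run the standard cohomology-and-base-change bookkeeping with Lemma \ref{lem:hartshorne-cohom-bc}; the paper treats the fibre over $0$ by exactly the induction on irreducible components that you spell out via leaf removal. The only cosmetic remark is that $\sC$ is a genus $0$ prestable curve by hypothesis (Proposition \ref{prop:local-smoothability-genus-0-pinching-appx}), so the appeal to \eqref{eqn:pinching-genus-preserved} is unnecessary.
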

\begin{proof}
    Note that if a line bundle on a genus $0$ prestable curve has non-negative degree on each irreducible component, then it is globally generated and its $H^1$ group vanishes. Moreover, if the total degree of such a line bundle is $d$, then its $H^0$ group has dimension $d+1$. For $\bP^1$, this is because every line bundle on $\bP^1$ is a power of $\clO_{\bP^1}(1)$, and in general, this follows by induction on the number of irreducible components. Thus, $H^1(\overline\clX_b,\clL_b) = 0$ and $\dim H^0(\overline\clX_b,\clL_b) = n+1$, for any closed point $b\in B$. 

    We will now repeatedly apply Lemma \ref{lem:hartshorne-cohom-bc} to finish the proof. Applying Lemma \ref{lem:hartshorne-cohom-bc}(a) to the $H^1$ computation from the previous paragraph implies that $R^1\pi_*(\clL)$ is the zero sheaf;  in particular, it is locally free of rank $0$. Lemma \ref{lem:hartshorne-cohom-bc}(b), again applied to the above $H^1$ computation, now implies that the natural map 
    \begin{align}\label{eqn:cohom-bc-map}
        \pi_*\clL\otimes_{\clO_{B,b}}(\clO_{B,b}/\fm_{B,b})\to H^0(\overline\clX_b,\clL_b)
    \end{align} 
    is surjective, for all closed points $b\in B$. Lemma \ref{lem:hartshorne-cohom-bc}(a), applied to the surjectivity of \eqref{eqn:cohom-bc-map}, shows that \eqref{eqn:cohom-bc-map} is actually an isomorphism. Now, Lemma \ref{lem:hartshorne-cohom-bc}(b) implies that $\pi_*\clL$ is a locally free sheaf. The fibre of the resulting vector bundle is determined by the isomorphism \eqref{eqn:cohom-bc-map}. Its rank is seen to be $n+1$ by the $H^0$ computation from the previous paragraph.
\end{proof}

\begin{notation}
    After possibly replacing $B$ by an affine open neighborhood of $0\in B$, use Lemma \ref{lem:nice-linear-system} to fix an identification $\clO_B^{\oplus n+1} = \pi_*\clL$, defined by $n+1$ global sections $\psi_0,\ldots,\psi_n$. These sections restrict to a basis of $H^0(\overline\clX_b,\clL_b)$ for each closed point $b\in B$. In particular, the sections $\psi_0,\ldots,\psi_n$ give a base point free linear system on $\overline\clX$ and we get a morphism
    \begin{align*}
        \psi=[\psi_0:\cdots:\psi_n]:\overline\clX\to\bP^n.
    \end{align*}
    Since $\clL$ is trivial over $C\subset\sC\subset\overline\clX$, the morphism $\psi$ maps $C$ to a point, denoted by $s\in\bP^n$. Let $\overline\clY\subset\bP^n\times B$ the closed subvariety which is defined to be the image of the proper morphism $\psi\times\pi$. The projection $\overline\clY\to B$ is surjective and so, it is flat by Remark \ref{rem:flatness-over-dvr}. It therefore defines a flat family $\{\overline\clY_b\subset\bP^n\}_{b\in B}$ of closed subschemes, parametrized by the affine curve $B$. Finally, let
    \begin{align*}
        \psi|_b:\overline\clX_b\to\overline\clY_b\subset\bP^n
    \end{align*}
    be the restriction of $\psi$ over any closed point $b\in B$. Note that $\psi|_b$ is defined by the linear system $\psi_0|_{\overline\clX_b},\ldots,\psi_n|_{\overline\clX_b}$ obtained by restricting the sections $\psi_0,\ldots,\psi_n$ to $\overline\clX_b$.
\end{notation}

Consider any closed point $b\in B$. If $b\ne 0$, then $(\overline\clX_b,\clL_b)$ is isomorphic to $(\bP^1,\clO_{\bP^1}(n))$; therefore, $\psi|_b:\overline\clX_b\to\overline\clY_b$ is an isomorphism.  Thus, whenever $b\ne 0$, the Hilbert polynomial\footnote{Recall that the Hilbert polynomial of a coherent sheaf $\clF$ on $\bP^n$ is given by $t\in\bZ\mapsto \chi(\clF\otimes_{\clO_{\bP^n}}\clO_{\bP^n}(t))$. The Hilbert polynomial of closed subscheme $S\subset\bP^n$ is, by definition, the Hilbert polynomial of $\clO_S$.} of the subscheme $\overline\clY_b\subset\bP^n$ is given by $P(t) = nt+1$. Since $\overline\clY\to B$ is flat, we conclude that the subscheme $\overline\clY_0\subset\bP^n$ also has the same Hilbert polynomial.

\begin{Lemma}
    The scheme $\overline\clY_0$ is reduced. Moreover, $\psi|_0:\overline\clX_0\to\overline\clY_0$ is identified with the pinching $\nu:\sC\to\sS$ of $\sC$ at $C$, and $\psi$ induces an isomorphism $\overline\clX\setminus C\to\overline\clY\setminus\{(s,0)\}$.  
\end{Lemma}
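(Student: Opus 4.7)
The plan is to analyze $\psi|_0:\sC\to\bP^n$ component-by-component on $\sC$, identify the image with a union of $n$ lines through a common point $s$, match Hilbert polynomials to rule out any embedded structure on $\overline\clY_0$, and then verify the two conditions of Definition~\ref{def:pinching}. The crux will be establishing that the $n$ image lines through $s$ have linearly independent tangent directions; once this is available, everything else follows from flatness and a finiteness argument.

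First I would note that the divisor $\sum_i\sigma_i(B)$ restricts on $\sC$ to $\sum_i q_i$, so $\clL|_C = \clO_C$ (degree $0$ on the connected arithmetic-genus-$0$ curve $C$) and $\clL|_{E_i}=\clO_{\bP^1}(1)$. In particular, $\psi|_0$ collapses $C$ to a single point $s\in\bP^n$, and each $\psi|_{E_i}$ is a complete linear system of degree $1$ that embeds $E_i$ as a line $L_i\subset\bP^n$ passing through $s$. Since $\psi_0|_\sC,\ldots,\psi_n|_\sC$ form a basis of $H^0(\sC,\clL|_\sC)$ by Lemma~\ref{lem:nice-linear-system}, their image $\psi|_0(\sC)=L_1\cup\cdots\cup L_n$ is not contained in any hyperplane of $\bP^n$ (else we would obtain a nontrivial linear relation among the $\psi_j|_\sC$). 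Now $n$ lines sharing a common point have linear span of dimension $n$ precisely when their tangent directions at that point are linearly independent, which yields the required general position at $s$.

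Next, I would pin down $\overline\clY_0$ via Hilbert polynomials. The paper already records that $\overline\clY_0$ has Hilbert polynomial $nt+1$. The reduced union $L_1\cup\cdots\cup L_n$ fits into the exact sequence $0\to\clO_{L_1\cup\cdots\cup L_n}\to\bigoplus_i\clO_{L_i}\to\clQ\to 0$, where, using the linear independence of tangent directions just established, the cokernel $\clQ$ is concentrated at $s$ with length equal to $n-1$, computed locally on $n$ transverse coordinate axes in $\bA^n$. This gives $L_1\cup\cdots\cup L_n$ the Hilbert polynomial $n(t+1)-(n-1)=nt+1$. Since the support of $\overline\clY_0$ is $L_1\cup\cdots\cup L_n$, there is a natural surjection $\clO_{\overline\clY_0}\twoheadrightarrow\clO_{L_1\cup\cdots\cup L_n}$, whose kernel has Hilbert polynomial $0$ and therefore vanishes. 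Thus $\overline\clY_0$ is reduced and equals $L_1\cup\cdots\cup L_n$.

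To finish, I would check Definition~\ref{def:pinching} for $\psi|_0$. Condition (i) is immediate: the $L_i\setminus\{s\}$ are pairwise disjoint, each $\psi|_{E_i}:E_i\xrightarrow{\sim}L_i$ is an isomorphism, $\psi|_0^{-1}(s)=C$ set-theoretically, and $\psi|_0$ induces an isomorphism $\sC\setminus C\xrightarrow{\sim}\overline\clY_0\setminus\{s\}$. For condition (ii), in local coordinates at $s$ making the $L_i$ the coordinate axes of $\bA^n$, the local ring $\clO_{\overline\clY_0,s}$ is the subring of $\prod_i\clO_{L_i,s}$ of tuples with common value at $s$; under the component-wise isomorphisms $E_i\simeq L_i$, this matches $((\psi|_0)_*\clO_\sC)_s=\{(c,f_1,\ldots,f_n):f_i\in\clO_{E_i,p_i},\,f_i(p_i)=c\}$, so the cokernel of $\clO_{\overline\clY_0}\to(\psi|_0)_*\clO_\sC$ vanishes at $s$, consistent with $\dim H^1(C,\clO_C)=0$. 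Finally, since $(\psi\times\pi)^{-1}(s,0)=C$, the restriction $\psi\times\pi:\overline\clX\setminus C\to\overline\clY\setminus\{(s,0)\}$ is proper with singleton fibres, hence finite, and also birational (because $\psi|_b$ is an isomorphism for $b\neq 0$); the source is smooth by hypothesis and the target is smooth by flatness of $\overline\clY\to B$ combined with smoothness of its fibres away from the pinched point. A finite birational morphism to a smooth (hence normal) variety is an isomorphism, completing the proof.
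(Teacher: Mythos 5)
Your argument is correct, and its skeleton is the same as the paper's: show each $E_i$ maps isomorphically onto a line $L_i$ through $s$ with $\bigoplus_i T_{L_i,s}=T_{\bP^n,s}$, compute that the reduced union $Z=L_1\cup\cdots\cup L_n$ has Hilbert polynomial $nt+1$, and kill the kernel of $\clO_{\overline\clY_0}\twoheadrightarrow\clO_Z$ by the Hilbert-polynomial comparison. Two of your sub-steps genuinely differ from the paper, both legitimately. First, you actually prove the tangent-independence at $s$ (the paper only asserts it): since $\psi_0|_\sC,\ldots,\psi_n|_\sC$ are a basis of $H^0(\sC,\clL_0)$, the image is nondegenerate in $\bP^n$, and $n$ lines through a common point span $\bP^n$ exactly when their directions at that point are independent; this is a clean way to record the assertion, and it also rules out any $E_i$ being contracted (your side claim that $\psi|_{E_i}$ is given by a \emph{complete} degree-$1$ system is unproved but true and not needed). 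Second, for the last assertion the paper observes directly that $\psi\times\pi$ restricted to $\overline\clX\setminus C$ is a closed embedding into $\bP^n\times B\setminus\{(s,0)\}$, whereas you argue that the map to $\overline\clY\setminus\{(s,0)\}$ is proper with singleton fibres, hence finite, birational, and an isomorphism because the target is smooth (flatness of $\overline\clY\to B$ plus smoothness of the fibres away from $(s,0)$, which uses your identification $\overline\clY_0=Z$); this Zariski-main-theorem-style route costs you the normality check of the target but is perfectly valid and no harder.

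One small omission to repair: the lemma asserts an identification of $\psi|_0:\overline\clX_0\to\overline\clY_0$ with the \emph{given} pinching $\nu:\sC\to\sS$, whereas you stop after verifying that $\psi|_0$ satisfies conditions (i) and (ii) of Definition \ref{def:pinching}. You should add the concluding step, exactly as in the paper: by Example \ref{exa:genus-0-pinching}, a pinching at a sub-curve $C$ with $H^1(C,\clO_C)=0$ (and branches on distinct components) is unique — equivalently, your explicit description of $\clO_{\overline\clY_0,s}$ as the subring of tuples in $\prod_i\clO_{E_i,p_i}$ agreeing at the point coincides with the description of $\clO_{\sS,s}$ inside $(\nu_*\clO_\sC)_s$, so the isomorphism $\sS\simeq\overline\clY_0$ over $\sC$ follows. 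Your computation already contains everything needed for this; it just needs to be said.
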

\begin{proof}
    Since $\psi|_0$ is the morphism defined by the complete linear system of $\clL_0$ on $\overline\clX_0 = \sC$, we see that, for $1\le i\le n$, the restriction $\psi|_{E_i}$ is an isomorphism onto a linearly embedded projective line $L_i\subset\bP^n$ passing through $s$ (which is the image of $C$ under $\psi$). Moreover, we have an equality $\bigoplus_{1\le i\le n}T_{L_i,s} = T_{\bP^n,s}$ of tangent spaces. Define $Z\subset\bP^n$ to be the reduced subscheme given by the union $L_1\cup\cdots\cup L_n$. A direct computation shows that the Hilbert polynomial of $Z$ is also given by $P(t) = nt+1$. Now, $\overline\clY_0$ contains $Z$ as a subscheme and they both have the \emph{same} Hilbert polynomial. This means $\ker(\clO_{\overline\clY_0}\twoheadrightarrow\clO_{Z})$ has zero Hilbert polynomial and is, therefore, the zero sheaf. Thus, $\overline\clY_0 = Z$ is reduced.

    By the previous paragraph and Example \ref{exa:genus-0-pinching}, we see that $\psi|_0:\overline\clX_0\to\overline\clY_0$ is the \emph{unique} pinching of $\sC$ at $C$. Since $\nu:\sC\to\sS$ is another such pinching, the two must be identified. The final assertion (that $\overline\clX\setminus C\to\overline\clY\setminus\{(s,0)\}$ is an isomorphism) follows since $\psi|_{\overline\clX\setminus C}$ is a closed embedding into $\bP^n\times B\setminus\{(s,0)\}$.
\end{proof}

To complete the proof of Proposition \ref{prop:local-smoothability-genus-0-pinching-appx}, let us define $\clX := \overline\clX\setminus\bigcup_{1\le i\le n}\sigma_i(B)$ and $\clY := \overline\clY\setminus\bigcup_{1\le i\le n}\sigma_i(B)$. The identity morphism of $\clY_0 = \sS\setminus\{q_1,\ldots,q_r\}$, the pointed curve $0\in B$, and the $B$-morphism $\psi:\clX\to\clY$ can now be taken to play the respective roles of $\varphi_i:U_i\to V_i$, $0\in B_i$, and $\psi_i:\clX_i\to\clY_i$ in Definition \ref{def:locally-smoothable-pinching}. This concludes the proof.\qed

\bibliography{references} 

@article {Artin-modifications,
    AUTHOR = {Artin, M.},
     TITLE = {Algebraization of formal moduli. {II}. {E}xistence of
              modifications},
   JOURNAL = {Ann. of Math. (2)},
  FJOURNAL = {Annals of Mathematics. Second Series},
    VOLUME = {91},
      YEAR = {1970},
     PAGES = {88--135},
      ISSN = {0003-486X},
   MRCLASS = {14.00},
  MRNUMBER = {260747},
MRREVIEWER = {H.\ Kurke},
       DOI = {10.2307/1970602},
       URL = {https://doi.org/10.2307/1970602},
}

@Book{Artin-tifr-lectures,
 Author = {Artin, M.},
 Title = {Lectures on deformations of singularities. {Notes} by {C}. {S}. {Seshadri}, {Allen} {Tannenbaum}},
 FSeries = {Lectures on Mathematics and Physics. Mathematics. Tata Institute of Fundamental Research},
 Series = {Lect. Math. Phys., Math., Tata Inst. Fundam. Res.},
 Volume = {54},
 Year = {1976},
 Publisher = {Springer, Berlin; Tata Inst. of Fundamental Research, Bombay},
 Language = {English},
 Keywords = {14B07,14D15,14M12,14-02},
 zbMATH = {3613208},
 Zbl = {0395.14003}
}

@book {Badescu-book,
    AUTHOR = {B\u{a}descu, L.},
     TITLE = {Algebraic surfaces},
    SERIES = {Universitext},
      NOTE = {Translated from the 1981 Romanian original by Vladimir
              Ma\c{s}ek and revised by the author},
 PUBLISHER = {Springer-Verlag, New York},
      YEAR = {2001},
     PAGES = {xii+258},
      ISBN = {0-387-98668-5},
   MRCLASS = {14J10 (14-01 14J17)},
  MRNUMBER = {1805816},
MRREVIEWER = {Marco\ Andreatta},
       DOI = {10.1007/978-1-4757-3512-3},
       URL = {https://doi.org/10.1007/978-1-4757-3512-3},
}

@article{DW-counting,
    AUTHOR = {Doan, Aleksander and Walpuski, Thomas},
     TITLE = {Counting embedded curves in symplectic 6-manifolds},
   JOURNAL = {Comment. Math. Helv.},
  FJOURNAL = {Commentarii Mathematici Helvetici. A Journal of the Swiss
              Mathematical Society},
    VOLUME = {98},
      YEAR = {2023},
    NUMBER = {4},
     PAGES = {693--769},
      ISSN = {0010-2571,1420-8946},
   MRCLASS = {53D05 (14N35 53C15)},
  MRNUMBER = {4680501},
       DOI = {10.4171/cmh/556},
       URL = {https://doi.org/10.4171/cmh/556},
}

@article {Grauert-contraction,
    AUTHOR = {Grauert, H.},
     TITLE = {\"{U}ber {M}odifikationen und exzeptionelle analytische
              {M}engen},
   JOURNAL = {Math. Ann.},
  FJOURNAL = {Mathematische Annalen},
    VOLUME = {146},
      YEAR = {1962},
     PAGES = {331--368},
      ISSN = {0025-5831,1432-1807},
   MRCLASS = {32.60 (32.22)},
  MRNUMBER = {137127},
MRREVIEWER = {S.\ Hitotumatu},
       DOI = {10.1007/BF01441136},
       URL = {https://doi.org/10.1007/BF01441136},
}

@book {Har-DT,
    AUTHOR = {Hartshorne, R.},
     TITLE = {Deformation theory},
    SERIES = {Graduate Texts in Mathematics},
    VOLUME = {257},
 PUBLISHER = {Springer, New York},
      YEAR = {2010},
     PAGES = {viii+234},
      ISBN = {978-1-4419-1595-5},
   MRCLASS = {14D15 (13D10 14B07 14B12)},
  MRNUMBER = {2583634},
MRREVIEWER = {Arvid\ Siqveland},
       DOI = {10.1007/978-1-4419-1596-2},
       URL = {https://doi.org/10.1007/978-1-4419-1596-2},
}

@book {Niu-thesis,
    AUTHOR = {Niu, J.},
     TITLE = {Refined {C}onvergence for {G}enus-{T}wo {P}seudo-{H}olomorphic
              {M}aps},
      NOTE = {Thesis (Ph.D.)--State University of New York at Stony Brook},
 PUBLISHER = {ProQuest LLC, Ann Arbor, MI},
      YEAR = {2016},
     PAGES = {88},
      ISBN = {978-1369-18729-8},
   MRCLASS = {99-05},
  MRNUMBER = {3597726},
       URL =
              {http://gateway.proquest.com/openurl?url_ver=Z39.88-2004&rft_val_fmt=info:ofi/fmt:kev:mtx:dissertation&res_dat=xri:pqm&rft_dat=xri:pqdiss:10164143},
}

@misc{stacks-project,
  author       = {The {Stacks project authors}},
  title        = {The Stacks project},
  howpublished = {\url{https://stacks.math.columbia.edu}},
  year         = {2023},
}

@book {Pinkham-thesis,
    AUTHOR = {Pinkham, H.},
     TITLE = {Deformations of algebraic varieties with {$G\sb{m}$} action.},
    SERIES = {},
 PUBLISHER = {Soci\'{e}t\'{e} Math\'{e}matique de France, Paris, },
      YEAR = {1974},
     PAGES = {i+131},
   MRCLASS = {14D15 (14B05)},
  MRNUMBER = {376672},
MRREVIEWER = {O.\ Riemenschneider},
}

@article {Pinkham-surf-sing,
    AUTHOR = {Pinkham, H.},
     TITLE = {Normal surface singularities with {$C\sp*$} action},
   JOURNAL = {Math. Ann.},
  FJOURNAL = {Mathematische Annalen},
    VOLUME = {227},
      YEAR = {1977},
    NUMBER = {2},
     PAGES = {183--193},
      ISSN = {0025-5831,1432-1807},
   MRCLASS = {14B05},
  MRNUMBER = {432636},
MRREVIEWER = {Richard\ Randell},
       DOI = {10.1007/BF01350195},
       URL = {https://doi.org/10.1007/BF01350195},
}

@incollection {Vakil-stable-red,
    AUTHOR = {Vakil, R.},
     TITLE = {A tool for stable reduction of curves on surfaces},
 BOOKTITLE = {Advances in algebraic geometry motivated by physics ({L}owell,
              {MA}, 2000)},
    SERIES = {Contemp. Math.},
    VOLUME = {276},
     PAGES = {145--154},
 PUBLISHER = {Amer. Math. Soc., Providence, RI},
      YEAR = {2001},
      ISBN = {0-8218-2810-X},
   MRCLASS = {14H10 (14D06)},
  MRNUMBER = {1837115},
MRREVIEWER = {Susan\ J.\ Colley},
       DOI = {10.1090/conm/276/04517},
       URL = {https://doi.org/10.1090/conm/276/04517},
}

@article {Zinger-sharp-compactness,
    AUTHOR = {Zinger, A.},
     TITLE = {A sharp compactness theorem for genus-one pseudo-holomorphic
              maps},
   JOURNAL = {Geom. Topol.},
  FJOURNAL = {Geometry \& Topology},
    VOLUME = {13},
      YEAR = {2009},
    NUMBER = {5},
     PAGES = {2427--2522},
      ISSN = {1465-3060,1364-0380},
   MRCLASS = {53D45 (32Q65)},
  MRNUMBER = {2529940},
MRREVIEWER = {Hsian-Hua\ Tseng},
       DOI = {10.2140/gt.2009.13.2427},
       URL = {https://doi.org/10.2140/gt.2009.13.2427},
}

@Misc{M2,
    author = {Grayson, D. R. and Stillman, M. E.},
    title = {Macaulay2, a software system for research in algebraic geometry},
note= {Available at \url{http://www.math.uiuc.edu/Macaulay2/}}
}

@Misc{geogebra5, 
    author = {Hohenwarter, M. and Borcherds, M. and Ancsin, G. and Bencze, B. and Blossier, M.  and \'Eli\'as, J. and Frank, K. and  G\'al, L. and Hofst\"atter, A. and Jordan, F. and Kone\v{c}n\'y, Z. and Kov\'acs, Z. and Lettner, E. and Lizelfelner, S. and Parisse, B. and Solyom-Gecse, C. and Stadlbauer, C. and Tomaschko, M.}, title = {{G}eo{G}ebra 5.0.507.0}, note = {Available at \url{http://www.geogebra.org}}, year = {2018}, month = oct
}

@article {behrend-manin,
    AUTHOR = {Behrend, K. and Manin, Yu.},
     TITLE = {Stacks of stable maps and {G}romov-{W}itten invariants},
   JOURNAL = {Duke Math. J.},
  FJOURNAL = {Duke Mathematical Journal},
    VOLUME = {85},
      YEAR = {1996},
    NUMBER = {1},
     PAGES = {1--60},
      ISSN = {0012-7094,1547-7398},
   MRCLASS = {14D20 (14C25 14D22)},
  MRNUMBER = {1412436},
MRREVIEWER = {Barbara\ Fantechi},
       DOI = {10.1215/S0012-7094-96-08501-4},
       URL = {https://doi.org/10.1215/S0012-7094-96-08501-4},
}

@article {mumford-path-4,
    AUTHOR = {Mumford, D.},
     TITLE = {Pathologies {IV}},
   JOURNAL = {Amer. J. Math.},
  FJOURNAL = {American Journal of Mathematics},
    VOLUME = {97},
      YEAR = {1975},
    NUMBER = {3},
     PAGES = {847--849},
      ISSN = {0002-9327,1080-6377},
   MRCLASS = {14H20 (14H15)},
  MRNUMBER = {460338},
       DOI = {10.2307/2373780},
       URL = {https://doi.org/10.2307/2373780},
}

@article{sequel,
	title={An obstructing  to smoothing stable maps}, 
	author={Rezaee, F. and Swaminathan, M.},
	year={2024},
	journal={arXiv preprint arXiv:2407.01845},
        archivePrefix={arXiv},
}

@article {Eisenbud-Harris-Wpoint,
    AUTHOR = {Eisenbud, D. and Harris, J.},
     TITLE = {Existence, decomposition, and limits of certain {W}eierstrass
              points},
   JOURNAL = {Invent. Math.},
  FJOURNAL = {Inventiones Mathematicae},
    VOLUME = {87},
      YEAR = {1987},
    NUMBER = {3},
     PAGES = {495--515},
      ISSN = {0020-9910,1432-1297},
   MRCLASS = {14H10 (14C20 14F07)},
  MRNUMBER = {874034},
MRREVIEWER = {R.\ F.\ Lax},
       DOI = {10.1007/BF01389240},
       URL = {https://doi.org/10.1007/BF01389240},
}

@book{Har77,
	AUTHOR = {Hartshorne, R.},
	TITLE = {Algebraic geometry},
	SERIES = {Graduate Texts in Mathematics, No. 52},
	PUBLISHER = {Springer-Verlag, New York-Heidelberg},
	YEAR = {1977},
	PAGES = {xvi+496},
	ISBN = {0-387-90244-9},
	MRCLASS = {14-01},
	MRNUMBER = {0463157},
	MRREVIEWER = {Robert Speiser},
}

@article {ionel-j-inv,
    AUTHOR = {Ionel, E.-N.},
     TITLE = {Genus {$1$} enumerative invariants in {$\mathbf{P}^n$} with fixed
              {$j$} invariant},
   JOURNAL = {Duke Math. J.},
  FJOURNAL = {Duke Mathematical Journal},
    VOLUME = {94},
      YEAR = {1998},
    NUMBER = {2},
     PAGES = {279--324},
      ISSN = {0012-7094,1547-7398},
   MRCLASS = {53D45 (14N10 14N35)},
  MRNUMBER = {1638587},
MRREVIEWER = {Andreas\ Gathmann},
       DOI = {10.1215/S0012-7094-98-09414-5},
       URL = {https://doi.org/10.1215/S0012-7094-98-09414-5},
}

@article{hu-li-niu-genus-2,
      title={Genus Two Stable Maps, Local Equations and Modular Resolutions}, 
      author={Hu, Y. and Li, J. and Niu, J.},
      year={2012},
     journal={arXiv preprint arXiv:1201.2427},
      archivePrefix={arXiv},
}

@article{ekholm-shende-ghost,
      title={Ghost bubble censorship}, 
      author={Tobias Ekholm and Vivek Shende},
      year={2022},
journal={arXiv preprint arXiv:2212.05835},
      archivePrefix={arXiv},
}

@article{Ranganathan--Santos-Parker--Wise,
      title={
Moduli of stable maps in genus one and logarithmic geometry {I}}, 
      author={Ranganathan, D. and Santos-Parker, K. and Wise, J.},
        JOURNAL = {Geometry \& Topology},
    VOLUME = {23},
      YEAR = {2019},
    NUMBER = {},
     PAGES = {3315--3366},

}

@article {Battistella-Carocci,
    AUTHOR = {Battistella, Luca and Carocci, Francesca},
     TITLE = {A smooth compactification of the space of genus two curves in
              projective space: via logarithmic geometry and {G}orenstein
              curves},
   JOURNAL = {Geom. Topol.},
  FJOURNAL = {Geometry \& Topology},
    VOLUME = {27},
      YEAR = {2023},
    NUMBER = {3},
     PAGES = {1203--1272},
      ISSN = {1465-3060,1364-0380},
   MRCLASS = {14H10 (14H45 14N35 14T90)},
  MRNUMBER = {4599312},
       DOI = {10.2140/gt.2023.27.1203},
       URL = {https://doi.org/10.2140/gt.2023.27.1203},
}

@article {Vakil-genus01,
    AUTHOR = {Vakil, Ravi},
     TITLE = {The enumerative geometry of rational and elliptic curves in
              projective space},
   JOURNAL = {J. Reine Angew. Math.},
  FJOURNAL = {Journal f\"{u}r die Reine und Angewandte Mathematik. [Crelle's
              Journal]},
    VOLUME = {529},
      YEAR = {2000},
     PAGES = {101--153},
      ISSN = {0075-4102,1435-5345},
   MRCLASS = {14N10 (14H10 14N35)},
  MRNUMBER = {1799935},
MRREVIEWER = {Gilberto\ Bini},
       DOI = {10.1515/crll.2000.094},
       URL = {https://doi.org/10.1515/crll.2000.094},
}

@article {KKO,
    AUTHOR = {Kim, Bumsig and Kresch, Andrew and Oh, Yong-Geun},
     TITLE = {A compactification of the space of maps from curves},
   JOURNAL = {Trans. Amer. Math. Soc.},
  FJOURNAL = {Transactions of the American Mathematical Society},
    VOLUME = {366},
      YEAR = {2014},
    NUMBER = {1},
     PAGES = {51--74},
      ISSN = {0002-9947,1088-6850},
   MRCLASS = {14N35 (14D20 14D23)},
  MRNUMBER = {3118390},
MRREVIEWER = {Hsian-Hua\ Tseng},
       DOI = {10.1090/S0002-9947-2013-05845-X},
       URL = {https://doi.org/10.1090/S0002-9947-2013-05845-X},
}

@incollection {PT-13-over-2,
    AUTHOR = {Pandharipande, R. and Thomas, R. P.},
     TITLE = {13/2 ways of counting curves},
 BOOKTITLE = {Moduli spaces},
    SERIES = {London Math. Soc. Lecture Note Ser.},
    VOLUME = {411},
     PAGES = {282--333},
 PUBLISHER = {Cambridge Univ. Press, Cambridge},
      YEAR = {2014},
      ISBN = {978-1-107-63638-5},
   MRCLASS = {14N35 (14H10 14N10)},
  MRNUMBER = {3221298},
MRREVIEWER = {Margarida\ Melo},
}

@article {VZ-desing,
    AUTHOR = {Vakil, Ravi and Zinger, Aleksey},
     TITLE = {A desingularization of the main component of the moduli space
              of genus-one stable maps into {$\Bbb P^n$}},
   JOURNAL = {Geom. Topol.},
  FJOURNAL = {Geometry \& Topology},
    VOLUME = {12},
      YEAR = {2008},
    NUMBER = {1},
     PAGES = {1--95},
      ISSN = {1465-3060,1364-0380},
   MRCLASS = {14D20 (14N35)},
  MRNUMBER = {2377245},
MRREVIEWER = {Charles\ D.\ Cadman},
       DOI = {10.2140/gt.2008.12.1},
       URL = {https://doi.org/10.2140/gt.2008.12.1},
}

@article {Zinger-BCOV-g1,
    AUTHOR = {Zinger, Aleksey},
     TITLE = {The reduced genus 1 {G}romov-{W}itten invariants of
              {C}alabi-{Y}au hypersurfaces},
   JOURNAL = {J. Amer. Math. Soc.},
  FJOURNAL = {Journal of the American Mathematical Society},
    VOLUME = {22},
      YEAR = {2009},
    NUMBER = {3},
     PAGES = {691--737},
      ISSN = {0894-0347,1088-6834},
   MRCLASS = {14N35 (14J32 14J33 33C90 53D45)},
  MRNUMBER = {2505298},
MRREVIEWER = {Hsian-Hua\ Tseng},
       DOI = {10.1090/S0894-0347-08-00625-5},
       URL = {https://doi.org/10.1090/S0894-0347-08-00625-5},
}

@article {CLLL,
    AUTHOR = {Chang, Huai-Liang and Li, Jun and Li, Wei-Ping and Liu,
              Chiu-Chu Melissa},
     TITLE = {Mixed-spin-{P} fields of {F}ermat polynomials},
   JOURNAL = {Camb. J. Math.},
  FJOURNAL = {Cambridge Journal of Mathematics},
    VOLUME = {7},
      YEAR = {2019},
    NUMBER = {3},
     PAGES = {319--364},
      ISSN = {2168-0930,2168-0949},
   MRCLASS = {14N35 (14J32)},
  MRNUMBER = {4010064},
MRREVIEWER = {Reinier\ Kramer},
       DOI = {10.4310/CJM.2019.v7.n3.a3},
       URL = {https://doi.org/10.4310/CJM.2019.v7.n3.a3},
}

@article {CLLL2,
    AUTHOR = {Chang, Huai-Liang and Li, Jun and Li, Wei-Ping and Liu,
              Chiu-Chu Melissa},
     TITLE = {An effective theory of {GW} and {FJRW} invariants of quintic
              {C}alabi-{Y}au manifolds},
   JOURNAL = {J. Differential Geom.},
  FJOURNAL = {Journal of Differential Geometry},
    VOLUME = {120},
      YEAR = {2022},
    NUMBER = {2},
     PAGES = {251--306},
      ISSN = {0022-040X,1945-743X},
   MRCLASS = {14N35 (53D45)},
  MRNUMBER = {4385118},
MRREVIEWER = {Luca\ Battistella},
       DOI = {10.4310/jdg/1645207466},
       URL = {https://doi.org/10.4310/jdg/1645207466},
}

@article{Guo-Janda-Ruan-BCOV-g2,
      title={A mirror theorem for genus two {G}romov-{W}itten invariants of quintic threefolds}, 
      author={Guo, S. and Janda, F. and Ruan, Y.},
      year={2017},
journal={arXiv preprint arXiv:1709.07392},
      archivePrefix={arXiv},
}

@article{Guo-Janda-Ruan-BCOV-higher-g,
      title={{S}tructure of Higher Genus {G}romov-{W}itten Invariants of Quintic 3-folds}, 
      author={Guo, S. and Janda, F. and Ruan, Y.},
      year={2018},
journal={arXiv preprint arXiv:1812.11908},
      archivePrefix={arXiv},
}

@article {log-GLSM1,
    AUTHOR = {Chen, Qile and Janda, Felix and Ruan, Yongbin},
     TITLE = {The logarithmic gauged linear sigma model},
   JOURNAL = {Invent. Math.},
  FJOURNAL = {Inventiones Mathematicae},
    VOLUME = {225},
      YEAR = {2021},
    NUMBER = {3},
     PAGES = {1077--1154},
      ISSN = {0020-9910,1432-1297},
   MRCLASS = {14N35 (14D23)},
  MRNUMBER = {4296354},
MRREVIEWER = {Amin\ Gholampour},
       DOI = {10.1007/s00222-021-01044-2},
       URL = {https://doi.org/10.1007/s00222-021-01044-2},
}

@article{log-GLSM2,
      title={Punctured logarithmic {R}-maps}, 
      author={Chen, Qile and Janda, Felix and Ruan, Yongbin},
      year={2022},
journal={arXiv preprint arXiv:2208.04519},
      archivePrefix={arXiv},
}

@misc{log-GLSM3,
	title={The structural formulae of virtual cycles in logarithmic Gauged Linear Sigma Models, \emph{in preparation}}, 
	author={Chen, Qile and Janda, Felix and Ruan, Yongbin},
	year={},
	eprint={},
	archivePrefix={},
	primaryClass={}
}

@article {NMSP1,
    AUTHOR = {Chang, Huai-Liang and Guo, Shuai and Li, Jun and Li, Wei-Ping},
     TITLE = {The theory of {$N$}-mixed-spin-{$P$} fields},
   JOURNAL = {Geom. Topol.},
  FJOURNAL = {Geometry \& Topology},
    VOLUME = {25},
      YEAR = {2021},
    NUMBER = {2},
     PAGES = {775--811},
      ISSN = {1465-3060,1364-0380},
   MRCLASS = {14D23 (14J33 14N35)},
  MRNUMBER = {4251436},
MRREVIEWER = {Qile\ Chen},
       DOI = {10.2140/gt.2021.25.775},
       URL = {https://doi.org/10.2140/gt.2021.25.775},
}

@article {NMSP2,
    AUTHOR = {Chang, Huai-Liang and Guo, Shuai and Li, Jun},
     TITLE = {Polynomial structure of {G}romov-{W}itten potential of quintic
              {$3$}-folds},
   JOURNAL = {Ann. of Math. (2)},
  FJOURNAL = {Annals of Mathematics. Second Series},
    VOLUME = {194},
      YEAR = {2021},
    NUMBER = {3},
     PAGES = {585--645},
      ISSN = {0003-486X,1939-8980},
   MRCLASS = {14N35 (14D21 14J33)},
  MRNUMBER = {4334973},
MRREVIEWER = {William\ Liu},
       DOI = {10.4007/annals.2021.194.3.1},
       URL = {https://doi.org/10.4007/annals.2021.194.3.1},
}

@article{NMSP3,
      title={{B}{C}{O}{V}'s {F}eynman rule of quintic 3-folds}, 
      author={Chang, Huai-Liang and Guo, Shuai and Li, Jun},
      year={2018},
journal={arXiv preprint arXiv:1810.00394},
      archivePrefix={arXiv},
}

\end{document}